\title[\bf Eisenstein cohomology and $L$-values]{Eisenstein Cohomology for ${\rm GL}_N$ and the \\
special values of Rankin--Selberg $L$-functions \\
over a totally imaginary number field}
\author{ \bf A. Raghuram} 
\date{\today}      
\subjclass[2010]{11F67; 11F66, 11F70, 11F75, 20G05, 22E50, 22E55}
\address{Dept.\,of Mathematics, Fordham University at Lincoln Center, New York, NY 10023, USA.} 
\email{araghuram@fordham.edu}
\numberwithin{equation}{section}   
\newtheorem{lemma}[equation]{Lemma}
\newtheorem{sublemma}[equation]{Sublemma}
\newtheorem{thm}[equation]{Theorem}
\newtheorem{prop}[equation]{Proposition}
\newtheorem{cor}[equation]{Corollary}
\newtheorem{con}[equation]{Conjecture}
\newtheorem{defn}[equation]{Definition}
\newtheorem{rem}[equation]{Remark}
\newtheorem{exam}[equation]{Example}
\newcommand{\bfgreek}[1]{\bm{\@nameuse{up#1}}}
\def\bpi{\bfgreek{pi}}
\let\oldtocsection=\tocsection
\let\oldtocsubsection=\tocsubsection
\let\oldtocsubsubsection=\tocsubsubsection
\renewcommand{\tocsection}[2]{\hspace{0em}\oldtocsection{#1}{#2}}
\renewcommand{\tocsubsection}[2]{\hspace{1em}\oldtocsubsection{#1}{#2}}
\renewcommand{\tocsubsubsection}[2]{\hspace{2em}\oldtocsubsubsection{#1}{#2}}
\begin{document}

 
 \begin{abstract}{\tiny
Rationality results are proved for the ratios of 
critical values of Rankin--Selberg $L$-functions of 
${\rm GL}(n) \times {\rm GL}(n')$ over a totally imaginary field $F$, by studying rank-one 
Eisenstein cohomology for the group ${\rm GL}(N)/F,$ where $N = n+n',$ generalizing the methods and results of previous work with 
G\"unter Harder \cite{harder-raghuram-book} where the base field was totally real. In contrast to the totally real situation, the internal structure
of the totally imaginary base field has a delicate effect on the rationality results.}
\end{abstract} 

\maketitle

 \def\R{\mathbb{R}}
\def\C{\mathbb{C}}
\def\Z{\mathbb{Z}}
\def\Q{\mathbb{Q}}
\def\A{\mathbb{A}}
\def\F{\mathbb{F}}
 \def\J{\mathbb{J}}
\newcommand\D{{\mathbb{ D}}}
 \def\L{\mathbb{L}}
\def\bG{\mathbb{G}}
\def\N{\mathbb{N}}
\def\BH{\mathbb{H}}
\newcommand\Qi{\mathbb{Q}(\i)}

\newcommand{\vless}{\rotatebox[origin=c]{-90}{$<$}}
\newcommand{\vgreat}{\rotatebox[origin=c]{90}{$<$}}
\newcommand{\vgreater}{\rotatebox[origin=c]{90}{$\leq$}}

\newcommand\Dm{\D_\lambda}
\newcommand\Dmp{\D_{\lambda^\prime}}
 \newcommand\Dim{\D_{\io\lambda}}
 \newcommand\Dimp{\D_{\io\lambda^\prime}}
\newcommand\Dum{\D_{\ul{\lambda}}}
\newcommand\Dium{\D_{\io\ul{\lambda}}} 
\newcommand\DumN{\D_{\ul{\lambda}-N\gamma_P}} 
\newcommand\DiumN{\D_{\io\ul{\lambda}-N\gamma_P}} 
\newcommand\cal{\mathcal}
\newcommand\SMK{{\cal S}^M_{K^M_f}}
\newcommand\tMZl{\tM_{\lambda,\Z}} 
\newcommand\Gm{{\mathbb G}_m}
\newcommand\cA{\cal A}
\newcommand\cC{\cal C}
\newcommand\calL{\cal L}
\newcommand\cO{\cal O}
\newcommand\cU{\cal U}
\newcommand\cK{\cal K}   
\newcommand\cW{\cal W}     
\newcommand\HH{{\cal H}}
\newcommand\cF{\mathcal{F}} 
\newcommand\G{\mathcal{G}}
\newcommand\cB{\mathcal{B}}
\newcommand\cT{\mathcal{T}}
\newcommand\cS{\mathcal{S}}
\newcommand\cP{\mathcal{P}}

\newcommand\GL{{ \rm  GL}}
\newcommand\Gl{{ \rm  GL}}
\newcommand\U{{ \rm  U}}
\def\SU{{\rm SU}}
\def\S{{\bf S}}
\newcommand\Gsp{{\rm Gsp}}
\newcommand\Lie {{ \rm Lie}} 
\newcommand\Sl{{ \rm  SL}}
\newcommand\SL{{ \rm  SL}}
\newcommand\SO {{ \rm  SO}}
\newcommand{\Sp}{\text{Sp}}
\newcommand\Ad{\text{Ad}}
\newcommand\AI{\text{AI}}
\newcommand\Sym{\text{Sym}}

\def\ringO{\mathcal{O}}
\def\idealP{\mathfrak{P}} 
 \def\g{\mathfrak{g}}
\def\k{\mathfrak{k}}
\def\z{\mathfrak{z}}
\def\s{\mathfrak{s}}
\def\c{\mathfrak{c}}
\def\b{\mathfrak{b}}
\def\t{\mathfrak{t}}
\def\q{\mathfrak{q}}
\def\l{\mathfrak{l}}
\def\gl{\mathfrak{gl}}
\def\sl{\mathfrak{sl}}
\def\u{\mathfrak{u}} 
\def\su{\mathfrak{su}}
\def\fp{\mathfrak{p}} 
\def\p{\mathfrak{p}}   
\def\r{\mathfrak{r}}
\def\fd{\mathfrak{d}}
\def\fR{\mathfrak{R}}
\def\fI{\mathfrak{I}}
\def\fJ{\mathfrak{J}}
\def\i{\mathfrak{i}}
\def\perm{\mathfrak{S}}
\newcommand\fg{\mathfrak g}
\newcommand\fk{\mathfrak k}
\newcommand\fgK{(\mathfrak{g},K_\infty^0)}
\newcommand\gK{ \mathfrak{g},K_\infty^0 }

\newcommand\ul{\underline} 
\newcommand\tp{{  {\pi}_f}}
\newcommand\tv{ {\pi}_v}
\newcommand\ts{{ {\sigma}_f}}
\newcommand\pts{ {\sigma}^\prime_f}
\newcommand\usf{\ul{\sigma}_f}
\newcommand\pusf{\ul{\sigma}^\prime_f}
\newcommand\usvp{\ul{\sigma}^\prime_v}
\newcommand\usv{\ul{\sigma}_v}
\newcommand\io{{}^\iota}
\newcommand\uls{{\underline{\sigma}}}

\newcommand\Spec{\hbox{\rm Spec}} 
\newcommand\SGK{\mathcal{S}^G_{K_f}}
\newcommand\SMP{\mathcal{S}^{M_P}}
 \newcommand\SGn{\mathcal{S}^{G_n}}
 \newcommand\SGp{\mathcal{S}^{G_{n^\prime}}}
\newcommand\SMPK{\mathcal{S}^{M_P}_{K_f^{M_P}}}
\newcommand\SMQ{\mathcal{S}^{M_Q}}
\newcommand\SMp{\mathcal{S}^{M }_{K_f^M}}
\newcommand\SMq{\mathcal{S}^{M^\prime}_{K_f^{M^\prime}}}
\newcommand\uSMP{\ul{\mathcal{S}}^{M_P}}
\newcommand\SGnK{\mathcal{S}^{G_n}_{K_f}}
\newcommand\SG{\mathcal{S}^G}
\newcommand\SGKp{\mathcal{S}^G_{K^\prime_f}}
\newcommand\piKK{{ \pi_{K_f^\prime,K_f}}}
\newcommand\piKKpkt{\pi^{\pkt}_{K_f^\prime,K_f}}
\newcommand\BSC{ \bar{\mathcal{S}}^G_{K_f}}
\newcommand\PBSC{\partial\SGK}
\newcommand\pBSC{\partial\SG}
\newcommand\PPBSC{\partial_P\SGK}
\newcommand\PQBSC{\partial_Q\SGK}
\newcommand\ppBSC{\partial_P\mathcal{S}^G}
\newcommand\pqBSC{\partial_Q\mathcal{S}^G}
\newcommand\prBSC{\partial_R\mathcal{S}^G}
\newcommand \bs{\backslash} 
 \newcommand \tr{\hbox{\rm tr}}
 \newcommand\ord{\text{ord}}
\newcommand \Tr{\hbox{\rm Tr}}
\newcommand\HK{\mathcal{H}^G_{K_f}}
\newcommand\HKS{\mathcal{H}^G_{K_f,\place}}
\newcommand\HKv{\mathcal{H}^G_{K_v}}
\newcommand\HGS{\mathcal{H}^{G,\place}}
\newcommand\HKp{\mathcal{H}^G_{K_p}}
\newcommand\HKpo{\mathcal{H}^G_{K_p^0}}
\newcommand\ch{{\bf ch}}

\newcommand\M{\mathcal{M}}
\newcommand\Ml{\M_\lambda}
\newcommand\tMl{\tilde{\Ml}}
\newcommand\tM{\widetilde{\mathcal{M}}}
\newcommand\tMZ{\tM_\Z}
\newcommand\tsigma{\ul{\sigma}}
\newcommand \pkt{\bullet}
\newcommand\tH{\widetilde{\mathcal{H}}}
\newcommand\Mot{{\bf M}} 
\newcommand\eff{{\rm eff}}
\newcommand\Aql{A_{\q}(\lambda)}
\newcommand\wl{w\cdot\lambda}
\newcommand\wlp{w^\prime\cdot\lambda} 

\def\w{{\bf w}} 
\def\d{{\sf d}}
\def\e{{\bf e}} 
\def\x{{\tt x}}
\def\y{{\tt y}}
\def\v{{\sf v}}
\def\q{{\sf q}} 
\def\ff{{\bf f}}
\def\bk{{\bf k}}
 
\def\Ext{{\rm Ext}}
\def\Aut{{\rm Aut}}
\def\Hom{{\rm Hom}}
\def\Ind{{\rm Ind}}
\def\aInd{{}^{\rm a}{\rm Ind}}
\def\aIndPG{\aInd_{\pi_0(P(\R)) \times P(\A_f)}^{\pi_0(G(\R)) \times G(\A_f)}}
\def\aIndQG{\aInd_{\pi_0(Q(\R)) \times Q(\A_f)}^{\pi_0(G(\R)) \times G(\A_f)}}
\def\Gal{{\rm Gal}}
\def\End{{\rm End}} 
\def\cm{{\rm cm}} 
\newcommand\Coh{{\rm Coh}}  
\newcommand\Eis{{\rm Eis}}
\newcommand\Res{\mathrm{Res}}
\newcommand\place{\mathsf{S}}
\newcommand\emb{\mathcal{I}} 
\newcommand\LB{\mathcal{L}}  
\def\Hod{{\mathcal{H}od}}
\def\Crit{{\rm Crit}}
  
 \newcommand\ip{\pi_f\circ \iota} 
\newcommand\Wp{W_{\pi_\infty\times \ip}}
\newcommand\Wpc{W^{\text{cusp}}_{\pi_\infty\times \pi_f\circ\iota}}
\newcommand\Lcusp{  L^2_{\text{cusp}}(G(\Q)\bs G(\A_f)/K_f)}
\newcommand\MiC{\tM_{\iota\circ\lambda,\C} }
\newcommand\miC{\M_{\iota\circ\lambda,\C} }
\newcommand\tpl{{}^\iota}
\newcommand\Id{\rm Id}
\newcommand\Lr{L^{\text{rat}}}
\newcommand \iso{ \buildrel \sim \over\longrightarrow} 
\newcommand\us{\ul\sigma}
\newcommand\qvs{q_v^{-z}}
\newcommand \into{\hookrightarrow}
\newcommand\ppfeil[1]{\buildrel #1\over \longrightarrow}
\newcommand\eb{{}^\iota}
    
\def\bfpi{\mathbf{\Pi}}
\def\bfdelta{\mathbf{\Delta}}

\def\sI{\mathscr{I}}
\def\sU{\mathscr{U}}
\def\sJ{\mathscr{J}}

\bigskip

\begin{center}
{\bf Introduction}
\end{center}

\bigskip

The principal aim of this article is to prove a rationality result for the ratios of successive critical values of Rankin--Selberg $L$-functions of $\GL(n) \times \GL(n')$ over a totally imaginary number field $F$ via a study of rank-one Eisenstein cohomology for the group $\GL(N)/F$ where $N = n+n'.$ 
This article is a generalization of the methods and results of a previous work with 
G\"unter Harder \cite{harder-raghuram-book} that studied such a situation for a totally real base field. 
A fundamental tool is the cohomology of local systems on the Borel--Serre compactification of a locally symmetric space for $\GL(N)/F$. The technical heart of the article pertains to analyzing the cohomology of the Borel--Serre boundary, especially for the contribution coming from maximal parabolic subgroups, that leads to an interpretation of the celebrated theorem of Langlands on the constant term of an Eisenstein series in terms of maps in cohomology. \medskip

Let $F$ be a totally imaginary number field and $F_0$ its maximal totally real subfield. There is at most one totally imaginary 
quadratic extension $F_1$ of $F_0$ contained in $F$, giving us two distinct cases that has a bearing on much that is to follow: 
\begin{enumerate} 
\item {\bf CM}: when there is indeed such an $F_1$; then $F_1$ is the maximal CM subfield of $F$; 
\item {\bf TR}: if not then put $F_1 = F_0$; here $F_1$ is the maximal totally real subfield of $F$. 
\end{enumerate}
The {\bf TR}-case imposes the restriction that existence of a critical point for Rankin--Selberg $L$-functions implies $nn'$ is even. The {\bf CM}-case, arguably the more interesting of the two,  
will impose no such restrictions; furthermore, whether $F$ itself is CM ($F = F_1$) or not ($[F:F_1] \geq 2$) has a delicate effect on Galois equivariance properties of the rationality results.

\medskip

Put $G = G_N = \Res_{F/\Q}(\GL(N)/F),$ and $T = T_N$ the restriction of scalars of the diagonal torus in $\GL(N).$ 
Let $E$ stand for a large enough finite Galois extension of $\Q$ in which $F$ can be embedded. The meaning of large enough will be clear from context. Take a dominant integral weight $\lambda \in X^*(T \times E),$ and let $\M_{\lambda, E}$ be the algebraic finite-dimensional absolutely-irreducible representation of $G \times E$ with highest weight $\lambda.$ For a level structure $K_f \subset G(\A_f)$, where $\A_f$ is the ring of finite adeles of $\Q,$ let $\tM_{\lambda, E}$ denote the sheaf of $E$-vector spaces on the locally symmetric space $\SGK$ of $G$ with level $K_f$ 
(see Sect.\,\ref{sec:sheaves-loc-sym-sp}).
A fundamental object of interest is the cohomology group 
$H^\bullet(\SGK, \tM_{\lambda, E})$. The Borel--Serre compactification $\BSC = \SGK \cup \partial\SGK$ gives the long exact sequence 
$$ \ \cdots   H^i_c(\SGK, \tM_{\lambda, E}) 
\stackrel{\mathfrak{i}^\bullet}{\longrightarrow}   H^i(\BSC, \tM_{\lambda,E}) 
\stackrel{\mathfrak{r}^\bullet}{\longrightarrow } H^i(\partial \SGK, \tM_{\lambda,E})  
\stackrel{\fd^\bullet}{\longrightarrow} H^{i+1}_c( \SGK, \tM_{\lambda, E} )  \cdots \ 
$$
of modules for the action of a Hecke algebra $\HK.$ 
Inner cohomology is defined as $H^\bullet_! = {\rm Image}(H^\bullet_c \to H^\bullet),$ within which is 
a subspace 
$H^\bullet_{!!} \subset H^\bullet_!$ called strongly-inner cohomology which has the property of capturing cuspidal cohomology at an arithmetic level, i.e., for any embedding of fields 
$\iota : E \to \C$, one has $H^\bullet_{!!}(\SGK, \tM_{\lambda,E}) \otimes_{E,\iota} \C = 
H^\bullet_{\rm cusp}(\SGK, \tM_{{}^\iota\lambda, \C}).$ If $\pi_f$ is a simple Hecke module appearing in $H^\bullet_{!!}(\SGK, \tM_{\lambda,E})$, 
then ${}^\iota\pi_f$ is the $K_f$-invariants of the finite part of a cuspidal automorphic representation ${}^\iota\pi$ of $G(\A) = \GL_N(\A_F),$ 
whose archimedean component ${}^\iota\pi_\infty$ has nonzero relative Lie algebra cohomology 
with respect to $\M_{{}^\iota\lambda, \C}$; 
denote this as $\pi_f \in \Coh_{!!}(G,\lambda)$. Only strongly-pure dominant integral weights will support cuspidal cohomology; the structure of the set $X^+_{00}(T \times E)$ 
of all such strongly-pure weights has an important bearing on the entire article; see Sect.\,\ref{sec:pure}. 
The cohomology of the Borel--Serre boundary $H^\bullet(\partial \SGK, \tM_{\lambda,E}),$ as a Hecke-module, 
is built via a spectral sequence from modules that are parabolically induced from the cohomology of Levi subgroups; see Sect.\,\ref{sec:coh-of-bdry}. 
For $N = n+n',$ with positive integers $n$ and $n'$, similar notations will be adopted for 
$G_n = \Res_{F/\Q}(\GL(n)/F),$ $T_n$, $G_{n'},$ $T_{n'},$ etc.  
Let $\mu \in X^+_{00}(T_n \times E)$ and $\mu' \in X^+_{00}(T_{n'} \times E),$ and consider 
$\sigma_f \in \Coh_{!!}(G_n,\mu)$ and $\sigma'_f \in \Coh_{!!}(G_{n'},\mu').$ 
  The contragredient  of ${}^\iota\sigma'$  is  denoted ${}^\iota\sigma'^\v.$
For $\iota : E \to \C,$ a point $m \in \tfrac{N}{2} + \Z$ is said to be critical for the completed
Rankin--Selberg $L$-function $L(s, {}^\iota\sigma \times {}^\iota\sigma'^\v),$ if the archimedean 
$\Gamma$-factors on either side of the functional equation are finite at $s = m.$ 
The critical set for $L(s, {}^\iota\sigma \times {}^\iota\sigma'^\v)$ is described in Prop.\,\ref{prop-crit-mu-mu'}. The main result (Thm.\,\ref{thm:main}) of this article is the following

\medskip

\noindent {\bf Theorem.}{\it  \ \  
Assume that $m$ and $m+1$ are critical for $L(s, {}^\iota\sigma \times {}^\iota\sigma'^\v).$  
\begin{enumerate}
\medskip
\item[(i)] If $L(m+1, {}^\iota\sigma \times {}^\iota\sigma'^\v) = 0$ for some $\iota$, then 
$L(m+1, {}^\iota\sigma \times {}^\iota\sigma'^\v) = 0$ for every $\iota.$ 
\medskip
\item[(ii)] Assume $F$ is in the {\bf CM}-case. 
Suppose $L(m+1, {}^\iota\sigma \times {}^\iota\sigma'^\v) \neq 0$, then  
$$
|\delta_{F/\Q}|^{- \tfrac{n n'}{2}} \cdot \frac{L(m, {}^\iota\sigma \times {}^\iota\sigma'^\v)}{L(m+1, {}^\iota\sigma \times {}^\iota\sigma'^\v)} 
\ \in \ \iota(E), 
$$ 
where, $\delta_{F/\Q}$ is the discriminant of $F/\Q$. 
For any $\gamma \in \Gal(\bar\Q/\Q),$ we have:
\begin{multline*}
   \gamma
   \left(|\delta_{F/\Q}|^{- \tfrac{n n'}{2}} \cdot \frac{L(m, {}^\iota\sigma \times {}^\iota\sigma'^\v)}{L(m+1, {}^\iota\sigma \times {}^\iota\sigma'^\v)}\right)
    \\ 
    = \ 
   \varepsilon_{\iota, w}(\gamma) \cdot \varepsilon_{\iota, w'}(\gamma) \cdot 
   |\delta_{F/\Q}|^{- \tfrac{n n'}{2}} \cdot 
   \frac{L(m, {}^{\gamma\circ\iota}\sigma \times {}^{\gamma \circ\iota} \sigma'^\v)}{L(m+1, {}^{\gamma \circ \iota}\sigma \times {}^{\gamma \circ\iota}\sigma'^\v)} \, ,
\end{multline*}
where $ \varepsilon_{\iota, w}(\gamma), \, \varepsilon_{\iota, w'}(\gamma) \in \{\pm1\}$ are certain signatures (see Def.\,\ref{def:signature}) 
whose product 
is trivial if $F$ is a CM field but can be nontrivial in general. 
\medskip
\item[(iii)] Assume $F$ is in the {\bf TR}-case. Then $nn'$ is even. 
Suppose $L(m+1, {}^\iota\sigma \times {}^\iota\sigma'^\v) \neq 0$, then  
$$
\frac{L(m, {}^\iota\sigma \times {}^\iota\sigma'^\v)}{L(m+1, {}^\iota\sigma \times {}^\iota\sigma'^\v)} 
\ \in \ \iota(E), 
$$ 
and for any $\gamma \in \Gal(\bar\Q/\Q),$ we have:
$$   
\gamma \left(
 \frac{L(m, {}^\iota\sigma \times {}^\iota\sigma'^\v)}{L(m+1, {}^\iota\sigma \times {}^\iota\sigma'^\v)}\right)
    \ = \ 
\frac{L(m, {}^{\gamma\circ\iota}\sigma \times {}^{\gamma \circ\iota} \sigma'^\v)}{L(m+1, {}^{\gamma \circ \iota}\sigma \times {}^{\gamma \circ\iota}\sigma'^\v)}.
$$
\end{enumerate}
}

\bigskip

For the proof, consider Eisenstein cohomology of $G$ which,  
by definition, is the image of $H^\bullet(\BSC, \tM_{\lambda,E}) \stackrel{\mathfrak{r}^\bullet}{\longrightarrow } H^\bullet(\partial \SGK, \tM_{\lambda,E})$. Let $P = \Res_{F/\Q}(P_{(n,n')})$, where $P_{(n,n')}$ is the standard maximal parabolic subgroup of $\GL_N$ of type 
$(n,n'),$ and let $U_P$ be the unipotent radical of $P.$
The first technical theorem (Thm.\,\ref{thm:manin-drinfeld}) stated as the `Manin--Drinfeld principle',  
says that the algebraically and parabolically induced representation $\aInd_{P(\A_f)}^{G(\A_f)}(\sigma_f \times \sigma'_f)$ 
together with its partner across a standard intertwining operator splits off as an isotypic component from the cohomology of the boundary as a Hecke module. 
The next technical result (Thm.\,\ref{thm:rank-one-eis}) 
is to prove that the image of Eisenstein cohomology in this isotypic component is analogous to a line in a two-dimensional plane. If one passes to a transcendental situation using an embedding $\iota : E \to \C$, then via Langlands's constant term theorem, 
the slope of this line is the ratio of $L$-values $L(m, {}^\iota\sigma \times {}^\iota\sigma'^\v)/L(m+1, {}^\iota\sigma \times {}^\iota\sigma'^\v),$ 
times the factor $|\delta_{F/\Q}|^{-n n'/2}.$ This latter factor involving the discriminant of the base field 
arises as the volume of $U_P(\Q)\backslash U_P(\A)$ needed to normalise  
the measure so that the constant term map, in cohomology, is the restriction map to the boundary stratum corresponding to $P$.

\bigskip

There are two subproblems to solve along the way whose proofs are totally different from those of the corresponding statements in \cite{harder-raghuram-book}. The first is  
a {\it combinatorial lemma} (see Sect.\,\ref{sec:comb-lemma}) involving the weights $\mu$ and $\mu'$ which says that the points $-\tfrac{N}{2}$ and $1-\tfrac{N}{2}$ are critical if and only if the 
induced module considered above contributes to cohomology of the boundary in an optimal degree; this involves combinatorial subtleties on Kostant representatives in Weyl groups. 
The ingredient $w$ in the signature $\varepsilon_{\iota, w}(\gamma)$ is a Kostant representative determined by $\mu$ and $\mu'$ via this combinatorial lemma, and 
$w'$ in $\varepsilon_{\iota, w'}(\gamma)$ is a Kostant representative determined by $w$ via Lem.\,\ref{lem:kostant-P-Q}. 
The combinatorial lemma also says that we only need to prove a rationality result for the particular ratio 
$L(-N/2, {}^\iota\sigma \times {}^\iota\sigma'^\v)/L(1-N/2, {}^\iota\sigma \times {}^\iota\sigma'^\v),$ 
for a sufficiently general class of weights $\mu$ and $\mu'$; see \ref{para:comb-lemma-reduction}. 
The second is to show that the map induced in relative Lie algebra cohomology by the archimedean standard intertwining operator contributes the ratio of local archimedean $L$-values; 
see Sect.\,\ref{sec:T-st-infinity}, and especially Prop.\,\ref{prop:basic-Tst-GLN-in-cohomology}; this uses the factorization of the standard intertwining operator into rank-one operators.

\bigskip

Previous work on the arithmetic of $L$-functions over a totally imaginary field especially worth mentioning in the context of this article are as follows. 
For 
$n = n' =1$, the rationality result in $(ii)$ is due to Harder \cite[Cor.\,4.2.2]{harder-inventiones}. In general, see Blasius \cite{blasius-annals} and Harder \cite{harder-inventiones} for $\GL_1$, see also Harder--Schappacher \cite{harder-schappacher}; 
Hida \cite{hida-duke} for $\GL_2 \times \GL_1$ and $\GL_2 \times \GL_2$; Grenie \cite{grenie} for $\GL_n \times \GL_n$; Harris \cite{harris-crelle} for standard $L$-functions for unitary groups which may be 
construed as a subclass of $L$-functions for $\GL_n \times \GL_1$; Harder \cite{harder-gln} and M\oe glin \cite{moeglin} 
for some general aspects of $\GL_n$--the result contained in $(i)$ 
is due to M\oe glin \cite[Sect.\,5]{moeglin}, although our proof is different from \cite{moeglin}. 
Furthermore, see the author's paper \cite{raghuram-forum}, Grobner--Harris 
\cite{grobner-harris}, and Januszewski \cite{januszewski} 
for $\GL_n \times \GL_{n-1}$; Sachdeva \cite{sachdeva} for $\GL_3 \times \GL_1$; and Lin \cite{lin}, Grobner--Harris--Lin \cite{grobner-harris-lin}, Grobner--Lin \cite{grobner-lin}, and Grobner--Sachdeva \cite{grobner-sachdeva} for different aspects for $\GL_n \times \GL_{n'}$.  
Amongst these, the results of \cite{grobner-harris-lin}, \cite{grobner-lin}, \cite{grobner-sachdeva}, and \cite{lin} come close in scope to the results of this paper, however, their methods are different and work over a base field that is assumed to be CM, while often needing a polarization assumption on their representations to descend to a unitary group, and in some situations being conditional on expected but unproven hypotheses. In contrast, the method pursued here, which is a generalization of Harder \cite{harder-inventiones} and 
my work with Harder \cite{harder-raghuram-CR}, \cite{harder-raghuram-book}, is totally different from all the other papers mentioned above, and furthermore, our results are self-contained, unconditional, and complete.

\medskip

There is a celebrated conjecture of Deligne \cite[Conj.\,2.7]{deligne} on the critical values of motivic $L$-functions. A fundamental aspect of the Langlands program is a 
conjectural dictionary between strongly-inner Hecke modules $\sigma_f$  and pure regular rank $n$ motives $M(\sigma_f)$ over $F$ with coefficients in $E$ (see, for example, 
\cite[Chap.\,7]{harder-raghuram-book}). 
Granting this dictionary, 
Deligne's conjecture applied to $M := \Res_{F/\Q}(M(\sigma_f) \otimes M(\sigma'^\v_f))$ conjecturally describes a rationality result for the array 
$\{L(m, {}^\iota\sigma \times {}^\iota\sigma'^\v)\}_{\iota : E \to \C}$ of critical values in terms of certain periods $c^\pm(M)$ of $M$. To see the main theorem of this article from the perspective of motivic $L$-functions necessitates a relation between $c^+(M)$ and $c^-(M)$, for which we refer the reader to the forthcoming article with Deligne \cite{deligne-raghuram}. The appearance of the 
signatures $\varepsilon_{\iota, w}(\gamma)$ and $\varepsilon_{\iota, w'}(\gamma)$ was in fact suggested by certain calculations in \cite{deligne-raghuram} that also allows us to 
recast Thm.\,\ref{thm:main} 
more succinctly as follows. Suppose $F$ is in the {\bf CM}-case, and suppose 
$F_1 = F_0(\sqrt{D})$ for a totally negative $D \in F_0$, then define 
$\Delta_{F} = N_{F_0/\Q}(D)^{[F:F_1]/2}.$ Suppose $F$ is in the {\bf TR}-case then define $\Delta_{F} =1.$ Fix $\i = \sqrt{-1}.$
The rationality result can be restated as
$$
(\i^{d_F/2} \Delta_F)^{nn'} \, \frac{L(m, {}^\iota\sigma \times {}^\iota\sigma'^\v)}{L(m+1, {}^\iota\sigma \times {}^\iota\sigma'^\v)} \ \in \iota(E), 
$$
(see \ref{sec:thm-implies-conj}) and the reciprocity law takes the shape that for every $\gamma \in \Gal(\bar\Q/\Q)$ one has: 
$$
\gamma \left(
(\i^{d_F/2} \Delta_F)^{nn'} \, \frac{L(m, {}^\iota\sigma \times {}^\iota\sigma'^\v)}{L(m+1, {}^\iota\sigma \times {}^\iota\sigma'^\v)} \right) \ = \ 
(\i^{d_F/2} \Delta_F)^{nn'} \, 
\frac{L(m, {}^{\gamma\circ\iota}\sigma \times {}^{\gamma\circ\iota}\sigma'^\v)}{L(m+1, {}^{\gamma\circ\iota}\sigma \times {}^{\gamma\circ\iota}\sigma'^\v)}.
$$
In the {\bf TR}-case, existence of a critical point will necessitate $nn'$ to be even, and so we may ignore the term $(\i^{d_F/2} \Delta_F)^{nn'} = \pm 1$ from 
the rationality result and the reciprocity law.

\medskip

To conclude the introduction, it is worth amplifying the dictum that 
whereas the analytic theory of $L$-functions is not sensitive to the arithmetic nature of the ground field $F$, 
but the arithmetic of special values of $L$-functions is definitively sensitive to the inner structure of $F$. For example, if $F$ is totally real, 
then the integral that Rankin and Selberg studied giving $L$-functions for $\GL(2) \times \GL(2)$ does not seem to admit a cohomological interpretation as an integral coming from, say, Poincar\'e or Serre duality. However, if $F$ is totally imaginary, then the Rankin--Selberg intergal does indeed admit an interpretation in terms of Poincar\'e duality; 
see Hida~\cite{hida-duke}. In a different direction, the period integrals of cusp forms on $\GL(2n)$ integrated over $\GL(n) \times \GL(n)$ that 
Friedberg--Jacquet \cite{friedberg-jacquet} studied to get the standard $L$-function of $\GL(2n)$ 
can be interpreted in cohomology over a totally real field (see my papers with Grobner \cite{grobner-raghuram}, and with Dimitrov and Januszewski \cite{dimitrov-januszewski-r}), however, if the base field is imaginary then there seems to be no such cohomological interpretation. 
This dependence on the arithmetic of the base field stems not only from the cohomological vagaries of the representations of 
$\GL_m(\R)$ vis-\`a-vis those of $\GL_m(\C)$, but also because the inner structure of the base field informs some of the constructions with algebraic groups over such base fields--this is why one sees the signatures $\varepsilon_{\iota, w}(\gamma)$ and 
$\varepsilon_{\iota, w'}(\gamma)$ when $F$ is in the {\bf CM}-case but not when $F$ is in the {\bf TR}-case; such terms did not appear 
when the base field is totally real \cite{harder-raghuram-book} or a CM field \cite{raghuram-imrn}.

\medskip

{\SMALL
{\it Suggestions to the reader:} Any one wishing to read this paper seriously, will need my monograph with Harder \cite{harder-raghuram-book} 
by his/her side. I have tried to make this manuscript reasonably self-contained, but any time I felt there was nothing to be gained by repetition, I have referenced 
\cite{harder-raghuram-book}. For a finer appreciation, the reader should compare the formal similarities of the results of this manuscript and the results of 
\cite{harder-raghuram-book}, while noting the very different proofs--especially with the proofs of the combinatorial lemma in 
Sect.\,\ref{sec:comb-lemma}, and the calculations involving the archimedean intertwining operator in Sect.\,\ref{sec:T-st-infinity}. 
For a first reading I recommend that the reader skim through Sect.\,\ref{sec:prelims} to get familiar with 
the notations, and assume the statements of Prop.\,\ref{prop-crit-mu-mu'}, Lem.\,\ref{lem:comb-lemma}, Prop.\,\ref{prop:irreducible-isomorphism-GLN} and 
Prop.\,\ref{prop:basic-Tst-GLN-in-cohomology} without worrying too much about their technical proof.}

\medskip

{\SMALL
{\it Acknowledgements:} 
I am intellectually indebted to G\"unter~Harder. The main result in this paper is a direct offshoot of my decade-long collaboration with Harder when we worked on Eisenstein cohomology for $\GL_N$ over a totally real field. This manuscript could well have been a joint article with Harder, except that he generously let me work on $\GL_N$ over a totally imaginary field by myself. I thank Don~Blasius, Michael~Harris, and Freydoon~Shahidi for their interest and constant encouragement to all my endeavours with the special values of various automorphic $L$-functions. I thank Haruzo~Hida for helpful correspondence on totally imaginary fields. I am grateful to the Charles Simonyi Endowment that funded my membership at the Institute for Advanced Study, Princeton, during the Spring and Summer terms of 2018, when I mostly worked on fixing a proof of the combinatorial lemma and had a first draft of this manuscript. Finally, I thank Pierre~Deligne for 
several discussions and especially 
his motivic explanations on the appearance of certain signatures which I had overlooked in an earlier version of this manuscript. I acknowledge support from a MATRICS grant MTR/2018/000918 of the Science and Engineering Research Board, Department of Science and Technology, Government of India.}

{\SMALL \tableofcontents}

\medskip
\section{\bf Preliminaries}
\label{sec:prelims}

\medskip
\subsection{Some basic notation}

\subsubsection{\bf The base field}
Let $F$ stand for a totally imaginary finite extension of $\Q$ of degree $d_F =  [F:\Q].$ 
Let $\Sigma_{F}  = \Hom(F,\C)$ be the set of all complex embeddings, and $\place_\infty$ denote the set of archimedean places of $F$; denote the cardinality of 
$\place_\infty$ by ${\sf r}$, hence 
$d_F = 2{\sf r}.$  There is a canonical surjection 
$\Sigma_{F} \to \place_\infty;$ the fibre over $v \in \place_\infty$ is a pair $\{\eta_v, \bar{\eta}_v\}$ 
of conjugate embeddings; via such a non-canonical choice of $\eta_v$ fix the identification $F_v \simeq \C.$ 
Let $\A = \A_\Q$ be the ad\`ele ring of $\Q$, and 
$\A_f = \A^\infty$ the ring of finite ad\`eles. Then $\A_F = \A \otimes_\Q F$, and $\A_{F,f} = \A_f \otimes_\Q F$. 
When $F$ is a CM field, i.e., a totally imaginary quadratic extension of a totally real extension $F^+$ (say) of $\Q;$ then $\Sigma_{F^+}  = \Hom(F^+,\C) = \Hom(F^+,\R),$ and 
the restriction from $F$ to $F^+$ gives a canonical surjection $\Sigma_F \to \Sigma_{F^+}$; the fiber over $\eta \in \Sigma_{F^+}$ is a pair of conjugate embeddings that will be denoted  as $\{\eta, \bar\eta\}$, with the understanding that the choice of $\eta$ in $\{\eta, \bar\eta\}$ though not canonical is nevertheless fixed once and for all. 
If $\Sigma_F = \{\nu_1, \dots, \nu_{d_F}\}$, $\{\omega_1,\dots,\omega_{d_F}\}$ is a $\Q$-basis of $F$, and $\theta_F = \det[\sigma_i(\omega_j)]$, then $\theta_F^2$ is the absolute discriminant 
$\delta_{F/\Q}$ of $F$. The square root of the absolute value of the discriminant, $|\delta_{F/\Q}|^{1/2},$ as an element of $\R^\times/\Q^\times,$ is independent of the enumeration and the choice of basis. Let $\i$ denote a fixed choice of $\sqrt{-1}.$ Since $F$ is totally imaginary, $\i^{d_F/2} \cdot \theta_F$ is a real number whose absolute value is $|\delta_{F/\Q}|^{1/2}.$

\medskip
\subsubsection{\bf The groups}
For an integer $N \geq 2$, let $G_{0} = \GL_N/F$, and put $G = \Res_{F/\Q}(G_{0})$ as the $\Q$-group obtained by the Weil restriction of scalars. To emphasize the dependence on $N$, 
$G_0$ will also be denoted $G_{N,0}$ and similar notation will be adopted for other groups to follow.  
Let $B_0$ be the subgroup of $G_0$ of upper-triangular matrices,  
$T_0$ the diagonal torus in $B_0$, and $Z_0$ the center of $G_0$; the corresponding $\Q$-groups via $\Res_{F/\Q}$ 
will be denoted $B, T,$ and $Z$, respectively. Let $S$ stand for the maximal $\Q$-split torus of $Z$; note that $S \simeq \mathbb{G}_m.$
Let $n$ and $n'$ be positive integers such that $n + n' = N,$ and let $P_0$ be the maximal parabolic subgroup of $G_0$ containing $B_0$ of type 
$(n,n').$ The unipotent radical of $P_0$ is denoted $U_{P_0}$ and  
Levi quotient of $P_0$ is $M_{P_0} = \GL_n \times \GL_{n'}.$  Put $P = \Res_{F/\Q}(P_0)$, and similarly $U_P$ and $M_P.$ The dimension of $U_P$ is $nn'd_F = 2nn'{\sf r}.$

\medskip
\subsection{Sheaves on locally symmetric spaces}
\label{sec:sheaves-loc-sym-sp}
This brief section is very similar to the situation over a totally real base field \cite{harder-raghuram-book}. Most of the concepts in this section apply, possibly with minor modifications, 
to related groups like $\GL_n, \ \GL_{n'}, \ M_{P_0},$ etc.

\medskip
\subsubsection{\bf Locally symmetric spaces}
\label{sec:loc-sym-spaces}
Note that $
G(\R) \ = \ G_0(F \otimes_\Q \R) \ = \ \prod_{v \in \place_\infty} \GL_N(F_v)  \ \simeq \ \prod_{v \in \place_\infty} \GL_N(\C).$
Similarly, $Z(\R) = Z_0(F \otimes_\Q \R) \simeq  \prod_{v \in \place_\infty} \C^\times 1_N,$
where $1_N$ is the identity $N \times N$-matrix; $S(\R) = \R^\times$ sits diagonally in $Z(\R)$. 
The maximal compact subgroup of $G(\R)$ will be denoted $C_\infty$; we have
$C_\infty \ = \ \prod_{v \in \place_\infty} \U(N),$
where $\U(N),$ the usual compact unitary group in $N$-variables, is a maximal compact group of $\GL_N(\C)$. Put $K_\infty = C_\infty S(\R)$ and note that $K_\infty = C_\infty S(\R)^\circ$ is a connected group, since 
$-1 \in S(\R)$ gets absorbed into $C_\infty.$ Define the symmetric space of $G$ as
$\SG \ := \ G(\R)/K_\infty .$
For any open compact subgroup $K_f \subset G(\A_f)$, define the ad\`elic symmetric space: 
$G(\A)/K_\infty  K_f \ = \ 
\SG \times (G(\A_f)/K_f).$ 
On this space $G(\Q)$ acts  properly discontinuously  and we get a quotient
 \begin{equation}
 \label{eqn:map-pi}
 \pi \, : \, G(\R)/K_\infty \times G(\A_f)/K_f  \ \longrightarrow \    
 G(\Q) \backslash \left( G(\R)/K_\infty \times G(\A_f)/K_f \right). 
\end{equation}
The target space, called the {\it ad\`elic locally symmetric space of $G$ with level structure $K_f$}, is denoted: 
$\SGK = G(\Q) \backslash G(\A) / K_\infty K_f.$
A typical element in the ad\`elic group $G(\A) = G(\R) \times G(\A_f)$ will be denoted $\underline g = g_\infty \times{ \ul  g}_f.$ 
As in \cite[Sect.\,2.1.4]{harder-raghuram-book} one has $\SGK \  \cong \ \coprod_{i=1}^m  \ \Gamma_i \backslash G(\R) / K_\infty$;  
if necessary, replacing $K_f$ by a subgroup of finite-index, assume that each $\Gamma_i$ is torsion-free. 
It is easy to see that
${\rm dim}(\SGK) \ = \ {\rm dim}(G(\R)/K_\infty) \ = \ {\rm dim}(G(\R)/C_\infty) - 1 \ = \ {\sf r}N^2 - 1.$

\medskip
\subsubsection{\bf The field of coefficients $E$} 
Throughout this paper, let $E/\Q$ be a `large enough' finite Galois extension that takes a copy of $F.$
(The meaning of $E$ being large enough will depend on the context: for example, large enough so that some Hecke summand in inner-cohomology would split over $E$. To relate cohomology groups with automorphic forms, one could drop finiteness and take $E = \C$; or anticipating $p$-adic interpolation of the $L$-values considered here, $E$ could be a large enough $p$-adic field.) An embedding $\iota : E \to \C$ gives a bijection 
$\iota_* : \Hom(F,E) \to \Hom(F,\C)$ given by composition: $\iota_*\tau = \iota \circ \tau.$
If $E = \C$, then there is a natural notion of complex-conjugation on $\Hom(F,\C)$: defined by 
$\bar{\eta}(x) = \overline{\eta(x)}.$ But, on $\Hom(F,E)$ there is no natural notion of complex-conjugation; however, using 
$\iota : E \to \C$ we can consider the conjugate $\overline{\tau}^\iota$ of $\tau$ defined 
as: $\iota_*(\overline{\tau}^\iota) = \overline{\iota_*\tau}.$ 
If $F$ is a CM field, then let $\{1, c\}$ denote the Galois group of $F/F^+;$ restriction $\tau \mapsto \tau|_{F^+}$ gives a surjective map
$\Hom(F, E) \twoheadrightarrow \Hom(F^+,E);$ for $\tau \in \Hom(F, E)$ define $\tau^c$ by $\tau^c(x) = \tau(c(x))$ for all $x \in F$, then 
$\{\tau, \tau^c\}$ is the fiber above $\tau|_{F^+}.$ If $E = \C$, then $\tau^c = \bar{\tau}.$

\medskip
\subsubsection{\bf Characters of the torus $T$} For $E$ as above, let 
$X^*(T \times E) := \Hom_{E-{\rm alg}}(T \times E, \mathbb{G}_m),$
where $ \Hom_{E-{\rm alg}}$ is to mean homomorphisms of $E$-algebraic groups. There is a natural action of ${\rm Gal}(E/\Q)$ on 
$X^*(T \times E)$. Since $T = \Res_{F/\Q}(T_0)$, one has  
$$
X^*(T \times E) \ = \ \bigoplus_{\tau : F \to E} X^*(T_0 \times_{F,\tau} E)  \ = \ \bigoplus_{\tau : F \to E} X^*(T_0), 
$$
where the last equality is because $T_0$ is split over $F$. Let $X^*_\Q(T \times E) = X^*(T \times E) \otimes \Q.$ 
The weights are parametrized as in \cite{harder-raghuram-book}: $\lambda \in X^*_\Q(T \times E)$ will be written as 
$\lambda = (\lambda^\tau)_{\tau : F \to E}$ with 
$$
\lambda^\tau \ = \ \sum_{i=1}^{N-1} (a^\tau_i-1)  \bfgreek{gamma}_i \ + \ d^\tau \cdot \bfgreek{delta}_N 
 \ = \ (b^\tau_1, b^\tau_2, \dots, b^\tau_N), 
$$
where, $\bfgreek{gamma}_i$ is the $i$-th fundamental weight for $\SL_N$ extended to $\GL_N$ by making it trivial on the center, and $\bfgreek{delta}_N$ is the determinant character of $\GL_N.$ If $r_{\lambda} := (Nd - \sum_{i=1}^{N-1} i (a_i-1))/N,$ 
then $b_1 =  a_1 + a_2 + \dots + a_{N-1} - (N-1) + r_{\lambda}, \ 
b_2  =  a_2 + \dots + a_{N-1} - (N-2) + r_{\lambda}, \dots, b_{N-1} =  a_{N-1} - 1 + r_{\lambda}, \ 
b_N  =  r_{\lambda},$ and conversely, 
$a_i  - 1 =  b_i - b_{i+1}, \ d  =  (b_1+\dots+b_N)/N.$
A weight $\lambda = \sum_{i=1}^{N-1} (a_i-1) \bfgreek{gamma}_i+ d \cdot \bfgreek{delta}_N  = (b_1,\dots,b_N)  \in X^*_\Q(T_0)$ is an integral weight if and only if 
$$
\lambda \in X^*(T_0) \ \Longleftrightarrow \ 
b_i \in \Z, \ \forall i 
 \ \Longleftrightarrow \ 
\left\{\begin{array}{l} 
a_i \in \Z, \quad 1 \leq i \leq N-1, \\
Nd \in \Z, \\
Nd \equiv \sum_{i=1}^{N-1} i (a_i-1) \pmod{N}.
\end{array}\right.
$$
A weight $\lambda = (\lambda^\tau)_{\tau : F \to E} \in X^*_\Q(T \times E)$ is integral if and only if each $\lambda^\tau$ is integral.
Next, an integral weight $\lambda \in X^*(T_0)$ is dominant, for the choice of the Borel subgroup being $B_0$,  if and only if 
$$
b_1 \geq b_2 \geq \dots \geq b_N
 \ \Longleftrightarrow \ a_i \geq 1 \  
 \mbox{for $1 \leq i \leq N-1.$ \ (There is no condition on $d$.)}
$$
A weight $\lambda = (\lambda^\tau)_{\tau : F \to E} \in X^*_\Q(T \times E)$ is dominant-integral if and only if each $\lambda^\tau$ is dominant-integral. Let 
$X^+(T \times E)$ stand for the set of all dominant-integral weights.

\medskip
\subsubsection{\bf The sheaf $\tM_{\lambda, E}$}
For $\lambda \in X^+(T \times E)$, put 
 $\M_{\lambda, E} \ = \ \bigotimes_{\tau : F \to E} \M_{\lambda^\tau},$
 where $\M_{\lambda^\tau}/E$ is the absolutely-irreducible finite-dimensional representation of 
 $G_0 \times_\tau E = \GL_n/F \times_\tau E$ with highest weight  $\lambda^\tau.$ Denote this representation as 
 $(\rho_{\lambda^\tau}, \M_{\lambda^\tau})$. The group $G(\Q) = \GL_n(F)$ acts on $\M_{\lambda, E}$ diagonally, i.e., 
$a \in G(\Q)$ acts on a pure tensor
$\otimes_\tau m_\tau$ via: 
$a \cdot (\otimes_\tau m_\tau) \ = \ \otimes_\tau \rho_{\lambda^\tau}(\tau(a))(m_\tau).$
This representation gives a sheaf $\tM_{\lambda, E}$ of $E$-vector spaces on $\SGK$: the sections 
over an open subset $V\subset \SGK$ are the locally constant functions 
$s: \pi^{-1}(V) \to \M_{\lambda, E}$ such that $s(a v)  = \rho(a) s(v)$ for all $a \in G(\Q),$  
where $\pi$ is as in (\ref{eqn:map-pi}). 

\medskip
Let us digress for a moment to clarify a certain point that seemingly causes some confusion. 
In the definition of $\SGK,$ one could have divided by $Z(\R) C(\R)$ instead of $K_\infty$, i.e., one can 
consider $G(\Q) \backslash G(\A) / Z(\R) C(\R) K_f;$ over this space the  
same construction of the sheaf $\tM_{\lambda, E}$ carries through, however, for it to be nonzero 
the central character of $\rho_\lambda$ has to have the type of an algebraic Hecke character of $F$; 
(see \cite[1.1.3]{harder-inventiones}). Let $\lambda = (\lambda^\tau)_{\tau : F \to E}\in X^+(T \times E)$, and suppose 
$\lambda^\tau = \sum_{i=1}^{N-1} (a^\tau_i-1) \bfgreek{gamma}_i + d^\tau \cdot  \bfgreek{delta},$ the condition on the 
central character means $d^{\iota \circ \tau} + d^{\overline{\iota \circ \tau}}$ 
is a constant independent every embedding $\iota : E \to \C$, and every $\tau \in \Hom(F,E).$ 
Define $X^+_{\rm alg}(T \times E)$ to be the subset of $X^+(T \times E)$ consisting of all dominant-integral weights which 
satisfy the {\it algebraicity} condition that 
``$d^{\iota \circ \tau} + d^{\overline{\iota \circ \tau}} = {\rm constant}$'' for all $\tau \in \Hom(F,E)$ and for all $\iota : E \hookrightarrow \C$. 
To end the digression, for 
the sheaf $\tM_{\lambda,E}$ on $\SGK$, at this moment we do not need to impose this algebraicity condition, however, later on for the sheaf to support interesting 
cohomology, such as cuspidal cohomology, we will be needing the condition of strong-purity that will imply algebraicity.  

\medskip

If $\lambda \in X^+_{\rm alg}(T \times E)$ and $K_f$ small enough as in \ref{sec:loc-sym-spaces} 
then every stalk of $\tM_{\lambda, E}$ 
is isomorphic to the $E$-vector space $\M_{\lambda,E},$ in which case the sheaf $\tM_{\lambda,E}$ is a local system.

\medskip
\section{\bf The cohomology of $\GL_N$ over a totally imaginary number field}
\label{sec:cohomology_gln}

For $\lambda \in X^+_{\rm alg}(T \times E)$, a basic object of study is the sheaf-cohomology group $H^\bullet(\SGK, \tM_{\lambda,E})$. One of the main tools is a long exact sequence coming from the Borel--Serre compactification. Another tool is the relation of these cohomology groups, by passing to a transcendental situation using an embedding $E \hookrightarrow \C$, to the theory of automorphic forms on $G$. The reader should appreciate that Sect.\,\ref{sec:pure} on strongly-pure weights has some novel features that do not show up over a totally real base field or over a CM field.

\medskip
\subsection{Inner cohomology}
\label{sec:long-e-seq}

Let $\BSC$ be the Borel--Serre compactification of $\SGK$, i.e., 
$\BSC = \SGK \cup \partial\SGK$, where the boundary is stratified as 
$\partial \SGK  = \cup_P \partial_P\SGK$ with $P$ running through the $G(\Q)$-conjugacy classes of proper parabolic subgroups defined over $\Q$. (See Borel--Serre \cite{borel-serre}.) 
The sheaf $\tM_{\lambda, E}$ on $\SGK$ naturally extends to a sheaf on 
$\BSC$ which we also denote by $\tM_{\lambda, E}$. Restriction from $\BSC$ to $\SGK,$  induces an isomorphism in cohomology: 
$
H^\bullet(\BSC, \tM_{\lambda, E}) \iso  H^\bullet(\SGK, \tM_{\lambda, E}).$
Consider the Hecke algebra $\HK = C^\infty_c(G(\A_f)/ \! \!/K_f)$ of all locally constant and compactly supported bi-$K_f$-invariant $\Q$-valued functions on $G(\A_f);$  
take the Haar measure on $G(\A_f)$ to be the product of local Haar measures, and for every prime $p$, the local measure is normalized so that 
$\mathrm{vol}(G(\Z_p)) = 1;$ then $\HK$ is a $\Q$-algebra under convolution of functions. 
The cohomology of the boundary $H^\bullet(\partial \SGK, \tM_{\lambda, E})$ and the cohomology 
with compact supports $H^\bullet_c(\SGK, \tM_{\lambda, E})$ are modules for $\HK$. 
There is a long exact sequence of $\HK$-modules: 
\begin{multline*}
\cdots  \longrightarrow H^i_c(\SGK, \tM_{\lambda, E}) 
\stackrel{\mathfrak{i}^\bullet}{\longrightarrow}   H^i(\BSC, \tM_{\lambda,E}) 
\stackrel{\mathfrak{r}^\bullet}{\longrightarrow } H^i(\partial \SGK, \tM_{\lambda,E}) \stackrel{\fd^\bullet}{\longrightarrow} \\
\stackrel{\fd^\bullet}{\longrightarrow} H^{i+1}_c(\SGK, \tM_{\lambda,E}) \longrightarrow \cdots
\end{multline*}
The image of cohomology with compact supports inside the full cohomology is called {\it inner} or {\it interior} cohomology and is denoted 
$H^{\bullet}_{\, !} := {\rm Image}(\mathfrak{i}^\bullet) = {\rm Im}(H^{\bullet}_c \to H^{\bullet}).$ 
The theory of Eisenstein cohomology is designed to describe the image of 
the restriction map $\mathfrak{r}^\bullet$. 
Inner cohomology  is a semi-simple module for the Hecke-algebra. 
If $E/\Q$ is sufficiently large, then there is an isotypical decomposition:  
\begin{align}\label{deco}  
H^\bullet_{\,!}(\SGK, \M_{\lambda,E}) \ =\ 
\bigoplus_{\pi_f \in {\rm Coh}_!(G,K_f,\lambda)} H^\bullet_{\,!}(\SGK, \M_{\lambda,E})(\pi_f), 
\end{align}
where $\pi_f$ is an isomorphism type of an absolutely  irreducible $\HK$-module, i.e., there is an $E$-vector space $V_{\pi_f}$ with an absolutely irreducible action $\pi_f$ of $\HK$.  
Let $\HKp = C^\infty_c(G(\Q_p)/ \! \!/K_p)$ 
be the local Hecke-algebra. The local factors  $\HKp$ are commutative outside a finite set $\place = \place_{K_f}$ of primes and the factors for two different primes commute with each other.  For $p \not\in \place$ the commutative algebra 
$\HKp$ acts
on $V_{\pi_f} $  by a homomorphism $\pi_p : \HKp \to E.$ Let $V_{\pi_p}$ be the one-dimensional $E$-vector space $E$ with the distinguished  basis element   $1\in E$  and with the action $\pi_p$ on it. Then  
$V_{\pi_f}  \ = \ V_{\pi_f,\place} \otimes \otimes^\prime_{p\not\in \place} V_{\pi_p} \ = \ \otimes_{p\in \place} V_{\pi_p}\otimes E,$
where the  absolutely-irreducible $\HKS$-module $ V_{\pi_f,\place}$ module
is decomposed as a tensor product $V_{\pi_f,\place}=  \otimes_{p\in \place}V_{\pi_p}$
of absolutely irreducible $\HKp$-modules.  The Hecke algebra decomposes as 
$\HK\ = \ \HKS\times \otimes_{p\not\in \place}\HKp \ = \ \HKS \times \HGS,$
where the first factor acts on the first factor  $V_{\pi_f, \place}$ and the second factor acts
via the homomorphism $\pi_f^\place :  \HGS \to E.$
The set ${\mathrm{Coh}}_!(G, K_f, \lambda)$ of isomorphism classes which occur with strictly positive multiplicity in \eqref{deco} is called the inner spectrum of $G$ with $\lambda$-coefficients and level structure $K_f.$ Taking the union over all $K_f$, the inner spectrum of $G$ with $\lambda$-coefficients is defined to be: 
${\mathrm{Coh}}_!(G, \lambda) \ = \ \bigcup_{K_f} {\mathrm{Coh}}_!(G, K_f, \lambda). $
Since the inner spectrum is captured, at a transcendental level, by the cohomology of the discrete spectrum, it follows from the strong multiplicity one theorem for the 
discrete spectrum for  $\GL_n$ (see Jacquet \cite{jacquet-residual} and  M\oe glin--Waldspurger \cite{moeglin-waldspurger}) that 
$\pi_f$ is determined by its restriction  $\pi_f^\place$
to the central subalgebra $\HGS$ of $\HK.$

\medskip
\subsection{Cuspidal cohomology} 
\label{sec:cuspidal-coh}

Take $E = \C$ and consider $\lambda \in X^+_{\rm alg}(T \times \C).$ 
Denote $\fg_\infty$ (resp., $\fk_\infty$) the Lie algebra of $G(\R)$ (resp., of $K_\infty = C_\infty S(\R).$)
The cohomology $H^\bullet(\SGK,\tM_{\lambda, \C})$ is the cohomology of the de~Rham complex denoted $\Omega^\bullet(\SGK, \tM_{\lambda, \C}).$
The de~Rham complex is isomorphic to the relative Lie algebra complex: 
$$  
\Omega^\bullet(\SGK, \tM_{\lambda, \C} ) \ = \  \Hom_{K_\infty} (\Lambda^\bullet(\fg_\infty/\fk_\infty), \, 
\cC^\infty(G(\Q)\backslash G(\A)/K_f, \omega_{\lambda}^{-1}|_{S(\R)^0}) \otimes  \M_{\lambda, \C}), 
 $$
 where $\cC^\infty(G(\Q)\backslash G(\A)/K_f, \omega_{\lambda}^{-1}|_{S(\R)^0})$ consists of all smooth functions 
 $\phi : G(\A) \to \C$ such that $\phi(a \, \ul g \, \ul k_f \, s_\infty) = \omega_{\lambda}^{-1}(s_\infty) \phi(\ul g),$ 
 for all $a \in G(\Q)$, $\ul g \in G(\A)$, $\ul k_f \in K_f$ and $s_\infty \in S(\R)^0.$ 
 Abbreviating $\omega_{\lambda}^{-1}|_{S(\R)^0}$ as $\omega_\infty^{-1},$ if $t \in \R_{>0} \cong S(\R)^0$ then 
 $ \omega_{\lambda}(t) \ = \ t^{N \sum_{\tau : F \to \C} d^\tau} \ = \ t^{\sum_\tau \sum_i b_i^\tau}.$
 The identification of the complexes gives an identification between our basic object of interest over $\C$ with the relative Lie algebra cohomology of the space of smooth automorphic forms twisted by the coefficient system: 
 $$
H^\bullet(\SGK, \tM_{\lambda,\C}) \ = \ 
H^\bullet(\fg_\infty, \fk_\infty; \cC^\infty(G(\Q)\backslash G(\A)/K_f, \omega_{\lambda}^{-1}|_{S(\R)^0}) \otimes  \M_{\lambda, \C}).
$$
The inclusion $\cC_{\rm cusp}^\infty(G(\Q)\backslash G(\A)/K_f, \omega_\infty^{-1}) \subset   
  \cC^\infty(G(\Q)\backslash G(\A)/K_f, \omega_\infty^{-1}),$ 
of the space of smooth cusp forms,  
induces an inclusion in relative Lie algebra cohomology (due to Borel \cite{borel-duke}), and cuspidal cohomology is defined as: 
$$
 H_{\rm cusp}^\bullet(\SGK, \tM_{\lambda, \C}) \ := \ 
H^\bullet \left(\fg_\infty, \fk_\infty;  \, 
\cC^\infty_{\rm cusp}(G(\Q)\backslash G(\A)/K_f, \omega_\infty^{-1})  \otimes \M_\lambda \right).
$$
Furthermore, 
$H_{\rm cusp}^\bullet(\SGK, \M_{\lambda, \C}) \subset H_{!}^\bullet(\SGK, \M_{\lambda, \C}).$
Define $\Coh_{\rm cusp}(G,\lambda,K_f)$ as the set of all $\pi_f \in \Coh_!(G, \lambda,K_f)$ 
which contribute to cuspidal cohomology.  The decomposition of cuspforms into cuspidal automorphic representations, 
gives the following fundamental decomposition for cuspidal cohomology: 
\begin{equation}
\label{eqn:cuspidal-coh-spectrum}
 H_{\rm cusp}^\bullet(\SGK, \tM_{\lambda, \C}) \ := \ 
 \bigoplus_{\pi \in \Coh_{\rm cusp}(G,\lambda,K_f)}  
 H^\bullet(\fg_\infty, \fk_\infty; \pi_\infty \otimes \M_{\lambda, \C}) \otimes \pi_f.
\end{equation}
To clarify a slight abuse of notation: if a cuspidal automorphic representation $\pi$ contributes to the above decomposition, then its representation at infinity is $\pi_\infty$ 
(which admits an explicit description that will be crucial for all the archimedean calculations), and $\pi_f$ denotes the $K_f$-invariants of its finite part. The level structure 
$K_f$ will be clear from context, hence whether $\pi_f$ denotes the finite-part or its $K_f$-invariants will be clear from context.  
Define ${\mathrm{Coh}}_{\rm cusp}(G, \lambda) \ = \ \bigcup_{K_f} {\mathrm{Coh}}_{\rm cusp}(G, K_f, \lambda).$

\medskip
\subsection{Pure weights and strongly-pure weights} 
\label{sec:pure} 
  
\medskip
\subsubsection{\bf Strongly-pure weights over $\C$}
If a weight $\lambda = (\lambda^\eta)_{\eta:F \to \C} \in X^+_{\rm alg}(T \times \C)$ supports cuspidal cohomology, i.e., if $H_{\rm cusp}^\bullet(\SGK, \tM_{\lambda, \C}) \neq 0$, then 
$\lambda$ satisfies the purity condition: 
\begin{equation}
\label{eqn:purity-def}
a_i^\eta = a_{N-i}^{\bar\eta} \ \mbox{for all $\eta : F \to \C$}\ \iff \ \mbox{$\exists \,\w$ such that $b^\eta_i + b^{\bar\eta}_{N-i+1} = \w$ for all $\eta$ and $i$,}
\end{equation}
which follows from the purity lemma \cite[Lem.\,4.9]{clozel}. The integer $\w$ is called the {\it purity weight} of $\lambda.$ The weight $\lambda$ is said to be {\it pure} if it 
satisfies \eqref{eqn:purity-def}, and denote by $X^+_{0}(T \times \C)$ the set of all such pure weights. 
Next, recall a theorem of Clozel that says that cuspidal cohomology for $\GL_N/F$ admits a rational structure 
\cite[Thm.\,3.19]{clozel}, from which it follows that any $\varsigma \in \Aut(\C)$ stabilizes cuspidal cohomology, i.e., ${}^\varsigma\lambda$ also 
satisfies the above purity condition, where if $\lambda = (\lambda^\eta)_{\eta: F \to \C}$ and $\varsigma \in \Aut(\C)$, then 
${}^\varsigma\lambda$ is the weight $({}^\sigma\lambda^\eta)_{\eta: F \to \C}$ where ${}^\varsigma\lambda^\eta = \lambda^{\varsigma^{-1}\circ \eta}$. 
A pure weight $\lambda$ will be called {\it strongly-pure}, if ${}^\varsigma\lambda$ is pure with purity-weight $\w$ for every $\varsigma \in \Aut(\C);$ denote by 
$X^+_{00}(T \times \C)$ the set of all such strongly-pure weights. 
For $\lambda \in X^+_{00}(T \times \C),$ note that 
$$
b^{\varsigma^{-1} \circ \eta}_j + b^{\varsigma^{-1} \circ \overline{\eta}}_{N-j+1} = \w, \ \mbox{for all \ $1 \leq j \leq N, \ \eta : F \to \C, \ \varsigma \in \Aut(\C)$}. 
$$ 
We have the following inclusions  
inside the character group of $T \times \C,$ which are all, in general, strict inclusions: 
 $$
 X^+_{00}(T \times \C) \ \subset \  X^+_{0}(T \times \C) \ \subset \ X^+_{\rm alg}(T \times \C) \ \subset \ X^+(T \times \C) \ \subset \ X^*(T \times \C).
 $$

\medskip
\subsubsection{\bf Strongly-pure weights over $E$}
\label{sec:strongly-pure-E} 
 
The set of strongly-pure weights may be defined at an arithmetic level. Recall the standing assumption on $E$ that is a finite Galois extension of $\Q$ that 
takes a copy of $F$; in particular, any embedding $\iota: E \to \C$ factors as $\iota: E \to \bar\Q \subset \C.$ Furthermore, $\iota : E \to \C$ gives 
a bijection $\iota_* : \Hom(F,E) \to \Hom(F,\C)$ as $\iota_*(\tau) = \iota \circ \tau$, which in turn gives a 
bijection $X^*(T \times E) \to X^*(T \times \C)$ that maps $\lambda = (\lambda^\tau)_{\tau : F \to E}$ to 
${}^\iota\lambda = ({}^\iota \lambda^\eta)_{\eta : F \to \C} = (\lambda^{\iota^{-1} \circ \eta})_{\eta : F \to \C}.$

\begin{prop}\label{prop:strong-pure-E}
Let $\lambda \in X^+_{\rm alg}(T \times E)$ be an algebraic dominant integral weight. Suppose 
$\lambda = (\lambda^\tau)_{\tau : F \to E}$ with $\lambda^\tau = (b^\tau_1 \geq \cdots \geq b^\tau_N)$. Then, the following are equivalent: 

\smallskip
\begin{enumerate}
\item[(i)] There exists $\iota : E \to \C$ such that ${}^\iota\lambda \in X^+_{00}(T \times \C)$, i.e., for every $\gamma \in \Gal(\bar\Q/\Q)$ we have 
${}^{\gamma \circ \iota}\lambda \in X^+_{0}(T \times \C)$ with the same purity weight: 
\begin{multline*}
``\exists \, \iota : E \to \C, \ \exists \, \w \in \Z \ \ \mbox{such that} \ \ 
b_j^{\iota^{-1}\circ \gamma^{-1} \circ \eta} + b_{N-j+1}^{\iota^{-1}\circ \gamma^{-1} \circ \bar\eta} \ = \ \w, \\ 
\forall \gamma \in \Gal(\bar\Q/\Q), \ \forall \eta : F \to \C, \ 1 \leq  j \leq N."
\end{multline*}

\smallskip
\item[(ii)] For every $\iota : E \to \C$, ${}^\iota\lambda \in X^+_{00}(T \times \C)$, i.e., for every $\gamma \in \Gal(\bar\Q/\Q)$ we have 
${}^{\gamma \circ \iota}\lambda \in X^+_{00}(T \times \C)$ with the same purity weight: 
\begin{multline*}
``\exists\, \w \in \Z \ \ \mbox{such that} \ \ 
b_j^{\iota^{-1}\circ \gamma^{-1} \circ \eta} + b_{N-j+1}^{\iota^{-1}\circ \gamma^{-1} \circ \bar\eta} \ = \ \w, \\  
\forall \, \iota : E \to \C, \ \forall\, \gamma \in \Gal(\bar\Q/\Q), \ \forall\, \eta : F \to \C, \ 1 \leq  j \leq N."
\end{multline*}

\smallskip
\item[(iii)] For every $\iota : E \to \C$, ${}^\iota\lambda \in X^+_{0}(T \times \C)$ with the same purity weight: 
$$
``\exists\, \w \in \Z \ \ \mbox{such that} \ \ 
b_j^{\iota^{-1}\circ \eta} + b_{N-j+1}^{\iota^{-1}\circ \bar\eta} \ = \ \w, \ 
\forall \, \iota : E \to \C,  \ \forall\, \eta : F \to \C, \ 1 \leq  j \leq N."
$$
\end{enumerate}
\end{prop}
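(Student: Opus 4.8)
The plan is to show that conditions $(i)$, $(ii)$ and $(iii)$ each unwind to one and the same explicit statement, namely the displayed formula of $(iii)$, so that proving the equivalences comes down to keeping careful track of how the relevant Galois groups act. The single structural input is that $E/\Q$ is Galois, hence normal, so that $\Gal(\bar\Q/\Q)$ — and in fact already the finite group $\Gal(E/\Q)$ — acts transitively on $\Hom(E,\C)$: given $\iota, \iota' : E \to \C$, normality forces the two conjugate subfields $\iota(E)$ and $\iota'(E)$ of $\bar\Q$ to coincide, so $\iota'\circ\iota^{-1}$ is an automorphism of this common field, which extends to an element of $\Gal(\bar\Q/\Q)$ and further to some $\varsigma \in \Aut(\C)$ with $\varsigma\circ\iota = \iota'$.

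First I would record the compatibility ${}^\varsigma({}^\iota\lambda) = {}^{\varsigma\circ\iota}\lambda$, valid for all $\varsigma \in \Aut(\C)$ and all $\iota : E \to \C$; it is immediate on composing the two reindexing recipes $({}^\iota\lambda)^\eta = \lambda^{\iota^{-1}\circ\eta}$ and $({}^\varsigma\mu)^\eta = \mu^{\varsigma^{-1}\circ\eta}$, using that $\iota(E)$, being the unique conjugate of the normal field $E$ inside $\bar\Q$, is stable under $\varsigma$. I would also stress the elementary but essential point that $\varsigma$ acts on ${}^\iota\lambda$ solely by permuting the index set $\Hom(F,\C)$, through $\varsigma|_{\bar\Q} \in \Gal(\bar\Q/\Q)$ (indeed through its restriction to the Galois closure of $F$ inside $E$), leaving the integer coordinates $b^\tau_j$ untouched. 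Feeding this into the definition of $X^+_{00}(T\times\C)$ — that $\mu \in X^+_{00}(T\times\C)$ precisely when ${}^\varsigma\mu$ is pure, in the sense of \eqref{eqn:purity-def}, with one and the same purity weight $\w$ for every $\varsigma \in \Aut(\C)$ — and using the transitivity above to replace the orbit $\{\varsigma\circ\iota : \varsigma \in \Aut(\C)\}$ by all of $\Hom(E,\C)$, I obtain, for any single $\iota$,
\[
{}^\iota\lambda \in X^+_{00}(T\times\C)\ \Longleftrightarrow\ \exists\,\w\in\Z\ \text{such that}\ \ b^{{\iota'}^{-1}\circ\eta}_j + b^{{\iota'}^{-1}\circ\bar\eta}_{N-j+1} = \w\ \ \text{for all}\ \iota',\eta,j,
\]
with $\iota'$ running over $\Hom(E,\C)$, $\eta$ over $\Hom(F,\C)$, and $1 \le j \le N$. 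The right-hand side is exactly the content of $(iii)$ and involves no choice of $\iota$; hence $(i)$ (``there is an $\iota$ with ${}^\iota\lambda \in X^+_{00}(T\times\C)$'') and $(ii)$ (``${}^\iota\lambda \in X^+_{00}(T\times\C)$ for every $\iota$'') both collapse onto $(iii)$, and all the required implications come out at once. That each ${}^\iota\lambda$ already lies in the ambient set $X^+_{\rm alg}(T\times\C)$, so that membership in $X^+_0$ or $X^+_{00}$ is even meaningful, is immediate from the very definition of $X^+_{\rm alg}(T\times E)$.

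The only step that calls for any care — the nearest thing here to an obstacle — is keeping the two layers of the action cleanly apart: the group $\Aut(\C)$ from the definition of strong purity over $\C$ acts on ${}^\iota\lambda$ only by permuting the embeddings $\eta : F \to \C$, and this permutation factors through $\Gal(\bar\Q/\Q)$, while the coordinates $b^\tau_j \in \Z$ are inert; meanwhile the clause ``with one and the same purity weight $\w$'' must be threaded through each rewriting, because a weight may perfectly well be pure for each embedding $\iota$ taken individually yet fail to be strongly pure precisely because its purity weight varies with $\iota$. It is this uniformity of $\w$ that distinguishes $(iii)$ from mere embedding-by-embedding purity; once the dictionary $\varsigma \leftrightarrow \varsigma|_{\bar\Q} \leftrightarrow$ an element of $\Hom(E,\C)$ is pinned down, nothing further remains.
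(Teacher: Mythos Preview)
Your proposal is correct and follows essentially the same approach as the paper: the key observation is that since $E/\Q$ is Galois, the sets $\{\gamma\circ\iota_0 : \gamma\in\Gal(\bar\Q/\Q)\}$, $\{\gamma\circ\iota : \gamma\in\Gal(\bar\Q/\Q),\ \iota:E\to\C\}$, and $\Hom(E,\C)$ all coincide, so the three quantifications in (i), (ii), (iii) range over the same set. The paper states this in a single line, while you unpack it more explicitly (tracking the compatibility ${}^\varsigma({}^\iota\lambda) = {}^{\varsigma\circ\iota}\lambda$ and the uniformity of $\w$), but the substance is identical.
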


\begin{proof}
Fix $\iota_0 : E \to \C$. Since $E/\Q$ is a finite Galois extension, the inclusions
$$
\{\gamma \circ \iota_0 \ | \ \gamma \in \Gal(\bar\Q/\Q)\} \ \subset \ 
\{\gamma \circ \iota \ | \ \gamma \in \Gal(\bar\Q/\Q), \iota : E \to \C\} \ \subset \ 
\Hom(E,\C)
$$
are all equalities. 
\end{proof}

\medskip 
The set of strongly-pure weights over $E$, denoted $X^+_{00}(T \times E)$ consists of the algebraic dominant integral weights 
$\lambda \in X^*(T \times E)$ that satisfy any one, and hence all, of the
conditions in the above proposition. It is most convenient to work with the characterization in $(iii)$.
There are the following inclusions within the character group of $T \times E$, which are all, in general, strict inclusions: 
 $$
 X^+_{00}(T \times E) \ \subset \ X^+_{\rm alg}(T \times E) \ \subset \ X^+(T \times E) \ \subset \ X^*(T \times E).
 $$

\medskip

The existence of a strongly-pure weight over a totally imaginary base field $F$ depends on the internal structure of $F$; this is explained over the course of the next four paragraphs.

\medskip
\subsubsection{\bf Interlude on (strongly-)pure weights for a CM field}
\label{sec:interlude-pure-1}
When the base field $F$ is a CM field, then a pure weight is also strongly-pure. 
Given any $\varsigma \in \Aut(\C)$, one can check that 
$\varsigma_*(X^+_{0}(T \times \C)) \ = \ X^+_{0}(T \times \C).$

\begin{lemma}
\label{sec:CM-field-embeddings}
Let $\eta : F \to \C$ and $\varsigma : \C \to \C$ be field homomorphisms, and let $\c : \C \to \C$ stand for complex conjugation. Then 
$$
\varsigma \circ \c \circ \eta \ = \  \c \circ \varsigma \circ \eta,
$$
i.e., complex conjugation and any automorphism of $\C$ commute on the image of a CM field. 
\end{lemma}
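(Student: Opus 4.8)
The plan is to exploit the rigidity of a CM field: $F$ is a totally imaginary quadratic extension of a totally real field $F^+$, and crucially $F/F^+$ is a \emph{normal} extension with $\Gal(F/F^+) = \{1, c\}$, where $c$ is the restriction to $F$ of complex conjugation under any embedding. The key structural fact I would isolate first is that the nontrivial automorphism $c$ of $F$ is intrinsic: for \emph{any} field embedding $\eta : F \to \C$, the composite $\c \circ \eta$ (where $\c$ is complex conjugation on $\C$) equals $\eta \circ c$. This is because $\eta(F)$ is again a CM field inside $\C$, its maximal totally real subfield is $\eta(F^+)$, and complex conjugation on $\C$ necessarily fixes the totally real subfield pointwise while acting nontrivially on $\eta(F)$; hence $\c|_{\eta(F)}$ corresponds under $\eta$ to the unique nontrivial element of $\Gal(F/F^+)$, namely $c$. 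So $\c \circ \eta = \eta \circ c$ for every $\eta : F \to \C$.

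Granting that identity, the lemma is a short computation. First I would apply the boxed fact to the embedding $\eta$ itself to get $\c \circ \eta = \eta \circ c$. Then I would apply it to the embedding $\varsigma \circ \eta : F \to \C$ (which is again an embedding of $F$, since $\varsigma$ is a field automorphism of $\C$), obtaining $\c \circ (\varsigma \circ \eta) = (\varsigma \circ \eta) \circ c$. Now compute:
$$
\varsigma \circ \c \circ \eta \ = \ \varsigma \circ (\eta \circ c) \ = \ (\varsigma \circ \eta) \circ c \ = \ \c \circ \varsigma \circ \eta,
$$
where the first equality uses $\c \circ \eta = \eta \circ c$, the second is associativity of composition, and the third uses $\c \circ (\varsigma \circ \eta) = (\varsigma \circ \eta) \circ c$ read backwards. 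This proves $\varsigma \circ \c \circ \eta = \c \circ \varsigma \circ \eta$ on $F$, as claimed.

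The only real content — and hence the step I expect to be the main obstacle to state cleanly rather than technically hard — is justifying the intrinsic description of $c$: that complex conjugation on $\C$, restricted to the image of a CM field, is independent of the embedding and corresponds to the canonical generator of $\Gal(F/F^+)$. The argument is that $\eta(F^+) \subset \R$ for every embedding $\eta$ (since $F^+$ is totally real), so $\c$ fixes $\eta(F^+)$; and $\c$ cannot fix all of $\eta(F)$ because $\eta(F)$ is totally imaginary, so $\c(\eta(F)) = \eta(F)$ with $\c|_{\eta(F)}$ of order $2$ over $\eta(F^+)$, forcing it to be $\eta \circ c \circ \eta^{-1}$. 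I would phrase this as a one-line preliminary observation and then run the three-line computation above. Everything here is elementary; the lemma's purpose is simply to record this commutation for later use in analyzing how $\Aut(\C)$ acts on pure weights when $F$ is CM.
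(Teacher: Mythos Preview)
Your proof is correct. The route differs slightly from the paper's: you isolate the intrinsic conjugation $c \in \Gal(F/F^+)$ and use the identity $\c \circ \eta = \eta \circ c$ (valid for every embedding $\eta$ of a CM field) to reduce the lemma to a two-line computation. The paper instead argues directly: setting $\eta_1 = \varsigma \circ \c \circ \eta$ and $\eta_2 = \c \circ \varsigma \circ \eta$, it observes that $\eta_1|_{F^+} = \eta_2|_{F^+}$ (since $F^+$ is totally real), so either $\eta_1 = \eta_2$ or $\eta_1 = \c \circ \eta_2$; the second option gives $\varsigma \circ \c \circ \eta = \varsigma \circ \eta$, which is ruled out by evaluating on any $x \in F \setminus F^+$ with $\c(\eta(x)) = -\eta(x)$. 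Your approach has the advantage of making the intrinsic nature of CM conjugation explicit and reusable; the paper's dichotomy argument is marginally shorter since it avoids stating the auxiliary fact separately. Both rest on the same three ingredients: $F^+$ is totally real, $[F:F^+]=2$, and $F$ is totally imaginary.
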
  
 
 \begin{proof}
 Let $\eta_1 = \varsigma \circ \c \circ \eta$ and $\eta_2 = \c \circ \varsigma \circ \eta.$ Then $\eta_1|_{F^+} = \eta_2|_{F^+}$ (recall that $F^+$ is the maximal totally real subfield of $F$). 
 This means that $\eta_1 = \eta_2$ or $\eta_1 = \c \circ \eta_2$; if the latter, then $\varsigma \circ \c \circ \eta =  \varsigma \circ \eta$; evaluate both sides on $x \in F - F^+$ on which $\c(\eta(x)) = -\eta(x)$ to get a contradiction. 
 \end{proof}
 
 Let $\lambda = (\lambda^\eta)_{\eta : F \to \C} \in X^+_{0}(T \times \C)$, hence $d^\eta + d^{\bar\eta} = \w$ for all $\eta : F \to \C.$ Take any $\varsigma \in \Aut(\C)$ and consider 
 ${}^\varsigma\lambda$; to see its purity note that
 $$
 b_{j}^{\varsigma^{-1} \circ \eta} + b_{N-j+1}^{\varsigma^{-1} \circ \bar\eta} \ = \ 
 b_{j}^{\varsigma^{-1} \circ \eta} + b_{N-j+1}^{\varsigma^{-1} \circ \c \circ \eta} \ = \
 b_{j}^{\varsigma^{-1} \circ \eta} + b_{N-j+1}^{\c \circ \varsigma^{-1} \circ \eta} \ = \ 
 b_{j}^{\varsigma^{-1} \circ \eta} + b_{N-j+1}^{\overline{\varsigma^{-1} \circ \eta}} = \w, 
 $$
 where the second equality is from Lem.\,\ref{sec:CM-field-embeddings} above. Hence, $\lambda$ is strongly-pure.

\medskip
\subsubsection{\bf Interlude on strongly-pure weights for a general totally imaginary field}
\label{sec:interlude-pure-2}
 When $F$ is totally imaginary but not CM, there may exist weights that are pure but not strongly-pure. The following example is instructive: 
 take $F = \Q(2^{1/3},\omega),$ where $2^{1/3}$ is the real cube root of $2$ and $\omega = e^{2\pi i/3}.$ Then $\Sigma_F = \Gal(F/\Q) \simeq S_3$ the permutation group in 3 letters taken to be $\{2^{1/3}, \, 2^{1/3}\omega, \, 2^{1/3}\omega^2\}.$ Let $s \in S_3$ correspond to $\eta_s : F \to \C.$ Consider the weights
$\lambda = (\lambda^{\eta_s})_{s \in S_3}$ and $\mu = (\mu^{\eta_s})_{s \in S_3}$ for $\Res_{F/\Q}(\GL(1)/F)$ described in the table: 
$$
\begin{array}{|c||c|c|c|c|c|c||} 
\hline
s & \ e \ & (12) & (23) & (13) & (123) & (132) \\ \hline \hline
\lambda^{\eta_s} & a & b & \w-a & c & \w-c & \w-b \\ \hline 
\mu^{\eta_s} & a & \w-a & \w-a & \w-a & a & a \\ \hline
\end{array}
$$
where $a,b,c,\w \in \Z$. 
For the tautological embedding $F \subset \C,$ the set $\Sigma_F$ is paired into complex conjugates as 
$\{(\eta_{e}, \eta_{(23)}), (\eta_{(12)}, \eta_{(132)}), (\eta_{(13)}, \eta_{(123)})\},$
from which it follows that $\lambda$ is a pure weight. All other possible pairings of $\Sigma_F$ into conjugates via automorphisms of $\C$ 
are given by: 
$\{(\eta_{e}, \eta_{(12)}), (\eta_{(23)}, \eta_{(123)}), (\eta_{(13)}, \eta_{(132)})\},$ and  
$\{(\eta_{e}, \eta_{(13)}), (\eta_{(23)}, \eta_{(132)}), (\eta_{(12)}, \eta_{(123)})\};$ 
($F$ being Galois this simply boils down to composing these embeddings $\eta_s$ by a fixed one $\eta_{s_0},$ and using 
$\eta_{s_0} \circ \eta_{s} = \eta_{s_0s}$).  It follows that $\lambda$ is not strongly-pure if $\w-a, b$ and $c$ are not all equal, 
but $\mu$ is strongly-pure and has purity weight $\w.$

\medskip
\subsubsection{\bf On the internal structure of a general totally imaginary field}
\label{sec:internal-structure-II}

Let $F$ be a totally imaginary field as before. Let $F_0$ be the largest totally real subfield in $F$. Then there is at most one totally imaginary quadratic extension $F_1$ of $F_0$ inside $F$. 
(See, for example, Weil \cite{weil}.) If $\alpha$ and $\beta$ are two totally negative elements of $F_0$ giving two possible such extensions $F_0(\sqrt{\alpha})$ and $F_0(\sqrt{\beta})$, then by 
maximality of $F_0$ one has $\sqrt{\alpha \beta} \in F_0$, i.e., $\alpha = t^2 \beta$ for $t \in F_0$, whence $F_0(\sqrt{\alpha}) = F_0(\sqrt{\beta}).$ There are two distinct cases to consider: 
\begin{enumerate}
\smallskip
\item[(i)] {\bf CM}: when there is indeed such an imaginary quadratic extension $F_1$ of $F_0$; then $F_1$ is the maximal CM subfield of $F$; of course $[F_1:F_0] = 2$.   
For example, if $F = \Q(2^{1/3},\omega)$ as in \ref{sec:interlude-pure-2} then $F_0 = \Q$ and $F_1 = \Q(\omega)$. 
\smallskip
\item[(ii)] {\bf TR}: when there is no imaginary quadratic extension of $F_0$ inside $F$, then put $F_1 = F_0$ for the maximal totally real subfield of $F$. 
For example, take $F_0$ to be a cubic totally real field 
(e.g., $F_0 = \Q(\zeta_7+\zeta_7^{-1})$, $\zeta_7 = e^{2\pi i/7}$), and take non-square elements $a,b \in F_0$ whose conjugates $a,a',a''$ and $b,b',b''$ are such that 
$a>0, \ a'<0, a''<0$ and $b<0, \ b'<0,\ b'' >0$; such $a$ and $b$ exist by weak-approximation; take $F = F_0(\sqrt{a}, \sqrt{b})$. Then there is no intermediate CM-subfield 
between $F_0$ and $F$; hence $F_1 = F_0.$ 
\end{enumerate}

\medskip 
As will be explained later on, that in case {\bf TR}, asking for a critical point for a Rankin--Selberg $L$-function for $\GL(n) \times \GL(n')/F$, will impose the restriction  $nn'$ is even. 
This should not be surprising, because, as is well-known, for an algebraic Hecke character $\chi$ over $F$, if the $L$-function $L(s,\chi)$ has critical points then that forces us to be 
in case {\bf CM} (see \cite{raghuram-hecke}). 

\medskip
\paragraph{\bf Notation in the {\bf CM}-case.} 
\label{sec:notation-CM}
Suppose $\place_\infty(F)$ (resp., $\place_\infty(F_1)$) is the set of archimedean places of $F$ (resp., $F_1$).
Enumerate $\place_\infty(F_1)$ as $\{w_1,\dots, w_{\sf r_1}\}$, where ${\sf r}_1 = d_{F_1}/2 = [F_1:\Q]/2.$  
For $ 1 \leq j \leq {\sf r}_1$, let $\{\nu_j, \bar\nu_j\} \subset \Sigma_{F_1}$ be the pair of conjugate 
embeddings corresponding to $w_j,$ the non-canonical choice of $\nu_j$ is fixed, and is distinguished in the sense that $\nu_j$ induces the isomorphism $F_{1,w_j} \simeq \C.$ 
Let $k = [F:F_1].$ Let $v_{j1},\dots,v_{jk}$ be the set of 
places in $\place_\infty(F)$ above $w_j.$ Let $\varrho : \Sigma_F \to \Sigma_{F_1}$ denote the restriction map $\varrho(\eta) = \eta|_{F_1}.$ Suppose 
$\varrho^{-1}(\nu_j) = \{\eta_{j1},\dots, \eta_{jk}\}$, then $\varrho^{-1}(\bar\nu_j) = \{\bar\eta_{j1},\dots, \bar\eta_{jk}\},$ with the indexing being such that the pair of conjugate 
embeddings $\{\eta_{jl}, \bar\eta_{jl}\}$ corresponds to $v_{jl} \in \place_\infty(F)$ for all $1 \leq j \leq {\sf r}_1$ and $1 \leq l \leq k.$

\medskip
\paragraph{\bf Notation in the {\bf TR}-case.} 
\label{sec:notation-TR}
Let $\place_\infty(F_1) = \{w_1,\dots, w_{d_{F_1}}\}$ be an enumeration of the set of archimedean places of $F_1$, 
where $d_{F_1} = [F_1:\Q];$ since $F_1$ is the maximal totally real subfield of the totally imaginary $F$, the degree  
$d_{F_1}$ can be either even or odd, but the index $k = [F:F_1]$ is even; suppose $k = 2k_1.$ 
For $ 1 \leq j \leq d_{F_1}$, let $\nu_j \in  \Sigma_{F_1}$ be 
complex embedding corresponding to $w_j.$ 
As before, $\varrho : \Sigma_F \to \Sigma_{F_1}$ denote the restriction map $\varrho(\eta) = \eta|_{F_1}.$ 
Let $v_{j1},\dots,v_{jk_1}$ be the set of 
places in $\place_\infty(F)$ above $w_j$, and suppose $v_{ji}$ corresponds to the pair of conjugate embeddings $\{\eta_{ji}, \bar\eta_{ji}\}$, then 
$\varrho^{-1}(\nu_j) = \{\eta_{j1}, \bar\eta_{j1}, \eta_{j2}, \bar\eta_{j2}, \dots, \eta_{jk_1}, \bar\eta_{jk_1}\}$. 
 
\medskip
\subsubsection{\bf Strongly-pure weights over $F$ are base-change from $F_1$}
\label{sec:base-change-of-weights}

\begin{prop}
\label{prop:strong-pure-weights-base-change}
Suppose $\lambda \in X^+_{00}(\Res_{F/\Q}(T_{N,0}) \times E)$ is a strongly-pure weight. Then there exists $\kappa \in X^+_{00}(\Res_{F_1/\Q}(T_{N,0}) \times E)$ such that 
$\lambda$ is the base-change of $\kappa$ from $F_1$ to $F$ in the sense that for any $\tau : F \to E$, $\lambda^\tau = \kappa^{\tau|_{F_1}}.$
\end{prop}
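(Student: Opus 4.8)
The statement is that a strongly-pure weight $\lambda = (\lambda^\tau)_{\tau:F\to E}$ for $\Res_{F/\Q}(\GL_N)$ is constant on the fibers of the restriction map $\Hom(F,E)\to\Hom(F_1,E)$, i.e. $\lambda^\tau$ depends only on $\tau|_{F_1}$. Once this is shown, defining $\kappa^{\sigma} := \lambda^{\tau}$ for any $\tau$ restricting to $\sigma\in\Hom(F_1,E)$ produces the desired $\kappa$, and one checks $\kappa\in X^+_{00}(\Res_{F_1/\Q}(T_{N,0})\times E)$ directly from the purity relations inherited by $\kappa$. So the whole proof reduces to proving the fiberwise-constancy claim.

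The approach I would take is to work with characterization $(iii)$ of strong purity from Proposition~\ref{prop:strong-pure-E}, which after applying some fixed $\iota:E\to\C$ becomes the statement that ${}^\iota\lambda\in X^+_0(T\times\C)$ is pure with the \emph{same} purity weight $\w$ for every choice of $\iota$. Concretely: for all $\iota:E\to\C$ and all $\eta:F\to\C$ one has $b_j^{\iota^{-1}\circ\eta} + b_{N-j+1}^{\iota^{-1}\circ\overline{\eta}} = \w$. Transporting this along different embeddings amounts to saying that for every pair of embeddings $\tau_1,\tau_2:F\to E$ that become complex-conjugate under \emph{some} embedding of $E$ into $\C$, we get $b_j^{\tau_1}+b_{N-j+1}^{\tau_2}=\w$ for all $j$. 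The key observation — exactly the phenomenon contrasted in \ref{sec:interlude-pure-2} with the CM case in Lemma~\ref{sec:CM-field-embeddings} — is: if $\tau_1,\tau_2:F\to E$ have $\tau_1|_{F_1}=\tau_2|_{F_1}$, then there exist two different embeddings of $E$ into $\C$ under which $\tau_1$ is paired respectively with $\tau_2$ and with \emph{itself} (more precisely with its own conjugate $\overline{\tau_1}$). Using both pairings in the purity relation gives $b_j^{\tau_1}+b_{N-j+1}^{\tau_2} = \w = b_j^{\tau_1}+b_{N-j+1}^{\overline{\tau_1}}$, hence $b_{N-j+1}^{\tau_2}=b_{N-j+1}^{\overline{\tau_1}}$ for all $j$, i.e. $\lambda^{\tau_2}=\lambda^{\overline{\tau_1}}$. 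Doing this for two embeddings $\tau_1,\tau_2$ with the same restriction to $F_1$, and also pairing $\tau_1$ with a \emph{third} such embedding, one concludes that all of $\lambda^{\tau}$ for $\tau$ in a single fiber over $F_1$ agree — the cleanest route is: $\lambda^{\tau_2}=\lambda^{\overline{\tau_1}}$ and $\lambda^{\tau_3}=\lambda^{\overline{\tau_1}}$ force $\lambda^{\tau_2}=\lambda^{\tau_3}$, and one handles the pair $\{\tau_1,\overline{\tau_1}\}$ itself via a third embedding restricting to the same place of $F_1$ (which exists precisely because $[F:F_1]\geq 2$ when the fiber has size $>2$; when the fiber over a place of $F_1$ consists of a single conjugate pair $\{\tau_1,\overline{\tau_1}\}$ one must argue that $\lambda^{\tau_1}=\lambda^{\overline{\tau_1}}$ as well, which uses that the purity weight $\w$ is forced to be even-split symmetric).

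The combinatorial heart — and the main obstacle — is establishing the claim that \emph{for any two embeddings $\tau_1,\tau_2:F\to E$ with the same restriction to $F_1$, one can realize both the pairing $(\tau_1,\tau_2)$ and a pairing of $\tau_1$ with a conjugate of $\tau_1$, or with a conjugate of $\tau_3$, via genuine embeddings of $E$ into $\C$}. This is where the internal structure of $F$ enters: since $F_1$ is the \emph{maximal} CM-or-real subfield sitting between $F_0$ and $F$ in the relevant sense, and since $F/F_1$ has no further imaginary quadratic subextension from the $F_0$-side, complex conjugation on $\Sigma_F$ (which depends on the chosen embedding of $E$) acts on each $\varrho$-fiber $\{\eta_{j1},\overline{\eta_{j1}},\dots\}$ by a permutation that can be changed freely as one varies the embedding of $E$ — i.e. the Galois action on $\Hom(F,E)$ is rich enough to permute within fibers. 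I would make this precise using the notation of \ref{sec:notation-CM} / \ref{sec:notation-TR}, observing that $\Gal(\bar\Q/\Q)$ acts transitively on $\Hom(F,\bar\Q)$ and hence, given $\iota_0:E\to\C$ realizing the ``tautological'' conjugation-pairing, every other pairing is obtained as $\gamma\circ\iota_0$ for $\gamma$ ranging over a coset space; translating the transitivity statement into the assertion that both desired pairings are attained is the delicate point. Once this transitivity/pairing statement is nailed down, everything else — deriving $\lambda^{\tau_2}=\lambda^{\tau_3}$ from the two purity identities, and checking $\kappa$ is again strongly pure and dominant integral — is routine.
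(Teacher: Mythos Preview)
Your plan has a genuine gap at precisely the point you flag as ``delicate'': the claim that for $\tau_1,\tau_2:F\to E$ with $\tau_1|_{F_1}=\tau_2|_{F_1}$ one can choose $\iota:E\to\C$ so that $\iota\tau_1$ and $\iota\tau_2$ are complex conjugates is \emph{false} when $F$ is in the {\bf CM}-case. Indeed, if $F_1$ is CM, then complex conjugation on $\Sigma_F$ covers complex conjugation on $\Sigma_{F_1}$: if $\eta|_{F_1}=\nu$ then $\bar\eta|_{F_1}=\bar\nu\neq\nu$. So two embeddings lying in the \emph{same} fiber over $\Sigma_{F_1}$ are never conjugate under any choice of $\iota$. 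Your identity $b_j^{\tau_1}+b_{N-j+1}^{\tau_2}=\w$ is therefore unavailable, and the chain of equalities leading to $\lambda^{\tau_2}=\lambda^{\tau_3}$ never starts. (In the {\bf TR}-case the fibers are stable under conjugation, so the obstruction is different, but you would still need to prove that the Galois orbit of conjugation realizes every pair, which is again not automatic.)

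The paper's proof avoids this entirely by a short Galois-theoretic argument. Working over $\C$, strong purity gives both
\[
b^{\gamma\circ\eta}_{N-j+1}+b^{\gamma\circ\bar\eta}_j=\w
\quad\text{and}\quad
b^{\gamma\circ\eta}_{N-j+1}+b^{\overline{\gamma\circ\eta}}_j=\w
\]
for every $\gamma\in\Gal(\bar\Q/\Q)$ and $\eta\in\Sigma_F$; subtracting yields $b^{\gamma\circ\bar\eta}_j=b^{\overline{\gamma\circ\eta}}_j$, i.e.\ $\lambda$ is invariant under every commutator $g\c g^{-1}\c$. One then invokes the fact (the content referenced from \cite{raghuram-hecke}) that the fixed field of the normal subgroup generated by $\{g\c g^{-1}\c\}$ is precisely the maximal CM-or-totally-real subfield; intersected with $F$ this is $F_1$. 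Hence $b^\eta_j$ depends only on $\eta|_{F_1}$, which is exactly the fiberwise constancy you were after. The moral is that rather than trying to pair $\tau_1$ directly with $\tau_2$, one should compare the two \emph{different} conjugates that $\tau_1$ acquires under two different embeddings of $E$; this difference is exactly a commutator $[g,\c]$, and the characterization of $F_1$ as the fixed field of such commutators closes the argument.
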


For brevity, the conclusion will denoted as $\lambda = {\rm BC}_{F/F_1}(\kappa).$

\begin{proof}
It suffices to prove the proposition over $\C$, i.e., if $'\lambda \in X^+_{00}(\Res_{F/\Q}(T_{N,0}) \times \C)$ then it suffices to show the existence 
$'\kappa \in X^+_{00}(\Res_{F_1/\Q}(T_{N,0}) \times \C)$ such that $'\lambda = {\rm BC}_{F/F_1}('\kappa);$ because, then given the $\lambda$ in the proposition,
take an embedding $\iota : E \to \C$, and let $'\lambda = {}^\iota\lambda$, to which using the statement over $\C$ one gets $'\kappa$, which defines a unique $\kappa$ 
via ${}^\iota\kappa =  {}'\kappa.$ It is clear that $\lambda = {\rm BC}_{F/F_1}(\kappa)$ because this is so after applying $\iota$. 

\smallskip
To prove the statement over $\C$, take $\lambda \in X^+_{00}(\Res_{F/\Q}(T_{N,0}) \times \C)$, and suppose $\lambda = (\lambda^\eta)_{\eta : F \to \C}$ with 
$\lambda^\eta = (b^\eta_1 \geq b^\eta_2 \geq \cdots \geq b^\eta_N).$ Strong-purity gives 
$$
b^{\gamma \circ \eta}_{N-j+1} + b^{\gamma \circ \bar{\eta}}_j = \w, \quad \forall \gamma \in \Gal(\bar\Q/\Q), \ \forall \eta \in \Sigma_F, \ 1 \leq j \leq N.
$$
Also, one has: 
$$
b^{\gamma \circ \eta}_{N-j+1} + b^{\overline{\gamma \circ \eta}}_j = \w, \quad \forall \gamma \in \Gal(\bar\Q/\Q), \ \forall \eta \in \Sigma_F, \ 1 \leq j \leq N.
$$
Hence, we get 
$b^{\gamma \circ \bar{\eta}}_j \ = \ b^{\overline{\gamma \circ \eta}}_j.$
Exactly as explicated in the proof of Prop.\,26 in \cite{raghuram-hecke}, one gets $b^{\gamma \circ \eta}_j = b^{\eta}_j$ for all 
$\gamma$ in the normal subgroup of $\Gal(\bar\Q/\Q)$ generated by the commutators $\{g \c g^{-1} \c : g \in \Gal(\bar\Q/\Q)\}$, and all $\eta : F \to \C.$ 
This means that $b^{\eta}_j$ depends 
only on $\eta|_{F_1}.$ 
\end{proof}

\medskip
\subsection{Strongly inner cohomology} 
\label{sec:strong-inner-coh}
The problem of giving an arithmetic characterization of cuspidal cohomology is addressed in \cite[Chap.\,5]{harder-raghuram-book} in great detail for 
$\GL_N$ over a totally real field. In this article, for $\GL_N$ over a totally imaginary $F$, we will only discuss it {\it en passant}, and contend ourselves in making the following  
\begin{defn}
\label{def:strongly-inner}
Take a field $E$ large enough (as before), and let $\lambda \in X^+_{00}(T \times E).$ The strongly inner spectrum of $\lambda$ for level structure $K_f$ is defined as:
\begin{multline*}
\Coh_{!!}(G, K_f, \lambda) \ = \\ 
\{\pi_f \in \Coh_!(G, K_f, \lambda) \ : \ \mbox{${}^\iota\pi_f \in \Coh_{\rm cusp}(G, K_f, {}^\iota\lambda)$ for some embedding $\iota : E \to \C$}\}.
\end{multline*}
\end{defn}
An irreducible Hecke-summand $\pi_f$ in inner cohomology, is strongly-inner if under some embedding $\iota$ rendering the context transcendental it contributes to cuspidal cohomology. The point of view in {\it loc.\,cit.} is that the definition 
is independent of $\iota$, and hence giving a rational origin (i.e., over $E$) to cuspidal summands giving another proof of a result of Clozel that cuspidal cohomology for $\GL_N$ admits a rational structure 
\cite[Thm.\,3.19]{clozel}. In this article one simply appeals to Clozel's theorem to observe that the definition of strongly inner spectrum is independent of the choice of embedding $\iota$, i.e., if $\iota, \iota' : E \to \C$ are two such embeddings then 
$$
{}^\iota\pi_f \in \Coh_{\rm cusp}(G, {}^\iota\lambda) \ \iff \ {}^{\iota'}\!\pi_f \in \Coh_{\rm cusp}(G, {}^{\iota'}\!\lambda). 
$$
Define strongly-inner cohomology as: 
$$
H_{!!}^\bullet(\SGK, \tM_{\lambda, E}) \ = \ 
\bigoplus_{\pi_f \in \Coh_{!!}(G,\lambda,K_f)}  H_{!}^\bullet(\SGK, \tM_{\lambda, E})(\pi_f). 
$$
Then, since cuspidal cohomology is contained in inner cohomology, it is clear that 
$$
H_{!!}^\bullet(\SGK, \tM_{\lambda, E}) \otimes_{E,\iota} \C \ \simeq \ 
H_{\rm cusp}^\bullet(\SGK, \tM_{{}^\iota\!\lambda, \C}). 
$$

For $\lambda \in X^+_{00}(T \times E),$ $\pi_f \in \Coh_{!!}(G, \lambda)$ (ignoring the level structure) and $\iota : E \to \C$, since ${}^\iota\pi_f \in \Coh_{\rm cusp}(G, {}^\iota\lambda)$, let ${}^\iota\pi$ stand for the unique global cuspidal automorphic representation of $G(\A_\Q) = \GL_N(\A_F)$ 
whose finite part is $({}^\iota\pi)_f = {}^\iota\pi_f.$ 
The representation at infinity 
$({}^\iota\pi)_\infty,$ to be denoted $\J_{{}^\iota\!\lambda }$ below, will be explicitly described in Sect.\,\ref{sec:arch-consideration}.

\medskip
\subsubsection{\bf Tate twists} 
\label{sec:tate-twists}

Let $m \in \Z.$ For $\lambda \in X^+(T_N \times E)$, define $\lambda + m \bfgreek{delta}_N$ by the rule that if 
$\lambda^\tau = (b_1^\tau,\dots, b_N^\tau)$, then $(\lambda + m \bfgreek{delta}_N)^\tau = (b_1^\tau +m,\dots, b_N^\tau+m)$ 
for $\tau : F \to E.$ It is  clear that  
$\lambda \mapsto \lambda + m \bfgreek{delta}_N$  preserves each of the properties: dominant, integral, algebraic and (strongly-)pure. 
As in \cite[Sect.\,5.2.4]{harder-raghuram-book}, cupping with 
the fundamental class $e_{\bfgreek{delta}_N} \in H^0(\SGK, \Q[\bfgreek{delta}_N])$ gives us an isomorphism 
$
T^\bullet_{\mathrm{Tate}}(m) : H^\bullet(\SGK, \tM_\lambda) \ \to \ H^\bullet(\SGK, \tM_{\lambda + m \bfgreek{delta}_N}), 
$
that maps (strongly-)inner cohomology to (strongly-)inner cohomology, and suppose $\pi_f \in \Coh_{!!}(G,\lambda)$, then 
$T^\bullet_{\mathrm{Tate}}(m)(\pi_f) \ = \ \pi_f(-m),$
where, $\pi_f(-m)$ is defined by $\pi_f(-m)(g_f) = \pi_f(g_f) \otimes |\!|g_f|\!|^{-m}.$

\medskip
\subsection{Archimedean considerations} 
\label{sec:arch-consideration}

\medskip
\subsubsection{\bf Cuspidal parameters and cohomological representations of $\GL_N(\C)$}
\label{sec:cuspidal-parameters}

Given a weight $\lambda = (\lambda^\eta)_{\eta :F \to \C} \in X^+_{00}(T \times \C),$ for each $v \in \place_\infty$ (recall that $v$ corresponds to a pair of complex embeddings $\{\eta_v, \bar\eta_v\}$ of $F$ into $\C$, with $\eta_v$ used to identify $F_v$ with $\C$), define the 
{\it cuspidal parameters} of $\lambda$ at $v$ by:
\begin{equation*}
\alpha^v \ := \ -w_0\lambda^{\eta_v} + \rho \ \ {\rm and} \ \ 
\beta^v \ := \ - \lambda^{\bar\eta_v} - \rho. 
\end{equation*} 
If $\lambda^\eta = (b^\eta_1,\dots,b^\eta_N)$ then 
\begin{equation}
\label{eqn:cuspidal-parameters-alpha}
\alpha^v \ = \ (\alpha^v_1, \dots, \alpha^v_N)
 \ = \ \left(-b^{\eta_v}_N+ \tfrac{(N-1)}{2}, \, -b^{\eta_v}_{N-1}+ \tfrac{(N-3)}{2}, \, \dots, \, -b^{\eta_v}_1 - \tfrac{(N-1)}{2}\right),  
\end{equation} 
and, similarly,  
\begin{equation}
\label{eqn:cuspidal-parameters-beta}
\beta^v \ := \ (\beta^v_1, \dots, \beta^v_N)
 \ = \ \left(-b^{\bar\eta_v}_1 - \tfrac{(N-1)}{2}, \, -b^{\bar\eta_v}_2 - \tfrac{(N-3)}{2}, \, \dots, \, -b^{\bar\eta_v}_N + \tfrac{(N-1)}{2}\right). 
\end{equation} 
Purity  implies that $\alpha^v_j + \beta^v_j = -\w.$ Define a representation of $\GL_N(F_v) \simeq \GL_N(\C)$ as: 
\begin{equation}
\label{eqn:j-lambda-v}
\J_{\lambda_v} \ := \ \J(\lambda^{\eta_v}, \lambda^{\bar\eta_v}) \ := \ 
\Ind_{B_N(\C)}^{\GL_N(\C)}
\left(z^{\alpha^v_1} \bar{z}^{\beta^v_1} \otimes \cdots \otimes z^{\alpha^v_N} \bar{z}^{\beta^v_N} \right),  
\end{equation}
where $B_N$ is the subgroup of all upper-triangular matrices in $\GL_N,$ and by $\Ind$ we mean normalized (i.e., unitary) parabolic induction.  Now define a representation of $G(\R) = \prod_v \GL_N(F_v)$: 
\begin{equation}
\label{eqn:j-lambda}
\J_{\lambda} \ := \ \bigotimes_{v \in \place_\infty} \J_{\lambda_v}. 
\end{equation}

\medskip

\begin{rem}\label{rem:interchange-alpha-beta}
{\rm 
Recall that the choice of the embedding $\eta_v$ in the pair $\{\eta_v, \bar\eta_v\}$ was fixed. If the roles of the $\eta_v$ and $\bar\eta_v$ are reversed, then it is easy to see 
that the pair $(\alpha^v, \beta^v)$ of cuspidal parameters would be replaced by $(w_0\beta^v, w_0\alpha^v)$, whence, the representation 
$\J_v$ is replaced by its conjugate $\bar\J_v$. See \ref{sec:arch-constituents} below. 
}\end{rem}

\medskip

Some basic properties of $\J_\lambda$ are described in the following two propositions. 

\begin{prop}
\label{prop:j-lambda}
Let $\lambda \in X^+_{00}(T \times \C)$  and $\J_\lambda$ as above.  Then:
\begin{enumerate}
\item $\J_\lambda$ is an irreducible essentially tempered representation admitting a Whittaker model. 
\smallskip
\item $H^\bullet(\g, K_\infty; \, \J_\lambda \otimes \M_{\lambda, \C}) \neq 0.$
\smallskip
\item Let $\J$ be an irreducible essentially tempered representation of $G(\R).$ \\ 
Suppose 
that $H^\bullet(\g, K_\infty; \J \otimes \M_{\lambda, \C}) \neq 0$ then $\J = \J_\lambda.$
\smallskip
\item If $\pi \in \Coh_{\rm cusp}(G,\lambda)$, i.e., $\pi$ is a global cuspidal automorphic representation that contributes to cuspidal cohomology 
with respect to a strongly-pure weight $\lambda$, then $\pi_\infty \cong \J_\lambda.$ 
\end{enumerate}
\end{prop}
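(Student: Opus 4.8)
The plan is to establish the four assertions in turn, since each is essentially a known fact about cohomological representations of $\GL_N(\C)$ transported through the restriction-of-scalars formalism. For assertion (1), I would first note that $G(\R) = \prod_{v \in \place_\infty} \GL_N(\C)$, so it suffices to treat a single factor $\J_{\lambda_v} = \J(\lambda^{\eta_v},\lambda^{\bar\eta_v})$. The parameters $\alpha^v_j, \beta^v_j$ defined in \eqref{eqn:cuspidal-parameters-alpha} and \eqref{eqn:cuspidal-parameters-beta} satisfy $\alpha^v_j - \beta^v_j \in \Z$ and, by dominance of $\lambda^{\eta_v}$ and $\lambda^{\bar\eta_v}$, the real parts $\alpha^v_j + \beta^v_j = -\w$ are constant while the ``$\mu$-parameters'' $\alpha^v_j - \beta^v_j$ are strictly decreasing; this is exactly the condition under which the induced representation from $B_N(\C)$ is irreducible (Zelevinsky/Speh theory for $\GL_N(\C)$, or Clozel's \cite{clozel} analysis of cohomological representations), and the twist by $|\det|^{-\w/2}$ of $\J_{\lambda_v}$ is a genuinely tempered (unitary, fully induced from a unitary character of the torus) representation, hence $\J_{\lambda_v}$ is essentially tempered. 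A Whittaker model exists because a full induced representation from a Borel always has one. Tensoring over $v$ preserves all these properties.

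For assertions (2) and (3), the key input is the Delorme/Vogan--Zuckerman computation of $(\g,K_\infty)$-cohomology. Writing $\g_\infty = \prod_v \gl_N(\C)$ viewed as a real Lie algebra and using the Künneth formula $H^\bullet(\g,K_\infty;\J_\lambda \otimes \M_{\lambda,\C}) = \bigotimes_v H^\bullet(\gl_N(\C),\U(N); \J_{\lambda_v}\otimes \M_{\lambda^{\eta_v}}\otimes\M_{\lambda^{\bar\eta_v}})$, I would reduce to the local statement for $\GL_N(\C)$. The cuspidal parameters in \eqref{eqn:cuspidal-parameters-alpha}--\eqref{eqn:cuspidal-parameters-beta} are precisely arranged — via the shift by $\rho$ and the action of $w_0$ — so that $\J_{\lambda_v}$ has the same infinitesimal character as $\M_{\lambda^{\eta_v}}\otimes\M_{\lambda^{\bar\eta_v}}$, which is the necessary condition for nonvanishing of relative Lie algebra cohomology; nonvanishing then follows because for $\GL_N(\C)$ the essentially tempered cohomological representation with a given (regular) infinitesimal character is unique and is exactly this induced representation (it is the ``$A_{\q}(\lambda)$'' attached to the Borel). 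That uniqueness is simultaneously the content of assertion (3): an irreducible essentially tempered $\J$ with $H^\bullet(\g,K_\infty;\J\otimes\M_{\lambda,\C})\neq 0$ must, factor by factor, have the right infinitesimal character and be tempered-after-twist, and Vogan--Zuckerman classification forces $\J_v \cong \J_{\lambda_v}$ for every $v$.

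Finally, assertion (4) is a consequence of (1), (2), (3) together with the Clozel--Borel--Wallach description of cohomological cuspidal representations: if $\pi \in \Coh_{\rm cusp}(G,\lambda)$ then by definition $H^\bullet(\g,K_\infty;\pi_\infty\otimes\M_{\lambda,\C})\neq 0$, and a cuspidal automorphic representation of $\GL_N(\A_F)$ has, at each archimedean place, a generic (hence, in the cohomological case, essentially tempered — this is where one invokes that cuspidal cohomology of $\GL_N$ is carried by essentially tempered representations, e.g. via the classification of the unitary dual realized in $L^2_{\rm cusp}$ and Clozel's purity lemma \cite[Lem.\,4.9]{clozel}) local component; applying (3) at each $v$ gives $\pi_\infty \cong \J_\lambda$. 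The main obstacle, and the point requiring the most care rather than mere citation, is assertion (3): establishing the \emph{uniqueness} of the essentially tempered cohomological representation of $\GL_N(\C)$ with prescribed infinitesimal character, and verifying that the $\rho$- and $w_0$-shifts in the definition of $\alpha^v, \beta^v$ match the highest weight data of $\M_{\lambda^{\eta_v}}\otimes \M_{\lambda^{\bar\eta_v}}$ so that the infinitesimal characters genuinely agree. Once that bookkeeping is done, everything else is standard.
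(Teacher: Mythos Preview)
Your proposal is correct and follows essentially the same approach as the paper: reduce to a single factor $\GL_N(\C)$ via K\"unneth, observe that twisting by $|\ |_\C^{\w/2}$ makes $\J_{\lambda_v}$ unitarily induced from unitary characters (hence irreducible and tempered), invoke Delorme's lemma for nonvanishing of cohomology, and use uniqueness of the essentially tempered representation with a given infinitesimal character for (3) and (4). The paper's proof is slightly more economical---it gets irreducibility directly from unitary induction rather than citing Zelevinsky/Speh or Vogan--Zuckerman---but the logical skeleton is the same.
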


These are well-known results for $\GL_N(\C)$ and are all easily seen from this elementary observation: for $z \in \C$, let $|z|_\C = z \bar{z}$; then the representation 
$$
\J_{\lambda_v} \otimes |\ |_\C^{\w/2} \ = \ 
\Ind_{B_N(\C)}^{\GL_N(\C)}
\left(z^{\alpha^v_1+ \w/2} \bar{z}^{\beta^v_1+\w/2} \, \otimes \, \cdots \, \otimes \, z^{\alpha^v_N+\w/2} \bar{z}^{\beta^v_N+\w/2} \right)
$$
 is unitarily induced from unitary characters (because of purity) and hence irreducible. 
A representation irreducibly induced from essentially discrete series representation is essentially tempered. Admitting a Whittaker model is a hereditary property. Nonvanishing of cohomology follows from Delorme's Lemma
(Borel--Wallach \cite[Thm.\,III.3.3]{borel-wallach}), and that relative Lie algebra cohomology satisfies a K\"unneth theorem. Finally, among all representations with given infinitesimal character there is at most one that is essentially tempered. 

\medskip
Define the following numbers:
\begin{equation}
\label{eqn:bn-tn}
\begin{split}
b_N^\C & \ = \ N(N-1)/2, \\
t_N^\C & \ = \ (N^2-1) - b_N^\C, \\ 
b_N^F & \ = \ {\sf r} \cdot b_N^\C, \\
t_N^F & \ = \ \dim(\SG) - b_N^F. 
\end{split}
\end{equation}
The relation between $b_N^F$ and $t_N^F$ is mitigated by an appropriate version of Poincar\'e duality, which is the reason why the `top-degree' is defined in terms of the `bottom-degree' and the dimension of the intervening symmetric space.  

\begin{prop}
\label{prop:j-lambda-range-degrees}
Let $\lambda \in X^+_{00}(T \times \C)$  and $\J_\lambda$ as above.  Then  
$$
H^q(\g, K_\infty; \, \J_{\lambda} \otimes \M_{\lambda, \C}) \neq 0 \ \ \iff \ \ 
b_N^F \ \leq \ q\ \leq \ t_N^F. 
$$
Furthermore, for extremal degrees $q \in \{b_N^F, \, t_N^F\},$ we have $\dim(H^q(\g, K_\infty; \, \J_{\lambda} \otimes \M_{\lambda, \C})) = 1.$ 
\end{prop}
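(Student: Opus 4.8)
The plan is to reduce everything to a single copy of $\GL_N(\C)$ via the K\"unneth theorem and then invoke the known computation of $(\g,K)$-cohomology for cohomological representations of $\GL_N(\C)$. First I would observe that since $G(\R) = \prod_{v \in \place_\infty} \GL_N(\C)$, $K_\infty = C_\infty S(\R)$ with $C_\infty = \prod_v \U(N)$, and the coefficient module factors as $\M_{\lambda,\C} = \bigotimes_v \M_{\lambda^{\eta_v}, \lambda^{\bar\eta_v}}$ while $\J_\lambda = \bigotimes_v \J_{\lambda_v}$, the relative Lie algebra complex decomposes as a graded tensor product over the ${\sf r}$ archimedean places, after accounting for the one-dimensional split central factor $S(\R)^0$ along which the twist $\omega_\lambda^{-1}$ makes the relevant Lie algebra cohomology vanish in degree zero of that factor only. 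Concretely, writing $\g_\infty/\k_\infty = \bigoplus_v \mathfrak{p}_v$ with $\mathfrak{p}_v$ the $(-1)$-eigenspace for the Cartan involution on $\gl_N(\C)$, and noting $\dim(\SG) = {\sf r}N^2 - 1 = {\sf r}(N^2-1) + ({\sf r}-1)$, one gets
$$
H^q(\g, K_\infty; \J_\lambda \otimes \M_{\lambda,\C}) \ = \ \bigoplus_{q_1 + \cdots + q_{\sf r} = q} \ \bigotimes_{v} H^{q_v}(\gl_N(\C), \U(N); \J_{\lambda_v} \otimes \M_{\lambda^{\eta_v},\lambda^{\bar\eta_v}}),
$$
where I must be careful to absorb the central $S(\R)^0$-direction correctly; this is exactly the normalization already set up in Sect.\,\ref{sec:cuspidal-coh}.

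Second, for a single place, I would cite the standard fact (Borel--Wallach \cite{borel-wallach}, Ch.\ III, together with the Delorme-type description used to prove Prop.\,\ref{prop:j-lambda}) that for an irreducible essentially tempered cohomological representation $\J_{\lambda_v}$ of $\GL_N(\C)$ matched to the weight, one has
$$
H^{q_v}(\gl_N(\C), \U(N); \J_{\lambda_v} \otimes \M_{\lambda^{\eta_v},\lambda^{\bar\eta_v}}) \ \cong \ H^{q_v - b_N^\C}\bigl(\text{something of total dimension } 2^{?}\bigr),
$$
more precisely that this cohomology is nonzero exactly in the range $b_N^\C \le q_v \le t_N^\C$ with $b_N^\C = N(N-1)/2$ the dimension of the compact dual $\U(N)$ (the ``bottom degree''), $t_N^\C = (N^2-1) - b_N^\C$ its Poincar\'e-dual top, and that in each of the two extremal degrees $q_v \in \{b_N^\C, t_N^\C\}$ the cohomology is one-dimensional. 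The cleanest way to see this is that $\J_{\lambda_v}$ is a full (irreducible) principal series, so its $(\gl_N,\U(N))$-cohomology with the appropriate finite-dimensional coefficients is computed by the cohomology of the compact dual symmetric space of $\SL_N(\C)$, which is $\U(N)$ itself (as a real manifold), whose Betti numbers are those of a product of odd spheres $S^1 \times S^3 \times \cdots \times S^{2N-1}$; the bottom and top Betti numbers are $1$, the cohomology is concentrated between degrees $0$ and $\dim \U(N) = b_N^\C$ after the shift, and the ``$-1$'' accounting for the $\GL$-versus-$\SL$ center interacts with the global $S(\R)^0$ twist.

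Third, I would combine the two: the tensor-product formula shows that $H^q(\g,K_\infty;\J_\lambda\otimes\M_{\lambda,\C})$ is nonzero precisely when each $q_v$ can be chosen in $[b_N^\C, t_N^\C]$ with $\sum q_v = q$ (modulo the bookkeeping of the one missing central degree), i.e.\ precisely when ${\sf r}\cdot b_N^\C \le q \le \dim(\SG) - {\sf r}\cdot b_N^\C$, which is the asserted range $b_N^F \le q \le t_N^F$ by the definitions in \eqref{eqn:bn-tn}. For the extremal degrees: if $q = b_N^F = {\sf r} b_N^\C$ then each $q_v$ is forced to equal $b_N^\C$, so the K\"unneth sum has a single term, each factor one-dimensional, giving $\dim = 1$; symmetrically at $q = t_N^F$ each $q_v = t_N^\C$ and again $\dim = 1$.

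The main obstacle I anticipate is the careful treatment of the one-dimensional split central torus $S(\R)^0$ and the twist by $\omega_\lambda^{-1}$: one must verify that the K\"unneth decomposition over the ${\sf r}$ places is compatible with quotienting by $S(\R)$ rather than by each local center, so that the total dimension shift is $\dim(\SG) = {\sf r}(N^2-1) - 1$ and not ${\sf r}(N^2-1)$, and that this ``$-1$'' does not disturb the one-dimensionality at the extremes. Everything else is a routine assembly from the $\GL_N(\C)$ computation already implicitly used in Prop.\,\ref{prop:j-lambda} and the graded K\"unneth theorem for relative Lie algebra cohomology.
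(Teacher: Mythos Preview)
Your approach is essentially the same as the paper's: compute the single-place $(\gl_N(\C),\U(N)Z_{N,0}(\R)^0)$-cohomology (the paper cites Clozel \cite[Lemme 3.14]{clozel}, noting the result is an exterior algebra shifted by $b_N^\C$, hence one-dimensional at the extremes), apply K\"unneth across the ${\sf r}$ places, and then pass from $(\g, C_\infty Z(\R)^0)$-cohomology to $(\g, C_\infty S(\R)^0)$-cohomology using $t_N^F = {\sf r}\, t_N^\C + \dim(Z(\R)^0/S(\R)^0) = {\sf r}\, t_N^\C + ({\sf r}-1)$; your flagged ``main obstacle'' is exactly this last step, handled the same way.

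One slip to fix in your heuristic: the compact dual of $\SL_N(\C)/\SU(N)$ is $\SU(N)$, not $\U(N)$, and $\dim \U(N) = N^2 \neq b_N^\C$; the range $t_N^\C - b_N^\C$ equals $N-1$ (the rank of the split part of the diagonal torus modulo center), consistent with the exterior-algebra description, not with the Betti numbers of $\U(N)$ as you wrote.
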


\begin{proof}
For each $v \in \place_\infty$, we have 
$H^q(\gl_N(\C), \U(N)Z_{N,0}(\R)^0; \, \J_{\lambda_v} \otimes \M_{\lambda_v, \C}) \neq 0$ if and only if 
$b_N^\C \leq q \leq t_N^\C.$
This follows, after a minor modification, from Clozel~\cite[Lemme 3.14]{clozel}. The cohomology is in fact an exterior algebra (up to shifting in degree 
by $b_N^\C$), giving one-dimensionality in bottom and top degree. Then use the fact that relative Lie algebra cohomology satisfies a K\"unneth theorem. This gives $(\g, C_\infty Z(\R)^0)$-cohomology from which the reader may easily deduce the above details for $(\g, C_\infty S(\R)^0) = (\g, K_\infty)$-cohomology; it is helpful to note that: 
$t_N^F \ = \ {\sf r} t_N^\C + ({\sf r}-1) \ = \ {\sf r} t_N^\C + \dim(Z(\R)^0/S(\R)^0).$
\end{proof}

There is a piquant numerological relation between the bottom or top degee for the cuspidal cohomology of Levi subgroup 
$\GL_n \times \GL_{n'}$ of a maximal parabolic subgroup $P$ of an ambient $\GL_N$, the corresponding bottom or top degree for 
$\GL_N$, and the dimension of the unipotent radical of $P$ given in the following proposition that has a crucial bearing 
on certain degree-considerations for Eisenstein cohomology. 
For any positive integer $r$, define $b_r^\C, \, t_r^\C, \, b_r^F$ and $t_r^F$ as in \eqref{eqn:bn-tn} replacing $N$ by $r$. 

\begin{prop}
\label{prop-numerology}
Let $n$ and $n'$ be positive integers with $n+n' = N.$ Then: 
\begin{enumerate}
\item $b_n^F + b_{n'}^F + \frac12 \dim(U_P) \ = \ b_N^F .$
\medskip
\item $t_n^F + t_{n'}^F + \frac12 \dim(U_P) \ = \ t_N^F - 1.$
\end{enumerate}
\end{prop}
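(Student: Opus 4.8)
The plan is to strip away the factor ${\sf r}$ coming from the archimedean places of $F$ and reduce both identities to elementary binomial arithmetic. Recall that $b_r^F = {\sf r}\, b_r^\C$ by definition, that $\tfrac12\dim(U_P) = nn'{\sf r}$, and from \ref{sec:loc-sym-spaces} that $\dim(\mathcal{S}^{G_r}) = {\sf r}\,r^2-1$, whence $t_r^F = \dim(\mathcal{S}^{G_r}) - b_r^F$.

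For part (1) I would simply expand the left-hand side as ${\sf r}\bigl(b_n^\C + b_{n'}^\C + nn'\bigr) = {\sf r}\bigl(\binom n2 + \binom{n'}2 + nn'\bigr)$ and invoke the classical identity $\binom n2 + \binom{n'}2 + nn' = \binom{n+n'}2 = \binom N2$ (count the unordered pairs from $N = n+n'$ objects according to whether $0$, $1$, or $2$ of the two chosen objects lie in the first block), which gives the left-hand side equal to ${\sf r}\binom N2 = b_N^F$ on the nose.

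For part (2) the cleanest route is to derive it formally from part (1) rather than recompute from scratch. First I would record the dimension identity
\[
\dim(\SG) \;=\; \dim(\SGn) + \dim(\SGp) + \dim(U_P) + 1,
\]
which is immediate from $\dim(\mathcal{S}^{G_r}) = {\sf r}\,r^2-1$, $\dim(U_P)=2nn'{\sf r}$, and $N^2 = n^2 + n'^2 + 2nn'$; the stray $+1$ is the one-dimensional discrepancy between the single $\Q$-split central torus $S$ used to form $\SG$ and the two such tori implicit in $\SGn \times \SGp$. Then, writing $t_r^F = \dim(\mathcal{S}^{G_r}) - b_r^F$ throughout and substituting part (1), one gets
\[
t_N^F - 1 \;=\; \bigl(\dim(\SGn) - b_n^F\bigr) + \bigl(\dim(\SGp) - b_{n'}^F\bigr) + \tfrac12\dim(U_P) \;=\; t_n^F + t_{n'}^F + \tfrac12\dim(U_P),
\]
as desired. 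As a sanity check one can instead reduce (2) directly, as in (1), to the identity $t_n^\C + t_{n'}^\C + nn' + 1 = t_N^\C$, using $t_r^\C = r^2-1-\binom r2$ together with the same binomial identity; here the relation $t_r^F = {\sf r}\,t_r^\C + ({\sf r}-1)$ recorded in the proof of Prop.\,\ref{prop:j-lambda-range-degrees} makes the passage between the two forms transparent.

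The whole argument is routine bookkeeping; the only place that needs care is the tracking of the central directions, i.e.\ the ``$+1$'' in the dimension count, equivalently the ``$-1$'' on the right-hand side of (2), which is precisely why part (2) is not perfectly symmetric with part (1).
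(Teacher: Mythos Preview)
Your proof is correct. Part (1) is identical to the paper's argument, just phrased in binomial notation. For part (2) you take a slightly different route: the paper computes $t_r^F = {\sf r}\,r(r+1)/2 - 1$ directly and then verifies the identity $\left({\sf r}\tfrac{n(n+1)}{2}-1\right) + \left({\sf r}\tfrac{n'(n'+1)}{2}-1\right) + {\sf r}nn' = \left({\sf r}\tfrac{N(N+1)}{2}-1\right) - 1$, whereas you derive (2) from (1) via the dimension identity $\dim(\SG) = \dim(\SGn) + \dim(\SGp) + \dim(U_P) + 1$. Your approach has the minor advantage of making the origin of the ``$-1$'' transparent (the single versus double central split torus), but both are routine bookkeeping and your sanity check at the end is exactly the paper's computation.
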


\begin{proof}
Keeping in mind that $N = n+ n'$, (1) follows from the identity: 
$$
{\sf r} \frac{n(n-1)}{2} + {\sf r} \frac{n'(n'-1)}{2} + {\sf r} nn' \ = \ {\sf r} \frac{(n+n')(n+n'-1)}{2}. 
$$
For (2), observe that 
$t_n^F \ = \ (n^2{\sf r} - 1) - {\sf r}n(n-1)/2 = {\sf r}n(n+1)/2 - 1.$
Now (2) follows from: 
$$
\left({\sf r} \frac{n(n+1)}{2} -1\right) + \left({\sf r} \frac{n'(n'+1)}{2} - 1\right)+ {\sf r} nn' \ = \  \left({\sf r} \frac{(n+n')(n+n'+1)}{2} - 1\right) - 1.
$$
\end{proof}

\medskip
\subsubsection{\bf Archimedean constituents: {\bf CM}-case}
\label{sec:arch-constituents}

If $\pi_\infty = \otimes_{v \in \place_\infty} \pi_v$ is an irreducible 
representation of $G(\R) = \prod_{v \in \place_\infty} \GL_N(\C)$ then the 
set $\{\pi_v : v \in \place_\infty\}$ of the irreducible factors, up to equivalence, 
will be called as the set of constituents of $\pi_\infty$.  
Let $\lambda \in X^+_{00}(\Res_{F/\Q}(T_{N,0}) \times E)$, $\pi_f \in \Coh_{!!}(G,\lambda),$ and $\iota : E \to \C$. 
The set of constituents of ${}^\iota\pi_\infty$ may be explicitly described. 

\medskip
\paragraph{{\bf CM}-case}

Recall from Prop.\,\ref{prop:strong-pure-weights-base-change} that 
$\lambda = {\rm BC}_{F/F_1}(\kappa),$ i.e., $\lambda^\tau = \kappa^{\tau|_{F_1}};$ after applying $\iota$, one has
${}^\iota\lambda^\eta = {}^\iota\kappa^{\eta|_{F_1}},$ which is the same as 
$\lambda^{\iota^{-1}\circ\eta} = \kappa^{\iota^{-1}\circ\eta|_{F_1}}.$ 
 Using the notations fixed in \ref{sec:notation-CM}, 
for any place $v_{jl} \in \place_\infty(F)$ above $w_j \in \place_\infty(F_1),$ the ordered pair $(\eta_{v_{jl}}, \bar\eta_{v_{jl}})$
of conjugate embeddings of $F$ restricts to the ordered pair $(\nu_{w_j}, \bar\nu_{w_j})$ of conjugate embeddings of $F_1$; 
hence the ordered pair of weights 
$({}^\iota\lambda^{\eta_{v_{jl}}}, {}^\iota\lambda^{\bar\eta_{v_{jl}}})$ is equal to the ordered pair $({}^\iota\kappa^{\nu_{w_j}}, {}^\iota\kappa^{\bar\nu_{w_j}});$ whence the archimedean component 
${}^\iota\pi_{v_{jl}}$ is equivalent to $\J({}^\iota\kappa^{\nu_{w_j}}, {}^\iota\kappa^{\bar\nu_{w_j}})$. Just for the moment, for brevity, denoting $\J({}^\iota\kappa^{\nu_{w_j}}, {}^\iota\kappa^{\bar\nu_{w_j}})$ by $\J_{w_j}$, one concludes that the constituents of ${}^\iota\pi_\infty$ is the multi-set 
$\{\J_{w_1},\dots, \J_{w_1}, \  \dots \ ,\J_{w_{{\sf r}_1}}, \dots, \J_{w_{{\sf r}_1}} \},$
with each $\J_{w_j}$ appearing $k = [F:F_1]$-many times; this multi-set may also be variously written as
$\{[F:F_1] \cdot \J_{w} \ | \ w \in \place_\infty(F_1)  \} \ = \ \{[F:F_1] \cdot \J({}^\iota\kappa^{\nu_{w}}, {}^\iota\kappa^{\bar\nu_{w}}) \ | \ w \in \place_\infty(F_1)  \}.$ Putting these together one has
\begin{multline}
\label{eqn:arch-components-CM}
{}^\iota\pi_\infty \ = \ \bigotimes_{v \in \place_\infty(F)} {}^\iota\pi_v \ = \
\bigotimes_{v \in \place_\infty(F)} \J({}^\iota\lambda^{\eta_{v}}, {}^\iota\lambda^{\bar\eta_{v}})  \\ 
 = \ \bigotimes_{w \in \place_\infty(F_1)} \bigotimes_{v|w} 
 \J({}^\iota\kappa^{\nu_{w}}, {}^\iota\kappa^{\bar\nu_{w}})   \ = \ 
\bigotimes_{w \in \place_\infty(F_1)} \bigotimes_{v|w} 
 \J(\kappa^{\iota^{-1} \circ \nu_{w}}, \kappa^{\iota^{-1} \circ \bar\nu_{w}}). 
\end{multline}

\medskip
\paragraph{{\bf TR}-case}

We still have from Prop.\,\ref{prop:strong-pure-weights-base-change} that 
$\lambda = {\rm BC}_{F/F_1}(\kappa),$ i.e., $\lambda^\tau = \kappa^{\tau|_{F_1}};$ after applying $\iota$, one has
${}^\iota\lambda^\eta = {}^\iota\kappa^{\eta|_{F_1}},$ which is the same as 
$\lambda^{\iota^{-1}\circ\eta} = \kappa^{\iota^{-1}\circ\eta|_{F_1}}.$ 
 Using the notations fixed in \ref{sec:notation-TR}, 
for any place $v_{jl} \in \place_\infty(F)$ above $w_j \in \place_\infty(F_1),$ both the embeddings in the ordered pair $(\eta_{v_{jl}}, \bar\eta_{v_{jl}})$
restrict to $\nu_{w_j}$. Hence the ordered pair of weights 
$({}^\iota\lambda^{\eta_{v_{jl}}}, {}^\iota\lambda^{\bar\eta_{v_{jl}}})$ is equal to the ordered pair $({}^\iota\kappa^{\nu_{w_j}}, {}^\iota\kappa^{\nu_{w_j}})$--note that both weights in 
the ordered pair are the same;  whence the archimedean component 
${}^\iota\pi_{v_{jl}}$ is equivalent to $\J({}^\iota\kappa^{\nu_{w_j}}, {}^\iota\kappa^{\nu_{w_j}})$. Once again, for brevity, denoting 
$\J({}^\iota\kappa^{\nu_{w_j}}, {}^\iota\kappa^{\nu_{w_j}})$ by $\J_{w_j}$, one concludes that the constituents of ${}^\iota\pi_\infty$ is the multi-set 
$\{\J_{w_1},\dots, \J_{w_1}, \  \dots \ ,\J_{w_{{\sf r}_1}}, \dots, \J_{w_{{\sf r}_1}} \},$
with each $\J_{w_j}$ appearing $k_1 = [F:F_1]/2$-many times; 
putting these together one has
\begin{multline}
\label{eqn:arch-components-TR}
{}^\iota\pi_\infty \ = \ \bigotimes_{v \in \place_\infty(F)} {}^\iota\pi_v
 \ = \ \bigotimes_{v \in \place_\infty(F)} \J({}^\iota\lambda^{\eta_{v}}, {}^\iota\lambda^{\eta_{v}}) \\
  \  = \ \bigotimes_{w \in \place_\infty(F_1)} \bigotimes_{v|w} \J({}^\iota\kappa^{\nu_{w}}, {}^\iota\kappa^{\nu_{w}}) 
   \ = \ \bigotimes_{w \in \place_\infty(F_1)} \bigotimes_{v|w} 
 \J(\kappa^{\iota^{-1} \circ \nu_{w}}, \kappa^{\iota^{-1} \circ \nu_{w}}) 
 \ = \ \bigotimes_{w \in \place_\infty(F_1)} \bigotimes_{v|w} \J_w, 
\end{multline}
and each of these $\J_w$ is self-conjugate from Rem.\,\ref{rem:interchange-alpha-beta}.

\medskip
\subsubsection{\bf Galois action on archimedean constituents}
\label{sec:galois-arch-constituents}
Let $\gamma \in \Gal(\bar\Q/\Q).$
The archimedean constituents of ${}^{\gamma \circ \iota}\pi$ is a permutation of the archimedean constituents of ${}^\iota\pi$, possibly up to replacing a local component 
by its conjugate which will only be relevant when $F$ is in the {\bf CM}-case. 
This is made more precise in the following paragraphs. 

\medskip
\paragraph{\bf The case when $F$ is itself a CM field}
\label{sec:galois-action-CM-field}
If $F = F_1$ is a CM field, and $F_0$ its maximal totally real quadratic subfield, 
for $\gamma \in \Gal(\bar\Q/\Q)$ and $\nu \in \Sigma_{F_1}$ from Lem.\,\ref{sec:CM-field-embeddings} one has $\gamma \circ \bar{\nu} = \overline{ \gamma \circ \nu};$ 
this means that $\gamma$ permutes 
the set of pairs of conjugate embeddings $\left\{\{\nu_w, \bar\nu_w\} \ | \ w \in \place_\infty(F_1)\right\}$, giving an action of $\gamma$ on $\place_\infty(F_1)$; 
if we identify $\place_\infty(F_1) = \place_\infty(F_0) = \Sigma_{F_0}$ then the action of $\gamma$ on $\place_\infty(F_1)$ is the same as action of 
$\gamma$ on $\Sigma_{F_0}$ via composition. It is important to note that $\gamma$ need not map the distinguished embedding corresponding to $w$ to the distinguished embedding 
corresponding to $\gamma \cdot w$; all one can say is that $\gamma \circ \nu_w \in \{\nu_{\gamma \cdot w}, \bar{\nu}_{\gamma \cdot w}\}.$ 
Suppose $\kappa \in X^+_{00}(\Res_{F_1/\Q}(T_{N,0}) \times E)$, $\pi_{1,f} \in \Coh_{!!}(\Res_{F_1/\Q}(\GL_N/F_1,\kappa))$, $\iota : E \to \C$, 
and ${}^\iota\pi_1$ the corresponding cuspidal automorphic representation of $\GL_N(\A_{F_1})$, then 
${}^\iota\pi_{1,\infty} = \otimes_{w \in \place_\infty(F_1)} {}^\iota\pi_{1,w}$ where 
$$
{}^\iota\pi_{1,w} \ = \ \J({}^\iota\kappa^{\nu_w}, {}^\iota\kappa^{\bar\nu_w})  \ = \ \J(\kappa^{\iota^{-1} \circ \nu_w}, \kappa^{\iota^{-1} \circ \bar\nu_w}).
$$ 
By the same token, replacing $\iota$ by $\gamma \circ \iota$ one has
$$
{}^{\gamma \circ \iota} \pi_{1,w} \ = \ \J(\kappa^{\iota^{-1} \circ \gamma^{-1} \circ \nu_w}, \kappa^{\iota^{-1} \circ \gamma^{-1} \circ \bar\nu_w}) 
 \ = \ \J(\kappa^{\iota^{-1} \circ \gamma^{-1} \circ \nu_w}, \kappa^{\iota^{-1} \circ \overline{\gamma^{-1} \circ \nu_w}})
$$ 
Depending on whether $\gamma^{-1} \circ \nu_w = \nu_{\gamma^{-1} \cdot w}$ or $\overline{\nu_{\gamma^{-1} \cdot w}}$, from 
Rem.\,\ref{rem:interchange-alpha-beta} it follows that 
\begin{equation}\label{eqn:galois-CM}
{}^{\gamma \circ \iota} \pi_{1,w} \ = \ 
\begin{cases}
{}^\iota\pi_{1, \gamma^{-1} \cdot w} & \mbox{if $\gamma^{-1} \circ \nu_w = \nu_{\gamma^{-1} \cdot w}$,} \\ \\  
\overline{{}^\iota\pi_{1, \gamma^{-1} \cdot w}}  & \mbox{if $\gamma^{-1} \circ \nu_w = \overline{\nu_{\gamma^{-1} \cdot w}}.$} 
\end{cases}
\end{equation}
Hence the archimedean components of ${}^{\gamma \circ \iota} \pi_1$ is a permutation of the archimedean components of ${}^{\iota} \pi_1$ up to taking conjugates; this paragraph fixes a mistake in \cite[Prop.\,3.2, (i)]{gan-raghuram}.

\medskip
\paragraph{\bf When $F$ is totally imaginary in the {\bf CM}-case.}
Let $\lambda \in X^+_{00}(\Res_{F/\Q}(T_{N,0}) \times E)$, $\lambda = {\rm BC}_{F/F_1}(\kappa),$ $\pi_f \in \Coh_{!!}(G,\lambda),$ $\iota : E \to \C$, and $\gamma \in \Gal(\bar\Q/\Q)$. 
The Galois action on $\Sigma_F$ and $\Sigma_{F_1}$ preserves the fibers of the restriction map $\Sigma_F \to \Sigma_{F_1}$. Suppose, $w_1, w_j \in \place_\infty(F_1)$ and 
$\nu_1, \nu_j \in \Sigma_{F_1}$ are the corresponding distinguished elements, and suppose $\gamma \circ \{\nu_1, \bar\nu_1\} = \{\nu_j, \bar\nu_j\}$. 
Suppose the fiber over $\nu_1$ is $\{\eta_{11}, \eta_{12}, \dots, \eta_{1 k}\}$ (recall $k = [F:F_1]$)
then the fiber over $\bar\nu_1$ is $\{\bar\eta_{11}, \bar\eta_{12}, \dots, \bar\eta_{1 k}\}$; and similarly, if 
the fiber over $\nu_j$ is $\{\eta_{j1}, \eta_{j2}, \dots, \eta_{j k}\}$ and then the fiber over $\bar\nu_j$ is $\{\bar\eta_{j1}, \bar\eta_{j2}, \dots, \bar\eta_{j k}\}.$ There are two cases: 
\medskip
\begin{enumerate}
\item $\gamma \circ \nu_1 = \nu_j$. Then necessarily, $\gamma \circ \bar\nu_1 = \bar\nu_j$, $\gamma \circ \{\eta_{11},  \dots, \eta_{1 k}\} = \{\eta_{j1},  \dots, \eta_{j k}\}$ and 
$\gamma \circ \{\bar\eta_{11},  \dots, \bar\eta_{1 k}\} = \{\bar\eta_{j1},  \dots, \bar\eta_{j k}\}.$ 

\medskip
\item $\gamma \circ \nu_1 = \bar\nu_j$. Then necessarily, $\gamma \circ \bar\nu_1 = \nu_j$, $\gamma \circ \{\eta_{11},  \dots, \eta_{1 k}\} = \{\bar\eta_{j1},  \dots, \bar\eta_{j k}\} $ and 
$\gamma \circ \{\bar\eta_{11},  \dots, \bar\eta_{1 k}\} =  \{\eta_{j1},  \dots, \eta_{j k}\}.$
\end{enumerate}

Since $F = F_1$ is already discussed in \ref{sec:galois-action-CM-field} above, suppose that $k > 1$; suppose $\gamma \circ \eta_{11} = \eta_{j1}$ then it is possible that $\gamma \circ \{\eta_{11}, \bar\eta_{11}\} \neq \{\eta_{j1}, \bar\eta_{j1}\}.$ In particular, the Galois action on $\Sigma_F$ does not descend to give a Galois action of $\gamma$ on $S_\infty(F)$. Similarly, also in case (2). Nevertheless, using 
\eqref{eqn:arch-components-CM} it follows that 
\begin{equation}
\label{eqn:galois-tot-imag}
{}^{\gamma\circ\iota}\pi_\infty \ = \ 
\bigotimes_{w \in \place_\infty(F_1)} \bigotimes_{v|w} 
 \J(\kappa^{\iota^{-1} \circ \gamma^{-1} \circ \nu_{w}}, \kappa^{\iota^{-1} \circ \gamma^{-1} \circ \bar\nu_{w}}), 
\end{equation}
and as in \eqref{eqn:galois-CM}, the inner constituent is given by: 
\begin{equation}
\label{eqn:galois-tot-imag-inner}
 \J(\kappa^{\iota^{-1} \circ \gamma^{-1} \circ \nu_{w}}, \kappa^{\iota^{-1} \circ \gamma^{-1} \circ \bar\nu_{w}})
\ = \ 
\begin{cases}
 \J(\kappa^{\iota^{-1} \circ \nu_{\gamma^{-1} \cdot w}}, \kappa^{\iota^{-1} \circ \bar\nu_{\gamma^{-1} \cdot w}}) & \mbox{if $\gamma^{-1} \circ \nu_w = \nu_{\gamma^{-1} \cdot w}$,} \\ \\  
\overline{ \J(\kappa^{\iota^{-1} \circ \nu_{\gamma^{-1} \cdot w}}, \kappa^{\iota^{-1} \circ \bar\nu_{\gamma^{-1} \cdot w}})}  & \mbox{if $\gamma^{-1} \circ \nu_w = \overline{\nu_{\gamma^{-1} \cdot w}}.$} 
\end{cases}
\end{equation}
Hence the archimedean components of ${}^{\gamma \circ \iota} \pi$ is a permutation of the archimedean components of ${}^{\iota} \pi$ up to taking conjugates.

\medskip
\paragraph{\bf When $F$ is totally imaginary in the {\bf TR}-case.}
The Galois action on $\Sigma_F$ and $\Sigma_{F_1}$ preserves the fibers of the restriction map $\Sigma_F \to \Sigma_{F_1}$, and since $F_1$ is totally real, 
identify the Galois-sets $\Sigma_{F_1} = \place_\infty(F_1).$ Using the notations of \eqref{eqn:arch-components-TR}, if 
$$
{}^\iota\pi_\infty \ = \ \bigotimes_{w \in \place_\infty(F_1)} \bigotimes_{v|w} \ \J(\kappa^{\iota^{-1} \circ \nu_{w}}, \kappa^{\iota^{-1} \circ \nu_{w}}), 
$$
then for $\gamma \in \Gal(\bar\Q/\Q)$ one has:
$$
({}^{\gamma \circ \iota} \pi)_\infty 
\ = \ \bigotimes_{w \in \place_\infty(F_1)} \bigotimes_{v|w} \ \J(\kappa^{\iota^{-1} \circ \nu_{\gamma^{-1}\circ w}}, \kappa^{\iota^{-1} \circ \nu_{\gamma^{-1}\circ w}}).
$$
Hence the archimedean components of ${}^{\gamma \circ \iota} \pi$ is a permutation of the archimedean components of ${}^{\iota} \pi$.

\medskip
\subsection{Boundary Cohomology}
\label{sec:coh-of-bdry}

The cohomology $H^\bullet(\PBSC, \tM_{\lambda,E})$ 
of the boundary of the Borel--Serre compactification of the locally symmetric space $\SGK$ is briefly discussed here, and the reader is referred to  
\cite[Chap.\,4]{harder-raghuram-book} for more details and proofs. 
There is a spectral sequence built from the cohomology of the boundary strata $ \PPBSC$ that converges to the cohomology of the boundary.  
To understand the cohomology of a single stratum $\PPBSC,$ note that
$$
H^\bullet(\partial_P\SGK, \tM_{\lambda,E}) \ = \ 
H^\bullet(P(\Q)\backslash G(\A)/ K_\infty K_f, \tM_{\lambda,E}).
$$
The space $P(\Q)\backslash G(\A)/ K_\infty K_f$ fibers over locally symmetric spaces of $M_P.$
Let $\Xi_{K_f}$ be a complete set of representatives for $P(\A_f)\backslash G(\A_f)/K_f.$
Let $K^P_\infty = K_\infty \cap P(\R),$ and for $\xi_f \in \Xi_{K_f}$, let $K_f^P(\xi_f) = P(\A_f) \cap \xi_f K_f \xi_f^{-1}.$ 
Then
$$
P(\Q)\backslash G(\A)/ K_\infty^0K_f  \ = \ \coprod_{\xi_f \, \in \, \Xi_{K_f}} P(\Q)\backslash P(\A)/ K^P_\infty K_f^P(\xi_f). 
$$
Let $\kappa_P : P \to P/U_P = M_P$ be the canonical map, and define 
$K_\infty^{M_P} = \kappa_P(K^P_\infty)$, and for $\xi_f \in \Xi_{K_f}$, let 
$K_f^{M_P}(\xi_f) = \kappa_P(K_f^P(\xi_f)).$ 
Define 
$$
\uSMP_{K_f^{M_P}(\xi_f)} \ := \ M_P(\Q)\backslash M_P(\A) / K_\infty^{M_P}K_f^{M_P}(\xi_f).
$$ 
The underline is to emphasize that we have divided by $K_\infty^{M_P}$ that may be explicated as follows: for the maximal 
parabolic $P = P_{(n,n')},$ whose Levi quotient $M_P$  may be identified with  
the block diagonal subgroup $G_n \times G_{n'}$ where $G_n = R_{F/\Q}(\GL_n)$ and $G_{n'} = R_{F/\Q}(\GL_{n'}),$ one has 
\begin{multline*}
K_\infty^{M_P} = \kappa_P(P(\R) \cap K_\infty) = M_P(\R) \cap K_\infty = \\
\prod_{v \in \place_\infty}
\left(\begin{bmatrix} \GL_n(\C) & \\ & \GL_{n'}(\C) \end{bmatrix} 
\cap \U(N) \right) S(\R) \ = \ 
\prod_{v \in \place_\infty}
\left(\begin{bmatrix} \U(n) & \\ & \U(n') \end{bmatrix} \right) S(\R). 
\end{multline*}
Note that $K_\infty^{M_P}$ is connected. 
 Let $K_f^{U_P}(\xi_f) = U_P(\A_f) \cap \xi_f K_f \xi_f^{-1}.$ We have the fibration: 
$$
U_P(\Q)\backslash U_P(\A)/K_f^{U_P}(\xi_f) \ \hookrightarrow \ 
P(\Q)\backslash P(\A)/ K^P_\infty K_f^P(\xi_f)  \ \twoheadrightarrow \ 
\uSMP_{K_f^{M_P}(\xi_f)}.
$$
The corresponding Leray--Serre spectral sequence is known to degenerate at the $E_2$-level. The cohomology of the total space is given in terms of the cohomology of the base with coefficients in the cohomology of the fiber. For the cohomology of the fiber, if $\u_P$ is the Lie algebra of $U_P$ then 
the cohomology of the fiber is the same as the Lie algebra cohomology group 
$H^\bullet(\u_P, \M_{\lambda, E})$--by a classical theorem due to van Est, which is naturally an algebraic representation of $M_P$; the associated sheaf on $\uSMP_{K_f^{M_P}(\xi_f)}$ is denoted by putting a tilde on top. One has:
\begin{equation}
\label{eqn:bdry-P-coh}
H^\bullet(\partial_P\SGK, \tM_{\lambda,E})  \ = \ 
\bigoplus_{\xi_f \, \in \, \Xi_{K_f}} 
H^\bullet \left(\uSMP_{K_f^{M_P}(\xi_f)}, \widetilde{H^\bullet(\u_P, \M_{\lambda,E})}\right). 
\end{equation}
Pass to the limit over all open compact subgroups $K_f$ and define:
$H^\bullet(\ppBSC, \tM_{\lambda,E}) := \varinjlim_{K_f} \, 
H^\bullet(\partial_P\SGK, \tM_{\lambda,E}). 
$
Let $\uSMP \ := \ M_P(\Q)\backslash M_P(\A) / K_\infty^{M_P};$ (\ref{eqn:bdry-P-coh}) can be rewritten as: 
$$
H^\bullet(\ppBSC, \tM_{\lambda,E})^{K_f} \ = \ 
\bigoplus_{\xi_f \, \in \, \Xi_{K_f}} 
H^\bullet \left(\uSMP, \widetilde{H^\bullet(\u_P, \M_{\lambda,E})}\right)^{K_f^{M_P}(\xi_f)}. 
$$
It is clear using Mackey theory that the right hand side is the $K_f$-invariants of an algebraically induced representation; hence one has the following  

\begin{prop}
\label{prop:bdry-coh-1}
The cohomology of $\ppBSC$ is given by:
$$
H^\bullet(\ppBSC, \tM_{\lambda,E}) \ = \ 
\aInd_{P(\A_f)}^{G(\A_f)}
\left( H^\bullet(\uSMP, \widetilde{H^\bullet(\u_P, \M_{\lambda,E})}) \right).
$$
The notation $\aInd$ stands for algebraic, or un-normalized, induction. 
\end{prop}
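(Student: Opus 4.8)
The work is essentially done: combining the Leray--Serre degeneration, van~Est's theorem, and the double-coset bookkeeping over $\Xi_{K_f}$, the cohomology of a single boundary stratum at finite level has already been identified in \eqref{eqn:bdry-P-coh} and rewritten just above as
$$
H^\bullet(\ppBSC, \tM_{\lambda,E})^{K_f} \ = \ \bigoplus_{\xi_f \in \Xi_{K_f}} H^\bullet\!\left(\uSMP, \widetilde{H^\bullet(\u_P, \M_{\lambda,E})}\right)^{K_f^{M_P}(\xi_f)}.
$$
The plan is to recognize the right-hand side as the $K_f$-fixed vectors in $\aInd_{P(\A_f)}^{G(\A_f)}$ of a suitable smooth $P(\A_f)$-module, and then to check that this identification is natural in $K_f$.

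First I would assemble the coefficient module: put $W := \varinjlim_{L} H^\bullet(\uSMP_L, \widetilde{H^\bullet(\u_P, \M_{\lambda,E})})$, the direct limit over open compact subgroups $L \subset M_P(\A_f)$. Right translation by $M_P(\A_f)$ on the tower $\{\uSMP_L\}_L$ is compatible with the $M_P$-equivariant structure of the van~Est sheaf (the coefficient object $H^\bullet(\u_P, \M_{\lambda,E})$ being an algebraic representation of $M_P$), so it induces a smooth action of $M_P(\A_f)$ on $W$; inflating through $\kappa_P : P \to M_P$ makes $W$ a smooth $P(\A_f)$-module on which $U_P(\A_f)$ acts trivially. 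Next I would invoke the Mackey formula for smooth induction: for any such $W$ and any open compact $K_f$, evaluation at the coset representatives gives a canonical isomorphism
$$
\aInd_{P(\A_f)}^{G(\A_f)}(W)^{K_f} \ \iso \ \bigoplus_{\xi_f \in \Xi_{K_f}} W^{P(\A_f) \cap \xi_f K_f \xi_f^{-1}}.
$$
Since $U_P(\A_f)$ acts trivially on $W$, the $\xi_f$-summand equals $W^{\kappa_P(P(\A_f) \cap \xi_f K_f \xi_f^{-1})} = W^{K_f^{M_P}(\xi_f)} = H^\bullet(\uSMP, \widetilde{H^\bullet(\u_P, \M_{\lambda,E})})^{K_f^{M_P}(\xi_f)}$, matching the displayed decomposition term by term. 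Finally, these level-by-level isomorphisms are natural in $K_f$ --- both sides are built from the same right $G(\A_f)$-translation action on sections --- so passing to $\varinjlim_{K_f}$ produces the asserted isomorphism of smooth $G(\A_f)$-modules, proving the proposition.

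The step requiring the most care is organizational rather than conceptual: making precise the $M_P(\A_f)$-action on the cohomology of the tower $\{\uSMP_L\}_L$ with van~Est coefficients, and verifying that the Mackey isomorphism commutes with the $G(\A_f)$-action (equivalently, with the Hecke operators) in the limit. It is worth noting that what appears is the \emph{un-normalized} induction $\aInd$, carrying no modulus twist: this is because the fibration records only $H^\bullet(\u_P, \M_{\lambda,E})$, the cohomology of $U_P(\Q) \backslash U_P(\A)$, without a choice of Haar measure --- the measure normalization on $U_P$ (and hence the discriminant factor $|\delta_{F/\Q}|^{-nn'/2}$) only enters later, when the constant-term map in cohomology is matched with restriction to $\partial_P\SGK$.
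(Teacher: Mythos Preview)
Your proposal is correct and follows the same approach as the paper, which simply asserts that the displayed decomposition is, by Mackey theory, the $K_f$-invariants of an algebraically induced representation; you have spelled out that Mackey argument in detail.
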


The following is a brief review of well-known results of Kostant \cite{kostant} on the structure of $H^{\bullet}(\u_P, \M_{\lambda, E}).$ 
The calculation of the unipotent cohomology group is over the field $E$. Recall that $G \times E = \prod_{\tau : F \to E} G_0^\tau$ where $G_0^\tau = G_0 \times_{F,\tau} E = \GL_N/E.$  
Let $\bfdelta_{G_0}$ stand for the set of roots of $G_0$ with respect to $T_{N,0}$, 
$\bfdelta_{G_0}^+$ the subset of positive roots (for choice of Borel subgroup being the upper triangular subgroup), and $\bfpi_{G_0}$ the set of simple roots.  
The notations $\bfdelta_{G_0^\tau}$, $\bfdelta_{G_0^\tau}^+$ and $\bfpi_{G_0^\tau}$ are clear. 
Let $P = \Res_{F/\Q}(P_0)$ be the parabolic subgroup of $G$ as above, and let $P_0^\tau := P_0 \times_{\tau} E.$
The Weyl group factors as $W = \prod_{\tau : F \to E} W_0^\tau$ with each $W_0^\tau$ isomorphic to the permutation group $\perm_N$ on $N$-letters. 
Let $W^P$ be the set of Kostant representatives in the Weyl group $W$ of $G$ corresponding to the parabolic subgroup $P$ defined as: $W^P = \{ w = (w^\tau) : w^\tau \in W^\tau_0{}^{P^\tau_0} \},$ where,  
$$
W^\tau_0{}^{P^\tau_0} \ := \{w^\tau \in W^\tau_0  : \ (w^\tau)^{-1}\alpha > 0, \ \forall \alpha \in \bfpi_{M_{P_0^\tau}} \}. 
$$
Here $\bfpi_{M_{P_0^\tau}} \subset \bfpi_{G_0^\tau}$ denotes the set of simple roots in the Levi quotient $M_{P_0^\tau}$ of $P_0^\tau.$ 
The twisted action of $w \in W$ on $\lambda \in X^*(T)$ is 
$w \cdot \lambda = (w^\tau \cdot \lambda^\tau)_{\tau : F \to E}$ and 
$w^\tau\cdot \lambda^\tau = w^\tau(\lambda^\tau + \bfgreek{rho}_N) - \bfgreek{rho}_N,$ where 
$\bfgreek{rho}_N = \tfrac12 \sum_{\alpha \in \bfdelta_{G_0}^+} \alpha.$ 
For $w \in W^P$, the irreducible finite-dimensional representation of $M_P \times E$ with extremal weight $w \cdot \lambda$ is denoted $\M_{w\cdot \lambda, E}$ . 
Kostant's theorem asserts that one has a multiplicity-free decomposition of $M_P \times E$-modules: 
\begin{equation}
\label{eqn:kostant}
H^q(\u_P, \M_{\lambda, E}) \ \simeq \  \bigoplus_{\substack{w \in W^P \\ l(w) = q}} \M_{w \cdot \lambda, E}.
\end{equation}
As explained in \cite{harder-raghuram-book}, the above result of Kostant can be parsed over the set of embeddings $\tau : F \to E.$  
Denote by $H^{l(w)}(\u_P, \M_{\lambda, E})(w)$ the summand of $H^q(\u_P, \M_{\lambda, E})$ corresponding to the Kostant representative $w$ which is nonzero 
for $q = l(w)$ and isomorphic to $\M_{w \cdot \lambda, E}$ . 
Applying \eqref{eqn:kostant} to the boundary cohomology as in Prop.\,\ref{prop:bdry-coh-1} gives the following 

\begin{prop}
\label{prop:bdry-coh-2}
The cohomology of $\ppBSC$ is given by:
$$
H^q(\ppBSC, \tM_{\lambda, E})  \ = \ 
\bigoplus_{w \in W^P}
{}^{\rm a}{\rm Ind}_{P(\A_f)}^{G(\A_f)}
\left(H^{q - l(w)}(\uSMP,  \widetilde{H^q(\u_P, \M_{\lambda, E})(w)} ) \right).
$$
\end{prop}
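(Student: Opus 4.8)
The plan is to feed Kostant's decomposition \eqref{eqn:kostant} of the fibre cohomology into the description of boundary cohomology furnished by Prop.\,\ref{prop:bdry-coh-1}, the only real work being a careful bookkeeping of cohomological degrees arising from the fibration
$$
U_P(\Q)\backslash U_P(\A)/K_f^{U_P}(\xi_f) \ \hookrightarrow \ P(\Q)\backslash P(\A)/K^P_\infty K_f^P(\xi_f) \ \twoheadrightarrow \ \uSMP_{K_f^{M_P}(\xi_f)}.
$$
First I would record the bigraded refinement of Prop.\,\ref{prop:bdry-coh-1}: since the associated Leray--Serre spectral sequence degenerates at $E_2$ (as already used in Sect.\,\ref{sec:coh-of-bdry}) and this degeneration is uniform in $K_f$, passing to the limit over open compact $K_f$ gives, for each total degree $q$,
$$
H^q(\ppBSC, \tM_{\lambda,E}) \ = \ \aInd_{P(\A_f)}^{G(\A_f)}\Bigl( \bigoplus_{a+b=q} H^a\bigl(\uSMP, \widetilde{H^b(\u_P,\M_{\lambda,E})}\bigr) \Bigr),
$$
where $\widetilde{H^b(\u_P,\M_{\lambda,E})}$ is the sheaf on $\uSMP$ attached, via van Est's theorem, to the algebraic $M_P\times E$-representation $H^b(\u_P,\M_{\lambda,E})$.

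Next I would apply Kostant: as $M_P\times E$-modules one has the multiplicity-free decomposition $H^b(\u_P,\M_{\lambda,E}) = \bigoplus_{w\in W^P,\,l(w)=b}\M_{w\cdot\lambda,E}$, and the $w$-summand is precisely $H^b(\u_P,\M_{\lambda,E})(w)$. Because the van Est identification is $M_P$-equivariant and the formation of the associated sheaf on $\uSMP$ is an additive functor of the $M_P$-module, this yields a decomposition of local systems $\widetilde{H^b(\u_P,\M_{\lambda,E})} = \bigoplus_{l(w)=b}\widetilde{H^b(\u_P,\M_{\lambda,E})(w)}$; since $H^a(\uSMP,-)$ commutes with finite direct sums, substituting into the displayed formula and re-indexing the double sum $\sum_{a+b=q}\sum_{l(w)=b}$ as a single sum over $w\in W^P$ with $a=q-l(w)$ gives
$$
H^q(\ppBSC, \tM_{\lambda,E}) \ = \ \aInd_{P(\A_f)}^{G(\A_f)}\Bigl( \bigoplus_{w\in W^P} H^{q-l(w)}\bigl(\uSMP, \widetilde{H^q(\u_P,\M_{\lambda,E})(w)}\bigr) \Bigr).
$$
Finally, $\aInd_{P(\A_f)}^{G(\A_f)}$ is an exact, hence additive, functor on $P(\A_f)$-modules, so it commutes with the direct sum over $W^P$, yielding the asserted formula as $\HK$-modules.

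The steps involving the spectral sequence, the exactness of $\aInd$, and additivity of sheafification and sheaf cohomology are all formal. The one point deserving a sentence of care --- and the closest thing here to a genuine obstacle --- is that the grading of $H^\bullet(\u_P,\M_{\lambda,E})$ by Kostant representatives is genuinely a grading by $M_P$-submodules: this is true because the extremal weights $w\cdot\lambda$ for distinct $w\in W^P$ are distinct, so the constituents $\M_{w\cdot\lambda,E}$ are pairwise non-isomorphic and the decomposition \eqref{eqn:kostant} is canonical. Consequently the subsheaf $\widetilde{H^q(\u_P,\M_{\lambda,E})(w)}$ is well-defined and $M_P$-equivariant, and the resulting direct sum decomposition of $H^q(\ppBSC,\tM_{\lambda,E})$ is respected by the Hecke action; everything else is bookkeeping of degrees.
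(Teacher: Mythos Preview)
Your proposal is correct and is exactly the approach the paper takes: the paper simply says ``Applying \eqref{eqn:kostant} to the boundary cohomology as in Prop.\,\ref{prop:bdry-coh-1} gives the following'' and states the proposition without further argument, so you have faithfully expanded the one-line indication into the routine bookkeeping of degrees via the degenerate Leray--Serre spectral sequence, Kostant's decomposition of the fibre cohomology, and additivity of $\aInd$. One cosmetic remark: in your final display the coefficient system should read $\widetilde{H^{l(w)}(\u_P,\M_{\lambda,E})(w)}$ after the substitution $b=l(w)$; the superscript $q$ there (which also appears in the paper's statement) is a notational slip, since that $q$ denotes the total boundary degree, not the unipotent-cohomology degree.
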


There is a canonical surjection $\uSMP \to \SMP$, using which we may inflate up the cohomology of $\SMP$ to the cohomology of $\uSMP$; 
this will be especially relevant to strongly inner cohomology classes of $\SMP$, which after inducing up to $G(\A_f)$ will contribute to boundary cohomology; see Sect.\,\ref{sec:ind-rep-bdry-coh}.

\medskip

\subsection{Galois action and local systems in boundary cohomology}
\label{sec:galois-infty-bdry}
For an embedding $\iota : E \to \C$, the map $\gamma_*$ induced by a Galois element $\gamma \in \Gal(\bar\Q/\Q)$ in unipotent cohomology  
$$
H^q(\u_P, \M_{{}^\iota\!\lambda, \C})({}^\iota w) \ \to \ H^q(\u_P, \M_{{}^{\gamma \circ \iota} \! \lambda, \C})({}^{\gamma \circ \iota} w), 
$$
where, $q = l(w) = l({}^\iota w) = l({}^{\gamma\circ\iota} w)$, will play a role in the proof of the reciprocity law of the main theorem. By Schur's lemma, this can be understood by its effect 
on the highest weight vector for the irreducible representation 
$H^q(\u_P, \M_{{}^\iota\!\lambda, \C})({}^\iota w) \simeq  \M_{{}^\iota w \cdot {}^\iota\!\lambda, \C}.$
Such a highest weight vector ${\sf h}(\lambda, w, \iota)$ 
 will be fixed by fixing a harmonic representative the corresponding cohomology class as in Kostant \cite[Thm.\,5.14]{kostant}. To explicate this vector note that: 
$$
\u_P \otimes E \ = \ \bigoplus_{\tau : F \to E} \u_{P_0}^\tau; \quad \u_{P_0}^\tau := \u_{P_0} \otimes_{F, \tau} E.
$$
Fix an ordering: 
$$
\Hom(F, E) = \{\tau_1,\tau_2,\dots,\tau_d\}.
$$ 
Let $\bfdelta(\u_{P_0})$ denote the subset of $\bfdelta^+$ of those positive roots $\varphi$ whose root space $X_\varphi$ is in $\u_{P_0}$. Fix an ordering
$$
\bfdelta(\u_{P_0}) = \{\varphi_1,\varphi_2,\dots,\varphi_{nn'}\}.
$$ 
For example, thinking in terms of upper triangular matrices, 
this ordering could be taken as the lexicographic ordering on the set of pairs of indices $\{(i,j): 1 \leq i \leq n, \ 1 \leq j \leq n'\}$. Fix a generator $e_\varphi$ for $X_\varphi$ for each 
$\varphi \in \bfdelta(\u_{P_0});$ note that $e_\varphi$ is well-defined up to $\Q^\times.$  Let $\{e_\varphi^*\}$ denote the basis of $\u_{P_0}^*$ that is dual 
to $\{e_\varphi\}.$ 
For a Kostant representative $w_0 \in W^{P_0} \subset W_{G_0}$, 
define $\Phi_{w_0} = \{\varphi > 0 : w_0^{-1}\varphi < 0\};$ then $\Phi_{w_0} \subset \bfdelta(\u_{P_0})$; with respect to the ordering that it inherits from $\bfdelta(\u_{P_0})$, denote 
$\Phi_{w_0} = \{\varphi^{w_0}_1, \dots,\varphi^{w_0}_{l}\}$ as an ordered set, where $l = l(w_0^{-1}) = l(w_0).$ Define 
$$
e_{\Phi_{w_0}}^* \ := \ e_{\varphi^{w_0}_1}^* \wedge \cdots \wedge e_{\varphi^{w_0}_l}^* \ \in \ \wedge^{q_0}(\u_{P_0}^*); \quad q_0 := l(w_0).
$$
Let $e_\varphi^\tau$ denote the image $e_\varphi \otimes 1$ of $e_\varphi$ under the canonical map $X_\varphi \to X_\varphi^\tau = X_\varphi \otimes_{F,\tau} E$. For  
$w = (w^\tau)_{\tau: F \to E} \in W_G = \prod_{\tau: F \to E} W_{G_0} \times_{F, \tau} E$, written using the ordering on $\Hom(F, E)$ as $w = \{w^{\tau_1}, \dots, w^{\tau_d}\}$, define: 
$$
e_{\Phi_w}^* \ := \ e_{\Phi_{w^{\tau_1}}}^* \wedge \cdots \wedge e_{\Phi_{w^{\tau_d}}}^* \ \in \ \wedge^q(\u_{P}^*\otimes E); \quad q := l(w). 
$$
Changing the base to $\C$ via $\iota : E \to \C$ gives
\begin{equation}
\label{eqn:e-Phi}
e_{\Phi_{^\iota \! w}}^* \ := \ e_{\Phi_{w^{\iota \circ \tau_1}}}^* \wedge \cdots \wedge e_{\Phi_{w^{\iota \circ \tau_d}}}^* \ \in \ \wedge^q(\u_{P}^* \otimes_\Q \C). 
\end{equation}
Fix a weight vector ${\sf s}(\lambda^\tau) \in \M_{\lambda^\tau, E}$ for the highest weight $\lambda^\tau;$ then 
${\sf s}(\lambda) = {\sf s}(\lambda^\tau_1) \otimes \cdots \otimes {\sf s}(\lambda^\tau_d)$ is the highest weight vector for  $\M_{\lambda, E}$. 
For each $w \in W,$ fix its representative in $G(E)$, which amounts to fixing a permutation matrix representing $w^\tau$ in $\GL_n(E)$ for 
each embedding $\tau : F \to E$. Let 
\begin{equation}
\label{eqn:extremal-w-vector}
{\sf s}(w \lambda) \ :=  \ \rho_{\lambda^{\tau_1}}(w^{\tau_1}){\sf s}(\lambda^\tau_1) \otimes \cdots \otimes  \rho_{\lambda^{\tau_d}}(w^{\tau_d}){\sf s}(\lambda^\tau_d)
\end{equation}
be the weight vector of extremal weight $w\lambda.$ These vectors can be composed via $\iota$: 
${\sf s}({}^\iota \!w \, {}^\iota\!\lambda)$ is the weight vector in $\M_{{}^\iota\!\lambda, \C}$ of extremal weight ${}^\iota \!w \, {}^\iota\!\lambda.$ Theorem 5.14 of \cite{kostant} asserts that 
\begin{equation}
\label{eqn:h-w-vector}
{\sf h}(\lambda, w, \iota) \ = \ e_{\Phi_{^\iota \! w}}^* \otimes {\sf s}({}^\iota \!w \, {}^\iota\!\lambda)  
\end{equation} 
is the highest weight vector for $H^q(\u_P, \M_{{}^\iota\!\lambda, \C})({}^\iota w).$ The image of ${\sf h}(\lambda, w, \iota)$ under the 
map $\gamma_*$ induced by $\gamma \in \Gal(\bar\Q/\Q)$ is a multiple of ${\sf h}(\lambda, w, \gamma \circ \iota)$; the scaling factor is captured by what 
$\gamma$ does to the wedge-products $e_{\Phi_{^\iota \! w}}^*$, motivating the following 

\begin{defn}
\label{def:signature}
Let $\iota : E \to \C$ and $\gamma \in \Gal(\bar\Q/\Q)$ then we have 
$$
e_{\Phi_{^{\gamma \circ \iota} \! w}}^*  \ = \ \varepsilon_{\iota, w}(\gamma) e_{\Phi_{^\iota \! w}}^* 
$$
for a signature $\varepsilon_{\iota, w}(\gamma) \in \{\pm 1\}$.
\end{defn}
From \eqref{eqn:extremal-w-vector}, \eqref{eqn:h-w-vector}, and the above definition one has:  
\begin{equation}
\label{eqn:galois-signature}
\gamma_*({\sf h}(\lambda, w, \iota)) \ = \ \varepsilon_{\iota, w}(\gamma) \cdot  {\sf h}(\lambda, w, \gamma \circ \iota). 
\end{equation}

\bigskip
\section{\bf The critical set and a combinatorial lemma}
\label{sec:crit-comb-lemma}

\medskip
\subsection{The critical set for $L(s, \sigma \times \sigma'^\v)$}
\label{sec:crit-set-description}

Let $n$ and $n'$ be two positive integers, and consider weights $\mu \in X^+_{00}(T_n \times \C)$ and 
$\mu' \in X^+_{00}(T_{n'} \times \C)$ given by:  
\begin{equation}
\label{eqn:mu}
\mu = (\mu^\eta)_{\eta : F \to \C}, \quad 
\mu^\eta = \sum_{i=1}^{n-1} (a^\eta_i-1) \bfgreek{gamma}_i + d^\eta \cdot  \bfgreek{delta} \ = \ (b^\eta_1,\dots, b^\eta_n), 
\end{equation}
and similarly, 
\begin{equation}
\label{eqn:mu'}
\mu' = (\mu'^\eta)_{\eta : F \to \C}, \quad 
\mu'^\eta = \sum_{j=1}^{n'-1} (a'^\eta_i-1) \bfgreek{gamma}_j + d'^\eta \cdot  \bfgreek{delta} \ = \ (b'^\eta_1,\dots, b'^\eta_{n'}). 
\end{equation}
Let $\sigma_f \in \Coh_{!!}(G_n, \mu)$ and $\sigma'_f \in \Coh_{!!}(G_{n'}, \mu')$ be strongly inner Hecke-summands; these Hecke-summands, take a unique representation at infinity to contribute to the respective cuspidal spectrum cohomology. Denote $\sigma_\infty = \J_\mu$ and 
$\sigma'_\infty = \J_{\mu'}$ then $\sigma = \sigma_\infty \otimes \sigma_f$ and $\sigma' = \sigma'_\infty \otimes \sigma'_f$ are cuspidal automorphic representations. We let $L(s, \sigma \times \sigma')$ stand for the completed standard Rankin--Selberg $L$-function of 
degree $nn'.$ We refer the reader to \cite[Sect.\,10.1]{shahidi-book} for a summary of the basic analytic properties of these $L$-functions. 
The purpose of this section is to identify the set of integers or possibly half-integers $m$ which are critical for  
$L(s, \sigma \times \sigma'^\v).$ (Note that we have dualized $\sigma'.$)

\medskip
\subsubsection{\bf Definition of the critical set}
\label{sec:defn-crit-set}
For any two half-integers $\alpha$ and $\beta,$ the local $L$-factor (see \cite{knapp}) of the character $z \mapsto z^\alpha \bar{z}^\beta$ of $\C^\times$ is given by: 
\begin{multline}
\label{eqn:abelian-local-l-factor}
L(s, z^\alpha \bar{z}^\beta)
\ = \ 
2(2\pi)^{-\left(s + \frac{\alpha+\beta}{2} + \frac{|\alpha - \beta|}{2}\right)}
\Gamma\left(s + \frac{\alpha+\beta}{2} + \frac{|\alpha - \beta|}{2}\right) \\ \sim \ 
\Gamma\left(s + \frac{\alpha+\beta}{2} + \frac{|\alpha - \beta|}{2}\right), 
\end{multline}
where, by $\sim$, we mean up to nonzero constants and exponential functions, which are entire and nonvanishing everywhere and hence are irrelevant to the computation of critical points; see Def.\,\ref{def:crit} below. For any 
$v \in \place_\infty$, let $\{\eta_v, \bar{\eta}_v\}$ be the pair of conjugate embeddings of $F$ to $\C$ as before. Let 
$$
\alpha^v \ =  \ -w_0\mu^{\eta_v} + \bfgreek{rho}_n \ = \ (\alpha^v_1,\dots, \alpha^v_n) 
\ \ \ {\rm and} \ \ \ 
\beta^v  \ = \ - \mu^{\bar\eta_v} - \bfgreek{rho}_n \ = \ (\beta^v_1,\dots, \beta^v_n)
$$
be the cuspidal parameters of $\mu$ at $v$; see  
\eqref{eqn:cuspidal-parameters-alpha} and \eqref{eqn:cuspidal-parameters-beta}. Similarly, let 
$$
\alpha'^v \ =  \ -w_0\mu'^{\eta_v} + \bfgreek{rho}_{n'} \ = \ (\alpha'^v_1,\dots, \alpha'^v_{n'}) 
\ \ \ {\rm and} \ \ \ 
\beta'^v  \ = \ - \mu'^{\bar\eta_v} - \bfgreek{rho}_{n'} \ = \ (\beta'^v_1,\dots, \beta'^v_{n'})
$$
be the cuspidal parameters of $\mu'$ at $v.$ 
Note that 
$$
\alpha := \alpha_i^v + \alpha'^v_j \ \in \ 
\frac{n-1}{2} + \Z + \frac{n'-1}{2} + \Z \ = \ 
\frac{N}{2} + \Z, \quad {\rm and} \quad 
\beta := \beta_i^v + \beta'^v_j \ \in \ \frac{N}{2} + \Z. 
$$
Then, it is clear that the quantity $\tfrac{\alpha+\beta}{2} + \tfrac{|\alpha - \beta|}{2}$ 
inside the argument of the $\Gamma$-function above is in 
$\tfrac{N}{2} + \Z.$ 
This tells us that the critical set for $L(s, \sigma \times \sigma')$ will be a subset of $\tfrac{N}{2} + \Z.$ 

\medskip

Let $\sigma$ and $\sigma'$ be cuspidal automorphic representations of $G_n(\A)$ and $G_{n'}(\A)$, respectively. The set of critical points for 
$L(s, \sigma \times \sigma'^\v)$ is defined to be: 
\begin{multline}\label{def:crit} 
\Crit(L(s, \sigma \times \sigma'^\v)) \ := \ \\
\left\{m \in \tfrac{N}{2} + \Z \ : 
\mbox{both $L_\infty(s, \sigma \times \sigma'^\v)$ and $L_\infty(1-s, \sigma^\v \times \sigma')$ are finite at $s = m$} \right\}. 
\end{multline}
If $\sigma$ and $\sigma'$ are cohomological with respect to $\mu$ and $\mu'$, then we denote
\begin{equation}
\label{eqn:crit-mu-mu'}
\Crit(\mu, \mu') \ := \ \Crit(L(s, \sigma \times \sigma'^\v)). 
\end{equation}

\medskip
\subsubsection{\bf Computing the critical set}
\label{sec:compute-crit-set}
Recall, the purity conditions:
$$
\alpha^v_i + \beta^v_i = - \w, \ \ {\rm and} \ \ 
\alpha'^v_i + \beta'^v_i = - \w'. 
$$ 
We define a quantity $a(\mu,\mu')$, and call it the {\it abelian width} between $\mu$ and $\mu'$, as: 
\begin{equation}
\label{eqn:abelian-width}
a(\mu,\mu') \ := \ 
\frac{\w - \w'}{2} \ = \ \frac{(d^\eta + d^{\bar\eta}) - (d'^\eta + d'^{\bar\eta})}{2}. 
\end{equation}

From the local Langlands correspondence and \eqref{eqn:abelian-local-l-factor} on abelian local $L$-factors, we get 
\begin{equation}
\label{eqn:l-factor-infinity}
L_\infty(s, \sigma \times \sigma'^\v) \ \sim \ 
\prod_{v \in \place_\infty} \, \prod_{i=1}^n \, \prod_{j=1}^{n'} \  
\Gamma\left(
s - a(\mu,\mu') + \frac{|\alpha^v_i - \alpha'^v_j - \beta^v_i + \beta'^v_j|}{2} \right).
\end{equation}
And, similarly, 
\begin{equation}
\label{eqn:l-factor-infinity-dual}
L_\infty(1-s, \sigma^\v \times \sigma') \ \sim \ 
\prod_{v \in \place_\infty} \, \prod_{i=1}^n \, \prod_{j=1}^{n'} \ 
\Gamma\left(
1-s + a(\mu,\mu') + \frac{|\alpha^v_i - \alpha'^v_j - \beta^v_i + \beta'^v_j|}{2} \right).
\end{equation}

Let $m \in \tfrac{N}{2} + \Z.$ Then $m \in \Crit(\mu,\mu')$ if and only if 
\begin{equation}
\label{eqn:crit-m-1}
m - a(\mu,\mu') + \frac{|\alpha^v_i - \alpha'^v_j - \beta^v_i + \beta'^v_j|}{2}  \ \geq \ 1, \quad \forall v \in \place_\infty, \forall i, \forall j, 
\end{equation}
which is the condition that $L_\infty(m, \sigma \times \sigma'^\v)$ is finite from \eqref{eqn:l-factor-infinity}, and 
\begin{equation}
\label{eqn:crit-m-2}
1-m + a(\mu,\mu') + \frac{|\alpha^v_i - \alpha'^v_j - \beta^v_i + \beta'^v_j|}{2}  \ \geq \ 1, \quad \forall v \in \place_\infty, \forall i, \forall j, 
\end{equation}
which is the condition that $L_\infty(1-m, \sigma^\v \times \sigma')$ is finite from \eqref{eqn:l-factor-infinity-dual}. 
Define the {\it cuspidal width} $\ell(\mu,\mu')$ between $\mu$ and $\mu'$ as: 
\begin{equation}
\label{eqn:cuspidal-width}
\ell(\mu,\mu') \ := \ 
{\rm min}
\left\{|\alpha^v_i - \alpha'^v_j - \beta^v_i + \beta'^v_j| \ : \ v \in \place_\infty, 1 \leq i \leq n, \ 1\leq j \leq n' \right\}. 
\end{equation}
Then \eqref{eqn:crit-m-1} and \eqref{eqn:crit-m-2} together gives us the following

\medskip
\begin{prop}
\label{prop-crit-mu-mu'}
Let $\mu \in X^+_{00}(T_n \times \C)$ and $\mu' \in X^+_{00}(T_{n'} \times \C).$ For  
$\sigma_f \in \Coh_{!!}(G_n, \mu)$ and $\sigma'_f \in \Coh_{!!}(G_{n'}, \mu')$, the critical set for the Rankin--Selberg $L$-function 
$L(s, \sigma \times \sigma'^\v)$ is given by: 
$$
\Crit(\mu, \mu') \ = \ 
\left\{m \in \tfrac{N}{2} + \Z \ : \ 
1 - \frac{\ell(\mu,\mu')}{2} + a(\mu, \mu') \ \leq \ m \ \leq \ \frac{\ell(\mu,\mu')}{2} + a(\mu, \mu') \right\}.
$$
This is contiguous string of integers or half-integers (depending on whether $N$ is even or odd), 
centered around $\tfrac12+ a(\mu, \mu')$, of length $\ell(\mu,\mu').$ 
\end{prop}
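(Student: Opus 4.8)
The plan is to read off $\Crit(\mu,\mu')$ directly from the two explicit products of $\Gamma$-factors in \eqref{eqn:l-factor-infinity} and \eqref{eqn:l-factor-infinity-dual}, turning the finiteness requirements \eqref{eqn:crit-m-1} and \eqref{eqn:crit-m-2} into a pair of linear inequalities in $m$. The one substantive ingredient is an integrality observation: for $m \in \tfrac{N}{2}+\Z$ the argument of every $\Gamma$-factor occurring in $L_\infty(m, \sigma \times \sigma'^\v)$ and in $L_\infty(1-m, \sigma^\v \times \sigma')$ is an \emph{integer}. Once that is known, $\Gamma(s+c)$ is finite at an integer point $s=m$ precisely when $m+c\ge 1$, so each finiteness condition becomes a bare inequality rather than a statement about missing a lattice of poles.

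First I would carry out the integrality and parity bookkeeping. Since $\mu,\mu'$ are integral weights, every $b^\eta_i$ and $b'^\eta_j$ is an integer, so each $\alpha^v_i-\beta^v_i$ and $\alpha'^v_j-\beta'^v_j$ is an integer; moreover, from the purity relations $\alpha^v_i+\beta^v_i=-\w$ and $\alpha'^v_j+\beta'^v_j=-\w'$ together with $\beta^v_i \in \tfrac{n-1}{2}+\Z$ and $\beta'^v_j \in \tfrac{n'-1}{2}+\Z$, one finds $\alpha^v_i-\beta^v_i\equiv \w+n-1$ and $\alpha'^v_j-\beta'^v_j\equiv \w'+n'-1 \pmod 2$, whence $|\alpha^v_i-\alpha'^v_j-\beta^v_i+\beta'^v_j|\equiv \w-\w'+N \pmod 2$. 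Therefore $\tfrac12|\alpha^v_i-\alpha'^v_j-\beta^v_i+\beta'^v_j| \in a(\mu,\mu') + \tfrac{N}{2} + \Z$; substituting into \eqref{eqn:crit-m-1} and \eqref{eqn:crit-m-2} and using $m \in \tfrac{N}{2}+\Z$ and $2a(\mu,\mu') = \w-\w' \in \Z$ shows both $\Gamma$-arguments are integers. A by-product is $\ell(\mu,\mu') \in \Z_{\ge 0}$ with $\ell(\mu,\mu') \equiv \w - \w' + N \pmod 2$.

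Then, with finiteness now equivalent to ``$\ge 1$'', \eqref{eqn:crit-m-1} says $m \ge 1 + a(\mu,\mu') - \tfrac12|\alpha^v_i-\alpha'^v_j-\beta^v_i+\beta'^v_j|$ for all $v,i,j$, and \eqref{eqn:crit-m-2} says $m \le a(\mu,\mu') + \tfrac12|\alpha^v_i-\alpha'^v_j-\beta^v_i+\beta'^v_j|$ for all $v,i,j$. In both cases the binding constraint is supplied by the triple minimizing $|\alpha^v_i-\alpha'^v_j-\beta^v_i+\beta'^v_j|$, that is, by the cuspidal width $\ell(\mu,\mu')$, yielding precisely
\[
1 - \tfrac{\ell(\mu,\mu')}{2} + a(\mu,\mu') \ \le \ m \ \le \ \tfrac{\ell(\mu,\mu')}{2} + a(\mu,\mu').
\]
Finally, the bookkeeping above shows both endpoints lie in $\tfrac{N}{2}+\Z$, so $\Crit(\mu,\mu')$ is the arithmetic progression in $\tfrac{N}{2}+\Z$ joining them: a contiguous string of $\ell(\mu,\mu')$ consecutive elements (empty exactly when $\ell(\mu,\mu')=0$), consisting of integers when $N$ is even and of half-integers when $N$ is odd, with mean $\tfrac12\bigl((1-\tfrac{\ell(\mu,\mu')}{2}+a(\mu,\mu'))+(\tfrac{\ell(\mu,\mu')}{2}+a(\mu,\mu'))\bigr) = \tfrac12 + a(\mu,\mu')$, i.e.\ centered at $\tfrac12 + a(\mu,\mu')$.

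I do not anticipate a genuine obstacle here: granting \eqref{eqn:l-factor-infinity}--\eqref{eqn:l-factor-infinity-dual}, the argument is essentially formal. The step needing care is the parity/integrality bookkeeping, since it is exactly what licenses replacing ``the relevant $\Gamma$-factor has no pole at $s=m$'' by a linear inequality, and it is also what forces $\Crit(\mu,\mu') \subset \tfrac{N}{2}+\Z$ together with the integer-versus-half-integer dichotomy governed by the parity of $N$.
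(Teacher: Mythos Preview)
Your proof is correct and follows essentially the same approach as the paper, which simply records that \eqref{eqn:crit-m-1} and \eqref{eqn:crit-m-2} together yield the proposition after defining $\ell(\mu,\mu')$. You make explicit the integrality of the $\Gamma$-arguments (which the paper sketches just before \eqref{def:crit}) and the passage from ``no pole'' to ``$\ge 1$'', but the underlying logic is the same.
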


\medskip
\begin{cor}
\label{cor-crit-N/2}
With notations as in Prop.\,\ref{prop-crit-mu-mu'}, the points $s = -N/2$ and $s = 1-N/2$ are both critical for $L(s, \sigma \times \sigma'^\v)$ if and only if 
$$
- \frac{N}{2} + 1 - \frac{\ell(\mu,\mu')}{2}
 \ \leq \ a(\mu, \mu')  \ \leq \  
- \frac{N}{2} -1 + \frac{\ell(\mu,\mu')}{2}. 
$$
\end{cor}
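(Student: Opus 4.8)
The plan is to read the statement off directly from Proposition~\ref{prop-crit-mu-mu'}, which identifies $\Crit(\mu,\mu')$ with the set of $m \in \tfrac{N}{2}+\Z$ obeying $1 - \tfrac{\ell(\mu,\mu')}{2} + a(\mu,\mu') \leq m \leq \tfrac{\ell(\mu,\mu')}{2} + a(\mu,\mu')$. Since $-\tfrac{N}{2}$ and $1-\tfrac{N}{2}$ both lie in $\tfrac{N}{2}+\Z$, membership of each in $\Crit(\mu,\mu')$ is equivalent to the corresponding two-sided inequality, so the whole corollary will come down to a short piece of bookkeeping with these inequalities.

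First I would note that, among the two points in question, $-\tfrac{N}{2}$ is the smaller and $1-\tfrac{N}{2}$ the larger. Consequently, writing $\ell = \ell(\mu,\mu')$ and $a = a(\mu,\mu')$, requiring $-\tfrac{N}{2} \in \Crit(\mu,\mu')$ and $1-\tfrac{N}{2} \in \Crit(\mu,\mu')$ amounts to the four inequalities $1 - \tfrac{\ell}{2} + a \leq -\tfrac{N}{2}$, $\ -\tfrac{N}{2} \leq \tfrac{\ell}{2} + a$, $\ 1 - \tfrac{\ell}{2} + a \leq 1 - \tfrac{N}{2}$, and $1 - \tfrac{N}{2} \leq \tfrac{\ell}{2} + a$. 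The second of these follows from the fourth, and the third follows from the first, because $-\tfrac{N}{2} < 1 - \tfrac{N}{2}$. Hence the pair $\{-N/2,\,1-N/2\}$ is contained in $\Crit(\mu,\mu')$ precisely when $1 - \tfrac{\ell}{2} + a \leq -\tfrac{N}{2}$ and $1 - \tfrac{N}{2} \leq \tfrac{\ell}{2} + a$.

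It then remains only to rearrange these two surviving inequalities: the first becomes $a(\mu,\mu') \leq -\tfrac{N}{2} - 1 + \tfrac{\ell(\mu,\mu')}{2}$ and the second becomes $-\tfrac{N}{2} + 1 - \tfrac{\ell(\mu,\mu')}{2} \leq a(\mu,\mu')$, which together are exactly the displayed double inequality. I do not expect any genuine obstacle here: all of the mathematical content sits in Proposition~\ref{prop-crit-mu-mu'}, and this corollary merely isolates the special situation $\{-N/2,\,1-N/2\} \subset \Crit(\mu,\mu')$ that will later be fed into the combinatorial lemma of Section~\ref{sec:comb-lemma}, in order to reduce the main theorem to a rationality statement for the single ratio of consecutive $L$-values at $s=-N/2$ and $s=1-N/2$.
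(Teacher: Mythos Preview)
Your proof is correct and matches the paper's approach: the paper does not give a separate argument for this corollary, treating it as an immediate consequence of Proposition~\ref{prop-crit-mu-mu'}, and your write-up simply spells out the elementary bookkeeping with the two inequalities that makes this explicit.
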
 
Of course, for this to be possible one needs $\ell(\mu,\mu') \geq 2$, i.e., that there at least two critical points. 
The corollary, which is one part of a {\it combinatorial lemma} below, is to be viewed like this: the two successive $L$-values at $s = -N/2$ and $s = 1-N/2$ are critical if and only if the abelian width is bounded in absolute value in terms of the cuspidal width.

\medskip
\begin{cor}
\label{cor:F-tot-real-nn'-even}
Suppose $F$ is in the {\bf TR}-case and $F_1 = F_0$ is the maximal totally real subfield of $F$. Given $\mu \in X^+_{00}(T_n \times \C)$ and $\mu' \in X^+_{00}(T_{n'} \times \C),$ 
if $n$ and $n'$ are both odd, then $\ell(\mu,\mu') = 0$; in particular, the Rankin--Selberg $L$-function 
$L(s, \sigma \times \sigma'^\v)$ has no critical points. 
\end{cor}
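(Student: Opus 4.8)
The plan is to show that the hypotheses force the cuspidal width $\ell(\mu,\mu')$ to vanish, and then appeal to Proposition \ref{prop-crit-mu-mu'}: since $\Crit(\mu,\mu')$ is there described as a contiguous string of length $\ell(\mu,\mu')$ (explicitly $\{m : 1-\tfrac{\ell(\mu,\mu')}{2}+a(\mu,\mu') \le m \le \tfrac{\ell(\mu,\mu')}{2}+a(\mu,\mu')\}$), the vanishing $\ell(\mu,\mu')=0$ immediately yields $\Crit(\mu,\mu')=\varnothing$. So everything reduces to exhibiting one place $v\in\place_\infty$ and one pair of indices $i,j$ with $|\alpha^v_i-\alpha'^v_j-\beta^v_i+\beta'^v_j|=0$ in \eqref{eqn:cuspidal-width}.

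First I would record the trivial rearrangement
\[
\alpha^v_i-\alpha'^v_j-\beta^v_i+\beta'^v_j \ = \ (\alpha^v_i-\beta^v_i) - (\alpha'^v_j-\beta'^v_j),
\]
so it is enough to find $v$ and $i$ with $\alpha^v_i-\beta^v_i=0$ and, for the \emph{same} $v$, an index $j$ with $\alpha'^v_j-\beta'^v_j=0$. From \eqref{eqn:cuspidal-parameters-alpha} and \eqref{eqn:cuspidal-parameters-beta} one computes, writing $\mu^{\eta_v}=(b^{\eta_v}_1,\dots,b^{\eta_v}_n)$,
\[
\alpha^v_i-\beta^v_i \ = \ b^{\bar\eta_v}_i - b^{\eta_v}_{n-i+1} + (n-2i+1).
\]
Now I bring in the {\bf TR}-hypothesis. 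Since $\mu$ is strongly pure, Proposition \ref{prop:strong-pure-weights-base-change} gives $\mu={\rm BC}_{F/F_1}(\kappa)$; because $F$ is in the {\bf TR}-case, $F_1=F_0$ is totally real, so for every archimedean place $v$ of $F$ lying over $w$ of $F_1$ both $\eta_v$ and $\bar\eta_v$ restrict to the single embedding $\nu_w$, whence $\mu^{\eta_v}=\mu^{\bar\eta_v}=\kappa^{\nu_w}$ and in particular $b^{\bar\eta_v}_i=b^{\eta_v}_i$ for all $i$. When $n$ is odd the index $i=\tfrac{n+1}{2}$ is its own mirror index $n-i+1$, and the displayed formula collapses to $\alpha^v_{(n+1)/2}-\beta^v_{(n+1)/2}= b^{\eta_v}_{(n+1)/2}-b^{\eta_v}_{(n+1)/2}+0 = 0$. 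Applying the identical argument to $\mu'$, using that $n'$ is odd, gives $\alpha'^v_{(n'+1)/2}-\beta'^v_{(n'+1)/2}=0$ for the same $v$.

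Taking $i=\tfrac{n+1}{2}$, $j=\tfrac{n'+1}{2}$ in \eqref{eqn:cuspidal-width} therefore produces a vanishing term, so $\ell(\mu,\mu')=0$, and by Proposition \ref{prop-crit-mu-mu'} the critical set $\Crit(\mu,\mu')=\Crit(L(s,\sigma\times\sigma'^\v))$ is empty. There is no genuine obstacle in this argument; the only points requiring care are keeping the cuspidal-parameter conventions of \eqref{eqn:cuspidal-parameters-alpha}--\eqref{eqn:cuspidal-parameters-beta} straight, and noticing that it is precisely the \emph{totally real} nature of $F_1$ (as opposed to a genuine CM $F_1$, where the pair $\{\eta_v,\bar\eta_v\}$ restricts to distinct conjugate embeddings $\{\nu_w,\bar\nu_w\}$ of $F_1$ with $\kappa^{\nu_w}\neq\kappa^{\bar\nu_w}$ in general) that forces $\mu^{\eta_v}=\mu^{\bar\eta_v}$ and thereby annihilates the middle cuspidal parameter.
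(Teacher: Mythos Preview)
Your proof is correct and follows essentially the same approach as the paper: both use Prop.\,\ref{prop:strong-pure-weights-base-change} together with the {\bf TR}-hypothesis to conclude $\mu^{\eta_v}=\mu^{\bar\eta_v}$, then observe that at the middle index $i=\tfrac{n+1}{2}$ (and $j=\tfrac{n'+1}{2}$) the cuspidal-width term vanishes, forcing $\ell(\mu,\mu')=0$. The only cosmetic difference is that the paper packages the relation as $\alpha^v = w_0\beta^v$ (i.e., $\alpha^v_i=\beta^v_{n+1-i}$), whereas you compute $\alpha^v_i-\beta^v_i$ directly; both collapse identically at the self-mirrored index.
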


\begin{proof}
Recall from Prop.\,\ref{prop:strong-pure-weights-base-change} that $\mu$ is the base change of a strongly-pure weight over $F_1$. For $v \in \place_\infty(F)$, one has 
$\eta_v|_{F_1} = \bar\eta_v|_{F_1}$, hence 
$\mu^{\eta_v} = \mu^{\bar\eta_v}$. Hence, for the cuspidal parameters, one has $\alpha^v = w_0 \beta^v$, i.e., $\alpha^v_i = \beta^v_{n+1-i}.$ If $n$ is odd, then 
$\alpha^v_{(n+1)/2} = \beta^v_{(n+1)/2}.$ Similarly, if $n'$ is odd, then $\alpha'^v_{(n'+1)/2} = \beta'^v_{(n'+1)/2}.$ From \eqref{eqn:cuspidal-width} it follows 
that $\ell(\mu,\mu') = 0$, as $0$ is realized as the minimum by taking $i = (n+1)/2$ and $j = (n'+1)/2.$ 
\end{proof}

\medskip
\subsubsection{\bf Critical set at an arithmetic level} 
\label{sec:crit-set-arith-level}
Let $\mu \in X^+_{00}(T_n \times E)$ and $\mu' \in X^+_{00}(T_{n'} \times E),$ and take  
$\sigma_f \in \Coh_{!!}(G_n, \mu)$ and $\sigma'_f \in \Coh_{!!}(G_{n'}, \mu').$ For any $\iota : E \to \C$, 
Prop.\,\ref{prop-crit-mu-mu'} gives 
the critical set $\Crit({}^\iota\mu, {}^\iota\mu')$  for the Rankin--Selberg $L$-function 
$L(s, {}^\iota\sigma \times {}^\iota\sigma'^\v).$ 

\medskip
\begin{cor}
\label{cor:crit-set-ind-iota}
The critical set $\Crit({}^\iota\mu, {}^\iota\mu') 
= \Crit(L(s, {}^\iota\sigma \times {}^\iota\sigma'^\v))$ is independent of $\iota:$  
$$
\Crit(L(s, {}^\iota\sigma \times {}^\iota\sigma'^\v)) \ = \ 
\Crit(L(s, {}^{\gamma \circ\iota}\sigma \times {}^{\gamma \circ\iota}\sigma'^\v)), \quad \forall \ \iota : E \to \C, \ \ \forall \gamma \in \Gal(\bar\Q/\Q).
$$
\end{cor}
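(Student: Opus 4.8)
The plan is to reduce the statement to Prop.\,\ref{prop-crit-mu-mu'}, which identifies $\Crit({}^\iota\mu,{}^\iota\mu')$ as the contiguous string of length $\ell({}^\iota\mu,{}^\iota\mu')$ centred at $\tfrac12 + a({}^\iota\mu,{}^\iota\mu')$. Hence it suffices to prove that the abelian width $a({}^\iota\mu,{}^\iota\mu')$ and the cuspidal width $\ell({}^\iota\mu,{}^\iota\mu')$ are each independent of $\iota$. Because $E/\Q$ is Galois and contains a copy of $F$, the set $\{\gamma\circ\iota_0 : \gamma\in\Gal(\bar\Q/\Q)\}$ equals $\Hom(E,\C)$ for any fixed $\iota_0$ (the same observation used in the proof of Prop.\,\ref{prop:strong-pure-E}), so it is enough to check invariance under $\iota\mapsto\gamma\circ\iota$.

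The abelian width is immediate. By \eqref{eqn:abelian-width}, $a({}^\iota\mu,{}^\iota\mu') = (\w - \w')/2$ where $\w$ and $\w'$ are the purity weights of ${}^\iota\mu$ and ${}^\iota\mu'$. But $\mu\in X^+_{00}(T_n\times E)$ and $\mu'\in X^+_{00}(T_{n'}\times E)$, and the characterization $(iii)$ of strong-purity over $E$ in Prop.\,\ref{prop:strong-pure-E} says precisely that ${}^\iota\mu$ is pure with one and the same purity weight $\w$ for every $\iota$, and likewise for $\mu'$. Hence $a({}^\iota\mu,{}^\iota\mu')$ does not depend on $\iota$.

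For the cuspidal width I would argue as follows. The cuspidal parameters of ${}^{\gamma\circ\iota}\mu$ at a place $v\in\place_\infty(F)$ are built out of $\mu^{\iota^{-1}\circ\gamma^{-1}\circ\eta_v}$ and $\mu^{\iota^{-1}\circ\gamma^{-1}\circ\bar\eta_v}$; via Prop.\,\ref{prop:strong-pure-weights-base-change} ($\mu = {\rm BC}_{F/F_1}(\kappa)$, $\mu' = {\rm BC}_{F/F_1}(\kappa')$) these depend only on the induced data over $F_1$, and the bookkeeping of the Galois action in Sect.\,\ref{sec:galois-arch-constituents} shows that the effect of $\gamma$ is to permute the archimedean places of $F$ (above the Galois-permuted places of $F_1$), with the one extra feature, occurring only in the {\bf CM}-case, that some pairs $\{\eta_v,\bar\eta_v\}$ may have their two elements interchanged. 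By Rem.\,\ref{rem:interchange-alpha-beta}, interchanging $\eta_v$ and $\bar\eta_v$ replaces the cuspidal parameters $(\alpha^v,\beta^v)$ of $\mu$ by $(w_0\beta^v, w_0\alpha^v)$, hence sends the multiset $\{\alpha^v_i - \beta^v_i\}_{i}$ to $\{-(\alpha^v_i - \beta^v_i)\}_{i}$; crucially the same interchange is forced simultaneously for $\mu'$, since whether it occurs is dictated by $\gamma$ and the underlying place of $F_1$ alone, not by $n$ versus $n'$. Therefore, in the quantity $|\alpha^v_i - \alpha'^v_j - \beta^v_i + \beta'^v_j| = |(\alpha^v_i - \beta^v_i) - (\alpha'^v_j - \beta'^v_j)|$, the interchange amounts to the reindexing $(i,j)\mapsto(n+1-i,\,n'+1-j)$ up to an overall sign, so the finite set of real numbers over which the minimum in \eqref{eqn:cuspidal-width} is taken is literally unchanged. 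Hence $\ell({}^{\gamma\circ\iota}\mu,{}^{\gamma\circ\iota}\mu') = \ell({}^\iota\mu,{}^\iota\mu')$, and Prop.\,\ref{prop-crit-mu-mu'} finishes the proof.

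A more conceptual variant, which I would also mention, bypasses the widths: the elementary formula \eqref{eqn:abelian-local-l-factor} for $L(s, z^\alpha\bar z^\beta)$ is symmetric in $(\alpha,\beta)$, so the local Rankin--Selberg factor of $L(s,{}^\iota\sigma\times{}^\iota\sigma'^\v)$ at a place $v$ is unaffected by replacing ${}^\iota\sigma_v$ and ${}^\iota\sigma'_v$ simultaneously by their conjugates; combining this with the fact (Sect.\,\ref{sec:galois-arch-constituents}) that the archimedean components of ${}^{\gamma\circ\iota}\sigma$ and ${}^{\gamma\circ\iota}\sigma'$ are permutations of those of ${}^\iota\sigma$ and ${}^\iota\sigma'$ up to a conjugation applied in sync to the two factors, one obtains $L_\infty(s,{}^{\gamma\circ\iota}\sigma\times{}^{\gamma\circ\iota}\sigma'^\v) = L_\infty(s,{}^\iota\sigma\times{}^\iota\sigma'^\v)$ and likewise for $L_\infty(1-s,{}^{\gamma\circ\iota}\sigma^\v\times{}^{\gamma\circ\iota}\sigma')$; since $\Crit$ is the common finiteness locus of these two products of $\Gamma$-factors, it is $\iota$-independent. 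In either route the only genuinely delicate point is the bookkeeping of conjugations — vacuous in the {\bf TR}-case, where $F_1$ is totally real and already $\alpha^v = w_0\beta^v$, and present but harmless in the {\bf CM}-case — together with the verification that a conjugation at $v$ touches $\sigma$ and $\sigma'$ at the same time; granting Prop.\,\ref{prop:strong-pure-weights-base-change} and the computations of Sect.\,\ref{sec:galois-arch-constituents}, this is routine.
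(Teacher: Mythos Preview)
Your proposal is correct and follows essentially the same approach as the paper's own proof, which also gives both routes: first the invariance of $a({}^\iota\mu,{}^\iota\mu')$ and $\ell({}^\iota\mu,{}^\iota\mu')$ via Rem.\,\ref{rem:interchange-alpha-beta}, and then the direct equality $L_\infty(s, {}^\iota\sigma \times {}^\iota\sigma'^\v) = L_\infty(s, {}^{\gamma\circ\iota}\sigma \times {}^{\gamma\circ\iota}\sigma'^\v)$ via Sect.\,\ref{sec:galois-arch-constituents} together with the symmetry $L(s, z^\alpha\bar z^\beta)=L(s, z^\beta\bar z^\alpha)$. Your write-up is more explicit than the paper on one point the paper leaves implicit, namely that the conjugation at a given archimedean place is forced simultaneously on $\sigma$ and $\sigma'$ (since it is dictated by $\gamma$ acting on the distinguished embeddings of $F_1$); this synchronization is indeed what makes the symmetry argument go through, so it is worth spelling out.
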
 
\begin{proof}
From Rem.\,\ref{rem:interchange-alpha-beta}, one can deduce 
$\ell({}^\iota\mu, {}^\iota\mu') = \ell({}^{\gamma \circ \iota}\mu, {}^{\gamma \circ \iota}\mu')$ and $a({}^\iota\mu, {}^\iota\mu') = a({}^{\gamma \circ \iota}\mu, {}^{\gamma \circ \iota}\mu').$ 
One can also see this directly, since by the results of \ref{sec:galois-arch-constituents}, 
the archimedean components of ${}^{\gamma \circ\iota}\sigma$ are a permutation 
of those of ${}^\iota\sigma$ up to conjugates; similarly, for ${}^\iota\sigma'$; since $L(s, z^\alpha \bar{z}^\beta) = L(s, z^\beta\bar{z}^\alpha)$ one gets $L_\infty(s, {}^\iota\sigma \times {}^\iota\sigma'^\v) = L_\infty(s, {}^{\gamma \circ\iota}\sigma \times {}^{\gamma \circ\iota}\sigma'^\v).$ 
\end{proof}

\medskip
\subsection{Combinatorial Lemma}
\label{sec:comb-lemma}

\medskip
\subsubsection{\bf Statement of the lemma}

\begin{lemma}
\label{lem:comb-lemma}
For strongly-pure weights $\mu \in X^+_{00}(T_n \times \C)$ and $\mu' \in X^+_{00}(T_{n'} \times \C)$, and cuspidal Hecke summands  
$\sigma_f \in \Coh_{!!}(G_n, \mu),$  $\sigma'_f \in \Coh_{!!}(G_{n'}, \mu'),$ the following are equivalent: 
\begin{enumerate}
\item The points $s = -N/2$ and $s = 1-N/2$ are both critical for $L(s, \sigma \times \sigma'^\v)$. 
\medskip
\item The abelian width is bounded in terms of the cuspidal width as: 
$$
- \frac{N}{2} + 1 - \frac{\ell(\mu,\mu')}{2}
 \ \leq \ a(\mu, \mu')  \ \leq \  
- \frac{N}{2} -1 + \frac{\ell(\mu,\mu')}{2}. 
$$
\item There exists $w \in W^P$ such that $w^{-1}\cdot(\mu+\mu')$ is dominant and $l(w^\eta) + l(w^{\bar\eta}) = \dim(U_{P_0})$ for all $\eta : F \to \C.$ 
(Recall: $w = (w^\eta)_{\eta : F \to \C}$ with $w^\eta \in W^{P_0 \times_\eta \C} \subset W_{G_0} \times_\eta \C$.)

\end{enumerate}
\end{lemma}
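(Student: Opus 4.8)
The plan is to prove the chain of equivalences $(1)\Leftrightarrow(2)\Leftrightarrow(3)$. The first equivalence $(1)\Leftrightarrow(2)$ is already essentially recorded: it is exactly the content of Cor.\,\ref{cor-crit-N/2}, which unwinds the description of the critical set in Prop.\,\ref{prop-crit-mu-mu'} by substituting $m=-N/2$ and $m=1-N/2$ into the inequalities. So the real work is the equivalence with $(3)$, and the strategy is to translate both sides into explicit inequalities on the entries of the cuspidal parameters $\alpha^v,\beta^v,\alpha'^v,\beta'^v$ (equivalently, on the $b^\eta_i$ and $b'^\eta_j$) and then match them.

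First I would make the combinatorial side $(3)$ concrete. Work one embedding $\eta$ at a time, since $W^P=\prod_\eta W^{P_0\times_\eta\C}$ and the twisted action, dominance, and length are all computed componentwise. Recall that a Kostant representative $w_0\in W^{P_0}$ for the maximal parabolic $P_0$ of type $(n,n')$ is determined by a choice of an $n$-element subset of $\{1,\dots,N\}$ (the positions into which the first block is sent), and that $l(w_0)$ counts inversions, with $0\le l(w_0)\le nn'=\dim(U_{P_0})$, the maximum $nn'$ being achieved precisely by the element $w_0^{\mathrm{long}}$ that sends the first block past the second block entirely. The condition $l(w^\eta)+l(w^{\bar\eta})=\dim(U_{P_0})$ for all $\eta$ is therefore a strong constraint: together with the requirement that $w^{-1}\cdot(\mu+\mu')$ be dominant, I expect it to force, for each conjugate pair $\{\eta,\bar\eta\}$, that one of $w^\eta,w^{\bar\eta}$ is the identity and the other is $w_0^{\mathrm{long}}$ (or something equally rigid). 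The point is that dominance of $w^{-1}\cdot(\mu+\mu')$ after removing $\rho_N$ says the sorted concatenation of the shifted $\mu$-parameters and shifted $\mu'$-parameters can be ``unshuffled'' by $w$ back into a dominant weight for $\GL_N$; this is possible for a Kostant $w$ of the extreme lengths exactly when the two blocks of parameters interleave in the most degenerate possible way, which is governed by comparing the smallest gap occurring among cross-differences $\alpha^v_i-\alpha'^v_j$ (the cuspidal width $\ell$) against the displacement $a(\mu,\mu')$ (the abelian width). Carrying out this unshuffling and reading off the resulting inequalities on $\ell$ and $a$ should reproduce precisely the bound in $(2)$.

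The key steps, in order, are: (a) cite Cor.\,\ref{cor-crit-N/2} for $(1)\Leftrightarrow(2)$; (b) parametrize $W^{P_0}$ by $n$-subsets and record the inversion-count formula for $l(w_0)$, identifying which representatives achieve length $0$ and length $nn'$; (c) for a fixed $\eta$, write out $w^{-1}\cdot(\mu^\eta+\mu'^\eta)$ explicitly in coordinates and determine for which $w^\eta$ this is dominant, expressing the answer as interlacing inequalities between the entries $b^\eta_i-i$ and $b'^\eta_j-j$ (shifted suitably by $\rho$); (d) impose the length condition $l(w^\eta)+l(w^{\bar\eta})=nn'$ across the conjugate pair and combine with purity ($b^\eta_i+b^{\bar\eta}_{N_n+1-i}=\w$, etc.) to collapse the system; (e) take the minimum over $v,i,j$ and recognize the resulting two-sided inequality as $(2)$. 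The main obstacle I anticipate is step (c)–(d): showing that the \emph{existence} of a single $w\in W^P$ simultaneously satisfying the dominance condition and the length condition \emph{for every} $\eta$ is equivalent to the \emph{uniform} bound on $a(\mu,\mu')$ in $(2)$ — one has to be careful that the same $\ell$ and $a$ control all embeddings at once, which is where strong-purity (base change from $F_1$, Prop.\,\ref{prop:strong-pure-weights-base-change}) and Rem.\,\ref{rem:interchange-alpha-beta} enter, pinning down the relation between the $\eta$- and $\bar\eta$-data so that the per-embedding combinatorics are all cut out by the same two numbers. Once the interlacing picture is set up correctly, matching it to Cor.\,\ref{cor-crit-N/2} is bookkeeping.
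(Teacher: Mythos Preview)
Your handling of $(1)\Leftrightarrow(2)$ via Cor.\,\ref{cor-crit-N/2} is fine, and your overall architecture for $(2)\Leftrightarrow(3)$---parametrize $W^{P_0}$ by $n$-subsets, work per conjugate pair $\{\eta,\bar\eta\}$, translate dominance of $w^{-1}\cdot(\mu+\mu')$ into interlacing inequalities, and match against the abelian/cuspidal width bound---is the same route the paper takes.

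There is, however, a concrete gap. You write that the balanced condition $l(w^\eta)+l(w^{\bar\eta})=nn'$ together with dominance should force one of $w^\eta,w^{\bar\eta}$ to be the identity and the other the longest element $w_0^{\mathrm{long}}$. This is false in general. The paper shows that for each conjugate pair the balanced representative takes the form $(w_\kappa,w_{\kappa^\vee})$, where $\kappa=(k_1<\cdots<k_n)$ is an \emph{arbitrary} $n$-subset of $\{1,\dots,N\}$ determined by the data, and $\kappa^\vee$ is the specific partner $\kappa^\vee_j=N+1-k_{n-j+1}$ (so $l(w_\kappa)+l(w_{\kappa^\vee})=nn'$ automatically, Lem.\,\ref{lem:length-w-kappa}). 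Already in the $\GL_n\times\GL_1$ case the balanced representative is the inverse of $(s_j,s_{n-j}^*)$ with $j$ ranging over $\{0,\dots,n\}$ according to where $0$ sits relative to the column of cuspidal parameters; only the extreme cases $j=0,n$ give the identity/longest pair you anticipate.

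Because of this, your step (c)--(d) as stated would not go through: the interleaving is not ``most degenerate'' but is read off from the position of $0$ in the full $n\times n'$ array $(\ell_{i,j})$ of cross-differences (see \eqref{eqn:l_ij}), which dictates $\kappa$ via the integers $r_1\le\cdots\le r_{n'}$ recording how many entries in each column are nonnegative. The heart of the proof is then the pair of Propositions \ref{prop:w-kappa-dom} and \ref{prop:w-kappa-v-dom}, which tabulate the dominance inequalities for $w_\kappa^{-1}\cdot(\mu^\eta+\mu'^\eta)$ and $w_{\kappa^\vee}^{-1}\cdot(\mu^{\bar\eta}+\mu'^{\bar\eta})$ and show that, taken together and with purity, they are equivalent to the $2\delta$ inequalities $\ell\ge 2\pm(N+(\w-\w'))$ for each $\ell$ in the finite set $\L$ whose minimum is $\ell(\mu,\mu')$. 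This case analysis (Prop.\,\ref{prop:simple-CL}) is where the real work lies, and it does not simplify to a two-case ``identity versus longest'' dichotomy.
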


We have already proved $(1) \iff (2)$. It remains to prove $(2) \iff (3).$ 
It is clear that 
$$
l(w^\eta) + l(w^{\bar\eta}) = \dim(U_{P_0}), \ \forall \eta : F \to \C \ \implies \ 
l(w) = \tfrac12 \dim(U_P). 
$$
However, if the degree of $F$ is greater than $2$ (i.e., if ${\sf r} > 1$) then the converse is not true in general. 

\begin{defn}
\label{def:strongly-balanced}
A Kostant representative $w \in W^P$ is said to be \underline{balanced} if 
$$l(w^\eta) + l(w^{\bar\eta}) = \dim(U_{P_0}), \ \forall \eta : F \to \C.$$
\end{defn}

\smallskip

For the benefit of the reader, we will make two passes over the proof of (2) $\iff$ (3) in simpler situations, because the proof in the general case is intricate in details and somewhat tedious; it is the 
sort of proof that makes one believe the dictum ``{\it der Teufel steckt im Detail.}''

\medskip
\subsubsection{\bf Explicating (2) $\iff$ (3) in the simplest nontrivial example}
\label{sec:example-n=n'=r=1}
\begin{proof}
Let us consider the case of $n = n' = 1$ and so $N = 2.$ Take $F$ to be an imaginary quadratic field with  
$\Hom(F,\C) = \{\eta, \bar\eta\}.$ 
The weights $\mu$ and $\mu'$ are both a pair of integers indexed by $\Hom(F,\C);$ we will write
$
\mu \ = \ ((a), (a^*)), \quad \mu' \ = \ ((b), (b^*)), 
$ 
with $a,a^*, b, b^* \in \Z,$ with the convention that $\mu^\eta = (a), \mu^{\bar\eta} = (a^*)$ and similarly for $\mu'.$  Note that purity of $\mu$ and 
$\mu'$ is automatic, and the purity weights are
$\w = a+a^*, \ \w' = b+b^*.$
The abelian width is
$a(\mu, \mu') = \frac{a+a^* - b-b^*}{2}.$
The cuspidal parameters at the only complex place $v$ of $F$ are: 
$\alpha^v = (-a), \ 
\beta^v = (-a^*), \ 
\alpha'^v = (-b), \ 
\beta'^v = (-b^*).$
The cuspidal width is
$\ell(\mu, \mu') = |-a+a^* +b-b^*|.$
The weight $\mu+\mu'$ which we would like make dominant using a balanced Kostant representative has the shape
$\mu+\mu' \ = \ ((a,b), \, (a^*, b^*)).$
For simplicity, let us denote: 
$p := a-b, \ p^* := a^* - b^*.$
Hence, $\mu+\mu'$ is dominant if and only if $p \geq 0$ and $p^* \geq 0.$ 
The inequalities in (2) now take the shape: 
\begin{equation}
\label{eqn:p-p*}
- \frac{|p^*-p|}{2} \ \leq \ 
\frac{p+p^*}{2}  \ \leq \ 
\frac{|p^*-p|}{2} - 2. 
\end{equation}

Since, $P_0 = B_0$ is the Borel subgroup, the Levi subgroup $M_P$ is a torus; hence, $W_{M_P}$ is trivial and 
$W^P = W_G.$ If $W_{G_0}$ is written as $\{1,s\}$ with $s$ the nontrivial element, then 
the elements of $W^P$ may be written as: 
$
W_G = (\{1,s\}, \{1^*, s^*\}). $
The dimension of $U_P$ is $2$, hence the balanced elements (of length $1$) of $W^P$ are $(1,s^*)$ and $(s,1)$. 
Now, consider three cases depending on the sign of $p - p^*$: 
\begin{itemize}
\item $p=p^*.$ In this case, \eqref{eqn:p-p*} reads $0 \leq p \leq -2,$ which is absurd; hence (2) is violated. If $p \geq 0$ then the only $w \in W^P$ such that $w^{-1}\cdot(\mu+\mu')$ is dominant is $w = (1,1^*)$ which has length $0$, hence (3) is violated. Similarly, if $p < 0$,  then the only $w \in W^P$ such that $w^{-1}\cdot(\mu+\mu')$ is dominant is $w = (s,s^*),$ which has length $2$, hence (3) is violated again. So, both (2) and (3) are false. 

\medskip
\item $p>p^*.$ In this case, \eqref{eqn:p-p*} simplifies to $p^* - p \leq p + p^* \leq p - p^* - 4,$ which implies that 
$p \geq 0 > -2 \geq p^*.$  The only $w \in W^P$ such that $w^{-1}\cdot(\mu+\mu')$ is dominant is $w = (1,s^*)$ which has length $1$, hence (3) is satisfied. 

\medskip
\item $p<p^*.$ In this case, $p^* \geq 0 > -2 \geq p$ and the only $w \in W^P$ that works is $(s,1)$ which is of length $1$. 
\end{itemize}
In all cases, either both (2) and (3) are satisfied, or both are violated. Hence, $(2) \iff (3).$ 
\end{proof}

\medskip
In the second case ($p>p^*$), one might ask what happens in the degenerate case of $p=0$ and $p^*=-1.$ (So we are violating (2) but keeping $p>p^*$.) This means that $\mu+\mu'$ has the shape
$((a,a), (b^*-1,b^*))$. The $\eta$ component $(a,a)$ is dominant, but one has to make the $\bar\eta$-component $(b^*-1,b^*)$ dominant. This can only be done using $s^*$, however, the reader can easily check that $s^* \cdot (b^*-1,b^*) = (b^*-1,b^*)$. In other words, there is no element $w$ such that
$w^{-1}\cdot(\mu+\mu')$ is dominant.

\medskip
\subsubsection{\bf Proof of (2) $\iff$ (3) for $\GL_n \times \GL_1$}
\label{sec:example-n'=1}

It is most convenient to first understand the case when ${\sf r} = 1,$ i.e., when $F$ is an imaginary quadratic field. Then 
$\Sigma_F = \{\eta, \bar\eta\}$ (for a non-canonical choice of 
$\eta : F \to \C$ that is fixed once and for all).  As above, we will follow a notational artifice that all quantities indexed by $\bar\eta$ will be designated with a $*$. A weight 
$\mu \in X^+_{0}(T_n \times \C)$ may be written as:
$\mu = \{ \mu^\eta, \mu^{\bar{\eta}} \}$ with $\mu^\eta = (\mu_1 \geq \mu_2 \geq \cdots \geq \mu_n)$ and  
$\mu^{\bar\eta} = (\mu_1^* \geq \mu_2^* \geq \cdots \geq \mu_n^*),$
with $\mu_i, \mu_j^* \in \Z,$ and purity implies
$\w \ = \ \mu_i + \mu_{n-i+1}^*.$ 
A weight $\mu' \in X^+_{0}(T_1 \times \C)$ is simply a pair of integers 
$\mu' = \{b, b^*\}$ with purity weight $\w' = b+b^*$. 
The weight $\mu+\mu'$ is given by:
$$
\mu + \mu' \ = \ \{(\mu_1, \mu_2, \dots, \mu_n, b), \quad (\mu_1^*, \mu_2^*,  \dots, \mu_n^*, b^*) \}. 
$$

\smallskip

We are seeking to understand when we can find a Kostant representative $w \in W^P$ which is balanced ($l(w^\eta) + l(w^{\bar{\eta}})= \dim(U_{P_0}) = n$) and such that $w^{-1}\cdot(\mu + \mu')$ is a dominant weight. For this, first identify the Kostant representatives for $P_0$ in $G_0$; the simple roots of $M_{P_0}$ are
${\bf \Pi}_{M_{P_0}} = \{e_1-e_2, \, e_2-e_3, \, \dots, e_{n-1} - e_n\}.$
The Weyl group of $G_0$ is $W_{G_0} = \perm_{n+1}$ the symmetric group on $n+1$ letters. We have 
\begin{eqnarray*}
w \in W^{P_0} & \iff &   w^{-1}(e_1-e_2) > 0, \ w^{-1}(e_2-e_3) > 0, \ \dots, w^{-1}(e_{n-1} - e_n) > 0 \\ 
& \iff &  w^{-1}(1) < w^{-1}(2) < \cdots < w^{-1}(n).
\end{eqnarray*}
The elements of $W^{P_0}$ and their lengths are listed below: 
$$
\begin{array}{|l||c||}
\hline
w^{-1} \ \ (w\in W^{P_0})      & l(w)  \\ \hline
s_0 := 1 & 0 \\ 
s_1 := (n, n+1) & 1 \\
s_2 := (n-1, n, n+1) & 2 \\ 
\quad \quad \vdots & \vdots \\ 
s_{n-1} := (2,3,\dots, n+1) & n-1 \\ 
s_n := (1,2,3,\dots, n+1) & n \\
\hline
\hline
\end{array}
$$
Note that the $(n+1)$-cycle $(1,2,\dots,n+1) = (1,2)(2,3)\cdots (n,n+1)$ as a product of $n$ simple transpositions giving its length which applies to the last row and a similar calculation gives all the other lengths. The Kostant representatives for $P$ are:
$W^P \ = \ \{(w, w^*) \ : \ w , w^* \in W^{P_0}\},$ where $l(w,w^*) = l(w) + l(w^*).$
Hence the inverses of the balanced Kostant representatives are: 
$$
\{(s_0, s_n^*), \ (s_1, s_{n-1}^*), \dots, (s_n, s_0^*) \}.
$$
The twisted action of the Kostant representatives on the weight are given in the table below: 
\begin{equation}
\label{eqn:twisted-action-n'=1}
\begin{array}{||c|l||}
\hline
w^{-1} \ \ (w\in W^{P_0})      & \quad w^{-1} \cdot (\mu_1,\mu_2,\dots, \mu_n,b)  \\ \hline
1 & \quad (\mu_1,\mu_2,\dots, \mu_n, b) \\ 
(n, n+1) & \quad (\mu_1,\mu_2,\dots, \mu_{n-1}, b-1, \mu_n +1) \\
 (n-1, n, n+1) & \quad (\mu_1,\mu_2,\dots, \mu_{n-2}, b-2, \mu_{n-1} +1, \mu_n +1) \\ 
\quad \quad \vdots & \quad\quad\quad \quad\quad\vdots \\ 
(2,3,\dots, n+1) & \quad (\mu_1, b-n+1, \mu_2 +1 ,\dots, \mu_{n-1} +1, \mu_n +1) \\ 
(1,2,3,\dots, n+1) &  \quad (b-n, \mu_1+1, \dots, \mu_{n-1} +1, \mu_n +1)\\
\hline
\hline
\end{array}
\end{equation}

\smallskip

For the combinatorial lemma, the abelian width is given by:
$$
a(\mu, \mu') \ = \ \frac{\w - \w'}{2} \ = \ \frac{\mu_i + \mu_{n-i+1}^* - b - b^*}{2} \ = \ \frac{(\mu_i - b) + (\mu_{n-i+1}^* - b^*)}{2},  
$$
and for the cuspidal width, the cuspidal parameters are given by: 
\begin{align*}
 \alpha  = \ (\alpha_1,\dots, \alpha_n) \ =  & \ (-\mu_n+ \tfrac{(n-1)}{2}, \ -\mu_{n-1}+\tfrac{(n-3)}{2},\ \dots, \ -\mu_1-\tfrac{(n-1)}{2}), \\
 \beta  = \ (\alpha_1,\dots, \alpha_n)  \ =  & \ (-\mu_1^*-\tfrac{(n-1)}{2}, \ -\mu_2^*-\tfrac{(n-3)}{2},\ \dots, \ -\mu_n^*+\tfrac{(n-1)}{2}), 
\end{align*}
and similarly, $ \alpha' = -b, \  \beta' = -b^*,$ from which the cuspidal width is:
$$
\ell(\mu,\mu') = 
\min\left\{
\begin{array}{l}
|\mu_1^*-\mu_n+(n-1) +b -b^*|, \\
 |\mu_2^*-\mu_{n-1}+(n-3) +b -b^*|, \\ 
\quad \quad \quad \vdots \\  
|\mu_n^*-\mu_1-(n-1) +b -b^*|
\end{array}\right\}.
$$ 
From the shape of $a(\mu,\mu')$ and $\ell(\mu, \mu')$ it is convenient to introduce the quantities
$c_i :=  \mu_i - b$ and $c_i^* := \mu_i^*-b^*.$
(These are the $p$ and $p^*$ when $n=1$.) Then we have: 
$a(\mu, \mu') = \frac{c_i + c_{n-i+1}^*}{2},$  and 
$$
\ell(\mu,\mu') \ = \ 
\min\{ \ |c_1^*-c_n + (n-1)|, \ |c_2^*-c_{n-1}+(n-3)|, \ |c_n^* - c_1 - (n-1)| \ \}.
$$
From the dominance of the weights $\mu$ and $\mu'$ we have the inequalities
$$
c_1^*-c_n + (n-1) \ > \ c_2^*-c_{n-1}+(n-3) \ > \ \cdots \ > c_n^* - c_1 - (n-1). 
$$
The proof conveniently breaks into $(n+1)$ disjoint cases depending on the relative position of $0$ in the above decreasing sequence. 

\smallskip
\noindent
{\bf Case 0:} $0 > c_1^*-c_n + (n-1) \ > \ c_2^*-c_{n-1}+(n-3) \ > \ \cdots \ > c_n^* - c_1 - (n-1),$ 

\smallskip
\noindent
{\bf Case j ($1\ \leq j \leq n-1$):} $c_j^*-c_{n-j+1} + (n-2j+1) \ > 0 \ > \ c_{j+1}^*-c_{n-j}+(n-2j-1),$ 

\smallskip
\noindent
{\bf Case n:} $c_1^*-c_n + (n-1) \ > \ c_2^*-c_{n-1}+(n-3) \ > \ \cdots \ > c_n^* - c_1 - (n-1) > 0.$

\bigskip
In {\bf Case 0}, we have $\ell(\mu,\mu') = -c_1^*+c_n - (n-1).$ Keeping in mind that $N = n+1$, the inequalities in (2) of the lemma read:
$$
- \frac{(n+1)}{2} + 1 - \frac{(-c_1^*+c_n-(n-1))}{2} 
\ \leq \ 
\frac{c_1^*+c_n}{2}
\ \leq \ 
- \frac{(n+1)}{2} -1 + \frac{(-c_1^*+c_n-(n-1))}{2} 
$$
This simplifies to 
$$
c_1^*-c_n \ \leq \ c_1^* + c_n \ \leq \ -c_1^* + c_n - 2n - 2. 
$$
Whence we get: 
$$
c_n \geq 0, \quad c_1^* \leq -n-1.
$$
This is exactly the condition that $w^{-1} = (1, s_n^*)$ under the twisted action makes $\mu+\mu'$ dominant. (See the last row of \eqref{eqn:twisted-action-n'=1}.)

\bigskip
{\bf Case n} is similar; we have $\ell(\mu,\mu') = c_n^*-c_1 - (n-1).$ The inequalities in (2) of the lemma read:
$$
- \frac{(n+1)}{2} + 1 - \frac{(c_n^*-c_1 - (n-1))}{2} 
\ \leq \ 
\frac{c_n^*+c_1}{2}
\ \leq \ 
- \frac{(n+1)}{2} -1 + \frac{(c_n^*-c_1 - (n-1))}{2} 
$$
This simplifies to 
$$
-c_n^*+c_1  \ \leq \ c_n^* + c_1 \ \leq \ c_n^* - c_1 - 2n - 2. 
$$
Whence we get: 
$$
c_n^* \geq 0, \quad c_1 \leq -n-1.
$$
This is exactly the condition that $w^{-1} = (s_n, 1^*)$ makes $\mu+\mu'$ dominant. 

\bigskip
{\bf Case j} breaks up into two sub-cases: 
\begin{enumerate}
\item[{\bf Case j1}:] $c_j^*-c_{n-j+1} + (n-2j+1) \geq c_{n-j} - c_{j+1}^* - (n-2j-1).$ 
\smallskip
\item[{\bf Case j2}:] $c_j^*-c_{n-j+1} + (n-2j+1) < c_{n-j} - c_{j+1}^* - (n-2j-1).$
\end{enumerate}

\medskip
For {\bf j1}, we have $\ell(\mu,\mu') = c_{n-j} - c_{j+1}^* - (n-2j-1)$ and the inequalities of (2) read: 
\begin{multline*}
- \frac{(n+1)}{2} + 1 - \frac{(c_{n-j} - c_{j+1}^* - (n-2j-1))}{2} 
\ \leq 
\frac{c_{n-j} + c_{j+1}^*}{2}, \ {\rm and} \\
\frac{c_{n-j} + c_{j+1}^*}{2} \ \leq \ 
- \frac{(n+1)}{2} -1 + \frac{(c_{n-j} - c_{j+1}^* - (n-2j-1))}{2}. 
\end{multline*}
These simplify to: 
$$
- c_{n-j} + c_{j+1}^* - 2j
\ \leq \ 
c_{n-j} + c_{j+1}^*
\ \leq \ 
c_{n-j} - c_{j+1}^* - 2n+2j-2. 
$$
This in turn implies that: 
$$
c_{n-j} \ \geq \ -j, \quad c_{j+1}^* \ \leq \ -n+j-1.
$$
Next, we see that the defining inequalities of {\bf j1} gives in particular that
\begin{equation}
\label{eqn:inequalities-case-j}
c_j^* + c_{j+1}^*  + 2n-4j \ \geq \ c_{n-j} + c_{n-j+1}. 
\end{equation}
Add $c_{n-j+1}$ on both sides of \eqref{eqn:inequalities-case-j} to get 
$$
c_{n-j+1} + c_j^* + c_{j+1}^*  + 2n-4j \ \geq \ c_{n-j} + 2c_{n-j+1}, 
$$
and applying purity, we can rewrite this as
$$
c_{n-j} + 2c_{j+1}^*+ 2n-4j \ \geq \ c_{n-j} + 2c_{n-j+1},
$$
whence
$$
c_{n-j+1} \ \leq \ c_{j+1}^*+ n-2j \ \leq \ -j-1. 
$$
Next, add $c_j^*$ to both sides of \eqref{eqn:inequalities-case-j} to get 
$$
2c_j^* + c_{j+1}^* + 2n-4j \ \geq \ c_{n-j} + c_{n-j+1} + c_j^*, 
$$
and applying purity, we can rewrite this as
$$
2c_j^* + c_{j+1}^*+ 2n-4j \ \geq \ 2c_{n-j} + c_{j+1}^*
$$
whence,
$$
c_j^* \ \geq \ c_{n-j} - n + 2j \ \geq \ -n+j. 
$$
Putting all this together, we get the following inequalities:
$$
c_{n-j} \geq -j, \quad c_{n-j+1} \leq -j-1, \quad {\rm and} \quad c_j^* \geq j-n, \quad c_{j+1}^* \leq -n+j-1.
$$

\medskip

For {\bf j2}, we have $\ell(\mu,\mu') = c_j^* - c_{n-j+1} + (n-2j+1)$ and the inequalities of (2) simplifying to:
$$
c_j^* \geq -n+j  \quad {\rm and} \quad c_{n-j+1} \leq -j-1. 
$$
The defining inequalities of {\bf j2} may be written as: 
\begin{equation}
\label{eqn:inequalities-case-j2}
c_j^* + c_{j+1}^* \ \leq \  c_{n-j} + c_{n-j+1} -2n+4j. 
\end{equation}
Add $c_{j+1}^*$ to both sides of \eqref{eqn:inequalities-case-j2}, apply purity to right hand side, and simplify to get:
$$
c_{j+1}^* \ \leq \ -n+j-1. 
$$
Next, add $c_{n-j}$ to both sides of \eqref{eqn:inequalities-case-j2}, apply purity to left hand side, and simplify to get:
$$
c_{n-j} \ \geq \ -j. 
$$
Putting all this together, we see exactly as in {\bf Case j1} that 
$$
c_{n-j} \geq -j, \quad c_{n-j+1} \leq -j-1, \quad {\rm and} \ 
c_j^* \geq -n+j, \quad c_{j+1}^* \leq -n+j-1. 
$$

\medskip 

Using the table \eqref{eqn:twisted-action-n'=1} we see that 
$$
c_{n-j} \geq -j, \quad c_{n-j+1} \leq -j-1
 \ \iff \ 
s_j \cdot (\mu_1,\dots,\mu_n,b) \ \mbox{is dominant}. 
$$
$$
c_j^* \geq j-n, \quad c_{j+1}^* \leq -n+j-1
 \ \iff \ 
s_{n-j}^* \cdot (\mu_1^*,\dots,\mu_n^*,b^*) \ \mbox{is dominant}. 
$$
So, in {\bf Case j}, the required balanced Kostant representative is the inverse of $(s_j, s_{n-j}^*).$

\medskip

Conversely, if $w^{-1} = (s_j, s_{n-j}^*)$ makes $(\mu+\mu')$ dominant then we just argue backwards in the above paragraphs to see that inequalities of (2) are satisfied. Thus far, we have proved (2) $\iff$ (3) when $F$ is imaginary quadratic. 

\bigskip

\paragraph{\bf A general totally imaginary field}
\label{sec:general-cm-field}

Now let $F$ be any totally imaginary field. For each $v \in \place_\infty$ we have a pair of complex embeddings 
$\{\eta_v, \bar\eta_v\}$ of $F$. For any such embedding $\eta$, the weight $\mu$, has a $\eta$-component $\mu^\eta = (\mu^\eta_1,\dots,\mu^\eta_n)$ which is a non-increasing sequence of integers, and similarly, $\mu'^\eta = (b^\eta)$ is just an integer. Define $c_j^\eta = \mu^\eta_j - b^\eta$. The abelian width is given by:
$a(\mu, \mu') \ = \ \frac{c_j^\eta + c_{n-j+1}^{\bar\eta}}{2},$ 
for any $j$ and any $\eta.$ 
For $v \in \place_\infty$, define $\ell_v(\mu,\mu')$ as the minimum of the absolute 
values of the following $n$ integers: 
$$
c^{\bar\eta_v}_1- c^{\eta_v}_n + (n-1) \  > \ c^{\bar\eta_v}_2-c_{n-1}^{\eta_v}+(n-3) \ > \ \cdots \  > c^{\bar\eta_v}_n - c_1^{\eta_v} - (n-1). 
$$
Then
$\ell(\mu,\mu') = \min\{\ell_v(\mu,\mu') \, : \, v \in \place_\infty \}.$
The inequalities of (2) imply that for each $v \in \place_\infty$ we have 
\begin{equation}
\label{eqn:(2)-v}
- \frac{N}{2} + 1 - \frac{\ell_v(\mu,\mu')}{2}
 \ \leq \ a(\mu, \mu')  \ \leq \  
- \frac{N}{2} -1 + \frac{\ell_v(\mu,\mu')}{2}. 
\end{equation}
Using the same argument as in the imaginary quadratic case, we see that there exists  
$
w_v \ = \ (w_{\eta_v}, w_{\bar{\eta}_v}) \ \in \ W^{(P_0 \times_{\eta_v} \C) \times  (P_0 \times_{\bar\eta_v} \C)}
$ 
such that 
$w_v^{-1}\cdot((\mu^{\eta_v}, \mu'^{\eta_v}), (\mu^{\bar\eta_v}, \mu'^{\bar\eta_v}))$ is dominant and $l(w_{\eta_v}) + l(w_{\bar{\eta}_v}) =  n.$ The required balanced Kostant representative then is
$w = (w_v)_{v \in \place_\infty}$; hence (3) is satisfied. 
Conversely, if (3) holds, then writing $w = (w^\eta)$ as $w = (w_v)$ with $w_v = (w_{\eta_v}, w_{\bar\eta_v})$, we see that 
$w_v^{-1}\cdot((\mu^{\eta_v}, \mu'^{\eta_v}), (\mu^{\bar\eta_v}, \mu'^{\bar\eta_v}))$ is dominant, and working backwards as in the imaginary quadratic case, we deduce
\eqref{eqn:(2)-v} holds for each $v$, and hence (2) holds. 
\hfill$\Box$

\medskip
\subsubsection{\bf Proof of (2) $\iff$ (3) in the general case}

First of all, we will prove it in the special case when $F$ is imaginary quadratic, i.e., $\r = 1.$ 

\medskip
\paragraph{\bf Parametrizing Kostant representatives} 
We will need explicit Kostant representatives. Recall, that $G_0 = \GL_N$ and 
$P_0 = M_{P_0} U_{P_0}$ the standard $(n, n')$-parabolic subgroup of $G_0$, where $N = n+n';$ clearly, $\dim(U_{P_0}) = nn'.$ 
Then $W_{G_0} = \perm_N$ the permutation group on $N$ letters, and $W_{M_{P_0}} = \perm_n \times \perm_{n'}.$ The set of Kostant representatives $W^{P_0}$ may be described as: 
\begin{equation}
\label{eqn:W^P-defn}
W^{P_0} \ = \ \{w \in W_{G_0} \ : \ w^{-1}(1) < \cdots < w^{-1}(n) \ {\rm and} \ w^{-1}(n+1) < \cdots < w^{-1}(N) \}.
\end{equation}
The set $W^{P_0}$ is in bijection with the set of all $n$-tuples $\kappa = (k_1,\dots, k_n)$ where 
$1 \leq k_1 < \cdots < k_n \leq N$. Any such $\kappa$ corresponds to $w_\kappa \in W^{P_0}$ which is uniquely defined by the conditions: 
\begin{equation}
\label{eqn:w-kappa}
w_\kappa^{-1}(1) = k_1, \, \dots, w_\kappa^{-1}(n) = k_n. 
\end{equation}
If $\kappa = (1,2,\dots, n)$ then $w_\kappa$ is the identity element. 
There is a self-bijection $W^{P_0} \to W^{P_0}$ defined by $w_\kappa \mapsto  w_{\kappa^\v}$, where 
 \begin{equation}
\label{eqn:kappa-v}
\kappa^\v \ :=  \ N+1-k_n \ < \ \cdots \ < N+1 - k_1; \quad \kappa^\v_j = N+1 - k_{n-j+1}.
\end{equation}
Let $w_N = w_{G_0} \in W_{G_0}$ denote the element of longest length, which is given by $w_N(j) = N+1-j$ for any $1 \leq j \leq N$; clearly, $w_{G_0}^2 = 1.$ 
Similarly, $w_n$ and $w_{n'}$ are defined, and  we have $w_{M_{P_0}} = w_n \times w_{n'}$.

\smallskip

\begin{lemma}
\label{lem:length-w-kappa}
With the notations as above, we have: 
\begin{enumerate}
\item $l(w_\kappa) = (k_1-1) + (k_2-2) + \dots + (k_n-n).$
\smallskip
\item $l(w_\kappa) + l(w_{\kappa^\v}) = nn' = \dim(U_{P_0}).$
\smallskip
\item $w_{\kappa^\v} = w_{M_{P_0}} w_\kappa w_{G_0}.$
\end{enumerate}
\end{lemma}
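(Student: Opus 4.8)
The plan is to establish the three assertions in a logical order that lets later parts reuse earlier ones, working throughout with the explicit combinatorial description of $W^{P_0}$ via the index tuples $\kappa = (k_1 < \cdots < k_n)$ from \eqref{eqn:w-kappa}. For part (1), I would compute the length $l(w_\kappa)$ as the number of inversions of $w_\kappa$, i.e.\ the cardinality of $\{(a,b): a < b, \ w_\kappa(a) > w_\kappa(b)\}$, equivalently (and more conveniently here) the number of inversions of $w_\kappa^{-1}$. Since $w_\kappa^{-1}(1) = k_1, \dots, w_\kappa^{-1}(n) = k_n$ and $w_\kappa^{-1}$ is increasing on $\{1,\dots,n\}$ and increasing on $\{n+1,\dots,N\}$ (this is the defining property \eqref{eqn:W^P-defn}), an inversion of $w_\kappa^{-1}$ must be a pair $(i, j)$ with $1 \le i \le n < j \le N$ and $k_i > w_\kappa^{-1}(j)$. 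Counting: for each fixed $i$, the number of $j > n$ with $w_\kappa^{-1}(j) < k_i$ is exactly $k_i - i$, because among the values $\{1,\dots,k_i - 1\}$ precisely $i-1$ of them (namely $k_1,\dots,k_{i-1}$) are hit by the first block. Summing over $i$ gives $\sum_{i=1}^n (k_i - i)$, which is (1). This is the cleanest starting point, and I expect it to be routine.

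For part (3), I would verify the identity $w_{\kappa^\v} = w_{M_{P_0}} w_\kappa w_{G_0}$ directly on the level of the tuples, by checking that the permutation $w' := w_{M_{P_0}} w_\kappa w_{G_0}$ lies in $W^{P_0}$ and has index tuple $\kappa^\v$. Concretely: $w_{G_0}$ sends $j \mapsto N+1-j$, and $w_{M_{P_0}} = w_n \times w_{n'}$ sends $i \mapsto n+1-i$ for $1 \le i \le n$ and $i \mapsto N+n+1-i$ for $n < i \le N$. So $(w')^{-1} = w_{G_0} w_\kappa^{-1} w_{M_{P_0}}$; evaluating at $i \in \{1,\dots,n\}$ gives $(w')^{-1}(i) = w_{G_0}(w_\kappa^{-1}(n+1-i)) = w_{G_0}(k_{n+1-i}) = N+1-k_{n+1-i} = \kappa^\v_i$, exactly matching \eqref{eqn:kappa-v}. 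One then checks $(w')^{-1}$ is increasing on each of the two blocks — this follows since conjugating by $w_{G_0}$ reverses order globally and $w_{M_{P_0}}$ reverses order within each block, so the two reversals on each block cancel — hence $w' \in W^{P_0}$ and $w' = w_{\kappa^\v}$ by uniqueness.

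Part (2) then follows from (1) and (3), or can be seen directly: $l(w_\kappa) + l(w_{\kappa^\v}) = \sum_i (k_i - i) + \sum_i (\kappa^\v_i - i) = \sum_i (k_i - i) + \sum_i (N+1 - k_{n-i+1} - i)$; reindexing the second sum and collecting terms, the $k_i$'s cancel and one is left with $\sum_{i=1}^n (N+1-2i) \cdot$(constant count) $= nN - n(n+1) = n(N - n - 1) = n(n'-1) \cdot$ — wait, I should be careful: $\sum_{i=1}^n(N+1-2i) = nN + n - n(n+1) = nN - n^2 = n(N-n) = nn'$, which is $\dim(U_{P_0})$ as claimed. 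Alternatively, invoke the general fact $l(w_{M_{P_0}} w_\kappa w_{G_0}) = l(w_{G_0}) - l(w_{M_{P_0}}) - l(w_\kappa)$ valid for $w_\kappa \in W^{P_0}$ (since $w_{G_0} = w_{M_{P_0}} \cdot w^{P_0}_{\mathrm{long}}$ and Kostant representatives satisfy $l(uw) = l(u) + l(w)$ for $u \in W_{M_{P_0}}$), together with $l(w_{G_0}) = \binom{N}{2}$, $l(w_{M_{P_0}}) = \binom{n}{2} + \binom{n'}{2}$, and the identity $\binom{N}{2} - \binom{n}{2} - \binom{n'}{2} = nn'$. I expect the only mild subtlety — the "main obstacle," such as it is — to be bookkeeping the index reversals in part (3) carefully enough to be sure the composite really lands in $W^{P_0}$ rather than merely being a permutation with the right values on $\{1,\dots,n\}$; once the block-monotonicity is checked, uniqueness of Kostant representatives does the rest.
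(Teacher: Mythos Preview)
Your proposal is correct and follows essentially the same approach as the paper: counting inversions of $w_\kappa^{-1}$ for (1), verifying (3) by evaluating $(w_{G_0} w_\kappa^{-1} w_{M_{P_0}})$ on $\{1,\dots,n\}$, and deducing (2) from (1) and the formula \eqref{eqn:kappa-v} for $\kappa^\v$. Your explicit check that $w' \in W^{P_0}$ via block-monotonicity is a point the paper leaves implicit, so if anything you are slightly more careful here.
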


\begin{proof}
Clearly, $l(w_\kappa) = l(w_\kappa^{-1})$, and 
for counting the length of $w_\kappa^{-1}$, count the number of its shuffles, i.e., count the number of pairs $(i,j)$ with $1 \leq i < j \leq N$ with $w_\kappa^{-1}(i) > w_\kappa^{-1}(j).$ But for 
any such shuffle, by \eqref{eqn:W^P-defn}, it is clear that $1 \leq i \leq n$ and $n+1 \leq j \leq N$. We leave it to the reader to see that for a fixed $i \leq n$, the number of shuffles $(i,j)$ is $k_i - i$. 
Also, (2) follows from Statement (1) and Equation \eqref{eqn:kappa-v}. To see the validity of (3), compute the inverses of both sides on any $1 \leq j \leq n$:
$$
(w_{G_0}  w_\kappa^{-1} w_{M_{P_0}})(j) = ( w_{G_0} w_\kappa^{-1})(n+1-j) = w_{G_0}(k_{n+1-j}) = N+1-k_{n+1-j} = \kappa^\v_j = w_{\kappa^\v}^{-1}(j). 
$$
\end{proof}

\medskip
\paragraph{\bf Twisted action of $W^{P_0}$ on weights} 

The usual permutation action of $\sigma \in S_m$ on an $m$-tuple is given by:  
$\sigma(t_1,\dots, t_m) \ = \ (t_{\sigma^{-1}(1)}, \dots, t_{\sigma^{-1}(m)}).$ If $\underline{t} := (t_1,\dots, t_m)$, then 
the twisted action of $\sigma$ on $\underline{t}$ is defined by: 
$\sigma \cdot \underline{t} = \sigma(\underline{t}+ \bfgreek{rho}_m) - \bfgreek{rho}_m,$ which unravels to 
$$
\sigma \cdot (t_1,\dots, t_m) \ = \ 
(t_{\sigma^{-1}(1)} + 1 - \sigma^{-1}(1), \, t_{\sigma^{-1}(2)} + 2 - \sigma^{-1}(2), \, \dots, \, t_{\sigma^{-1}(m)}+ m - \sigma^{-1}(m)). 
$$ 
Now, keeping the combinatorial lemma in mind, suppose 
$$
\mu = ((b_1,\dots,b_n), \, (c_1,\dots, c_n)), \quad \mu' = ((b'_1,\dots,b'_{n'}), \, (c'_1,\dots, c'_{n'})), 
$$
where each $n$-tuple or $n'$-tuple is a non-increasing string of integers satisfying the purity condition: 
$\w = b_i + c_{n-i+1}, \ \w' = b'_j + c'_{n'-j+1}.$
We are seeking a Kostant representative of optimal length that `straightens out' 
$$
\mu+\mu' \ = \ ((b_1,\dots,b_n, b'_1,\dots,b'_{n'}), \, (c_1,\dots, c_n, c'_1,\dots, c'_{n'})). 
$$
For this, we need the twisted action of $w_\kappa^{-1}$ on an $(n+n')$-tuple like $(b_1,\dots,b_n, b'_1,\dots,b'_{n'})$. Given $\kappa$, let us define its complement $\kappa^c$ as 
the ordered string of integers: 
$$
\kappa^c \ := \ k^c_1 < \cdots < k^c_{n'} \ := \ \{1,2,\dots, N\} \setminus \{k_1, k_2, \dots, k_n\}.
$$
It is  useful to note that 
$$
\kappa^c \ = \ 
\{1, 2, \dots, k_1-1,  k_1+1, \dots , k_2-1, k_2+1, \dots, k_n-1,  k_n+1, \dots, N \}. 
$$
The element $w_\kappa^{-1} \in W^{P_0}$ is the permutation that may be written as: 
$$
\begin{pmatrix}
1 & 2 & \dots & n & n+1 & \dots & N \\ 
k_1 & k_2 & \dots & k_n & k^c_1 & \dots & k^c_{n'}
\end{pmatrix}, 
$$
and the permutation $w_\kappa$ is: 
\begin{multline*}
\left(\begin{matrix}
1    & \dots  & k_1-1            &  k_1  &  k_1+1  & \dots   &  k_2-1       &  k_2  & k_2+1  &\dots \\
n+1 & \dots  & n+ k_{1}-1    &  1      &  n+k_1  & \dots   &  n+k_2-2    &  2     & n+k_2-1 & \dots  
\end{matrix}\right. \\ 
\left.
\begin{matrix}
\dots & k_{n-1} & k_{n-1}+1 & \dots   &  k_n-1       &  k_n  & k_n+1  & \dots &  N\\
\dots & n-1 &  k_{n-1}+2 & \dots   &  k_n    &  n     & k_n+1 & \dots & N
\end{matrix} \right). 
\end{multline*}
(The reader should pay some attention to the special cases $k_1 = 1$ and $k_n = N$.) 
Denoting 
$$
(b_1,\dots,b_n, b'_1,\dots,b'_{n'}) \ = \ (d_1,\dots,d_n, d_{n+1}, \dots, d_N),
$$ 
we have: 
\begin{multline}
\label{eqn:twisted-action-w-kappa}
w_\kappa^{-1} \cdot (d_1,\dots,d_n, d_{n+1}, \dots, d_N) \ = \\ 
(d_{w_\kappa(1)}+1- w_\kappa(1), \ d_{w_\kappa(2)}+2- w_\kappa(2), \ \dots \ ,  d_{w_\kappa(N)}+N- w_\kappa(N)). 
\end{multline}

\medskip
\paragraph{\bf Dominance of $w_\kappa^{-1} \cdot (d_1,\dots,d_N)$}
Let us enumerate the inequalities that guarantee dominance of the weight in \eqref{eqn:twisted-action-w-kappa}: 

\medskip
\begin{prop}
\label{prop:w-kappa-dom}
The weight $w_\kappa^{-1} \cdot (d_1,\dots,d_n, d_{n+1}, \dots, d_N)$ is dominant if and only if the following conditions are satisfied: 
\medskip
\begin{enumerate}
\item[(0)] If $k_1 - 1 \geq 1$ then 
$$b'_{k_1-1} - b_1 \ \geq \ n+k_1-1.$$ 
If $k_1=1$ then there is no such condition. 
\medskip 
\item[(1)] If $k_2 \geq k_1+2$ then 
  \begin{enumerate}
  \item[(i)] $$ b_1-b'_{k_1} \ \geq \ -n-k_1+2,$$ and 
  \item[(ii)]  $$ b'_{k_2-2} - b_2 \ \geq \ n+ k_2-3.$$ 
  \end{enumerate}
If $k_2 = k_1+1$ then there are no such conditions. 
\medskip
\item[] $$ \vdots$$ 

\medskip
\item[$(l)$] ($1 \leq l \leq n-1$) If $k_{l+1} \geq k_l+2$ then 
  \begin{enumerate}
  \item[(i)] $$ b_l - b'_{k_l+1-l} \ \geq \  -n -k_l +2l,$$ and 
  \item[(ii)]  $$ b'_{k_{l+1}-l-1} - b_{l+1} \ \geq \ n + k_{l+1} -2l -1.$$ 
  \end{enumerate}
If $k_{l+1} = k_l+1$ then there are no such conditions. 

\medskip
\item[] $$ \vdots$$ 

\medskip
\item[($n-1$)] If $k_n \ \geq \ k_{n-1}+2$ then 
  \begin{enumerate}
  \item[(i)] $$ b_{n-1} - b'_{k_{n-1}+2-n} \ \geq \  n- k_{n-1}-2,$$ and 
  \item[(ii)]  $$b'_{k_n-n} - b_n \ \geq \ -n + k_n + 1.$$ 
  \end{enumerate}
If $k_n = k_{n-1}+1$ then there are no such conditions. 

\medskip
\item[$(n)$] If $k_n \leq N-1$ then 
$$
b_n - b'_{k_n+1-n} \ \geq \ n-k_n 
$$
If $k_n = N$ then there is no such condition. 
\end{enumerate}
In the above $n+1$ conditions, some of them might be empty, however not all can be empty.
\end{prop}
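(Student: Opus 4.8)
The strategy is purely computational: unravel the explicit formula \eqref{eqn:twisted-action-w-kappa} for $w_\kappa^{-1}\cdot(d_1,\dots,d_N)$ and read off when consecutive entries are weakly decreasing. Write $w_\kappa^{-1}\cdot(d_1,\dots,d_N) = (e_1,\dots,e_N)$ with $e_i = d_{w_\kappa(i)} + i - w_\kappa(i)$. Dominance of this $N$-tuple is the assertion $e_i \geq e_{i+1}$ for $1 \leq i \leq N-1$, which is equivalent to $d_{w_\kappa(i)} - d_{w_\kappa(i+1)} \geq w_\kappa(i) - w_\kappa(i+1) - 1$ for each $i$. So the entire proof reduces to going through the explicit two-line description of the permutation $w_\kappa$ given just above \eqref{eqn:twisted-action-w-kappa}, tracking which pairs $(w_\kappa(i), w_\kappa(i+1))$ occur as $i$ runs from $1$ to $N-1$, and translating each inequality $d_{w_\kappa(i)} - d_{w_\kappa(i+1)} \geq w_\kappa(i) - w_\kappa(i+1) - 1$ into a condition on the $b$'s and $b'$'s.

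The key bookkeeping step is to partition the index set $\{1,\dots,N-1\}$ according to the block structure of the two-line notation for $w_\kappa$. Reading off the bottom row of $w_\kappa$: in the segment of domain values from $1$ up to $k_1 - 1$ the images are $n+1,\dots,n+k_1-1$ (all consecutive, so here $w_\kappa(i+1) = w_\kappa(i)+1$ and the resulting inequality $d_{w_\kappa(i)} \geq d_{w_\kappa(i+1)}$ is the already-assumed dominance of $\mu'$, hence vacuous for us); at domain value $k_1$ the image drops to $1$; then from $k_1+1$ to $k_2-1$ the images are $n+k_1,\dots,n+k_2-2$ (again consecutive, vacuous); at $k_2$ the image is $2$; and so on, with $k_\ell$ mapping to $\ell$ for each $\ell$, and finally from $k_n+1$ to $N$ the images are $k_n+1,\dots,N$ (consecutive, vacuous). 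The only \emph{non-vacuous} constraints therefore come from the transitions \emph{into} and \emph{out of} each domain value $k_\ell$: namely the pair of consecutive domain values $(k_\ell - 1, k_\ell)$ — present only if $k_\ell \geq k_{\ell-1}+2$ (with $k_0 := 0$), giving condition $(\ell-1)$(ii) after reindexing — and the pair $(k_\ell, k_\ell+1)$ — present only if $k_{\ell+1} \geq k_\ell + 2$ (with $k_{n+1} := N+1$), giving condition $(\ell)$(i). A careful substitution of the images $w_\kappa(k_\ell - 1) = n + k_\ell - \ell$ and $w_\kappa(k_\ell) = \ell$ and $w_\kappa(k_\ell+1) = n + k_\ell - \ell + 1$, together with the fact that $d_{k_\ell - 1}$ or $d_{k_\ell+1}$ is the appropriate $b'$-entry and $d_{k_\ell}$ after its image becomes the $\ell$-th slot corresponds to the $b_\ell$-entry, yields exactly the inequalities displayed as $(0)$, $(\ell)$(i), $(\ell)$(ii), and $(n)$. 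The boundary cases $k_1 = 1$ (no condition $(0)$) and $k_n = N$ (no condition $(n)$), and more generally $k_{\ell+1} = k_\ell + 1$ (block of length zero, no condition $(\ell)$), are precisely the cases where the relevant consecutive domain pair does not exist.

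The main obstacle — really the only obstacle — is purely notational: keeping the index shifts straight. One must be meticulous that the $j$-th entry of $b'$ appearing in condition $(\ell)$ is $b'_{k_\ell + 1 - \ell}$ in part (i) and $b'_{k_{\ell+1} - \ell - 1}$ in part (ii), which requires correctly counting how many of $k_1,\dots,k_{\ell}$ lie below a given complement index $k^c_m$, i.e. using $k^c_m = m + (\text{number of }k_i \leq k^c_m)$. A clean way to organize this is to prove once and for all the auxiliary identity that if $k^c_m$ is the $m$-th element of $\kappa^c$ lying strictly between $k_\ell$ and $k_{\ell+1}$, then $m = k^c_m - \ell$; this single observation converts every appearance of a complement index into an explicit expression in the $k_i$'s and $\ell$, after which each of the $n+1$ displayed inequalities is a one-line substitution. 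Finally, the remark that ``not all can be empty'' follows since if every block had length one we would have $k_\ell = \ell$ for all $\ell$, i.e. $w_\kappa$ the identity, but then condition $(n)$ reads $b_n - b'_1 \geq n - n = 0$, which is present because $k_n = n < N$ (as $n' \geq 1$); alternatively if $k_1 > 1$ then $(0)$ is present. One checks the two extreme shapes of $\kappa$ directly to confirm at least one condition always survives.
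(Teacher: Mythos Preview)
Your proposal is correct and follows essentially the same approach as the paper: both unravel the explicit formula \eqref{eqn:twisted-action-w-kappa}, observe that dominance inequalities at consecutive positions within a block are automatic from the dominance of $\mu$ or $\mu'$, and that the only nontrivial constraints arise at the transitions $(k_\ell-1,k_\ell)$ and $(k_\ell,k_\ell+1)$ into and out of each $k_\ell$, which are then translated by direct substitution into the stated inequalities. The paper merely carries out case $(1)$ as a representative and declares the rest analogous, whereas you give a more systematic bookkeeping scheme; your treatment of ``not all conditions empty'' is also fine.
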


\begin{proof}
The tedious argument has the same flavour for each case $(1), (2), \dots (l), \dots (n-1), (n)$; as a representative, let us verify (1). 
 If $k_2 \geq k_1+2$ then looking at the relevant part of $w_\kappa$: 
$$
\begin{pmatrix}
\dots   &  k_1  &  k_1+1  & \dots   &  k_2-1       &  k_2 & \dots  \\
\dots  &  1      &  n+k_1  & \dots   &  n+k_2-2    &  2   & \dots 
\end{pmatrix} 
$$ 
we will have two dominance conditions: comparing entries at steps $k_1$ and $k_1+1$ gives
\begin{equation}
\label{eqn:step-k-1}
d_{w_\kappa(k_1)} + k_1 - w_\kappa(k_1) \ \geq \ d_{w_\kappa(k_1+1)}+ k_1+1- w_\kappa(k_1+ 1)
\end{equation}
and, similarly, comparing entries at steps $k_2-1$ and $k_2$ gives
\begin{equation}
\label{eqn:step-k-2-1}
d_{w_\kappa(k_2-1)} + k_2-1 - w_\kappa(k_2-1) \ \geq \ d_{w_\kappa(k_2)}+ k_2- w_\kappa(k_2).
\end{equation}
Now, \eqref{eqn:step-k-1} unravels to $b_1 + k_1-1 \geq b'_{k_1}+1-n$ which is (1)(i), and similarly, \eqref{eqn:step-k-2-1} unravels to 
$b'_{k_2-2}+1-n \geq b_2+k_2-2$ which is (1)(ii). 
However, if $k_2=k_1+1$, then the corresponding part of the permutation $w_\kappa$ just collapses to 
$$
\begin{pmatrix}
\dots   &  k_1   &  k_2 & \dots  \\
\dots  &  1       &  2   & \dots 
\end{pmatrix} 
$$ 
and dominance is assured since $b_1 \geq b_2$.  
\end{proof}

\medskip
\begin{prop}
\label{prop:w-kappa-v-dom}
The weight $w_{\kappa^\v}^{-1} \cdot (c_1,\dots,c_n, c'_1, \dots, c'_{n'})$ is dominant if and only if the following conditions are satisfied: 
\medskip
\begin{enumerate}
\item[($0^\v$)] If $k^\v_1 - 1 \geq 1$ then 
$$
b_n - b'_{k_n+1-n} \ \geq \ n-k_n + (N + (\w-\w')).
$$
If $k^\v_1=1$ then there is no such condition. 

\medskip 
\item[($1^\v$)]  If $k^\v_2 \geq k^\v_1+2$ then 
  \begin{enumerate}
  \item[$(i)^\v$]   $$b'_{k_n-n} - b_n \ \geq \ -n + k_n +1 - (N + (\w-\w')),$$  and 
  \item[$(ii)^\v$]  $$ b_{n-1} - b'_{k_{n-1}+2-n} \ \geq \  n- k_{n-1}-2 + (N + (\w-\w')).$$
\end{enumerate}
If $k^\v_2 = k^\v_1+1$ then there are no such conditions.

\medskip
\item[] $$ \vdots$$ 

\medskip 
\item[($l^\v$)]  If $k^\v_{l+1} \geq k^\v_l+2$ then 
  \begin{enumerate}
  \item[$(i)^\v$]   $$b'_{k_{n-l+1} -n +l-1} - b_{n-l+1} \ \geq \ k_{n-l+1} -n +(2l-1) - (N + (\w-\w')),$$  and 
  \item[$(ii)^\v$]  $$ b_{n- l} - b'_{k_{n- l}+ 1 -n +l} \ \geq \  - k_{n-l} +n -2l + (N + (\w-\w')).$$
\end{enumerate}
\smallskip
If $k^\v_{l+1} = k^\v_l+1$ then there are no such conditions.

\medskip
\item[] $$ \vdots$$ 

\medskip
\item[($(n-1)^\v$)] If $k^\v_n \ \geq \ k^\v_{n-1}+2$ then 
  \begin{enumerate}
  \item[$(i)^\v$] $$ b'_{k_2-2} - b_2 \ \geq \ n+ k_2-3 - (N + (\w-\w')),$$ and  
  \item[$(ii)^\v$]  $$ b_1-b'_{k_1} \ \geq \ -n-k_1+2 + (N + (\w-\w')).$$ 
  \end{enumerate}
If $k^\v_n = k^\v_{n-1}+1$ then there are no such conditions. 

\medskip
\item[$(n^\v)$] If $k^\v_n \leq N-1$ then 
$$
b'_{k_1-1} - b_1 \ \geq \ n+k_1-1 - (N + (\w-\w')).
$$ 
If $k^\v_n = N$ then there is no such condition. 
\end{enumerate}
\end{prop}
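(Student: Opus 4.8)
The plan is to obtain Proposition~\ref{prop:w-kappa-v-dom} as a formal consequence of Proposition~\ref{prop:w-kappa-dom}, with no new case analysis. Concretely, I would apply Proposition~\ref{prop:w-kappa-dom} not to $\kappa$ and the tuple $(b_1,\dots,b_n,b'_1,\dots,b'_{n'})$, but instead to $\kappa^\v$ (see \eqref{eqn:kappa-v}) and the tuple $(c_1,\dots,c_n,c'_1,\dots,c'_{n'})$, i.e.\ to the Kostant representative $w_{\kappa^\v}$ acting on $(c_1,\dots,c_n,c'_1,\dots,c'_{n'})$; since $(c_1,\dots,c_n)$ and $(c'_1,\dots,c'_{n'})$ are the non-increasing $\bar\eta$-blocks of $\mu$ and $\mu'$, the proposition applies. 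This yields, with no work, $n+1$ dominance conditions for $w_{\kappa^\v}^{-1}\cdot(c_1,\dots,c_n,c'_1,\dots,c'_{n'})$, but expressed through the $c_i$, the $c'_j$, and the indices $k^\v_i$. The remaining task is purely a change of variables back into the $b_i$, the $b'_j$, and the $k_i$.

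The first half of that change of variables is the index dictionary $k^\v_j = N+1-k_{n-j+1}$. Using it one checks that the hypothesis ``$k^\v_{l+1}\ge k^\v_l+2$'' of case $(l^\v)$ is equivalent to ``$k_{n-l+1}\ge k_{n-l}+2$'', and that ``$k^\v_1-1\ge 1$'' and ``$k^\v_n\le N-1$'' are equivalent to ``$k_n\le N-1$'' and ``$k_1\ge 2$'' respectively; so case $(j^\v)$ for $w_{\kappa^\v}$ is governed by exactly the same gap in the increasing sequence $k_1<\cdots<k_n$ as case $(n-j)$ for $w_\kappa$, with the two blocks interchanged, and the degenerate clauses match up accordingly. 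The same substitution, together with $N=n+n'$, rewrites every subscript of the form $k^\v_i+c-n$ occurring on a $c$ or a $c'$ in terms of $k_{n-i+1}$.

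The second half is purity. The relations $\w=b_i+c_{n-i+1}$ and $\w'=b'_j+c'_{n'-j+1}$ give $c_i=\w-b_{n-i+1}$ and $c'_j=\w'-b'_{n'-j+1}$; substituting these converts each inequality of the form $c_\bullet-c'_\bullet\ge(\text{integer})$ produced above into one of the form $b'_\bullet-b_\bullet\ge(\text{integer})$, the term $\w-\w'$ coming from the two purity substitutions being precisely what, together with the additive $N$ coming from $k^\v_i=N+1-k_{n-i+1}$, the statement packages into the uniform correction $N+(\w-\w')$. I would verify the bookkeeping on one representative case, say $(n^\v)$: Proposition~\ref{prop:w-kappa-dom}(n) applied to $\kappa^\v$ reads ``$c_n-c'_{k^\v_n+1-n}\ge n-k^\v_n$'', and after the substitutions (noting $k^\v_n+1-n=n'+2-k_1$ and $c_n=\w-b_1$, $c'_{n'+2-k_1}=\w'-b'_{k_1-1}$) this becomes exactly ``$b'_{k_1-1}-b_1\ge n+k_1-1-(N+(\w-\w'))$'', which is clause $(n^\v)$; the clauses $(0^\v),(1^\v),\dots,((n-1)^\v)$ come out the same way. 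Alternatively one could argue through Lemma~\ref{lem:length-w-kappa}(3), writing $w_{\kappa^\v}^{-1}=w_{G_0}w_\kappa^{-1}w_{M_{P_0}}$ and using that the dot-action is a genuine group action, but the direct substitution is shorter.

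The only genuine obstacle is the index bookkeeping: one must be sure that the off-by-one shifts introduced by the permutation $w_{\kappa^\v}$, by the reindexing $k^\v_j=N+1-k_{n-j+1}$, and by the purity substitutions all cancel in the right way, and that the degenerate extremes $k_1=1$ and $k_n=N$ (equivalently $k^\v_n=N$ and $k^\v_1=1$) match up with the ``no such condition'' clauses on both sides. Once that is checked the list $(0^\v)$--$(n^\v)$ drops out verbatim.
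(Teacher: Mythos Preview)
Your proposal is correct and is essentially the same argument as the paper's: apply Proposition~\ref{prop:w-kappa-dom} to $\kappa^\v$ and $(c_1,\dots,c_n,c'_1,\dots,c'_{n'})$, then use the dictionary $k^\v_j=N+1-k_{n+1-j}$ together with purity $c_i=\w-b_{n+1-i}$, $c'_j=\w'-b'_{n'+1-j}$ to rewrite everything in terms of the $b$'s, $b'$'s, and $k$'s. The paper illustrates with case $(1)^\v(i)^\v$ where you illustrate with $(n^\v)$, but the method is identical.
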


\begin{proof}
Apply Prop.\,\ref{prop:w-kappa-dom} while replacing 
\smallskip
\begin{itemize}
\item $k_j$ by $k^\v_j = N+1-k_{n+1-j},$  
\smallskip
\item $b_j$ by $c_j = \w - b_{n+1-j},$ and 
\smallskip
\item $b'_j$ by $c'_j = \w' - b'_{n'-j+1}$,  
\end{itemize}
As an illustrative example, let us make these replacements in case $(1)(i)$ of Prop.\,\ref{prop:w-kappa-dom}, then we get 
$$
c_1 - c'_{k_1^\v} \geq -n-k_1^\v+2 \quad \iff \quad 
c_1 - c'_{N+1-k_n} \geq -n+k_n+1 -N, 
$$
which may be written as 
$$
(\w-b_n) - (\w-b'_{k_n-n}) \geq -n+k_n+1 -N \quad \iff \quad b'_{k_n-n} - b_n \geq -n+k_n+1 - (N + (\w-\w')),
$$
giving us case $(1)^\v(i)^\v$. Similarly, all the other cases may be verified. 
\end{proof}

\medskip
\begin{rem}
\label{rem:compare-w-k-w-kv}
{\rm 
Let us  note the following `duality' relations between the various cases of Prop.\,\ref{prop:w-kappa-dom} and Prop.\,\ref{prop:w-kappa-v-dom}. 
\begin{itemize}
\item $k^\v_1 = 1 \iff k_n=N$. \\ 
(Compare $(0)^\v$ of Prop.\,\ref{prop:w-kappa-v-dom} with $(n)$ of Prop.\,\ref{prop:w-kappa-dom}.) 
\item $k^\v_n = N \iff k_1=1$.  \\ 
(Compare $(n)^\v$ with $(0)$.) 
\item $k^\v_j \geq k^\v_{j-1} + 2 \iff k_{n+2-j} \geq k_{n+1-j}+2,$ for $2 \leq j \leq n$. \\
 (Compare $(1)^\v(i)^\v$ with $(n-1)(ii)$ and $(1)^\v(ii)^\v$ with $(n-1)(i)$.)
\end{itemize}
In this comparison, an inequality of the form $b_i - b'_j \geq \beta$ in Prop.\,\ref{prop:w-kappa-dom} corresponds to  
$b_i - b'_j \geq \beta + (N + (\w-\w'))$ in Prop.\,\ref{prop:w-kappa-v-dom}. Similarly, an inequality of the form 
$b'_j - b_i \geq \beta$ in Prop.\,\ref{prop:w-kappa-dom} corresponds to  
$b'_j - b_i \geq \beta - (N + (\w-\w'))$  in Prop.\,\ref{prop:w-kappa-v-dom}. 
}\end{rem}

\medskip
\paragraph{\bf The inner structure of the cuspidal width - I} 
For the weight $\mu$, written as above $\mu = ((b_1,\dots,b_n), \, (c_1,\dots, c_n))$, recall its cuspidal parameters from \eqref{eqn:cuspidal-parameters-alpha} and \eqref{eqn:cuspidal-parameters-beta}: 
$$
\alpha_i \ = \ -b_{n-i+1} + \tfrac{(n-2i+1)}{2} \quad 
\beta_i \ = \ -c_i - \tfrac{(n-2i+1)}{2}. 
$$ 
Similarly, for $\mu' = ((b'_1,\dots,b'_{n'}), \, (c'_1,\dots, c'_{n'}))$, we have 
$$
\alpha'_j \ = \ -b'_{n'-j+1} + \tfrac{(n'-2j+1)}{2}, \quad  
 \beta'_j \ = \ -c'_j - \tfrac{(n'-2j+1)}{2}. 
$$ 
For $1 \leq i \leq n$ and $1 \leq j \leq n'$, define $\ell_{i,j} := \alpha_i - \beta_i - \alpha'_j + \beta'_j.$
Applying purity we have: 
\begin{equation}
\label{eqn:lij}
\ell_{i,j} \ = \ 2(b'_{n'-j+1} - b_{n-i+1}) + (N+(\w - \w')) + -2n' + 2(j-i).
\end{equation}
These $nn'$ integers are ordered thus: 
\begin{equation}
\label{eqn:l_ij}
\begin{array}{ccccccc}
\ell_{1,1} & < & \ell_{1,2} & < & \cdots & < & \ell_{1,n'} \\
\vgreat & & \vgreat & & & & \vgreat \\
\ell_{2,1} & < & \ell_{2,2} & < & \cdots & < & \ell_{2,n'} \\ 
\vgreat & & \vgreat & & & & \vgreat \\
& \vdots & & & \vdots & &  \\ 
\vgreat & & \vgreat & & & & \vgreat \\
\ell_{n,1} & < & \ell_{n,2} & < & \cdots & < & \ell_{n,n'} \\
\end{array}
\end{equation}
Recall the cuspidal width is defined as 
$$
\ell(\mu,\mu') \ = \ \min\{|\ell_{i,j}| \ : \ 1 \leq i \leq n, \ 1 \leq j \leq n'\}.
$$
From \eqref{eqn:l_ij}, we see that the location of $0$ relative to these $nn'$ integers is important to determine the cuspidal width.

\medskip
\paragraph{\bf On how $\mu$ and $\mu'$ determine $\kappa$} 
Consider the $j$-th column of \eqref{eqn:l_ij}. Define $\ell_{0,j} = \infty$ (or a large positive integer), and  
$\ell_{n+1,j} = -\infty$ (or a large negative integer). For each $1 \leq j \leq n'$, define $r_j$ with $0 \leq r_j \leq n$ such that 
$$
\begin{array}{c}
\ell_{r_j,j} \\
\vgreater \\ 
0 \\ 
\vgreat \\
\ell_{r_j+1,j}.
\end{array}
$$ 
The integer $r_j$ defines the location of $0$ in the $j$-th column. For example, if all the $\ell_{*,j} \geq 0$ then $r_j = n,$ and similarly, if 
all $\ell_{*,j} < 0$ then $r_j = 0.$ Note that 
$$
0 \leq r_1 \leq r_2 \leq \cdots \leq r_{n'} \leq n.
$$
Next, define a string of integers $s_j$ by:
$s_j = r_j + j-1;$ then 
$$
0 \leq s_1 < s_2 < \cdots < s_{n'} \leq N-1.
$$
Now define $\kappa = k_1 < \cdots < k_n$ by: 
\begin{equation}
\label{eqn:kappa-defn}
\{k_1,\dots, k_n\} \ := \ \{1,2,\dots, N\} \setminus \{N-s_{n'}, N-s_{n'-1}, \dots, N-s_1\}.
\end{equation}

\medskip
\paragraph{\bf The inner structure of the cuspidal width - II}
Suppose there are $p$ strict inequalities in the sequence $r_1 \leq r_2 \leq \cdots \leq r_{n'}$, i.e, we have: 
$$
r_1 = \cdots = r_{t_1}  \ <  \ r_{t_1+1} = \cdots = r_{t_2}  \ < \ \cdots \  = r_{t_p} < r_{t_p+1} = \cdots = r_{n'}. 
$$
Let us denote the common values thus: 
\begin{equation}
\label{eqn:r-(1)}
r^{(1)} \ := \ r_1 = \cdots = r_{t_1}, \quad 
r^{(2)} \ := \ r_{t_1+1} = \cdots = r_{t_2}, \quad \dots, \quad
r^{(p+1)} \ := \ r_{t_p+1} = \cdots = r_{n'}. 
\end{equation}
Note that $1 \leq t_1 < t_2 < \cdots < t_p < n'.$ 
Define the quantity: 
\begin{equation}
\label{eq:def-delta}
\delta \ := \ 2(p+1) - \delta(r_1,0) - \delta(r_{n'},n),
\end{equation}
where in the last two terms, $\delta(i,j) = 1$ if $i=j$ and $\delta(i,j) =0$ if $i \neq j.$ We have the following

\medskip
\begin{lemma}
The cuspidal width $\ell(\mu,\mu')$ is the minimum of the set  
$$
\L \ := \ 
\{\ell_{r^{(1)},\, 1}, \ 
-\ell_{r^{(1)}+1, \, t_1}, \ 
\ell_{r^{(2)}, \, t_1+1}, \ 
-\ell_{r^{(2)}+1, \, t_2}, \ \dots \ 
\ell_{r^{(p+1)}, \, t_p+1}, \ 
-\ell_{r^{(p+1)}+1, \, n'} \}
$$
with the understanding that 
\begin{itemize}
\item if $\delta(r_1,0) = 1$ then $r^{(1)}=0$ and we delete the term $\ell_{r^{(1)},\, 1}$ from $\L$, and similarly, 
\item if $\delta(r_{n'},n) = 1$ then $r^{(p+1)} = n$ and we delete the term $-\ell_{r^{(p+1)}+1, \, n'}$ from $\L.$
\end{itemize}
The cardinality of the set $\L$ is $\delta.$
\end{lemma}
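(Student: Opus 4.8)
The plan is to read off the minimum of $\{|\ell_{i,j}|\}$ directly from the monotonicity pattern (\ref{eqn:l_ij}) and the definition of the $r_j$, column by column, and then collapse the resulting list using the equalities among consecutive $r_j$'s. First I would observe that inside a fixed column $j$ the sequence $\ell_{1,j} < \ell_{2,j} < \cdots < \ell_{n,j}$ is strictly increasing (this is exactly what (\ref{eqn:l_ij}) records along columns, after one checks the sign of the $(j-i)$ and $-2n'$ contributions in (\ref{eqn:lij})); hence among $\{|\ell_{i,j}| : 1\le i\le n\}$ the minimum is attained either at $i = r_j$ (the last index with $\ell_{i,j}\ge 0$) or at $i = r_j+1$ (the first index with $\ell_{i,j}<0$), since $|\cdot|$ is decreasing then increasing along the column and $r_j$ is precisely where the sign changes. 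So the column-$j$ minimum is $\min\{\ell_{r_j,j},\ -\ell_{r_j+1,j}\}$, with the convention that the term $\ell_{0,j}=+\infty$ is dropped when $r_j=0$ and the term $-\ell_{n+1,j}=+\infty$ is dropped when $r_j=n$. Taking the minimum over all $j$ then gives $\ell(\mu,\mu')=\min_j\min\{\ell_{r_j,j},-\ell_{r_j+1,j}\}$.

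Next I would reduce this over-long list of $2n'$ candidates to the $\delta$ candidates listed in $\L$ by exploiting the rows of (\ref{eqn:l_ij}): along each row $\ell_{i,1}<\ell_{i,2}<\cdots<\ell_{i,n'}$ is strictly increasing. Fix a block of columns $t_{a-1}+1,\dots,t_a$ on which $r_j$ has the constant value $r^{(a)}$. For the nonnegative candidates $\ell_{r^{(a)},j}$ with $j$ ranging over this block, row-monotonicity in $i=r^{(a)}$ forces the smallest to be the one with smallest $j$, namely $\ell_{r^{(a)},\,t_{a-1}+1}$; for the candidates $-\ell_{r^{(a)}+1,j}$, which are nonpositive-complements, $\ell_{r^{(a)}+1,j}$ is increasing in $j$, hence most negative at the largest $j$, so $-\ell_{r^{(a)}+1,j}$ is smallest at $j=t_a$, namely $-\ell_{r^{(a)}+1,\,t_a}$. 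Running $a$ from $1$ to $p+1$ (using the grouping (\ref{eqn:r-(1)})) produces exactly the set $\L$, and the minimum over $\L$ equals the minimum over the full $2n'$-element list, which is $\ell(\mu,\mu')$.

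Finally, for the cardinality statement: the full list has one ``positive'' and one ``negative'' representative per block, so $2(p+1)$ entries a priori, but the convention above deletes $\ell_{r^{(1)},1}$ precisely when $r_1=r^{(1)}=0$ (i.e.\ $\delta(r_1,0)=1$) and deletes $-\ell_{r^{(p+1)}+1,n'}$ precisely when $r_{n'}=r^{(p+1)}=n$ (i.e.\ $\delta(r_{n'},n)=1$); hence $|\L| = 2(p+1)-\delta(r_1,0)-\delta(r_{n'},n)=\delta$ by (\ref{eq:def-delta}). One should also check that no two of the retained entries coincide as indexed objects, so that $|\L|$ is genuinely $\delta$ and not smaller; this follows because the $t_a$ are strictly increasing and $r^{(1)}<r^{(2)}<\cdots<r^{(p+1)}$, so the index pairs $(r^{(a)},t_{a-1}+1)$ and $(r^{(a)}+1,t_a)$ are pairwise distinct.

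The main obstacle I anticipate is bookkeeping the boundary cases cleanly—when $r_1=0$ or $r_{n'}=n$, when a block is a single column ($t_a = t_{a-1}+1$), and when $p=0$ (all $r_j$ equal). None of these is deep, but the convention ``delete the infinite term'' has to be matched exactly against the two Kronecker deltas in (\ref{eq:def-delta}), and one must make sure that after deletions $\L$ is still nonempty (which holds because $r_1=0$ and $r_{n'}=n$ cannot both occur together with $\ell$ being finite—if they did, one checks the $0$-th and $n$-th rows straddle $0$ and some interior candidate survives). I would handle these by first treating the generic case $0<r_1$ and $r_{n'}<n$ in full, then noting the two extreme cases only remove the corresponding extremal term from $\L$ without affecting the argument.
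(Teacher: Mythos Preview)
Your approach is exactly the one the paper has in mind; the paper's own proof is a single sentence (``This follows from \eqref{eqn:l_ij}; the cardinality of $\L$ follows from \eqref{eq:def-delta}''), and what you have written is the natural unpacking of that sentence: reduce column-by-column to the two candidates at the sign change, then use row-monotonicity to pick out one representative per block of constant $r_j$.

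Two small slips to fix before you write it up. First, the columns in \eqref{eqn:l_ij} are strictly \emph{decreasing} in $i$, not increasing: from \eqref{eqn:lij} one sees that $\ell_{i,j}-\ell_{i+1,j} = 2(b_{n-i}-b_{n-i+1})+2 > 0$. This does not damage your argument, since all you actually use is that $|\ell_{i,j}|$ is unimodal along each column with its valley at $\{r_j,r_j+1\}$, and that holds for either direction of monotonicity once the sign change is located; but the sentence ``$\ell_{1,j}<\ell_{2,j}<\cdots<\ell_{n,j}$'' should be reversed. Second, in the block reduction you write that $\ell_{r^{(a)}+1,j}$ is ``most negative at the largest $j$''; it is \emph{least} negative there (row-monotonicity makes it increase toward $0$), which is precisely why $-\ell_{r^{(a)}+1,j}$ is smallest at $j=t_a$. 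Your conclusion is right; only the clause in the middle is inverted.

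Your worry at the end about $\L$ being empty is unnecessary: if $r_1=0$ and $r_{n'}=n$ then $r_1\neq r_{n'}$, so $p\ge 1$ and $\delta=2(p+1)-2\ge 2$; and if $p=0$ then at most one of the two deletions can occur. So $|\L|=\delta\ge 1$ in all cases.
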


\begin{proof}
This follows from \eqref{eqn:l_ij}; the cardinality of $\L$ follows from \eqref{eq:def-delta}. 
\end{proof}

\medskip
\paragraph{\bf The proof of the combinatorial lemma - I}

The proof of (2) $\iff$ (3) 
of the combinatorial lemma (for the case of an imaginary quadratic extension) follows from the following 

\medskip
\begin{prop}
\label{prop:simple-CL}
The following are equivalent: 
\begin{enumerate}
\item $-N+2-\ell(\mu,\mu') \ \leq \ (\w-\w') \ \leq \ -N-2+\ell(\mu,\mu').$ 
\smallskip
\item The element $w = (w_\kappa, w_{\kappa^\v})$ satisfies $w^{-1}\cdot(\mu+\mu')$ is dominant. 
\end{enumerate}
\end{prop}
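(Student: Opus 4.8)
The plan is to prove the two implications by carefully matching the two lists of inequalities that have already been assembled: the dominance conditions in Prop.\,\ref{prop:w-kappa-dom} and Prop.\,\ref{prop:w-kappa-v-dom} for the pair $(w_\kappa, w_{\kappa^\v})$, and the description of the cuspidal width $\ell(\mu,\mu')$ as the minimum of the set $\L$ together with the location data $r_1 \leq \cdots \leq r_{n'}$ and the integers $s_j = r_j+j-1$ that define $\kappa$ via \eqref{eqn:kappa-defn}. The key observation is that the way $\kappa$ is manufactured from $\mu$ and $\mu'$ is precisely designed so that a dominance inequality of the form $b_i - b'_j \geq \beta$ (resp. $b'_j - b_i \geq \beta$) in Prop.\,\ref{prop:w-kappa-dom} is equivalent, after translating via \eqref{eqn:lij}, to an inequality of the shape $\ell_{r,\,t} \geq 0$ for an appropriate $(r,t)$ that actually occurs as a corner in the staircase \eqref{eqn:l_ij}; these are automatically satisfied by the very definition of $r_j$. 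The \emph{non-automatic} dominance inequalities — the ones that genuinely constrain $\mu,\mu'$ — are exactly those that, under the dictionary of Rem.\,\ref{rem:compare-w-k-w-kv}, pair a condition from Prop.\,\ref{prop:w-kappa-dom} with its shifted partner from Prop.\,\ref{prop:w-kappa-v-dom}; the simultaneous validity of such a pair $b_i - b'_j \geq \beta$ and $b_i - b'_j \geq \beta + (N+(\w-\w'))$ (or the analogous pair with $b'_j - b_i$) is governed by the sign of $N+(\w-\w')$ relative to the size $\ell_{i,j}$, and this is exactly the content of the two-sided bound in $(1)$.

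First I would set up the bookkeeping: list the dominance conditions of Prop.\,\ref{prop:w-kappa-dom} for $w_\kappa$ acting on the first component $(b_1,\dots,b_n,b'_1,\dots,b'_{n'})$, and separately those of Prop.\,\ref{prop:w-kappa-v-dom} for $w_{\kappa^\v}$ acting on the second component $(c_1,\dots,c_n,c'_1,\dots,c'_{n'})$, rewriting the latter in terms of the $b$'s and $b'$'s using $c_j = \w - b_{n+1-j}$, $c'_j = \w' - b'_{n'-j+1}$ as in the proof of Prop.\,\ref{prop:w-kappa-v-dom}. Using Rem.\,\ref{rem:compare-w-k-w-kv}, I would organize all these inequalities into the matched pairs it describes: for each index $l$, the pair $(l)(i)$ of Prop.\,\ref{prop:w-kappa-dom} with $((n-l)^\v$ and so on$)$ of Prop.\,\ref{prop:w-kappa-v-dom}, plus the boundary cases $(0)$ vs $(n^\v)$ and $(n)$ vs $(0^\v)$. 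For each such matched pair of inequalities $b_i - b'_j \geq \beta$ and $b_i - b'_j \geq \beta + (N+(\w-\w'))$, I would note that (a) the weaker one is automatic: translating via \eqref{eqn:lij} it becomes $\ell_{i',j'} \geq 0$ (up to relabeling) for an $(i',j')$ sitting at a staircase position consistent with the chosen $r$'s, which holds by construction of $\kappa$; and (b) the stronger one holds precisely when $(N+(\w-\w'))$ is bounded above (or below, in the dual-labeled pairs) by the corresponding $|\ell_{i,j}|$, hence by $\ell(\mu,\mu')$ after taking the minimum over all relevant corners — which recovers $(1)$. This is the heart of the argument: one shows that $w^{-1}\cdot(\mu+\mu')$ dominant $\iff$ all the "stronger halves" hold $\iff$ $|N+(\w-\w')| + 2 \leq \ell(\mu,\mu')$, i.e. the two-sided bound in $(1)$, using that $\ell(\mu,\mu')$ is the minimum of the set $\L$ whose elements are exactly the $\pm\ell_{r^{(k)},t}$ and $\pm\ell_{r^{(k)}+1,t}$ occurring at the corners that the matched pairs see.

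I expect the \textbf{main obstacle} to be the careful verification that the automatic inequalities really are automatic, and in particular the bookkeeping around the degenerate cases: when some $k_{l+1} = k_l+1$ (so a block of conditions in Prop.\,\ref{prop:w-kappa-dom} is empty), when $r_j = 0$ or $r_j = n$ (so $\delta(r_1,0)$ or $\delta(r_{n'},n)$ equals $1$ and terms are deleted from $\L$), and the edge cases $k_1 = 1$, $k_n = N$. One must check that in each such degeneracy the corresponding constraint either disappears on both sides of the dictionary simultaneously, or reduces to a trivially true inequality $b_i \geq b_{i+1}$ / $b'_j \geq b'_{j+1}$ coming from dominance of $\mu$ and $\mu'$ themselves — exactly as in the collapse argument at the end of the proof of Prop.\,\ref{prop:w-kappa-dom}. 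Once this case analysis is organized (it is combinatorially intricate but each case is short), the equivalence $(1)\iff(2)$ drops out, and combined with Cor.\,\ref{cor-crit-N/2} and Lem.\,\ref{lem:length-w-kappa}(2) — which guarantees $(w_\kappa, w_{\kappa^\v})$ is balanced, so it lies in $W^P$ and has the right length — this will complete the proof of $(2)\iff(3)$ of the combinatorial lemma in the imaginary-quadratic case; the general totally imaginary case then follows place-by-place exactly as in \ref{sec:general-cm-field}.
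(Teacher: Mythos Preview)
Your overall strategy is right and matches the paper's: rewrite (1) as the family of inequalities $\ell \geq 2 + (N+(\w-\w'))$ and $\ell \geq 2 - (N+(\w-\w'))$ for each $\ell \in \L$, and show these correspond bijectively to the (non-empty) dominance conditions of Prop.\,\ref{prop:w-kappa-dom} and Prop.\,\ref{prop:w-kappa-v-dom} via the dictionary in Rem.\,\ref{rem:compare-w-k-w-kv} and the case analysis over the structure of the $r_j$'s.

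However, your picture of \emph{how} the correspondence works has a specific error. You claim that in each matched pair of dominance inequalities (one from Prop.\,\ref{prop:w-kappa-dom}, its partner from Prop.\,\ref{prop:w-kappa-v-dom}) the weaker one translates to an inequality of the form $\ell_{i',j'} \geq 0$ and is therefore automatic from the definition of the $r_j$'s. This is false. If you actually carry out the translation via \eqref{eqn:lij} you find (for instance, in the case $r_1 = 0$, with $\ell = -\ell_{1,\,N-k_n} \in \L$) that case $(n)$ of Prop.\,\ref{prop:w-kappa-dom} becomes exactly $\ell \geq 2 - (N+(\w-\w'))$, while its partner case $(0^\v)$ of Prop.\,\ref{prop:w-kappa-v-dom} becomes exactly $\ell \geq 2 + (N+(\w-\w'))$. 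Neither reduces to a bare $\ell_{i',j'} \geq 0$; each is one of the two halves of the bound in (1). The same pattern holds in every case (check $r_1 \geq 1$ with $\ell_{r_1,1}$ and $-\ell_{r_1+1,t_1}$): the two members of a matched pair give precisely the two bounds $\w - \w' \geq -N + 2 - \ell$ and $\w - \w' \leq -N - 2 + \ell$, and both are substantive.

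So the correct accounting is not ``half the dominance conditions are automatic, the other half are governed by (1)'' but rather ``the non-empty dominance conditions from Prop.\,\ref{prop:w-kappa-dom} \emph{together with} those from Prop.\,\ref{prop:w-kappa-v-dom} are in bijection with the $2\delta$ inequalities obtained by applying both bounds to the $\delta$ elements of $\L$.'' What the construction of $\kappa$ from the $r_j$'s actually buys you is that the \emph{empty} cases in Prop.\,\ref{prop:w-kappa-dom} and Prop.\,\ref{prop:w-kappa-v-dom} (those with $k_{l+1} = k_l + 1$, or $k_1 = 1$, or $k_n = N$) occur precisely where there is no corresponding element of $\L$, so the counts on the two sides match. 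Once you correct this bookkeeping, the case-by-case analysis you outline (walking through $r_1 = 0$, $r_{n'} = n$, $r_1 \geq 1$, and then each anchor $t_q$ for $1 \leq q \leq p$) goes through and is exactly what the paper does; the argument is then reversible, giving the equivalence.
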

Note that the requirement of the Kostant representative to be balanced is automatically taken care of by (2), since by Lem.\,\ref{lem:length-w-kappa},\,(2), 
we have $l(w) = l(w_\kappa) + l(w_{\kappa^\v}) = nn'.$  

\begin{proof}
The information contained in the inequalities 
$$
-N+2-\ell(\mu,\mu') \ \leq \ (\w-\w') \ \leq \ -N-2+\ell(\mu,\mu')
$$ 
is clearly equivalent to the set of $2\delta$ inequalities
\begin{equation}
\label{eqn:each-ell-bounds}
\ell \ \geq \ 2 + (N+(\w - \w')) \quad {\rm and} \quad \ell \geq 2 - (N+(\w - \w')), \quad \forall \ell \in \L.
\end{equation}

Let us begin the analysis of various cases and consider each of the above inequalities: 

\medskip

\begin{itemize}
\item Suppose $r_1 = 0.$  From \eqref{eqn:kappa-defn} it follows that $r_1 = 0 \iff k_n \leq N-1 \iff k_1^\v \geq 2.$ 
The condition $r_1 = \cdots = r_{t_1} = 0$  (which means the first $t_1$ many columns of \eqref{eqn:l_ij} are negative)
implies that $N-t_1+1,\dots, N-1, N$ are deleted in defining $\kappa$ in \eqref{eqn:kappa-defn}, 
hence $k_n = N-t_1.$ 
Now, consider the term 
$\ell = -\ell_{r^{(1)}+1, \, t_1} = -\ell_{1, N-k_n} \in \L.$ From \eqref{eqn:lij} we have 
$$
-\ell_{1,N-k_n} \ = \ 2(b_n - b'_{k_n+1-n}) - (N+(\w - \w')) + 2n' - 2(N-k_n-1).
$$
Applying \eqref{eqn:each-ell-bounds} to $-\ell_{1,N-k_n}$ gives us: 
$$
b_n - b'_{k_n+1-n} \ \geq \ n-k_n + (N+(\w - \w')), \quad {\rm and} \quad b_n - b'_{k_n+1-n} \ \geq \ n-k_n, 
$$
which are the same as the bounds in case-$(0^\v)$ of Prop.\,\ref{prop:w-kappa-v-dom} and case-$(n)$ of Prop.\,\ref{prop:w-kappa-dom}.

\medskip

\item Suppose $r_{n'} = n.$ From \eqref{eqn:kappa-defn} it follows that $r_1 = 0 \iff k_1 \geq 2 \iff k_n^\v \leq N-1.$ 
The condition $r_{t_p+1} = \cdots = r_{n'} = n$ 
(which means that in \eqref{eqn:l_ij} the last $t_p$-columns are all non-negative) 
implies that $1,2,\dots, N-(n+t_p)$ are deleted in getting $\kappa$ in \eqref{eqn:kappa-defn}, hence 
$k_1 = n'-t_p+1.$ Now, consider the term 
$\ell = -\ell_{r^{(p+1)}, \, t_p +1} = -\ell_{n, n'-k_1+2} \in \L.$ From \eqref{eqn:lij} we have 
$$
\ell_{n, n'-k_1+2} \ = \ 2(b'_{k_1-1} - b_1) + (N+(\w - \w')) - 2k_1-2n+4.
$$
Applying \eqref{eqn:each-ell-bounds} to $\ell_{n, n'-k_1+2}$ gives us: 
$$
b'_{k_1-1} - b_1 \ \geq \ n+k_1-1 , \quad {\rm and} \quad b'_{k_1-1} - b_1 \ \geq \ n+k_1 -1 + (N+(\w - \w')), 
$$
which are exactly the bounds described case-$(0)$ of Prop.\,\ref{prop:w-kappa-dom} and case-$(n)^\v$ of Prop.\,\ref{prop:w-kappa-v-dom}. 

\medskip

\item Suppose $r_1 \geq 1.$ Then the shape of $\kappa$ is of the form:
$$
\kappa \ = \ \{ \dots, N-r_1-t_1, \widehat{N-r_1+ - t_1}, \dots, \widehat{N - r_1}, N-r_1+1, \dots, N-1, N \}, 
$$
where the $\widehat{a}$ means that $a$ is deleted from that list. This implies that 
$$
k_n = N, \ k_{n-1} = N-1, \ \dots, k_{n-r_1+1} = N-r_1+1, \ k_{n-r_1} = N-r_1-t_1, \dots
$$
Hence we see that 
$$
\mbox{if $\ l \ := \ n - r_1 \ $ then  $ \ k_l = N-r_1-t_1, \ \ k_{l+1} = N-r_1+1.$}
$$
In particular, $k_{l+1} - k_l = 1+t_1 \geq 2.$ 
Put $l^\v = n-l+1$ then, by definition of $\kappa^\v$ we also have $k^\v_{l^\v}  - k^\v_{l^\v-1} \geq 2.$ 
Note that $l^\v = n-(n-r_1)+1 = r_1+1.$ Hence we have $k^\v_{r_1+1} - k^\v_{r_1} \geq 2.$ 

\medskip

Consider the elements $\ell_{r_1, 1}$ and $-\ell_{r_1+1, t_1}$ in $\L.$ 
Note that 
$$
\ell_{r_1,1} = 2(b'_{n'} - b_{n-r_1+1}) + (N + (\w-\w')) - 2n' + 2(1-r_1).
$$
If we apply \eqref{eqn:each-ell-bounds} to $\ell_{r_1,1}$ we get 
$$
b'_{n'} - b_{n-r_1+1} \geq n'+r_1 \quad {\rm and} \quad 
b'_{n'} - b_{n-r_1+1} \geq n'+r_1 - (N+(\w-\w')). 
$$
We will leave it to the reader check that these are exactly the inequalities we get from 
case-$(l)(ii)$ of Prop.\,\ref{prop:w-kappa-dom} and case-$(r_1)^\v(i)^\v$ of Prop.\,\ref{prop:w-kappa-v-dom}.  

\medskip

Next, note that 
$$
-\ell_{r_1+1, t_1} = -2(b'_{n'-t_1+1} - b_{n-r_1}) - (N + (\w-\w')) + 2n' - 2(t_1-r_1-1).
$$
Apply \eqref{eqn:each-ell-bounds} to $-\ell_{r_1+1, t_1}$ to get 
$$
b_{n-r_1} - b'_{n'-t_1+1} \geq  -n' + t_1 - r_1 
$$ 
and 
$$ 
b_{n-r_1} - b'_{n'-t_1+1}  \geq -n' + t_1 - r_1 + (Nr_1+(\w-\w')). 
$$
We will leave it to the reader check that these are exactly the inequalities we get from 
case-$(l)(i)$ of Prop.\,\ref{prop:w-kappa-dom} and case-$(r_1)^\v(ii)^\v$ of Prop.\,\ref{prop:w-kappa-v-dom}.  

\medskip

Let us summarize the above three cases as: 
\begin{enumerate}
\item If $r_1 = 0$ then  $(n)$ and $(0^\v)$ hold. 
\item If $r_{n'} = n$ then $(0)$ and $(n)^\v$ hold. 
\item If $r_1 \geq 1$ then $(n-r_1)(i),$ $(n-r_1)(ii)$, $(r_1^\v)(i)^\v$ and $(r_1^\v)(ii)^\v$ hold. (Furthermore, cases 
$(1)^\v$ through $(r_1-1)^\v$ are empty and $(n-r_1+1)$ through $(n)$ are empty.) 
\end{enumerate}

\medskip

\item It should be clear now, that for each $q$ with $1 \leq q \leq p$, using $t_q$ or $r^{(q)}$ as the anchor, we get all the cases of Prop.\,\ref{prop:w-kappa-dom} and Prop.\,\ref{prop:w-kappa-v-dom}, 
and hence $w^{-1}\cdot(\mu+\mu')$ is dominant. 
\end{itemize}

The entire argument is reversible, i.e., if the cases of Prop.\,\ref{prop:w-kappa-dom} and Prop.\,\ref{prop:w-kappa-v-dom} hold the inequalities in \eqref{eqn:each-ell-bounds} are satisfied. 
This completes the proof of Prop.\,\ref{prop:simple-CL}. 
\end{proof}

\medskip
\paragraph{\bf The general totally imaginary field}
Now if $F$ is any totally imaginary field, then the proof reduces to working with pairs of complex embeddings $(\eta_v, \bar\eta_v)$ for 
a $v \in \place_\infty$; it is entirely analogous to Sect.\,\ref{sec:general-cm-field}. We will leave the details to the reader. 

\medskip
\subsubsection{\bf The combinatorial lemma at an arithmetic level} 
\label{sec:comb-lem-arith-level}
All the three statements in the combinatorial lemma work at an arithmetic level. Take $\mu \in X^+_{00}(T_n \times E)$ and 
$\mu' \in X^+_{00}(T_{n'} \times E)$, and $\sigma_f \in \Coh_{!!}(G_n, \mu),$  $\sigma'_f \in \Coh_{!!}(G_{n'}, \mu'),$ and for 
$\iota : E \to \C$, consider the statement of the lemma for ${}^\iota\mu, {}^\iota\mu', {}^\iota\sigma$ and ${}^\iota\sigma'$; let us add some comments 
for each of (1), (2) and (3) of the lemma:  
\begin{enumerate}
\item From Sec.\,\ref{sec:crit-set-arith-level} it follows that $-\tfrac{N}{2}$ and $1-\tfrac{N}{2}$ are critical for 
$L(s, {}^\iota\sigma \times {}^\iota\sigma'^\v)$ for any $\iota : E \to \C.$ 

\smallskip
\item Since $\mu$ and $\mu'$ are strongly-pure, it is easy to see that the abelian width $a({}^\iota\mu, {}^\iota\mu')$ and the cuspidal width 
$\ell({}^\iota\mu, {}^\iota\mu')$ are independent of $\iota.$ (See Cor.\,\ref{cor:crit-set-ind-iota}.) 
For the assertion for cuspidal width, the reader may check from definitions that the 
$\ell({}^\iota\mu, {}^\iota\mu')$ is given by taking the minimum of 
$|-2\mu^{\iota\circ\tau}_{n-i+1}+2\mu'^{\iota\circ\tau}_{n'-j+1} + n-n' + 2j-2i + \w -\w'|$ over all $\tau : F \to E,$ and all indices 
$1 \leq i \leq n, \ 1 \leq j \leq n'.$ As $\tau$ varies over $\Hom(F,E)$, $\iota\circ\tau$ varies over $\Hom(F,\C),$ making the above minimum 
independent of $\iota.$

\smallskip
\item Write $w \in W^G$ as $w = (w^\tau)_{\tau: F \to E}.$ We will say $w \in W^P$ is balanced if $l(w^\tau) + l(w^{\overline{\tau}^\iota}) = \dim(U_{P_0})$ for all $\tau \in \Hom(F,E)$ and for all $\iota : E \to \C$; recall that $\iota$ induces a complex conjugation $\tau \mapsto {\overline{\tau}^\iota}$ on $\Hom(F,E)$. (See Rem.\,\ref{rem:w-base-change} below.) 
\end{enumerate}
It should now be clear that $(1) \iff (2) \iff (3)$ of the lemma is independent of $\iota : E \to \C.$

\medskip

\begin{rem}
\label{rem:w-base-change}{\rm 
Strongly-pure weights $\mu \in X^+_{00}(T_n \times E)$ and 
$\mu' \in X^+_{00}(T_{n'} \times E)$ being the base-change from $F_1$ (Prop.\,\ref{prop:strong-pure-weights-base-change}), it follows when the conditions of the combinatorial lemma hold,  
that the Kostant representative $w = (w^\tau)_{\tau : F \to E}$ is also the base-change from $F_1$ in the sense that if $\tau|_{F_1} = \tau'|_{F_1}$ then 
$w^{\tau} = w^{\tau'}.$ 
}\end{rem}

\medskip
\section{\bf Archimedean intertwining operator}
\label{sec:T-st-infinity}

\subsection{The case of $\GL_2$} 
The calculations in this subsection are in principle the same as in Harder~\cite[Sect.\,3.5]{harder-inventiones}, but we need to go through this exercise to reorganise our thoughts, while using inputs from \cite[Chap.\,9]{harder-raghuram-book}, so as to generalize them to $\GL_N$ in 
the next subsection. The main result of this subsection is Prop.\,\ref{prop:basic-Tst-GL2-rational-classes}.

\subsubsection{\bf Explicit cohomology class for $\GL_2$}
\label{sec:explcit-class-GL2-infinity}
Let $\mu = ((b_1,b_2), (c_1,c_2))$ be a pure dominant integral weight for $\GL_2(\C)$ as a real group. Integrality means $b_1,b_2,c_1,c_2 \in \Z$; dominance is $b_1 \geq b_2$ and $c_1 \geq c_2$; purity means 
$b_1+c_2 = b_2+c_1$, which allows us to define $m := b_1-b_2 +1 = c_1-c_2+1$. The cuspidal parameters are
$(\alpha_1,\alpha_2) = (-b_2+ \tfrac12,-b_1-\tfrac12)$ and $(\beta_1,\beta_2) = (-c_1-\tfrac12, -c_2+\tfrac12).$
We have the induced representation
$$
\J_\mu \ = \ \Ind_{B_2(\C)}^{\GL_2(\C)}\left(
z^{-b_2+ \tfrac12} \, \bar{z}^{-c_1-\tfrac12} \ \otimes \ z^{-b_1-\tfrac12} \, \bar{z}^{-c_2+\tfrac12} \right).
$$
Recall, $\GL_2(\C) = B_2(\C) \SU(2)$ with $T_c^{(1)} := B_2(\C) \cap \SU(2) \approx \SU(1) \approx \S^1.$ Let us write $e^{i\theta}$ for an element of $\S^1$ which is the element 
$\left(\begin{smallmatrix} e^{i\theta} & 0 \\ 0 & e^{-i\theta}\end{smallmatrix}\right)$ in $T_c^{(1)}$. 
If $(\tau_k,V_k)$ denotes the irreducible representation of $\SU(2)$ of dimension $k$, and $\chi_{2m}(e^{i \theta}) = e^{i (2m)\theta},$
then
\begin{equation}
\label{sec:J-mu-SU(2)}
\J_\mu \ = \ \Ind_{T_c^{(1)}}^{\SU(2)}
\left(\chi_{2m}\right) \ \approx \ 
\tau_{2m+1} \oplus \tau_{2m+3} \oplus \cdots \oplus \tau_{2m+2k+1} \oplus \cdots, 
\end{equation}
since by Frobenius reciprocity, any irreducible representation of $\SU(2)$ that appears in $\J_\mu$ has to contain the character $\chi_{2m}$ with multiplicity one. Note that $\tau_{2m+1}$ is the minimal $K$-type in 
the induced representation $\J_\mu;$ we denote $\J_\mu(\tau_{2m+1})$ for this minimal $K$-type as it sits inside the ambient $\J_\mu$. 
Let' us next describe $(\rho_\mu,\M_\mu)$ restricted to $\SU(2).$ We have 
$\M_{(b_1,b_2)} = \Sym^{b_1-b_2}(\C^2) \otimes \det^{b_2}$ as a representation of $\GL_2(\C),$ 
where $\C^2$ is the standard representation. Hence,  
$\rho_{(b_1,b_2)}|_{\SU(2)} = \tau_m.$  Similarly, $\rho_{(c_1,c_2)}|_{\SU(2)} = \tau_m.$  If $g \in \SU(2)$ then $\bar{g} = {}^t g^{-1}$, hence 
$\bar{\tau}_m = \tau_m^\vee = \tau_m.$ This implies that $\rho_\mu|_{\SU(2)} = \tau_m \otimes \tau_m.$  Recall Clebsch-Gordon for $\SU(2)$: for $p \geq q \geq 1$ we have
\begin{equation}
\label{sec:clebsch-gordon-SU(2)}
\tau_p \otimes \tau_q \ = \ \tau_{p-q+1} \oplus \tau_{p-q+3} \oplus \cdots \oplus \tau_{p+q-1}.
\end{equation}
Applying this to $p = q = m$ we get:
\begin{equation}
\label{sec:rho-mu-SU(2)}
\rho_\mu|_{\SU(2)} \ = \ 
\tau_1 \oplus \tau_3 \oplus \cdots \oplus \tau_{2m-1}.
\end{equation}
Denote $\M_\mu(\tau_{2m-1})$ for the copy of $\tau_{2m-1}$ as it sits inside $\M_\mu.$ 
Let $\g_2 := \gl_2(\C)$ and $\tilde\k_2 := \R \oplus \u_2(\C)$ be the Lie algebras of the connected real Lie groups $\GL_2(\C)$ and $Z_2(\C)\U(2),$ respectively. 
Then the Adjoint-action of $\SU(2)$ on $\g_2/\tilde\k_2$ is irreducible whose complexification is isomorphic to $\tau_3.$ Furthermore, we have:
$\wedge^0(\g_2/\tilde\k_2) \approx \wedge^3(\g_2/\tilde\k_2)  \approx \tau_1$ and 
$\wedge^1(\g_2/\tilde\k_2) \approx \wedge^2(\g_2/\tilde\k_2)  \approx \tau_3.$
We can now describe the complex $\Hom_{\SU(2)}(\wedge^\bullet(\g_2/\tilde\k_2), \J_\mu \otimes \M_\mu).$ Apply 
\eqref{sec:J-mu-SU(2)} and \eqref{sec:rho-mu-SU(2)} to $\J_\mu \otimes \M_\mu$ and then apply \eqref{sec:clebsch-gordon-SU(2)} to see that 
the smallest $p$ for which $\tau_p$ can occur in $\J_\mu \otimes \M_\mu$ is $p=3$ and this is realized exactly once as:
$$
\tau_3 \ \hookrightarrow \ \tau_{2m+1} \otimes \tau_{2m-1} \ = \ \J_\mu(\tau_{2m+1}) \otimes \M_\mu(\tau_{2m-1}). 
$$
Hence, $\Hom_{\SU(2)}(\wedge^\bullet(\g_2/\tilde\k_2), \J_\mu \otimes \M_\mu) \neq 0 \iff \bullet = 1, 2,$ and is one-dimensional in these degrees. 
Knowing that the differentials for this complex are zero, we deduce that $\J_\mu \otimes \M_\mu$ has nonvanishing $(\g_2, \tilde\k_2)$-cohomology 
only in degrees $1$ and $2$ and the cohomology group is one-dimensional in these degrees. 
Fix a basis $[\J_\mu]$ for: 
\begin{equation}
\label{eqn:[J-mu]}
H^1(\g_2, \tilde{\k}_2; \J_\mu \otimes \M_\mu) \ = \ 
H^1(\gl_2(\C), Z_2(\C)\U(2); \J_\mu \otimes \M_\mu) \ = \ \C [\J_\mu].
\end{equation}
Now, we express $[\J_\mu] \in 
\Hom_{Z_2(\C)\U(2)}(1\!\!1, \wedge^1(\g_2/\tilde\k_2)^* \otimes \J_\mu(\tau_{2m+1}) \otimes \M_\mu(\tau_{2m-1})),$ as 
$$
[\J_\mu] \ = \ 
\sum_{i, \alpha} X_i^* \otimes \phi_{i, \alpha} \otimes m_{\alpha}, 
$$
where $\{X_i^*\}$ is a basis for $(\g_2/\tilde\k_2)^*$, and $\{m_\alpha\}$ is a basis for $\M_\mu.$ (Of course, if $m_\alpha \notin 
\M_\mu(\tau_{2m-1})$ then $\phi_{i, \alpha} = 0.$) We call the finite set $\{\phi_{i, \alpha}\}$ of vectors in $\J_\mu$ as 
{\it cohomological vectors}. Since $H^1$ has dimension one, a scaling of the basis element $[\J_\mu]$ means jointly scaling this 
finite set of cohomological vectors. Furthermore, 
we contend, via an explicit version of Clebsch--Gordon, that one of 
the $\phi_{i, \alpha}$ is a highest weight vector of the lowest $K$-type $\J_\mu(\tau_{2m+1}).$ Call this particular vector 
as the {\it distinguished cohomological vector} for a given choice of $[\J_\mu]$.

\medskip
\subsubsection{\bf The highest weight vector of the lowest $K$-type in $\J_\mu$}
We can explicitly describe such a vector $f_\mu$; first of all, since $f_\mu$ is in the induced representation $\J_\mu$ we have 
\begin{equation}
\label{eqn:f-mu-1}
f_\mu(\begin{pmatrix}z & * \\ & w \end{pmatrix} g)
 \ = \ 
z^{-b_2+ \tfrac12} \, \bar{z}^{-c_1-\tfrac12} \cdot w^{-b_1-\tfrac12} \, \bar{w}^{-c_2+\tfrac12} \cdot \left|\frac{z}{w}\right|_\C^{1/2} f_\mu(g), 
\end{equation}
for all $g \in \GL_2(\C)$ and $z,w\in \C^\times.$ Next, we note:  
\begin{equation}
\label{eqn:f-mu-2}
f_\mu( 
\begin{pmatrix} e^{i\alpha} &  \\ & e^{-i\alpha} \end{pmatrix}
g \begin{pmatrix} e^{i\beta} &  \\ & e^{-i\beta} \end{pmatrix})  = \ 
e^{i(2m)\alpha}
e^{i(2m)\beta} f_\mu(g), 
\end{equation}
for all $g \in \GL_2(\C)$. The left-equivariance under $T_c^{(1)}$ is by \eqref{eqn:f-mu-1}, and the right-equivariance 
under $T_c^{(1)}$ is because of being the highest weight vector in $\tau_{2m+1}$. Finally, $f_\mu$ is completely determined 
by its values on $\SU(2)$, for which, observe that
$\SU(2) = T_c^{(1)} \cdot \SO(2) \cdot T_c^{(1)}.$
For the values of $f_\mu$ on $\SO(2)$, recall that the weight-vectors of $\tau_{2m+1}$ maybe enumerated as $\{f_{-2m}, f_{-2m+2}, \dots, f_{2m-2}, f_{2m}\}$ where 
$T_c^{(1)}$ acts on $f_k$ via the character $e^{i \beta} \mapsto e^{ik\beta}.$ So our $f_\mu$ is $f_{2m}$ up to a scalar multiple. 
Let $\r(\theta) = \left(\begin{smallmatrix} \cos(\theta) & -\sin(\theta) \\ \sin(\theta) & \cos(\theta)  \end{smallmatrix}\right)$; then the weight 
vectors $\{f_{-2m}, f_{-2m+2}, \dots, f_{2m-2}\}$ may be normalized so that: 
\begin{equation}
\label{eqn:SO(2)-on-f-2m}
\r(\theta) \cdot f_{2m} \ = \ 
\cos^{2m}(\theta)f_{2m} \ + \ \cos^{2m-2}(\theta)\sin^2(\theta) f_{2m-2} \ + \ \cdots \ + \ \sin^{2m}(\theta)f_{-2m}. 
\end{equation}
(Think of the model for $\tau_{2m+1}$ consisting of homogeneous polynomials of degree $2m$ in two variables.) 
Using the analogue of \eqref{eqn:f-mu-2} for the other weight vectors we see that $f_k(I) = 0$ if $k \neq 2m$. (Here $I$ is the $2 \times 2$ identity matrix.) 
Evaluating \eqref{eqn:SO(2)-on-f-2m} on $I$ we get: 
\begin{equation}
\label{eqn:f-mu-3}
f_\mu(\r(\theta)) \ = \ \cos^{2m}(\theta), 
\end{equation}
where, we have normalized $f_\mu$ by $f_{\mu}(I) = 1.$ Putting \eqref{eqn:f-mu-1}, \eqref{eqn:f-mu-2} and \eqref{eqn:f-mu-3} together we can write: 
\begin{equation}
\label{eqn:f-mu-4}
f_\mu(\begin{pmatrix}z & * \\ & w \end{pmatrix} \r(\theta) \begin{pmatrix} e^{i\beta} &  \\ & e^{-i\beta} \end{pmatrix}) \ = \ 
z^{-b_2+ 1} \, \bar{z}^{-c_1} \cdot w^{-b_1-1} \bar{w}^{-c_2} \cdot
\cos^{2m}(\theta) \cdot
e^{i(2m)\beta}. 
\end{equation}

\medskip
\subsubsection{\bf The cohomology class $[\J_\mu]_0$}
The compact Lie group $\SO(2)$ is the real points of an algebraic group defined over $\Q$, whose $\Q$-points we denote $\SO(2)(\Q)$; this consists 
of all those $\r(\theta)$ such that $\cos(\theta), \, \sin(\theta) \in \Q$.  
We will scale the cohomology class $[\J_\mu]$, such that the distinguished cohomological vector is rational, i.e., takes rational values on $\SO(2)(\Q);$ we denote 
this class by $[\J_\mu]_0$. Observe that $[\J_\mu]_0$ is well-defined only up to homothety by $\Q^\times.$ By \eqref{eqn:f-mu-4}, we see 
that some $\Q^\times$-multiple of $f_\mu$ is a distinguished cohomological vector for $[\J_\mu]_0.$

\medskip
\subsubsection{\bf The intertwining operator $T_{\rm st}$}

Consider the induced representation 
$$
\J_\mu \ = \ \Ind_{B_2(\C)}^{\GL_2(\C)}\left(
z^{-b_2+ \tfrac12} \, \bar{z}^{-c_1-\tfrac12} \ \otimes \ z^{-b_1-\tfrac12} \, \bar{z}^{-c_2+\tfrac12} \right) \ \ {\rm as} \ \ \Ind_{B_2(\C)}^{\GL_2(\C)}(\chi_1(\mu) \otimes \chi_2(\mu)), 
$$
where $\chi_1(\mu)(z) = z^{-b_2+ \tfrac12} \, \bar{z}^{-c_1-\tfrac12}$ and $\chi_2(\mu)(z) = z^{-b_1-\tfrac12} \, \bar{z}^{-c_2+\tfrac12}.$
The standard intertwining operator from $\J_\mu$ to its `companion' induced representation 
$$
T_{\rm st} : \Ind_{B_2(\C)}^{\GL_2(\C)}(\chi_1(\mu) \otimes \chi_2(\mu)) \ \longrightarrow \ 
\Ind_{B_2(\C)}^{\GL_2(\C)}(\chi_2(\mu) \otimes \chi_1(\mu))
$$
is given by the integral: 
\begin{equation}
\label{eqn:T-st-GL2}
T_{\rm st}(f)(g) \ = \ 
\int_\C f \left(
\begin{pmatrix} 0 & 1 \\ -1 & 0 \end{pmatrix}
\begin{pmatrix} 1 & u \\ 0 & 1 \end{pmatrix} 
g \right) \, du
\end{equation}
where $du$ is the Lebesgue measure on $\C$; if $u = x + \i y$ then $du = dx\,dy.$

\begin{prop}
\label{prop:irreducible-isomorphism-GL2}
Suppose $s = -1$ and $s=0$ are regular points for both $L(s, \chi_1(\mu)\chi_2(\mu)^{-1})$ and $L(1-s, \chi_1(\mu)^{-1}\chi_2(\mu)).$ Then, 
the representation $\Ind_{B_2(\C)}^{\GL_2(\C)}(\chi_1(\mu) \otimes \chi_2(\mu))$ is irreducible, and the standard intertwining operator $T_{\rm st}$ is an isomorphism. 
\end{prop}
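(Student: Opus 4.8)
The plan is to treat the two assertions separately, the first being essentially a consequence of purity alone and the second the genuine content.

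For irreducibility, I would use purity of $\mu$: from $b_1+c_2=b_2+c_1$ one gets
$\chi_1(\mu)\chi_2(\mu)^{-1}(z) = z^{b_1-b_2+1}\,\bar z^{\,c_2-c_1-1} = z^{m}\bar z^{-m}$, a character of $\C^\times$ whose two exponents $m$ and $-m$ are nonzero integers of opposite sign (recall $m\geq 1$ by dominance). The principal series $\Ind_{B_2(\C)}^{\GL_2(\C)}(\eta_1\otimes\eta_2)$ for $\GL_2(\C)$ is reducible only when $\eta_1\eta_2^{-1}$ or $\eta_2\eta_1^{-1}$ is of the form $z\mapsto z^p\bar z^q$ with $p,q$ strictly positive integers; since neither $\chi_1(\mu)\chi_2(\mu)^{-1}$ nor its inverse has this shape, both $\Ind(\chi_1(\mu)\otimes\chi_2(\mu))$ and its companion $\Ind(\chi_2(\mu)\otimes\chi_1(\mu))$ are irreducible. (Equivalently, the same purity identity lets one twist by $|\det|_\C^t$ for a suitable $t$ to make both $\chi_i(\mu)$ unitary, realizing the module as a twist of a unitary principal series of $\GL_2(\C)$.)

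Granting irreducibility, a nonzero $\GL_2(\C)$-intertwining map $T_{\rm st}$ between these two irreducibles is automatically injective and surjective, hence an isomorphism; so it suffices to show that the meromorphically continued integral \eqref{eqn:T-st-GL2} is holomorphic and nonzero at the inducing data attached to $\mu$. I would do this by evaluating $T_{\rm st}$ on the distinguished cohomological vector $f_\mu$ of Sect.\,\ref{sec:explcit-class-GL2-infinity}: an Iwasawa decomposition of the integrand of \eqref{eqn:T-st-GL2}, combined with the explicit formulas \eqref{eqn:f-mu-1}--\eqref{eqn:f-mu-4}, turns $\int_\C f_\mu$ into a complex Euler (beta-function) integral whose value is, up to a nonzero elementary constant (powers of $2$ and $2\pi$), a ratio of $\Gamma$-factors. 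Matching this ratio against \eqref{eqn:abelian-local-l-factor} identifies it with a quotient of abelian archimedean $L$-factors built from $\chi_1(\mu)\chi_2(\mu)^{-1}$ at $s=0$ and $s=1$, the complementary factor (built from $\chi_1(\mu)^{-1}\chi_2(\mu)$, i.e.\ from the $1-s$ side) entering through the rank-one Gindikin--Karpelevich normalization $T_{\rm st} = c(\chi_1(\mu)\chi_2(\mu)^{-1})\cdot N_{\rm st}$ with $N_{\rm st}$ holomorphic and nowhere vanishing. The hypothesis that $s=-1$ and $s=0$ be regular for $L(s,\chi_1(\mu)\chi_2(\mu)^{-1})$ and for $L(1-s,\chi_1(\mu)^{-1}\chi_2(\mu))$ forces both numerator and denominator of $c(\chi_1(\mu)\chi_2(\mu)^{-1})$ to be finite; since values of $\Gamma$ never vanish, $c(\chi_1(\mu)\chi_2(\mu)^{-1})$ is then finite and nonzero, whence $T_{\rm st}(f_\mu)\neq 0$ and $T_{\rm st}$ is an isomorphism.

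I expect the only real difficulty to be the bookkeeping in this last step: performing the Iwasawa decomposition cleanly, carrying out the complex Euler integral, and pinning down the precise arguments of the $\Gamma$-functions so that they match \eqref{eqn:abelian-local-l-factor} with the correct roles of $s$ versus $1-s$ and of $\chi_1(\mu)\chi_2(\mu)^{-1}$ versus $\chi_1(\mu)^{-1}\chi_2(\mu)$ -- the sort of place where ``der Teufel steckt im Detail.'' One may instead short-circuit this by quoting the rank-one Gindikin--Karpelevich / Knapp--Stein factorization directly, writing $T_{\rm st}=c(\cdot)N_{\rm st}$ with $c(\cdot)$ an explicit ratio of abelian $L$-values and reading finiteness and non-vanishing off the hypothesis; but since the explicit vector $f_\mu$ is needed anyway for the cohomological normalization in Prop.\,\ref{prop:basic-Tst-GL2-rational-classes}, computing with it is the natural route and sets up the next result.
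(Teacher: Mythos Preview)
Your proposal is correct. For irreducibility you are doing what the paper does: it cites Godement's classification of $\GL_2(\C)$ principal series, and your alternative via a unitary twist is the argument already used for Prop.\,\ref{prop:j-lambda}.

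For the isomorphism the paper takes your second route rather than your first: it does not compute on $f_\mu$ but instead invokes the Langlands--Shahidi normalized intertwining operator (via \cite[Prop.\,7.54]{harder-raghuram-book} and \cite{shahidi-duke80}), deferring details to the $\GL_N$ case in Prop.\,\ref{prop:irreducible-isomorphism-GLN}; this is essentially your Gindikin--Karpelevich shortcut. Your preferred route --- evaluate the integral \eqref{eqn:T-st-GL2} on $f_\mu$ and observe the result is nonzero --- is not the paper's proof of \emph{this} proposition, but it is precisely the computation carried out in the very next result, Prop.\,\ref{prop:basic-Tst-GL2}, where the value $2\pi/(2m-1)$ is obtained. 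So you are effectively proposing to merge the two propositions. This is clean and economical for $\GL_2$; the paper keeps them separate so that the isomorphism statement for $\GL_N$ (Prop.\,\ref{prop:irreducible-isomorphism-GLN}) can be handled uniformly by Casselman--Shahidi, while the explicit rank-one computation is reserved for the cohomological normalization (Prop.\,\ref{prop:basic-Tst-GLN-in-cohomology}) via the factorization \eqref{eqn:T-factorised}.

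One small correction: the rank-one $c$-function here is the single ratio $L(0,\chi_1\chi_2^{-1})/L(1,\chi_1\chi_2^{-1})$; there is no separate ``complementary factor from the $1-s$ side'' entering it. The hypothesis on $L(1-s,\chi_1^{-1}\chi_2)$ at $s=0$ supplies finiteness of the denominator only because $L(1,\chi_1^{-1}\chi_2)=L(1,\chi_1\chi_2^{-1})$ (both depend just on $|\alpha-\beta|$ in \eqref{eqn:abelian-local-l-factor}). Also, for $m\geq 1$ the integral \eqref{eqn:T-st-GL2} converges absolutely on $f_\mu$, so no meromorphic continuation is needed.
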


\begin{proof}
Irreducibility follows from \cite[Chap.\,2, Thm.\,3]{godement}. The proof of  $T_{\rm st}$ being an isomorphism follows the same argument as in the proof of \cite[Prop.\,7.54]{harder-raghuram-book}. We will elaborate further when we deal with $\GL_N$; see Prop.\,\ref{prop:irreducible-isomorphism-GLN} below. 
\end{proof}

\medskip
\subsubsection{\bf The highest weight vector of the lowest $K$-type on the `other side'}

Since $T_{\rm st}$ is an isomorphism of $\GL_2(\C)$-modules, it maps the minimal $\SU(2)$-type in $\Ind(\chi_1(\mu) \otimes \chi_2(\mu))$ 
isomorphically onto the minimal $\SU(2)$-type in $\Ind(\chi_2(\mu) \otimes \chi_1(\mu))$, and within these $\SU(2)$-types, it maps $f_\mu$, which is the highest weight vector for $T_c^{(1)}$ described above 
to a multiple of the highest weight vector on the other side, which we denote $\tilde{f_\mu}$. We have the analogues of 
\eqref{eqn:f-mu-1} and \eqref{eqn:f-mu-2} for $\tilde{f_\mu}$: 

\begin{equation}
\label{eqn:tilde-f-mu-1}
\tilde f_\mu(\begin{pmatrix}z & * \\ & w \end{pmatrix} g)
 \ = \ 
z^{-b_1-\tfrac12} \, \bar{z}^{-c_2+\tfrac12} \cdot  w^{-b_2+ \tfrac12} \, \bar{w}^{-c_1-\tfrac12} \cdot \left|\frac{z}{w}\right|_\C^{1/2} \tilde f_\mu(g), 
\end{equation}
\begin{equation}
\label{eqn:tilde-f-mu-2}
\tilde f_\mu( 
\begin{pmatrix} e^{i\alpha} &  \\ & e^{-i\alpha} \end{pmatrix}
g \begin{pmatrix} e^{i\beta} &  \\ & e^{-i\beta} \end{pmatrix})  = \ 
e^{i(-2m)\alpha}
e^{i(2m)\beta} \tilde f_\mu(g). 
\end{equation}
for all $g \in \GL_2(\C).$ 
But, \eqref{eqn:tilde-f-mu-2} also says that $\tilde f_\mu(I) = 0$ (since $m \geq 1$). Let 
$w_0 = \left(\begin{smallmatrix} 0 & 1 \\ -1 & 0 \end{smallmatrix}\right) \ = \ \r(-\pi/2).$
Then, using \eqref{eqn:SO(2)-on-f-2m}, and evaluating at $w_0$ we see: $(\r(\theta)\cdot \tilde f_\mu)(w_0) = \cos^{2m}(\theta) \cdot \tilde f_\mu(w_0).$ (The other summands vanish on $w_0$ using the analogue 
of \eqref{eqn:tilde-f-mu-2}.) Hence 
$$
\tilde f_\mu(\r(\theta - \pi/2)) \ = \ \tilde f_\mu (w_0 \r(\theta)) \ = \ \cos^{2m}(\theta) \cdot \tilde f_\mu(w_0).
$$
Change $\theta \mapsto \theta +\pi/2$ and noting $\cos(\theta+\pi/2) = - \sin(\theta)$ we get the analogue of \eqref{eqn:f-mu-3}: 
\begin{equation}
\label{eqn:tilde-f-mu-3}
\tilde f_\mu(\r(\theta)) \ = \ \sin^{2m}(\theta), 
\end{equation}
where, we have normalized $\tilde f_\mu$ by $\tilde f_{\mu}(w_0) = 1.$ From \eqref{eqn:tilde-f-mu-1}, \eqref{eqn:tilde-f-mu-2} and \eqref{eqn:tilde-f-mu-3}, we have:
\begin{equation}
\label{eqn:tilde-f-mu-4}
\tilde f_\mu(\begin{pmatrix}z & * \\ & w \end{pmatrix} \r(\theta) \begin{pmatrix} e^{i\beta} &  \\ & e^{-i\beta} \end{pmatrix}) \ = \ 
z^{-b_1} \, \bar{z}^{-c_2+1} \cdot w^{-b_2} \bar{w}^{-c_1-1} \cdot
\sin^{2m}(\theta) \cdot
e^{i(2m)\beta}. 
\end{equation}

\medskip
\subsubsection{\bf The basic intertwining calculation for $\GL_2$}

\begin{prop}
\label{prop:basic-Tst-GL2}
$$
T_{\rm st}(f_\mu) \ \approx_{\Q^\times} \ 
\frac{L(0, \chi_1 \chi_2^{-1})}{L(1, \chi_1 \chi_2^{-1})} \, 
\tilde f_\mu, 
$$
where, $\approx_{\Q^\times}$ means equality up to a nonzero rational number. 
\end{prop}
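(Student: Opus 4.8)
The plan is to use $\GL_2(\C)$-equivariance of $T_{\rm st}$ to reduce the assertion to a single scalar, to evaluate that scalar as an explicit integral, and then to compare it with the archimedean $L$-factor formula \eqref{eqn:abelian-local-l-factor}. Since $T_{\rm st}$ intertwines $\Ind_{B_2(\C)}^{\GL_2(\C)}(\chi_1(\mu)\otimes\chi_2(\mu))$ with its companion $\Ind_{B_2(\C)}^{\GL_2(\C)}(\chi_2(\mu)\otimes\chi_1(\mu))$, it carries $\SU(2)$-isotypic components to $\SU(2)$-isotypic components; the minimal type $\tau_{2m+1}$ occurs with multiplicity one in each (note that $\chi_1(\mu)\chi_2(\mu)^{-1}$ restricts to $\chi_{2m}$ on $T_c^{(1)}$), and $f_\mu$, $\tilde f_\mu$ are the respective $T_c^{(1)}$-highest weight vectors of that type. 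Hence $T_{\rm st}(f_\mu) = c\cdot \tilde f_\mu$ for some $c\in\C$, and since $\tilde f_\mu(w_0) = 1$ by \eqref{eqn:tilde-f-mu-3}, evaluating at $w_0$ gives
\[
c \ = \ T_{\rm st}(f_\mu)(w_0) \ = \ \int_\C f_\mu\!\left(w_0\, n(u)\, w_0\right)\, du, \qquad n(u) := \begin{pmatrix} 1 & u \\ 0 & 1\end{pmatrix}.
\]

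The next step is to evaluate this integral. A direct matrix computation gives $w_0 n(u) w_0 = \left(\begin{smallmatrix}-1 & 0\\ u & -1\end{smallmatrix}\right) = (-I)\left(\begin{smallmatrix}1 & 0\\ -u & 1\end{smallmatrix}\right)$; since $-I = \mathrm{diag}(e^{i\pi}, e^{-i\pi})\in T_c^{(1)}$ with $\chi_{2m}(-I)=e^{2\pi i m}=1$, formula \eqref{eqn:f-mu-2} lets one delete the central factor. To evaluate $f_\mu$ on the lower unipotent $\left(\begin{smallmatrix}1 & 0\\ v & 1\end{smallmatrix}\right)$, I would use the Iwasawa decomposition $\left(\begin{smallmatrix}1 & 0\\ v & 1\end{smallmatrix}\right) = \left(\begin{smallmatrix}\rho^{-1} & * \\ 0 & \rho\end{smallmatrix}\right) k(v)$ with $\rho := (1+|v|^2)^{1/2}$ and $k(v) := \rho^{-1}\left(\begin{smallmatrix}1 & -\bar v\\ v & 1\end{smallmatrix}\right)\in\SU(2)$. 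Substituting $z=\rho^{-1}$, $w=\rho$ into the left-equivariance \eqref{eqn:f-mu-1} and invoking purity $b_1+c_2 = b_2+c_1$, the $b$- and $c$-exponents cancel and only $|z/w|_\C^{1/2} = \rho^{-2}$ survives, so $f_\mu\!\left(\left(\begin{smallmatrix}1 & 0\\ v & 1\end{smallmatrix}\right)\right) = \rho^{-2} f_\mu(k(v))$. Writing $v=|v|e^{i\psi}$, one checks the $KAK$-identity $k(v) = \mathrm{diag}(e^{-i\psi/2}, e^{i\psi/2})\, \r(\theta)\, \mathrm{diag}(e^{i\psi/2}, e^{-i\psi/2})$ with $\cos\theta = \rho^{-1}$; since $f_\mu$ transforms by $\chi_{2m}$ under $T_c^{(1)}$ on both sides, the two torus contributions cancel and \eqref{eqn:f-mu-3} yields $f_\mu(k(v)) = \cos^{2m}\theta = (1+|v|^2)^{-m}$. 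Hence $f_\mu(w_0 n(u) w_0) = (1+|u|^2)^{-m-1}$, and
\[
c \ = \ \int_\C (1+|u|^2)^{-m-1}\, du \ = \ 2\pi\int_0^\infty (1+r^2)^{-m-1}\, r\, dr \ = \ \frac{\pi}{m},
\]
the integral converging absolutely because dominance of $\mu$ forces $m = b_1 - b_2 + 1 \geq 1$.

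It then remains to identify $c$ with the ratio of $L$-values. One computes $\chi_1(\mu)\chi_2(\mu)^{-1}(z) = z^{m}\bar z^{-m}$, so \eqref{eqn:abelian-local-l-factor} gives $L(s, \chi_1(\mu)\chi_2(\mu)^{-1}) \sim 2(2\pi)^{-(s+m)}\Gamma(s+m)$, whence $L(0,\chi_1\chi_2^{-1})/L(1,\chi_1\chi_2^{-1}) = 2\pi/m$. Therefore $c = \tfrac12\cdot L(0,\chi_1\chi_2^{-1})/L(1,\chi_1\chi_2^{-1})$ with $\tfrac12\in\Q^\times$, and
\[
T_{\rm st}(f_\mu) \ = \ c\, \tilde f_\mu \ \approx_{\Q^\times} \ \frac{L(0,\chi_1\chi_2^{-1})}{L(1,\chi_1\chi_2^{-1})}\, \tilde f_\mu,
\]
as desired.

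The step I expect to be the main obstacle is the Iwasawa/$KAK$ bookkeeping for $f_\mu$ on the lower unipotent: obtaining the precise exponent $-m-1$ requires correctly interleaving the normalized-induction character (using purity to kill the weight terms), the $\SU(2)$-structure of $k(v)$, and the explicit $\SO(2)$-action \eqref{eqn:SO(2)-on-f-2m}; a half-integer slip anywhere in this chain would corrupt the power of $2\pi$ in the final comparison and hence the claimed rationality up to $\Q^\times$.
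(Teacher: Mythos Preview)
Your proof is correct and follows essentially the same strategy as the paper: both evaluate $T_{\rm st}(f_\mu)$ at $w_0$, use the Iwasawa decomposition of the lower unipotent together with \eqref{eqn:f-mu-1}--\eqref{eqn:f-mu-3} to reduce to a radial integral, and then compare with the $L$-ratio \eqref{eqn:ratio-GL2-inft-explicit}. Your bookkeeping is in fact slightly sharper: your integrand $(1+|u|^2)^{-m-1}$ and value $c=\pi/m$ are correct, whereas the paper records the exponent as $\Delta_r^{-(2m+1)}$ and the integral as $2\pi/(2m-1)$; since $\tfrac{m}{2m-1}\in\Q^\times$ this does not affect the conclusion up to $\Q^\times$, but your version matches $\tfrac12\cdot L(0,\chi_1\chi_2^{-1})/L(1,\chi_1\chi_2^{-1})$ exactly.
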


\begin{proof}
It is clear that $T_{\rm st}(f_\mu)$ is a scalar multiple of $\tilde f_\mu.$ To compute that scalar we evaluate $T_{\rm st}(f_\mu)$ at $w_0$: 
$$
T_{\rm st}(f_\mu)(w_0) \ = \ 
\int_\C f_\mu \left(
\begin{pmatrix} 0 & 1 \\ -1 & 0 \end{pmatrix}
\begin{pmatrix} 1 & u \\ 0 & 1 \end{pmatrix} 
\begin{pmatrix} 0 & 1 \\ -1 & 0 \end{pmatrix}
 \right) \, du
\ = \ 
\int_\C f_\mu \left(
\begin{pmatrix} 1 & 0 \\ -u & 1 \end{pmatrix}
 \right) \, du. 
$$
Change to polar coordinates: $u = r e^{i\theta}.$ Note that
$$
\begin{pmatrix} 1 & 0 \\ -r e^{i\theta} & 1 \end{pmatrix} \ = \ 
\begin{pmatrix} e^{-i \theta/2} & 0 \\ 0 & e^{i \theta/2} \end{pmatrix}
\begin{pmatrix} 1 & 0 \\ -r  & 1 \end{pmatrix}
\begin{pmatrix} e^{i \theta/2} & 0 \\ 0 & e^{-i \theta/2} \end{pmatrix}.
$$
Hence, applying \eqref{eqn:f-mu-1} and \eqref{eqn:f-mu-2} we get
$$
f_\mu \left(
\begin{pmatrix} 1 & 0 \\ - r e^{i\theta}& 1 \end{pmatrix} \right) \ = \ 
e^{-i(2m) \theta/2} 
f_\mu \left(\begin{pmatrix} 1 & 0 \\ - r & 1 \end{pmatrix}\right)
e^{i(2m) \theta/2} \ = \ 
f_\mu \left(\begin{pmatrix} 1 & 0 \\ - r & 1 \end{pmatrix}\right).
$$
Next, we note 
$$
\begin{pmatrix} 1 & 0 \\ - r & 1 \end{pmatrix} \ = \ 
\begin{pmatrix} \Delta_r^{-1} & -r\Delta_r^{-1} \\ 0 &  \Delta_r \end{pmatrix}
\begin{pmatrix} \Delta_r^{-1} & r\Delta_r^{-1} \\ - r\Delta_r^{-1} & \Delta_r^{-1} \end{pmatrix}, 
$$
where $\Delta_r = \sqrt{1+r^2}.$ Note that $\left(\begin{smallmatrix} \Delta_r^{-1} & r\Delta_r^{-1} \\ - r\Delta_r^{-1} & \Delta_r^{-1} \end{smallmatrix}\right) = \r(\alpha)$ with $\alpha = \tan^{-1}(-r).$
From \eqref{eqn:f-mu-1} and \eqref{eqn:f-mu-3} we get
$$
f_\mu \left(\begin{pmatrix} 1 & 0 \\ - r & 1 \end{pmatrix}\right) \ = \ 
\frac{1}{\Delta_r^{2m+1}}. 
$$
The integral evaluates to: 
\begin{equation}
\label{eqn:T-st-f-mu-explicit-GL2}
T_{\rm st}(f_\mu)(w_0) \ = \ \int_{\theta = 0}^{2\pi} \int_{r=0}^\infty \frac{r \, dr\, d\theta}{(\sqrt{1+r^2})^{2m+1}}  \ = \ \frac{2\pi}{2m-1}.
\end{equation}

Now, $\chi_1\chi_2^{-1}(z) = z^m \bar{z}^{-m}$, and by \eqref{eqn:abelian-local-l-factor} we have
$L(s, \chi_1\chi_2^{-1}) = 2 (2\pi)^{-(s+m)} \Gamma(s+m).$ Hence, 
\begin{equation}
\label{eqn:ratio-GL2-inft-explicit}
\frac{L(0, \chi_1 \chi_2^{-1})}{L(1, \chi_1 \chi_2^{-1})} \ = \ 
\frac{(2\pi)^{- m}}{(2\pi)^{-1-m}} 
\frac{\Gamma(m)}{\Gamma(m+1)} \ = \ \frac{2\pi}{m}. 
\end{equation}
The proof follows from \eqref{eqn:T-st-f-mu-explicit-GL2} and \eqref{eqn:ratio-GL2-inft-explicit}. 
\end{proof}

\medskip
\subsubsection{\bf Arithmetic interpretation of the intertwining calculation}

Denote the induced representation in the range of $T_{\rm st}$ as $\tilde\J_\mu = \Ind(\chi_2(\mu) \otimes \chi_1(\mu))$
Now, fix a cohomology class $[\tilde\J_\mu]_0$: 
\begin{equation}
\label{eqn:[tilde-J-mu]}
H^1(\g_2, \tilde{\k}_2; \tilde\J_\mu \otimes \M_\mu) \ = \ 
H^1(\gl_2(\C), Z_2(\C)\U(2); \tilde\J_\mu \otimes \M_\mu) \ = \ \C [\tilde\J_\mu]_0,
\end{equation}
characterised by the property that its distingusihed cohomological vector is rational; hence, up to $\Q^\times$-multiples, the vector 
$\tilde f_\mu$ is a cohomological vector for $\tilde\J_\mu.$ 
Consider the map induced in cohomology by the operator $T_{\rm st} : \J_\mu \to \tilde\J_\mu$; at the level of generators it will map 
$[\J_\mu] = \sum_{i, \alpha} X_i^* \otimes \phi_{i, \alpha} \otimes m_{\alpha}$ to 
$\sum_{i, \alpha} X_i^* \otimes T_{\rm st}(\phi_{i, \alpha}) \otimes m_{\alpha}.$ Then, in terms of the cohomology classes with rational 
distinguished cohomological vectors, Prop.\,\ref{prop:basic-Tst-GL2} may be stated as:  

\begin{prop}
\label{prop:basic-Tst-GL2-rational-classes}
$$
T_{\rm st}([\J_\mu]_0) \ \approx_{\Q^\times} \ 
\frac{L(0, \chi_1 \chi_2^{-1})}{L(1, \chi_1 \chi_2^{-1})} \, 
[\tilde\J_\mu]_0, 
$$
\end{prop}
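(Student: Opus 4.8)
The plan is to deduce Proposition \ref{prop:basic-Tst-GL2-rational-classes} directly from Proposition \ref{prop:basic-Tst-GL2} by tracking what the intertwining operator does to a chosen generator of the one-dimensional cohomology group. First I would recall the setup from \ref{sec:explcit-class-GL2-infinity}: the class $[\J_\mu]_0$ is represented by a cocycle $\sum_{i,\alpha} X_i^* \otimes \phi_{i,\alpha} \otimes m_\alpha$ in $\Hom_{Z_2(\C)\U(2)}(1\!\!1, \wedge^1(\g_2/\tilde\k_2)^* \otimes \J_\mu \otimes \M_\mu)$, normalized so that its distinguished cohomological vector --- which by the explicit Clebsch--Gordon analysis is a highest weight vector of the lowest $K$-type $\J_\mu(\tau_{2m+1})$, i.e.\ a $\Q^\times$-multiple of $f_\mu$ --- is rational. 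The map induced in cohomology by $T_{\rm st}$ sends this cocycle to $\sum_{i,\alpha} X_i^* \otimes T_{\rm st}(\phi_{i,\alpha}) \otimes m_\alpha$, a cocycle for $\tilde\J_\mu \otimes \M_\mu$.

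The key step is the observation that, because both $H^1(\g_2, \tilde\k_2; \J_\mu \otimes \M_\mu)$ and $H^1(\g_2, \tilde\k_2; \tilde\J_\mu \otimes \M_\mu)$ are one-dimensional and $T_{\rm st}$ is an isomorphism of $(\g_2, K_\infty)$-modules (Prop.\ \ref{prop:irreducible-isomorphism-GL2}), the induced map on $H^1$ is multiplication by a single scalar $c \in \C^\times$; so $T_{\rm st}([\J_\mu]_0) = c \cdot [\tilde\J_\mu]_0$. To pin down $c$ up to $\Q^\times$, I would use that $T_{\rm st}$ restricts to an isomorphism of lowest $K$-types $\J_\mu(\tau_{2m+1}) \iso \tilde\J_\mu(\tau_{2m+1})$, carrying highest weight vectors to highest weight vectors: thus $T_{\rm st}(f_\mu)$ is a scalar multiple of $\tilde f_\mu$, and by Prop.\ \ref{prop:basic-Tst-GL2} that scalar is $\approx_{\Q^\times} L(0,\chi_1\chi_2^{-1})/L(1,\chi_1\chi_2^{-1})$. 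Since the distinguished cohomological vector of $[\J_\mu]_0$ is a $\Q^\times$-multiple of $f_\mu$ and that of $[\tilde\J_\mu]_0$ is a $\Q^\times$-multiple of $\tilde f_\mu$, and passing from the cocycle to its distinguished cohomological vector is a $\Q$-linear operation (extracting the coefficient of the highest weight vector of the lowest $K$-type), the scalar $c$ governing the cohomology map equals, up to $\Q^\times$, the scalar governing $f_\mu \mapsto T_{\rm st}(f_\mu)$. Hence $c \approx_{\Q^\times} L(0,\chi_1\chi_2^{-1})/L(1,\chi_1\chi_2^{-1})$, which is the claim.

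The one point that needs care --- and which I expect to be the main (if modest) obstacle --- is justifying that scaling the cohomology class is the same as jointly scaling the finite set $\{\phi_{i,\alpha}\}$ of cohomological vectors, so that the normalization ``distinguished cohomological vector is rational'' transports correctly through $T_{\rm st}$. This is where one uses that $H^1$ is one-dimensional: any two cocycles representing classes in $\C[\J_\mu]_0$ differ by a scalar, which scales all $\phi_{i,\alpha}$ simultaneously (the $X_i^*$ and $m_\alpha$ being fixed bases), and the distinguished vector is one particular $\phi_{i,\alpha}$, so rationality of that one vector rigidifies the class up to $\Q^\times$. The same reasoning on the target side, together with the $K$-equivariance of $T_{\rm st}$ ensuring it respects the decomposition into $K$-isotypic pieces and hence sends distinguished cohomological vectors to (multiples of) distinguished cohomological vectors, closes the argument. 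Everything else is a formal consequence of Schur's lemma applied to the one-dimensional $H^1$'s and the already-established Prop.\ \ref{prop:basic-Tst-GL2}.
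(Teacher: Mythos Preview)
Your proposal is correct and follows essentially the same approach as the paper: the paper's own proof is simply the observation that the map induced in cohomology by $T_{\rm st}$ sends the cocycle $\sum_{i,\alpha} X_i^* \otimes \phi_{i,\alpha} \otimes m_\alpha$ to $\sum_{i,\alpha} X_i^* \otimes T_{\rm st}(\phi_{i,\alpha}) \otimes m_\alpha$, so that Prop.~\ref{prop:basic-Tst-GL2-rational-classes} is a restatement of Prop.~\ref{prop:basic-Tst-GL2} in terms of the rational classes. You have supplied more detail than the paper does---in particular your discussion of why the ``distinguished cohomological vector is rational'' normalization transports correctly through $T_{\rm st}$, using one-dimensionality of $H^1$ and $K$-equivariance---but this is exactly the content the paper leaves implicit.
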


\medskip
\subsubsection{\bf Rational classes via Delorme's Lemma}
Recall Delorme's Lemma (see Borel--Wallach \cite[Thm.\,III.3.3]{borel-wallach}) which in the current context can be explicated as: 
\begin{multline}
\label{eqn:delorme-gl2}
H^1(\g_2, \tilde{\k}_2; \J_\mu \otimes \M_\mu) \ \simeq \\
H^0(\g_1, \k_1; z^{-b_2+1} \bar{z}^{-c_1} \otimes \M_{(b_2-1)(c_1)}) \otimes 
H^0(\g_1, \k_1; z^{-b_1-1} \bar{z}^{-c_2} \otimes \M_{(b_1+1)(c_2)}), 
\end{multline}
where $\g_1 = \gl_1(\C)$, $\k_1 = \su(1),$ for $b, c \in \Z$ we abbreviate the character $z \mapsto z^{b} \bar{z}^{c}$ simply as 
$z^{b} \bar{z}^{c}$, and $\M_{(b)(c)}$ is the algebraic representation $z^{b} \bar{z}^{c}$ of the real group $\C^\times.$  Note 
that on the right hand side, in each factor, we are looking at the relative Lie algebra cohomology for $\GL_1(\C)$ of 
$z^{-b} \bar{z}^{-c} \otimes \M_{(b)(c)},$ which is nothing but the trivial character! For brevity, denote
$H^0_{b,c} = H^0(\g_1, \k_1; z^{-b} \bar{z}^{-c} \otimes \M_{(b)(c)}).$
Parse the isomorphism in Delorme's Lemma: the map $f \mapsto f(1_2)$ for $f \in \J_\mu$ induces an isomorphism coming from Frobenius reciprocity: 
\begin{multline*}
H^1(\g_2, \tilde{\k}_2; \aInd(z^{-b_2+1} \bar{z}^{-c_1} \otimes  z^{-b_1-1} \bar{z}^{-c_2}) \otimes \M_\mu) 
\ \simeq \\ 
H^1(\b_2, \k_{B_2}; (z^{-b_2+1} \bar{z}^{-c_1} \otimes  z^{-b_1-1} \bar{z}^{-c_2}) \otimes \M_\mu), 
\end{multline*}
where $\b_2$ (resp., $\k_{B_2}$) 
is the real Lie algebra of $B_2(\C)$ (resp., $\U(2) \cap B_2(\C)$). 
The proof of \cite[Thm.\,III.3.3]{borel-wallach} gives that
\begin{multline*}
H^1(\b_2, \k_B; (z^{-b_2+1} \bar{z}^{-c_1} \otimes  z^{-b_1-1} \bar{z}^{-c_2}) \otimes \M_\mu) \ \simeq \\
H^0(\t_2, \k_{T_2}; (z^{-b_2+1} \bar{z}^{-c_1} \otimes  z^{-b_1-1} \bar{z}^{-c_2}) \otimes H^1(\u_{B_2}, \M_\mu)), 
\end{multline*}
where $\t_2$, $\k_{T_2}$, and $\u_{B_2}$ are the real Lie algebras of the diagonal torus $T_2(\C)$ in $B_2(\C)$, its maximal compact $\U(2) \cap T_2(\C)$, 
and the unipotent radical of $B_2(\C)$, respectively. 
To apply Kostant's theorem \eqref{eqn:kostant} we need the Kostant representatives of length $1$ for the Borel subgroup in the 
real reductive group $\GL_2(\C);$  if $w_0 = \left(\begin{smallmatrix} 0 & 1 \\ -1 & 0 \end{smallmatrix}\right)$ then the required Kostant representatives are 
$w_l = (w_0,1)$ and $w_r = (1,w_0).$ By direct calculation we have: 
$$
w_l \cdot \mu \ = \ 
(w_0, 1) \cdot ((b_1, b_2), (c_1, c_2)) \ = \ 
((b_2-1, b_1+1)(c_1,c_2)). 
$$
Hence, $\M_{w_l \cdot \mu},$ as an algebraic irreducible representation for the diagonal torus in $\GL_2(\C),$ is  
$\M_{(b_2-1)(c_1)} \otimes \M_{(b_1+1)(c_2)},$ giving us \eqref{eqn:delorme-gl2} that we rewrite as: 
\begin{equation}
\label{eqn:delorme-gl2-simple}
\gamma_1 : H^1(\g_2, \tilde{\k}_2; \J_\mu \otimes \M_\mu) \ \stackrel{\approx}{\longrightarrow} \ 
H^0_{(b_2-1, c_1)} \otimes 
H^0_{(b_1+1, c_2)}. 
\end{equation}
Fix a basis $\omega_{(b,c)}$ for $H^0_{(b,c)}$ which is the rational class corresponding to the cohomology of the trivial representation. 
We take for $[\J_\mu]_0$, the basis element $H^1(\g_2, \tilde{\k}_2; \J_\mu \otimes \M_\mu),$ such that 
$\gamma_1([\J_\mu]_0) = \omega_{(b_2-1, c_1)} \otimes \omega_{(b_1+1, c_2)}.$ 

Now, we work with the cohomology class for the induced module $\tilde{\J}_\mu$ in the codomain of $T_{\rm st}.$ Here, the integral in 
\eqref{eqn:T-st-GL2} tells us to consider Frobenius reciprocity via the map $\tilde{f} \mapsto \tilde{f}(w_0),$ which  
induces an isomorphism: 
\begin{multline*}
H^1(\g_2, \tilde{\k}_2; \aInd(z^{-b_1} \bar{z}^{-c_2+1} \otimes  z^{-b_2} \bar{z}^{-c_1-1}) \otimes \M_\mu) 
\ \simeq \\ 
H^1(\bar{\b}_2, \k_{\bar{B}}; (z^{-b_1} \bar{z}^{-c_2+1} \otimes  z^{-b_2} \bar{z}^{-c_1-1}) \otimes \M_\mu),
\end{multline*}
where $\bar{B}$ is the Borel subgroup of $\GL_2(\C)$ of lower triangular matrices that is opposite to $B_2(\C)$. In this situation we 
use the Kostant representative $w_r = (1,w_0)$ to give ourselves the isomosphism: 
\begin{equation}
\label{eqn:delorme-gl2-simple-2}
\gamma_{w_0} : H^1(\g_2, \tilde{\k}_2; \tilde{\J}_\mu \otimes \M_\mu) \ \stackrel{\approx}{\longrightarrow} \ 
H^0_{(b_1, c_2-1)} \otimes 
H^0_{(b_2, c_1+1)}. 
\end{equation}
We take for $[\tilde\J_\mu]_0$, the basis element $H^1(\g_2, \tilde{\k}_2; \tilde\J_\mu \otimes \M_\mu),$ such that 
$\gamma_{w_0}([\J_\mu]_0) = \omega_{(b_1, c_2-1)} \otimes \omega_{(b_2, c_1+1)}.$  It helps to keep the following diagram in mind: 
$$
\xymatrix{
H^1(\g_2, \tilde{\k}_2; \J_\mu \otimes \M_\mu) 
\ar[d]_{T_{\rm st}^\bullet} \ar[r]^{\gamma_1} & 
H^0_{(b_2-1, c_1)} \otimes H^0_{(b_1+1, c_2)} \ar@{=}[d] \\ 
H^1(\g_2, \tilde{\k}_2; \tilde{\J}_\mu \otimes \M_\mu) \ar[r]_{\gamma_{w_0}}& 
H^0_{(b_1, c_2-1)} \otimes H^0_{(b_2, c_1+1)}
}$$
The diagram is not commutative! Prop.\,\ref{prop:basic-Tst-GL2-rational-classes} says that it is commutative up to nonzero rational numbers and a particular 
ratio of archimedean $L$-values. The reader is referred to \cite[Sect.\,9.6]{harder-raghuram-book}.

\medskip
\subsection{The case of $\GL_N$}

Now, we generalize Prop.\,\ref{prop:basic-Tst-GL2-rational-classes} to the case of $\GL_N,$ giving us the main result of this subsection in 
Prop.\,\ref{prop:basic-Tst-GLN-in-cohomology}.

\medskip
\subsubsection{\bf The induced representations and the standard intertwining operator}
Take strongly-pure weights $\mu$ and $\mu'$ as in Sect.\,\ref{sec:crit-comb-lemma}. 
Fix an archimedean place $v$ (which we often drop simply to avoid tedious notation). Consider the induced representation
$$
\sigma_v \ = \ \J_{\mu} \ = \ 
\Ind_{B_n(\C)}^{\GL_n(\C)}
\left(z^{\alpha_1} \bar{z}^{\beta_1} \otimes \cdots \otimes z^{\alpha_n} \bar{z}^{\beta_n} \right),  \quad \alpha_i, \beta_i \in \tfrac{(n-1)}{2} + \Z; 
$$
see \eqref{eqn:cuspidal-parameters-alpha} and \eqref{eqn:cuspidal-parameters-beta} for the cuspidal parameters $\alpha_i$ and $\beta_i.$  Abbreviate this as: 
$$
\sigma_v \ = \ \J_{\mu} \ = \ \psi_1 \times \cdots \times \psi_n; \quad \psi_i(z) =  z^{\alpha_i} \bar{z}^{\beta_i}.
$$
Similarly, we have
$$
\sigma'_v \ = \ \J_{\mu'} \ = \ 
\Ind_{B_{n'}(\C)}^{\GL_{n'}(\C)}
\left(z^{\alpha'_1} \bar{z}^{\beta'_1} \otimes \cdots \otimes z^{\alpha'_{n'}} \bar{z}^{\beta'_{n'}} \right),  \quad \alpha'_j, \beta'_j \in \tfrac{(n'-1)}{2} + \Z, 
$$
which we abbreviate as: 
$$
\sigma'_v \ = \ \J_{\mu'} \ = \ \psi'_1 \times \cdots \times \psi'_{n'}; \quad \psi'_j(z) =  z^{\alpha'_j} \bar{z}^{\beta'_j}.
$$
We are interested in the standard intertwining operator
$$
T_{\rm st} \ : \ 
\aInd_{P_{(n,n')}(\C)}^{\GL_N(\C)}(\J_\mu \times \J_{\mu'})  \ \longrightarrow \ \aInd_{P_{(n',n)}(\C)} ^{\GL_N(\C)} (\J_{\mu'}(-n) \times \J_\mu(n')).
$$
which, in terms of normalized induced representations looks like:
\begin{equation}
\label{eqn:T-st-GLN}
T_{\rm st} \ : \ 
\Ind_{P_{(n,n')}(\C)}^{\GL_N(\C)}(\J_\mu(-n'/2) \times \J_{\mu'}(n/2))  \ \longrightarrow \ 
\Ind_{P_{(n',n)}(\C)} ^{\GL_N(\C)} (\J_{\mu'}(n/2) \times \J_\mu(-n'/2)).
\end{equation}
Write 
\begin{eqnarray*}
\J_\mu(-n'/2) \ & = & \ \Ind_{B_n(\C)}^{\GL_n(\C)}(\chi_1 \otimes \cdots \otimes \chi_n),  \quad \chi_i = \psi_i(-n'/2), \quad {\rm and} \\
\J_{\mu'}(n/2) \ & = & \ \Ind_{B_{n'}(\C)}^{\GL_{n'}(\C)}(\chi'_1 \otimes \cdots \otimes \chi'_{n'}),  \quad \chi'_j = \psi'_j(n/2). 
\end{eqnarray*}
Apply transitivity of normalized induction to the representation in the domain of \eqref{eqn:T-st-GLN} to get: 
\begin{multline*}
\Ind_{P_{(n,n')}(\C)}^{\GL_N(\C)}\left(
\Ind_{B_n(\C)}^{\GL_n(\C)}(\chi_1 \otimes \cdots \otimes \chi_n) \times 
\Ind_{B_{n'}(\C)}^{\GL_{n'}(\C)}(\chi'_1 \otimes \cdots \otimes \chi'_{n'}) \right) \ = \\ 
\Ind_{B_N(\C)}^{\GL_N(\C)}(\chi_1 \otimes \cdots \otimes \chi_n \otimes \chi'_1 \otimes \cdots \otimes \chi'_{n'}) 
\ =: \ 
\chi_1 \times \cdots \times \chi_n \times \chi'_1 \times \cdots \times \chi'_{n'}, 
\end{multline*}
and, similarly, the induced representation in the target is: 
$\chi'_1 \times \cdots \times \chi'_{n'} \times \chi_1 \times \cdots \times \chi_n.$
Hence, \eqref{eqn:T-st-GLN} takes the shape: 
\begin{equation}
\label{eqn:T-st-GLN-from-BN}
T_{\rm st} \ : \ 
\chi_1 \times \cdots \times \chi_n \times \chi'_1 \times \cdots \times \chi'_{n'}
 \ \longrightarrow \ 
\chi'_1 \times \cdots \times \chi'_{n'} \times \chi_1 \times \cdots \times \chi_n. 
\end{equation}
For a function $f \in \chi_1 \times \cdots \times \chi_n \times \chi'_1 \times \cdots \times \chi'_{n'}$ we have the intertwining integral
\begin{equation}
\label{eqn:T-st-GLN-integral}
T_{\rm st}(f)(g) \ = \ \int_{M_{n \times n'}(\C)} f\left(w_0 \begin{pmatrix}1_n & u \\ & 1_{n'}\end{pmatrix} g \right) \, du, 
\end{equation}
where $w_0$ is the element of the Weyl group of $\GL_N$ given by the following permutation: 
$$
w_0 \ = \ 
\begin{pmatrix}
1 &  2 & \dots       &  n-1  & n &  n+1 & n+2 & \dots & N-1 & N \\
n'+1 & n'+2 & \dots  & N-1 & N & 1      & 2     & \dots & n'-1 & n'
\end{pmatrix}, 
$$
and the measure $du$ on $M_{n \times n'}(\C)$ in in the integral is taken as the product of the Lebesgue measures on each coordinate of $u$.

\medskip
\begin{prop}
\label{prop:irreducible-isomorphism-GLN}
Assume that the archimedean local factors $L(s, \J_\mu \times \J_{\mu'}^{\sf v})$ and $L(1-s, \J_\mu^{\sf v}  \times \J_{\mu'})$ are finite at 
$s = -N/2$ and $s=1-N/2.$ Then 
\begin{enumerate}
\item the representations $\chi_1 \times \cdots \times \chi_n \times \chi'_1 \times \cdots \times \chi'_{n'}$ and 
$\chi'_1 \times \cdots \times \chi'_{n'} \times \chi_1 \times \cdots \times \chi_n$ are irreducible; and furtheremore,  
\item the standard intertwining integral $T_{\rm st}$ in \eqref{eqn:T-st-GLN-integral} converges and gives an isomorphism between these two irreducible representations. 
\end{enumerate}
\end{prop}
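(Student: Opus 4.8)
\textbf{Proof strategy for Proposition \ref{prop:irreducible-isomorphism-GLN}.} The plan is to reduce both assertions to the rank-one archimedean calculation carried out for $\GL_2$ in Prop.\,\ref{prop:basic-Tst-GL2} together with standard facts about intertwining operators for $\GL_N(\C)$. First I would record that the characters $\chi_i$ and $\chi'_j$ of $\C^\times$ are, after the Tate twists by $-n'/2$ and $n/2$, of the form $z \mapsto z^{a}\bar z^{b}$ with $a - b \in \Z$ and (crucially, because $\mu$ and $\mu'$ are strongly-pure) with the reducibility/pole data of each pair $(\chi_i, \chi'_j)$ controlled by a single complex parameter; the hypothesis that $L(s,\J_\mu\times\J_{\mu'}^{\sf v})$ and $L(1-s,\J_\mu^{\sf v}\times\J_{\mu'})$ are finite at $s = -N/2$ and $s = 1-N/2$ translates, via \eqref{eqn:abelian-local-l-factor} and the factorization of the Rankin--Selberg $\Gamma$-factor into $\Gamma$-factors attached to the pairs $\psi_i\psi_j'^{-1}$, into the statement that for every $i,j$ neither $\chi_i\chi_j'^{-1}$ nor $\chi_i^{-1}\chi_j'$ has a pole forcing reducibility in the relevant $\GL_2$-induced module. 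This is precisely the input needed to invoke the irreducibility criterion for principal series of $\GL_N(\C)$.

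For part (1), I would argue as follows. The representation $\chi_1 \times \cdots \times \chi_n \times \chi'_1 \times \cdots \times \chi'_{n'}$ of $\GL_N(\C)$ is a full principal series attached to a character of the diagonal torus. By the theory of $\GL_N(\C)$ (Zelobenko; or equivalently the argument of \cite[Chap.\,2, Thm.\,3]{godement} used already in the $\GL_2$ case, iterated), such a principal series is irreducible if and only if for no pair of the inducing characters $\chi$, $\chi''$ on the diagonal does the rank-one intertwining operator attached to the transposition swapping them fail to be an isomorphism; and that failure is detected exactly by a pole of the corresponding abelian $L$-factor among $L(s,\chi\chi''^{-1})$, $L(1-s,\chi^{-1}\chi'')$ at the integer points dictated by the positions of $\chi,\chi''$. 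Since $\J_\mu$ and $\J_{\mu'}$ are already irreducible (Prop.\,\ref{prop:j-lambda}\,(1)), the only pairs that can cause trouble are the ``cross'' pairs $(\chi_i,\chi'_j)$, and the finiteness hypothesis on the Rankin--Selberg archimedean factors rules out precisely those. The same argument applies verbatim to the target $\chi'_1 \times \cdots \times \chi'_{n'} \times \chi_1 \times \cdots \times \chi_n$, since it is induced from a permutation of the same character.

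For part (2), I would factor $T_{\rm st}$ through the long Weyl element $w_0$ as a composition of rank-one intertwining operators, one for each positive root $\varphi$ of $\GL_N$ with $w_0^{-1}\varphi < 0$ — these are exactly the roots crossing from the first $n$ coordinates into the last $n'$, so there are $nn'$ of them, each attached to a pair $(\chi_i,\chi'_j)$. Each rank-one factor is, up to the standard reduction to $\GL_2$, the operator $T_{\rm st}$ of \eqref{eqn:T-st-GL2} between principal series of $\GL_2(\C)$ with inducing characters $\chi_i$ and $\chi'_j$; by Prop.\,\ref{prop:irreducible-isomorphism-GL2} (whose hypotheses are met, again by the finiteness of the relevant abelian $L$-factors, which follows from the Rankin--Selberg hypothesis) each such factor converges and is an isomorphism. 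Composing, $T_{\rm st}$ converges in the given region and is an isomorphism; since its source is irreducible by part (1), any nonzero intertwining map is automatically an isomorphism, which also gives an alternative route to the conclusion. The main obstacle, and the step deserving the most care, is the bookkeeping that matches the poles of the global archimedean Rankin--Selberg $L$-factor at $s = -N/2, 1-N/2$ with the poles of the individual rank-one abelian $L$-factors at their respective integer points: one must check that the shift by $n/2$ resp.\ $-n'/2$ in the Tate twists, combined with the position of the root $\varphi_{ij}$ in $w_0$, lines up exactly so that finiteness of $L(s,\J_\mu\times\J_{\mu'}^{\sf v})$ at the two points $-N/2$ and $1-N/2$ is equivalent to regularity of every rank-one factor at $s = -1$ and $s = 0$ — this is the analogue, on the archimedean side, of the combinatorial matching in Lem.\,\ref{lem:comb-lemma}, and it is where the strong-purity of $\mu,\mu'$ is used to guarantee the needed parametrization of each cross-pair by one variable.
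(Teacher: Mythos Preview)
Your approach is genuinely different from the paper's. The paper does not reduce to $\GL_2$ at this stage: it invokes Casselman--Shahidi \cite[Prop.\,5.3]{casselman-shahidi} directly for irreducibility (their criterion is stated precisely in terms of finiteness of the relevant local $L$-factors, so the hypothesis of the proposition is tailored to it), and for (2) it refers to the argument of \cite[Prop.\,7.54]{harder-raghuram-book}, which uses Shahidi's results on local constants to see that the operator is nonzero (hence an isomorphism once (1) is known). Both steps are black-boxed to the Langlands--Shahidi machinery. The factorisation into rank-one operators is used only \emph{after} this proposition, in Sect.\,4.2.2, to compute the effect of $T_{\rm st}$ in cohomology.

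Your route is in principle viable and more elementary, but there is a genuine mismatch in how you invoke Prop.\,\ref{prop:irreducible-isomorphism-GL2}. That proposition is formulated for the specific operator $\J_\mu \to \tilde\J_\mu$ attached to a single $\GL_2$-weight, with hypothesis at $s=-1,0$ (these being $-N/2,\,1-N/2$ for $N=2$); it is not a general statement about arbitrary pairs of characters. For your rank-one factor swapping $\chi_i$ and $\chi'_j$, one has $\chi_i\chi_j'^{-1} = \psi_i\psi_j'^{-1}\,|\cdot|_\C^{-N/2}$, so $L(s,\chi_i\chi_j'^{-1}) = L(s-N/2,\psi_i\psi_j'^{-1})$; the hypothesis of Prop.\,\ref{prop:irreducible-isomorphism-GLN} therefore gives finiteness of $L(s,\chi_i\chi_j'^{-1})$ and $L(s,\chi_i^{-1}\chi'_j)$ at $s=0$ and $s=1$, not at $s=-1$ and $s=0$. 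Writing $\chi_i\chi_j'^{-1}(z)=z^p\bar z^q$, these four finiteness conditions force $\max(p,q)\geq 1$ and $\min(p,q)\leq -1$, which is exactly what rules out reducibility of $\chi_i\times\chi'_j$ and makes the rank-one integral a nonzero isomorphism---so your argument can be completed, but you must cite Godement's general $\GL_2(\C)$ criterion (and the corresponding nonvanishing of the rank-one operator) directly rather than Prop.\,\ref{prop:irreducible-isomorphism-GL2} as written. The bookkeeping you flag as ``the main obstacle'' is indeed the entire content, and your sketch has it misaligned.
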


\begin{proof}
The proof follows from the Langlands--Shahidi machinery. For brevity, only for this proof, let $\sigma  = \J_{\mu}$ and $\sigma'  = \J_{\mu'}.$ Let 
$$
I_P^G(s, \sigma \otimes  \sigma') = 
{\rm Ind}_{P_{(n,n')}(\C)}^{\GL_N(\C)}((\sigma  \otimes |\ |^{\frac{n'}{N}s}) \otimes ( \sigma' \otimes |\ |^{\frac{-n}{N}s})). 
$$
The $s$-variable is introduced using the fundamental weight corresponding to the simple root that is deleted for the maximal standard parabolic subgroup $P_{(n,n')}$ whose Levi quotient is the 
block diagonal subgroup $\GL_n \times \GL_{n'}.$ Similarly, we let
$$
I_Q^G(-s, \sigma \otimes  \sigma') = 
{\rm Ind}_{P_{(n',n)}(\C)}^{\GL_N(\C)}( ( \sigma' \otimes |\ |^{\frac{-n}{N}s})   \otimes (\sigma  \otimes |\ |^{\frac{n'}{N}s})). 
$$
The standard intertwining operator $T_{\rm st}(s, w_0) : I_P^G(s, \sigma \otimes  \sigma') \to I_Q^G(-s, \sigma \otimes  \sigma')$ is given by the integral \eqref{eqn:T-st-GLN-integral}. 
Under the hypothesis of the proposition, it follows from Casselman--Shahidi \cite[Prop.\,5.3]{casselman-shahidi} that the induced representations 
$I_P^G(-N/2, \sigma \otimes  \sigma') = \chi_1 \times \cdots \times \chi_n \times \chi'_1 \times \cdots \times \chi'_{n'}$ and 
$I_Q^G(N/2, \sigma \otimes  \sigma') = \chi'_1 \times \cdots \times \chi'_{n'} \times \chi_1 \times \cdots \times \chi_n$
are irreducible. The operator $T_{\rm st} = T_{\rm st}(-N/2, w_0)$ being an isomorphism follows exactly as in the proof of 
\cite[Prop.\,7.54]{harder-raghuram-book}; this part of the proof uses Shahidi's 
results on local constants \cite{shahidi-duke80}. 
\end{proof}

\medskip
\subsubsection{\bf Factorizing the intertwining operator}

For $1 \leq i \leq N-1$, let ${\sf s}_i = (i, i+1)$ be the $i$-th simple reflection corresponding to the $i$-th simple root $\alpha_i = e_i-e_{i+1}.$ Its easy to see that
a positive root $e_i - e_j$ (positivity is $i < j$) is mapped to a negative root by $w_0$ if and only if $1 \leq i \leq n$ and $n+1 \leq j \leq N,$ hence
$l(w_0) = nn'.$
Furthermore, its easy to see that: 
$$
w_0 \ = \ 
({\sf s}_{n'} \dots {\sf s}_2 {\sf s}_1) \cdots ({\sf s}_{N-2}\dots {\sf s}_n {\sf s}_{n-1}) ({\sf s}_{N-1}\dots {\sf s}_{n+1} {\sf s}_n), 
$$
where the right hand side is grouped into $n$ parenthetical expressions each of which is a product of $n'$ simple reflections; hence 
giving a minimal expression of $w_0$ in terms of $l(w_0)$ many simple reflections. This gives a factorization: 
\begin{equation}
\label{eqn:T-factorised}
T_{\rm st} = T_{\rm st}(w_0) \ = \\ 
\left(T({\sf s}_{n'}) \circ \cdots \circ T({\sf s}_2)\circ T({\sf s}_1)\right) \circ \cdots  \circ \left(T({\sf s}_{N-1}) \circ \cdots \circ T({\sf s}_{n+1})\circ T({\sf s}_n) \right), 
\end{equation}
which is well-known in the Langlands--Shahidi method; see, for example, \cite[Thm.\,4.2.2]{shahidi-book} as applied to our situation.

\medskip
\begin{exam}
{\rm To visualise such a factorisation, consider the simple but nontrivial example: take $n = 3$ and $n'=2$, then
the right hand side of \eqref{eqn:T-factorised} is the sequence of operators: 

\begin{enumerate}
\item $T({\sf s}_3) \ : \ \chi_1 \times \chi_2 \times \chi_3 \times \chi'_1 \times \chi'_2 
\ \longrightarrow \ \chi_1 \times \chi_2 \times  \chi'_1 \times \chi_3 \times \chi'_2$ 

\item $T({\sf s}_4) \ : \ \chi_1 \times \chi_2 \times  \chi'_1 \times \chi_3 \times \chi'_2  
\ \longrightarrow \ \chi_1 \times \chi_2 \times  \chi'_1 \times \chi'_2 \times \chi_3$

\item $T({\sf s}_2) \ : \ \chi_1 \times \chi_2 \times  \chi'_1 \times \chi'_2 \times \chi_3 
 \ \longrightarrow \ \chi_1 \times \chi'_1  \times  \chi_2 \times \chi'_2 \times \chi_3$
 
\item $T({\sf s}_3) \ : \ \chi_1 \times \chi'_1  \times  \chi_2 \times \chi'_2 \times \chi_3
  \ \longrightarrow \  \chi_1 \times \chi'_1  \times  \chi'_2 \times \chi_2 \times \chi_3$
  
\item $T({\sf s}_1) \ : \ \chi_1 \times \chi'_1  \times  \chi'_2 \times \chi_2 \times \chi_3 
  \ \longrightarrow \  \chi'_1 \times \chi_1  \times  \chi'_2 \times \chi_2 \times \chi_3$
  
\item $T({\sf s}_2) \ : \ \chi'_1 \times \chi_1  \times  \chi'_2 \times \chi_2 \times \chi_3
 \ \longrightarrow \ \chi'_1 \times \chi'_2  \times   \chi_1\times \chi_2 \times \chi_3.$
\end{enumerate}
}\end{exam}
\medskip

The point is that at every intermediate stage, there are only two characters $\chi_i$ and $\chi'_j$ that are getting switched. The corresponding integral is happening over 
the coordinate $u_{ij}$ in the variable $u \in M_{n \times n'}(\C)$ that appears in \eqref{eqn:T-st-GLN-integral}. The measure $du$, as mentioned above, is the product of the Lebesgue measures 
$du_{ij}.$ Such an intermediate integral is the induction to $\GL_N$ of a $\GL_2$-intertwining integral, and we have seen that it corresponds to scaling by a factor 
$L_\infty(0, \chi_i \chi_j'^{-1})/L_\infty(1, \chi_i \chi_j'^{-1})$ (up to a nonzero rational). This implies that $T_{\rm st}$ will have a scaling factor of the product of all 
intermediate scaling factors, towards which note the easy lemma:

\begin{lemma}
\label{lem:l-factor-infinity-correct-ratio}
$$
\prod_{i=1}^n \prod_{j=1}^{n'} 
\frac{L_\infty(0, \chi_i \chi_j'^{-1})}{L_\infty(1, \chi_i \chi_j'^{-1})} \ = \ 
\prod_{i=1}^n \prod_{j=1}^{n'} 
\frac{L_\infty(-\tfrac{N}{2}, \psi_i \psi_j'^{-1})}{L_\infty(1-\tfrac{N}{2}, \psi_i \psi_j'^{-1})} \ = \ 
\frac{L_\infty(-\tfrac{N}{2}, \sigma_\infty \times \sigma_\infty'^{\sf v})}{L_\infty(1-\tfrac{N}{2}, \sigma_\infty \times \sigma_\infty'^{\sf v})}.
$$
\end{lemma}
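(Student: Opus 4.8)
The statement to be proven is Lemma \ref{lem:l-factor-infinity-correct-ratio}, which asserts a chain of two equalities relating products of archimedean abelian $L$-factors to the Rankin--Selberg archimedean $L$-factor at the two relevant points.

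\medskip

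The plan is to prove the two equalities separately, since they are of a different nature. The first equality, $\prod_{i,j} L_\infty(0, \chi_i \chi_j'^{-1})/L_\infty(1, \chi_i \chi_j'^{-1}) = \prod_{i,j} L_\infty(-N/2, \psi_i \psi_j'^{-1})/L_\infty(1-N/2, \psi_i \psi_j'^{-1})$, is a purely formal bookkeeping identity. Recall that $\chi_i = \psi_i(-n'/2)$ and $\chi'_j = \psi'_j(n/2)$, which means $\chi_i \chi_j'^{-1} = (\psi_i \psi_j'^{-1}) \otimes |\ |_\C^{-n'/2 - n/2} = (\psi_i \psi_j'^{-1})(-N/2)$ in the normalisation where $|\ |_\C = z\bar z$. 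Hence $L_\infty(s, \chi_i \chi_j'^{-1}) = L_\infty(s - N/2, \psi_i \psi_j'^{-1})$, since twisting by $|\ |_\C^{a}$ shifts the $L$-factor argument by $a$ (using $L(s, z^\alpha \bar z^\beta \cdot |\ |_\C^a) = L(s+a, z^\alpha \bar z^\beta)$, which follows directly from \eqref{eqn:abelian-local-l-factor}). Thus $L_\infty(0, \chi_i\chi_j'^{-1}) = L_\infty(-N/2, \psi_i\psi_j'^{-1})$ and $L_\infty(1, \chi_i\chi_j'^{-1}) = L_\infty(1-N/2, \psi_i\psi_j'^{-1})$, and taking the product over $1 \leq i \leq n$, $1 \leq j \leq n'$ gives the first equality term-by-term. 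I would write this out in one or two lines.

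\medskip

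For the second equality, $\prod_{i,j} L_\infty(-N/2, \psi_i\psi_j'^{-1})/L_\infty(1-N/2, \psi_i\psi_j'^{-1}) = L_\infty(-N/2, \sigma_\infty \times \sigma_\infty'^{\sf v})/L_\infty(1-N/2, \sigma_\infty \times \sigma_\infty'^{\sf v})$, the key input is the factorisation of the archimedean Rankin--Selberg $L$-factor over a complex place. Since $\sigma_\infty = \J_\mu = \psi_1 \times \cdots \times \psi_n$ and $\sigma_\infty'^{\sf v} = \J_{\mu'}^{\sf v} = \psi_1'^{-1} \times \cdots \times \psi_{n'}'^{-1}$ are (fully induced) principal series of $\GL_n(\C)$ and $\GL_{n'}(\C)$ respectively, the local Langlands correspondence for $\GL_m(\C)$ — under which induction corresponds to direct sum of Weil group representations — gives that the $L$-parameter of $\sigma_\infty$ is $\bigoplus_i \psi_i$ and that of $\sigma_\infty'^{\sf v}$ is $\bigoplus_j \psi_j'^{-1}$, whence the $L$-parameter of $\sigma_\infty \times \sigma_\infty'^{\sf v}$ is $\bigoplus_{i,j} \psi_i \psi_j'^{-1}$, and therefore $L_\infty(s, \sigma_\infty \times \sigma_\infty'^{\sf v}) = \prod_{i,j} L_\infty(s, \psi_i \psi_j'^{-1})$, the product being over the same index set. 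This is exactly the content already used implicitly in writing down \eqref{eqn:l-factor-infinity}; indeed, one can simply quote \eqref{eqn:l-factor-infinity} specialised to the complex place $v$ and to the tempered parameters $\alpha_i, \beta_i, \alpha'_j, \beta'_j$ of $\mu$ and $\mu'$, noting that $a(\mu,\mu') = 0$ after the appropriate normalisation has been absorbed — or, more cleanly here, just invoke multiplicativity of archimedean $L$-factors along the parameter. Evaluating at $s = -N/2$ in the numerator and $s = 1-N/2$ in the denominator and forming the ratio yields the claim.

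\medskip

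The main (and really only) obstacle is conceptual rather than technical: one must be careful about the various normalisations — which $L$-factor convention is in force (the $\sim$ of \eqref{eqn:abelian-local-l-factor} discards constants and exponentials, but here we want genuine equalities of $\Gamma$-factor ratios, so one should work with the honest factor $2(2\pi)^{-(\cdots)}\Gamma(\cdots)$ and check the constants and $(2\pi)$-powers cancel in each ratio, exactly as in \eqref{eqn:ratio-GL2-inft-explicit}), and the precise relationship between the $\psi_i, \chi_i$ and the Tate twists used in \eqref{eqn:T-st-GLN}. Once the twist $\chi_i \chi_j'^{-1} = (\psi_i\psi_j'^{-1})(-N/2)$ is pinned down and the multiplicativity of the Rankin--Selberg $\Gamma$-factor over $\C$ is invoked, the lemma is immediate; I expect the write-up to be about half a page, most of it spent making the twist computation transparent. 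Accordingly I would keep the proof short, stating the twist identity, citing \eqref{eqn:abelian-local-l-factor} for the shift property and \eqref{eqn:l-factor-infinity} (or local Langlands for $\GL_m(\C)$) for the factorisation, and concluding.
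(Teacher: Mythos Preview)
Your proposal is correct. The paper does not actually give a proof of this lemma at all: it is introduced with the words ``note the easy lemma'' and left without argument. Your two-step plan --- first the shift $\chi_i\chi_j'^{-1} = (\psi_i\psi_j'^{-1})(-N/2)$ giving $L_\infty(s,\chi_i\chi_j'^{-1}) = L_\infty(s-N/2,\psi_i\psi_j'^{-1})$, then multiplicativity of the archimedean Rankin--Selberg $L$-factor under the local Langlands correspondence for $\GL_m(\C)$ (as already invoked in \eqref{eqn:l-factor-infinity}) --- is exactly the intended verification, and there is nothing to compare against.
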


\medskip
\subsubsection{\bf The intertwining operator in cohomology}
Let $\sJ = \sJ^0$ stand for the underlying $(\g_N,\k_N)$-module of 
$\Ind_{B_N(\C)}^{\GL_N(\C)}( \chi_1 \otimes \cdots \otimes \chi_n \otimes \chi'_1 \otimes \cdots \otimes \chi'_{n'})$
and 
similarly, $\tilde\sJ = \sJ^{nn'}$ that of  
$\Ind_{B_N(\C)}^{\GL_N(\C)}( \chi'_1 \otimes \cdots \otimes \chi'_{n'} \otimes \chi_1 \otimes \cdots \otimes \chi_n).$ 
Rewrite the factorization in \eqref{eqn:T-factorised} as: 
$$
T_{\rm st} = T^{nn'} \circ \cdots \circ T^2 \circ T^1, \quad T^k : \sJ^{k-1} \to \sJ^k \ \mbox{for $1 \leq k \leq nn'.$}
$$
with each $\sJ^k$ being an irreducible principal series representation, and each $T^k$ is the induction of a $\GL_2$-intertwining operator as explained. 
Note that  
$$
\Ind_{B_N(\C)}^{\GL_N(\C)}(\chi_1 \otimes \cdots \otimes \chi_n \otimes \chi'_1 \otimes \cdots \otimes \chi'_{n'}) = 
\aInd_{B_N(\C)}^{\GL_N(\C)}(\xi_1 \otimes \cdots \otimes \xi_n \otimes \xi'_1 \otimes \cdots \otimes \xi'_{n'}),
$$
where 
$\xi_i = \chi_i\left(\tfrac{N-2i+1}{2}\right) = \psi_i\left(\tfrac{n-2i+1}{2}\right)$ and 
$\xi'_j = \chi'_j\left(\tfrac{N-2j-2n+1}{2}\right) = \psi'_j\left(\tfrac{n'-2j+1}{2}\right),$ 
are all {\it algebraic} characters of $\C^\times$. Similarly, each $\sJ^k$ is the algebraic parabolic induction of an algebraic character of the diagonal torus. 
Delorme's lemma identifies the one-dimensional cohomology group 
$H^{b_N^\C}(\g_N, \k_N; \sJ^k \otimes \M_\lambda)$ as a tensor product of the $\GL_1$ cohomology groups for the $\xi_i$'s and $\xi_j'$'s; 
as in \eqref{eqn:delorme-gl2-simple}, but simplifying notations we have: 
$$
\gamma_k : H^{b_N^\C}(\g_N, \k_N; \sJ^k \otimes \M_\lambda)  \ \stackrel{\approx}{\longrightarrow} \ 
\mbox{(product of $\GL_1$ cohomology groups)}.
$$
This product of $\GL_1$-cohomology groups may be identified with each other for $1 \leq k \leq nn'.$ 
Fixing a rational basis $\omega_{(b,c)}$ for 
each of the $\GL_1$-classes and so for their tensor product, we define a basis element $[\sJ^k]_0$ for 
$H^{b_N^\C}(\g_N, \k_N; \sJ^k \otimes \M_\lambda)$ via $\gamma_k^{-1}.$  

\medskip

We start with $T^1 : \sJ^0 \to \sJ^1$ and note that this is the induction from $(n-1, 2, n'-1)$-parabolic subgroup of $\GL_N$ of the $\GL_2$-intertwining operator that switches $\chi_n$ and $\chi'_1$. Prop.\,\ref{prop:basic-Tst-GL2-rational-classes} applied to $T^1$ gives: 
$$
(T^1)^\bullet ([\sJ^0]_0) \ \approx_{\Q^\times} \ 
\frac{L(0, \chi_n \chi'_1{}^{-1})}{L(1, \chi_n \chi'_1{}^{-1})} [\sJ^1]_0. 
$$
At the next step, from the factorisation in \eqref{eqn:T-factorised}, we will get: 
$$
(T^2)^\bullet ([\sJ^1]_0) \ \approx_{\Q^\times} \ 
\frac{L(0, \chi_n \chi'_2{}^{-1})}{L(1, \chi_n \chi'_2{}^{-1})} [\sJ^2]_0, 
$$
and so on. Using Lem.\,\ref{lem:l-factor-infinity-correct-ratio}, Prop.\,\ref{prop:basic-Tst-GL2-rational-classes} generalizes to the following 

\begin{prop}
\label{prop:basic-Tst-GLN-in-cohomology}
$$
T_{\rm st}^\bullet([\sJ]_0) \ \approx_{\Q^\times} \ 
\frac{L_\infty(-\tfrac{N}{2}, \sigma_\infty \times \sigma_\infty'^{\sf v})}{L_\infty(1-\tfrac{N}{2}, \sigma_\infty \times \sigma_\infty'^{\sf v})} \, 
[\tilde \sJ]_0. 
$$
\end{prop}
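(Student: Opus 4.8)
The plan is to reduce the $\GL_N$ statement to an $nn'$-fold composition of the $\GL_2$-calculation already established in Prop.\,\ref{prop:basic-Tst-GL2-rational-classes}. First I would invoke Prop.\,\ref{prop:irreducible-isomorphism-GLN} under the stated hypothesis on finiteness of the archimedean $L$-factors at $s = -N/2$ and $s = 1-N/2$: this guarantees that $\sJ = \sJ^0$ and $\tilde\sJ = \sJ^{nn'}$, as well as each intermediate $\sJ^k$, are irreducible principal series, and that $T_{\rm st}$ together with each factor $T^k$ is a well-defined isomorphism. The cohomology groups $H^{b_N^\C}(\g_N, \k_N; \sJ^k \otimes \M_\lambda)$ are all one-dimensional (this is the $\GL_N(\C)$ analogue of \eqref{eqn:[J-mu]}, following from the fact that the relevant relative Lie algebra cohomology is an exterior algebra, one-dimensional in bottom degree, together with the K\"unneth theorem as in the proof of Prop.\,\ref{prop:j-lambda-range-degrees}), so the induced maps $(T^k)^\bullet$ are scalars once we fix the rational bases $[\sJ^k]_0$ defined via $\gamma_k^{-1}$.

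The key step is to compute each scalar $(T^k)^\bullet$. Here I would use the factorization \eqref{eqn:T-factorised} of $w_0$ into $l(w_0) = nn'$ simple reflections, grouped so that at stage $k$ the operator $T^k$ only switches one pair $(\chi_i, \chi'_j)$ of adjacent characters in the standard module, the integral being over the single coordinate $u_{ij}$ of $u \in M_{n\times n'}(\C)$. Thus $T^k$ is the parabolic induction (from an appropriate $(a, 2, b)$-type parabolic) of the rank-one $\GL_2$-intertwining operator $T_{\rm st}$ of \eqref{eqn:T-st-GL2} attached to the pair $(\chi_i, \chi'_j)$. Induction is exact and compatible with Delorme's lemma and with the rational structures, so the scalar by which $(T^k)^\bullet$ acts is exactly the scalar computed in Prop.\,\ref{prop:basic-Tst-GL2-rational-classes} for that pair, namely $L(0, \chi_i\chi_j'^{-1})/L(1, \chi_i\chi_j'^{-1})$ up to $\Q^\times$. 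Chaining these together along $T_{\rm st} = T^{nn'} \circ \cdots \circ T^1$ gives
$$
T_{\rm st}^\bullet([\sJ]_0) \ \approx_{\Q^\times} \ \prod_{i=1}^n \prod_{j=1}^{n'} \frac{L(0, \chi_i\chi_j'^{-1})}{L(1, \chi_i\chi_j'^{-1})} \, [\tilde\sJ]_0,
$$
where the product ranges over all $nn'$ pairs $(i,j)$ that appear across the $nn'$ stages (each pair occurring exactly once, by the shuffle description of $w_0$). Finally I would apply Lem.\,\ref{lem:l-factor-infinity-correct-ratio} to rewrite this product of abelian $L$-ratios as $L_\infty(-\tfrac{N}{2}, \sigma_\infty \times \sigma_\infty'^{\sf v})/L_\infty(1-\tfrac{N}{2}, \sigma_\infty \times \sigma_\infty'^{\sf v})$, yielding the claimed formula.

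The main obstacle, and the point requiring the most care, is the bookkeeping that the rational bases $[\sJ^k]_0$ chosen via $\gamma_k^{-1}$ are compatible with the induction-of-$\GL_2$ picture at each intermediate stage — that is, that the rational class defined by Delorme's lemma on the $\GL_N$ side restricts, along the $(a,2,b)$-parabolic, to the rational class $[\J_\mu]_0$ (resp.\ $[\tilde\J_\mu]_0$) on the $\GL_2$-factor tensored with trivial $\GL_1$-classes on the remaining factors. This is where one must track how Frobenius reciprocity (via $f \mapsto f(1)$ versus $f \mapsto f(w_0)$) and the Kostant representatives $w_l, w_r$ interact under transitivity of induction; the $\GL_2$-computation in \ref{sec:explcit-class-GL2-infinity}--\ref{sec:explcit-class-GL2-infinity} and the diagram after \eqref{eqn:delorme-gl2-simple-2} are the model, and the $\GL_N$ case is obtained by induction along the factorization, but verifying that no spurious non-rational constants are introduced at the junctions between consecutive $T^k$'s (beyond the harmless $\Q^\times$-ambiguity and the $L$-value ratio) is the delicate part. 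I expect the transcendental constants $2\pi$ appearing in the individual $\GL_2$-integrals \eqref{eqn:T-st-f-mu-explicit-GL2} to cancel against those in the $\Gamma$-factor ratios \eqref{eqn:ratio-GL2-inft-explicit} exactly as in the $\GL_2$ case, so that only the rational scalars and the $L$-ratio survive; making this cancellation precise stage-by-stage is routine but must be stated.
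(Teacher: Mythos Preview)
Your proposal is correct and follows essentially the same approach as the paper: factorize $T_{\rm st}$ into $nn'$ rank-one operators via \eqref{eqn:T-factorised}, apply Prop.\,\ref{prop:basic-Tst-GL2-rational-classes} to each factor to extract the ratio $L(0,\chi_i\chi_j'^{-1})/L(1,\chi_i\chi_j'^{-1})$, and then use Lem.\,\ref{lem:l-factor-infinity-correct-ratio} to assemble the product into the Rankin--Selberg ratio. The paper's proof is in fact just the two displayed lines before the proposition statement, computing $(T^1)^\bullet$ and $(T^2)^\bullet$ explicitly and then saying ``and so on''; your discussion of the bookkeeping for the rational bases $[\sJ^k]_0$ via Delorme's lemma is if anything more careful than what the paper writes out.
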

The reader is referred to Harder \cite{harder-tifr} where a hope is expressed in general, and verified in the context therein, that the rational number implicit in $\approx_{\Q^\times}$ has a simple
shape. See \eqref{eqn:T-st-f-mu-explicit-GL2} and \eqref{eqn:ratio-GL2-inft-explicit} above in the simplest possible case of $n = n' = 1.$

\medskip

\section{\bf The main theorem on special values of $L$-functions for $\GL_n \times \GL_{n'}$}
\label{sec:main-theorem}

Before the main theorem on $L$-values (Thm.\,\ref{thm:main}) can be stated and proved, 
two technical results on the boundary cohomology are necessary; the 
first is what is known as a `Manin--Drinfeld' principle and the second is on rank-one Eisenstein cohomology.  

\medskip
\subsection{A Manin--Drinfeld Principle}
\label{sec:manin-drinfeld-principle} 
The main purpose of this subsection is to state and prove Thm.\,\ref{thm:manin-drinfeld}. 

\subsubsection{\bf Kostant representatives}

To begin, two important lemmas about Kostant representatives from \cite[Sect.\,5.3.2]{harder-raghuram-book} are recorded below. 
Recall that $P = \Res_{F/\Q}(P_0)$ and $P_0 = P_{(n,n')}$ is the maximal parabolic subgroup of type $(n,n')$ of $G_0 = \GL_N/F.$ Let $Q_0 = P_{(n',n)}$ be the associate parabolic, and $Q = \Res_{F/\Q}(Q_0)$.
Let $\bfpi_{M_{P_0}} = \bfpi_{G_0} - \{\alpha_{P_0}\}$. 
 Let $w_{P_0}$ be the unique element of $W_0 = W_{G_0}$ such that $w_{P_0}(\bfpi_{M_{P_0}}) \subset \bfpi_{G_0}$ and 
$w_{P_0}(\alpha_{P_0}) < 0,$ it is the longest Kostant representative for $W^{P_0}.$ 
\medskip

\begin{lemma}
\label{lem:kostant-P-Q}
With notations as above, one has: 
\begin{enumerate}
\item The map $w \mapsto w^\prime:= w_{P}\,w$ gives a bijection $W^P \to W^Q$. If $w= (w^\tau)_{\tau : F \to E}$, then by definition, 
$w_{P}w = (w_{P_0}w^\tau)_{\tau:F \to E}.$ 
\item This bijection has the property that $l(w^\tau) + l(w'^\tau) = \dim{(U_{P_0^\tau})}$. 
\item $w$ is balanced if and only if $w'$ is balanced.
\end{enumerate}
\end{lemma}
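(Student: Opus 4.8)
\textbf{Proof plan for Lemma \ref{lem:kostant-P-Q}.}

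The plan is to reduce everything to the case of a single embedding $\tau$, since all three assertions are asserted componentwise (the Weyl group factors as $W = \prod_{\tau:F\to E} W_0^\tau$, each $W_0^\tau \cong \perm_N$, and the twisted action, the length function, and the notion of ``balanced'' are all defined coordinatewise). So fix $\tau$ and work inside $W_0 = \perm_N$ with the standard maximal parabolic $P_0 = P_{(n,n')}$ and its associate $Q_0 = P_{(n',n)}$; write $w_{P_0}$ for the longest Kostant representative in $W^{P_0}$, characterized by $w_{P_0}(\bfpi_{M_{P_0}}) \subset \bfpi_{G_0}$ and $w_{P_0}(\alpha_{P_0}) < 0$. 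Concretely, $w_{P_0}$ is the block transposition sending $(1,\dots,n,n+1,\dots,N)$ to $(n'+1,\dots,N,1,\dots,n')$, i.e.\ the element called $w_0$ in Section~\ref{sec:T-st-infinity}; note $l(w_{P_0}) = nn' = \dim(U_{P_0})$.

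For (1): I would first check that $w' := w_{P_0} w \in W^{Q_0}$ whenever $w \in W^{P_0}$. Recall $W^{P_0} = \{w : w^{-1}(1) < \cdots < w^{-1}(n),\ w^{-1}(n+1) < \cdots < w^{-1}(N)\}$ from \eqref{eqn:W^P-defn}; the analogous description holds for $W^{Q_0}$ with the block sizes $(n',n)$. Since $(w')^{-1} = w^{-1} w_{P_0}^{-1} = w^{-1} w_{P_0}$ and $w_{P_0}$ simply shifts $\{1,\dots,n'\}$ to $\{n+1,\dots,N\}$ and $\{n'+1,\dots,N\}$ to $\{1,\dots,n\}$, one reads off directly that $(w')^{-1}$ restricted to the first $n'$ entries is increasing and on the last $n$ entries is increasing, i.e.\ $w'\in W^{Q_0}$. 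Injectivity of $w\mapsto w_{P_0}w$ is automatic; surjectivity follows because $w_{P_0}$ applied to $W^{Q_0}$ lands in $W^{P_0}$ by the symmetric argument (swapping the roles of $(n,n')$ and $(n',n)$, using that $w_{P_0}$ for the $(n,n')$-parabolic and $w_{Q_0}$ for the $(n',n)$-parabolic are mutually inverse, both being the same block transposition read the two ways). Alternatively, and more cleanly, I can invoke the general fact that $W^{P_0} \to W^{Q_0}$, $w \mapsto w_{P_0} w$, is a bijection because every $v\in W_0$ has a unique factorization $v = w \cdot u$ with $w \in W^{P_0}$, $u \in W_{M_{P_0}}$, and $w_{P_0} W_{M_{P_0}} w_{P_0}^{-1} = W_{M_{Q_0}}$.

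For (2): use the standard length-additivity for parabolic factorizations together with $l(w_{P_0} w) = l(w_{P_0}) - l(w)$ when $w \in W^{P_0}$. This last identity holds because $w_{P_0}$ is the longest element of $W^{P_0}$ and, for Kostant representatives of a maximal parabolic, $w \leq w_{P_0}$ in Bruhat order with $l(w_{P_0}) = l(w) + l(w_{P_0} w^{-1}\cdot\text{something})$ --- more precisely, I would verify it via the inversion-set description: $\Phi_{w_{P_0}} = \bfdelta(\u_{P_0})$ (all of $\u_{P_0}$), $\Phi_w \subset \bfdelta(\u_{P_0})$, and $\Phi_{w'} = \bfdelta(\u_{P_0}) \setminus \Phi_w$ after identifying via $w_{P_0}$; this is exactly the content of Lemma~\ref{lem:length-w-kappa}(2), $l(w_\kappa) + l(w_{\kappa^\v}) = nn'$, once one observes that the bijection $w_\kappa \mapsto w_{\kappa^\v}$ of \eqref{eqn:kappa-v} coincides with $w \mapsto w_{P_0} w$ --- indeed Lemma~\ref{lem:length-w-kappa}(3) says $w_{\kappa^\v} = w_{M_{P_0}} w_\kappa w_{G_0}$, and one checks $w_{M_{P_0}} w_\kappa w_{G_0} = w_{P_0} w_\kappa$ by comparing the two block transpositions. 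So (2) is essentially already proved in Section~\ref{sec:comb-lemma}; I would just make the identification of the two bijections explicit.

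For (3): this is now immediate. By definition (Def.~\ref{def:strongly-balanced}), $w=(w^\tau)$ is balanced iff $l(w^\eta) + l(w^{\bar\eta}) = \dim(U_{P_0})$ for every conjugate pair $\{\eta,\bar\eta\}$. Since $w' = w_P w$ acts as $w'^\tau = w_{P_0} w^\tau$ for each $\tau$ and the conjugation $\eta \leftrightarrow \bar\eta$ is a fixed involution on $\Hom(F,E)$ (determined by an embedding $\iota$, as in Remark~\ref{rem:w-base-change}) that commutes with left multiplication by the fixed element $w_{P_0}$, we get from (2) applied to $\eta$ and to $\bar\eta$:
$$
l(w'^\eta) + l(w'^{\bar\eta}) = \big(\dim(U_{P_0}) - l(w^\eta)\big) + \big(\dim(U_{P_0}) - l(w^{\bar\eta})\big),
$$
which equals $\dim(U_{P_0})$ precisely when $l(w^\eta) + l(w^{\bar\eta}) = \dim(U_{P_0})$. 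Hence $w$ is balanced iff $w'$ is balanced.

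The only mildly delicate point --- and the one I would spend the most care on --- is making the identification in (2) airtight: namely that the ``associate parabolic'' bijection $w \mapsto w_{P_0} w$ agrees on the nose with the combinatorially defined bijection $w_\kappa \mapsto w_{\kappa^\v}$ from \eqref{eqn:kappa-v}, so that Lemma~\ref{lem:length-w-kappa}(2) can be quoted directly. Everything else is bookkeeping with one-line permutations. I expect no genuine obstacle; this lemma is recorded from \cite[Sect.\,5.3.2]{harder-raghuram-book} and the proof is a routine transcription, with the componentwise structure over $\Hom(F,E)$ adding nothing beyond notation.
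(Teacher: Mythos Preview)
Your overall strategy is right --- reduce to a single $\tau$, use the one-line description of $W^{P_0}$ and $W^{Q_0}$, and for (3) derive balancedness from the componentwise length identity in (2). The paper itself gives no proof here; it simply records the lemma from \cite[Sect.\,5.3.2]{harder-raghuram-book}, so your direct approach is appropriate.

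There is, however, a genuine error in your argument for (2). You assert that the self-bijection $w_\kappa \mapsto w_{\kappa^{\sf v}} = w_{M_{P_0}} w_\kappa w_{G_0}$ of $W^{P_0}$ (which is the map of Lemma~\ref{lem:kostant-P-P}, computed explicitly in Lemma~\ref{lem:length-w-kappa}(3)) coincides with the map $w \mapsto w_{P_0} w$ of the present lemma. It does not: the former is a self-bijection of $W^{P_0}$, the latter a bijection $W^{P_0} \to W^{Q_0}$, and when $n \neq n'$ these targets are different subsets of $\perm_N$. Concretely, take $n=2$, $n'=1$, $w = e$: then $w_{M_{P_0}} w_{G_0} = (1\,2)(1\,3) = (1\,3\,2) \in W^{P_0}$, whereas $w_{P_0} = (1\,2\,3) \in W^{Q_0}$, and $(1\,3\,2) \neq (1\,2\,3)$. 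So the identification you flag as ``the only mildly delicate point'' in fact fails.

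The fix is easy and you already have the ingredients. Use $w_{P_0} = w_{G_0}\, w_{M_{P_0}}$ (both factors are involutions, and $w_{P_0}^{-1} = w_{M_{P_0}} w_{G_0}$ is the longest element of $W^{P_0}$). For $w \in W^{P_0}$ the factorization $W_{G_0} = W_{M_{P_0}} \cdot W^{P_0}$ gives $l(w_{M_{P_0}} w) = l(w_{M_{P_0}}) + l(w)$, and then
\[
l(w_{P_0} w) \;=\; l(w_{G_0}\, w_{M_{P_0}} w) \;=\; l(w_{G_0}) - l(w_{M_{P_0}} w) \;=\; l(w_{G_0}) - l(w_{M_{P_0}}) - l(w) \;=\; nn' - l(w).
\]
This is exactly your claimed identity $l(w_{P_0} w) = l(w_{P_0}) - l(w)$; it does hold, but via this length computation, not via an identification with $w_{\kappa^{\sf v}}$. (The two maps $w \mapsto w_{M_{P_0}} w w_{G_0}$ and $w \mapsto w_{P_0} w$ happen to have the same length effect because both equal $l(w_{G_0}) - l(w_{M_{P_0}}) - l(w)$, but they are distinct as maps.) With this correction your proof of (2), and hence of (3), goes through.
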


\medskip 

Similarly, there is the following self-bijection of $W^P$: 

\medskip
\begin{lemma}
\label{lem:kostant-P-P}
Let $w_G$ be the element of longest length in the Weyl group $W_G$ of $G$, and similarly, 
let $w_{M_P}$ be the element of longest length in the Weyl group $W_{M_P}$. 
Then: 
\begin{enumerate}
\item The map $w \mapsto w^{\mathsf v}:= w_{M_P}\cdot w \cdot w_G$ gives a bijection $W^P \to W^P$. 
\item This bijection has the property that $l(w^\tau) + l(w^{\v\tau}) = \dim{(U_{P_0^\tau})}.$ 
\item $w$ is balanced if and only if $w^\v$ is balanced.
\end{enumerate}
\end{lemma}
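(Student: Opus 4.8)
The plan is to prove Lemma \ref{lem:kostant-P-P} by reducing it to the well-understood combinatorics of a single factor, i.e. to $W^{P_0} \subset W_{G_0} = \perm_N$, and then assembling over the embeddings $\tau : F \to E$. Recall that $W_G = \prod_{\tau} W_{G_0}^\tau$, that $w_G = (w_{G_0}^\tau)_\tau$ where $w_{G_0}$ is the longest element of $\perm_N$, and that $w_{M_P} = (w_{M_{P_0}}^\tau)_\tau$ with $w_{M_{P_0}} = w_n \times w_{n'}$ in the block notation of Sect.\,\ref{sec:comb-lemma}. Thus the map $w \mapsto w^{\mathsf v} = w_{M_P}\, w\, w_G$ is the product over $\tau$ of the map $w^\tau \mapsto w_{M_{P_0}} w^\tau w_{G_0}$ on $W_{G_0}$, so it suffices to establish the three assertions for this single-factor map $\phi : W_{G_0} \to W_{G_0}$, $\phi(u) = w_{M_{P_0}} u\, w_{G_0}$.

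For the single-factor statement I would use the parametrization already set up in the excerpt: $W^{P_0}$ is in bijection with strictly increasing $n$-tuples $\kappa = (k_1 < \cdots < k_n)$ in $\{1,\dots,N\}$ via $w_\kappa^{-1}(i) = k_i$ (Eqn.\,\eqref{eqn:w-kappa}), and Lem.\,\ref{lem:length-w-kappa} already records (3) $w_{\kappa^{\mathsf v}} = w_{M_{P_0}} w_\kappa w_{G_0}$ together with (2) $l(w_\kappa) + l(w_{\kappa^{\mathsf v}}) = nn' = \dim(U_{P_0})$, where $\kappa^{\mathsf v}$ is the complementary tuple of \eqref{eqn:kappa-v}. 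So the first claim (that $\phi$ restricts to a self-bijection of $W^{P_0}$) is exactly that $w_\kappa \mapsto w_{\kappa^{\mathsf v}}$ maps $W^{P_0}$ to itself, which is immediate from \eqref{eqn:kappa-v} since $\kappa \mapsto \kappa^{\mathsf v}$ is a well-defined involution on strictly increasing $n$-tuples; and the length identity $l(w^\tau) + l(w^{\mathsf v\,\tau}) = \dim(U_{P_0^\tau}) = nn'$ is Lem.\,\ref{lem:length-w-kappa}(2) applied in each factor. That handles (1) and (2) of Lemma \ref{lem:kostant-P-P}.

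For (3), the balanced condition of Def.\,\ref{def:strongly-balanced} says $l(w^\eta) + l(w^{\bar\eta}) = \dim(U_{P_0})$ for all $\eta$. Suppose $w$ is balanced. Using the factorwise description $w^{\mathsf v} = (w_{M_{P_0}} w^\tau w_{G_0})_\tau$ and the length identity just proved, $l(w^{\mathsf v\,\eta}) = \dim(U_{P_0}) - l(w^\eta)$ and likewise for $\bar\eta$; adding these and using $l(w^\eta) + l(w^{\bar\eta}) = \dim(U_{P_0})$ gives $l(w^{\mathsf v\,\eta}) + l(w^{\mathsf v\,\bar\eta}) = 2\dim(U_{P_0}) - \dim(U_{P_0}) = \dim(U_{P_0})$, so $w^{\mathsf v}$ is balanced. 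The converse follows because $\phi$ (hence $w \mapsto w^{\mathsf v}$) is an involution — one checks $w_{M_{P_0}}^2 = 1$, $w_{G_0}^2 = 1$, equivalently $(\kappa^{\mathsf v})^{\mathsf v} = \kappa$ from \eqref{eqn:kappa-v}.

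I do not expect any serious obstacle here: the lemma is essentially bookkeeping built on Lem.\,\ref{lem:length-w-kappa}, which is already proved in the excerpt. The only point requiring a little care is verifying that $w_{M_P} u\, w_G$ genuinely lands in $W^P$ (and not merely in $W_G$) for $u \in W^P$ — i.e. that it is again a minimal-length coset representative for $W_{M_P}\backslash W_G$ — which is why I would route the argument through the explicit $\kappa \leftrightarrow \kappa^{\mathsf v}$ description rather than a purely abstract Coxeter-group argument; the latter would also work (it is the standard fact that $u \mapsto w_{M_P} u\, w_G$ is the canonical involution on $W_{M_P}\backslash W_G / \{1\}$ reversing lengths within the coset space), but the concrete version matches the notational setup of the paper and makes the balanced-preservation statement transparent.
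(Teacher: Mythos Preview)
Your proof is correct. Note, however, that the paper does not actually prove this lemma: it simply records it from \cite[Sect.\,5.3.2]{harder-raghuram-book}, so there is no in-paper argument to compare against. Your approach is exactly the natural one and is in fact more self-contained than the paper, since you route everything through Lem.\,\ref{lem:length-w-kappa}, which \emph{is} proved here; the reduction to a single factor via $W_G = \prod_\tau W_{G_0}^\tau$, the identification of $u \mapsto w_{M_{P_0}} u\, w_{G_0}$ with $w_\kappa \mapsto w_{\kappa^{\mathsf v}}$, and the length complementation $l(w^{\mathsf v\,\tau}) = nn' - l(w^\tau)$ giving the balanced-preservation are all clean and correct.
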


\medskip
\subsubsection{\bf Induced representations in boundary cohomology}
\label{sec:ind-rep-bdry-coh}

The conditions imposed by the combinatorial lemma has a consequence on the occurrences of induced representations as Hecke summands in the boundary cohomology. 
Recall that $E$ is a large enough Galois extension of $\Q$ that takes a copy of $F$. Consider strongly-pure weights 
$\mu \in X^+_{00}(T_n \times E)$ and 
$\mu' \in X^+_{00}(T_{n'} \times E).$ Let $\sigma_f \in \Coh_{!!}(G_n, \mu)$ and $\sigma'_f \in \Coh_{!!}(G_{n'}, \mu')$ be strongly-inner Hecke-summands. The effect of the related
balanced representatives: $w', w^\v$ and $w^\v{}'$ on certain weights are recorded in the following

\begin{prop}
\label{prop:w-wv-mu-muv}
Assume that the weights $\mu$ and $\mu'$ satisfy the conditions of the combinatorial lemma. Hence there is a balanced element $w \in W^P$ such that $\lambda := w^{-1} \cdot (\mu + \mu')$ is dominant. Then (after recalling the notations in Sect.\,\ref{sec:tate-twists}): 

\begin{enumerate}

\item $w'\cdot \lambda = (\mu' -n \bfgreek{delta}_{n'}) + (\mu + n' \bfgreek{delta}_{n}),$

\medskip

\item $w^\v \cdot \lambda^\v = (\mu^\v - n'\bfgreek{delta}_{n}) + (\mu'^\v + n \bfgreek{delta}_{n'}),$ and  

\medskip

\item $w^\v{}' \cdot \lambda^\v =  \mu'^\v + \mu^\v.$ 
\end{enumerate}
\end{prop}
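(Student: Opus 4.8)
The three identities are all of the same nature: one takes the dominant weight $\lambda = w^{-1}\cdot(\mu+\mu')$ and applies a distinguished Weyl element (here $w^\v{}' = w_{M_P}\, w'\, w_G$, obtained by combining Lem.\,\ref{lem:kostant-P-Q} and Lem.\,\ref{lem:kostant-P-P}) via the twisted action, and one checks that the resulting $M_P$-dominant weight has the asserted shape. Since the twisted action, purity, and the combinatorial data all factor over $\Hom(F,E)$ (and, after base change, over $\Hom(F_1,E)$), I would reduce immediately to a single embedding $\tau$, i.e.\ to the genuine $\GL_N$-situation: it suffices to show $w_0^{\v\tau}{}' \cdot \lambda^{\v\tau} = \mu'^{\v\tau} + \mu^{\v\tau}$ for each $\tau$, where the right side means the weight whose $M_{P_0}=\GL_{n'}\times\GL_n$ blocks are $\mu'^{\v\tau}$ and $\mu^{\v\tau}$ respectively.

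\textbf{Key steps.} First I would record the explicit form of $w^{\tau}$ coming from the combinatorial lemma: it is the balanced Kostant representative $w_\kappa$ of Sect.\,\ref{sec:comb-lemma}, so $w^{-1}\cdot(\mu+\mu')$ is the dominant rearrangement of the entries of $\mu$ and $\mu'$ interleaved according to $\kappa$. Next, I would use parts (1) and (2) of the present Prop.\,\ref{prop:w-wv-mu-muv}, which I may assume as they precede (3): $w'\cdot\lambda = (\mu'-n\bdelta_{n'}) + (\mu+n'\bdelta_n)$ and $w^\v\cdot\lambda^\v = (\mu^\v-n'\bdelta_n)+(\mu'^\v+n\bdelta_{n'})$. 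The cleanest route is then to observe that $w^\v{}' = (w')^\v$ — i.e.\ the two commuting bijections of Lem.\,\ref{lem:kostant-P-Q} and Lem.\,\ref{lem:kostant-P-P} compose to the same element regardless of order, since $w_{M_P}, w_{M_Q}, w_G$ are all longest elements (one checks $w_{M_Q} w' w_G = w_P (w_{M_P} w w_G)$ using $w_{M_Q} w_P = w_{M_P}$, which follows from $w_P(\bfpi_{M_P}) = \bfpi_{M_Q}$). Granting this, apply the $\v$-operation of Lem.\,\ref{lem:kostant-P-P} (now for $W^Q$) to the identity in part (1): since $\lambda^\v$ is dominant and $(w')^\v \cdot \lambda^\v$ must be $M_Q$-dominant, and since the $\v$-operation sends the Tate twist $\pm n\bdelta$ to $\mp n\bdelta$ composed with $w_G$-action on the blocks, the twist factors $-n\bdelta_{n'}$ and $+n'\bdelta_n$ get exchanged and negated, cancelling against the $+n\bdelta_{n'}$, $-n'\bdelta_n$ produced by passing from $\mu,\mu'$ to $\mu^\v,\mu'^\v$; the net determinant twist is $0$, leaving exactly $\mu'^\v + \mu^\v$. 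Alternatively, and more robustly, I would just compute directly: write $w^\v{}' \cdot \lambda^\v = w_{M_P} w' w_G \cdot (w_G \cdot (w^{-1}\cdot(\mu+\mu')))$, use that $w_G \cdot w_G \cdot \nu = \nu$ for the twisted action, so this equals $w_{M_P} w' \cdot (w^{-1}\cdot(\mu+\mu')) = w_{M_P} w' w^{-1} \cdot (\mu+\mu') = w_{M_P} w_P \cdot (\mu+\mu')$ — wait, $w' w^{-1} = w_P w w^{-1} = w_P$ — hence $= w_{M_P} w_P \cdot (\mu+\mu')$, and $w_{M_P} w_P = w_{M_P}^{-1} w_P$ reverses each block and then the blocks, i.e.\ it is precisely the element making $(\mu+\mu')$ into its $M_P$-dominant form but \emph{without} any $\rho$-shift discrepancy, so the answer is the $M_P$-dominant weight with blocks obtained by reversing and shifting $\mu'$ and $\mu$ — which is exactly $\mu'^\v + \mu^\v$ by the definition of the $\v$-operation on the factor weights in Lem.\,\ref{lem:length-w-kappa}\,(3).

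\textbf{Main obstacle.} The genuine difficulty is purely bookkeeping: correctly tracking the $\bfgreek{rho}$-shifts and the determinant (Tate-twist) characters $\bdelta_n, \bdelta_{n'}, \bdelta_N$ through the composite of three longest-element actions, and verifying that all the $\pm n\bdelta$, $\pm n'\bdelta$ contributions cancel so that no twist survives in (3) (in contrast to (1) and (2), where they do not cancel). The cleanest way to avoid sign errors is to do the computation once over $\C$ for a single $\tau$, exploiting $w_G\cdot w_G = \mathrm{id}$ (twisted) and the identities $w' = w_P w$, $w^\v{}' = w_{M_P} w' w_G = w_{M_Q} w' w_G$; then the block-reversal built into $w_{M_P}$ exactly undoes the Tate twists introduced when rewriting $\mu,\mu'$ in terms of $\mu^\v,\mu'^\v$, and one reads off $\mu'^\v+\mu^\v$. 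I would also double-check the two special cases $k_1=1$ and $k_n=N$ in the $\kappa$-parametrization, since those are where the parenthetical ``collapse'' phenomena of Prop.\,\ref{prop:w-kappa-dom} occur, but these do not affect the weight identity, only the emptiness of certain dominance inequalities, so no separate argument is needed.
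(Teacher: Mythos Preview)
The paper does not actually prove this proposition; it simply refers the reader to \cite[Sect.\,5.3.4]{harder-raghuram-book}. So your sketch is being compared against the argument in that reference, which is indeed the ``apply the $\v$-bijection to part (1)'' route you outline first. That strategy is correct, and your observation that $(w^\v)' = (w')^\v$ (via $w_P w_{M_P} w_P^{-1} = w_{M_Q}$) is a genuine and useful point. However, two concrete errors in your execution need fixing.

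First, you write $w^\v{}' = w_{M_P}\, w'\, w_G$ and later also $= w_{M_Q}\, w'\, w_G$, treating these as equal. They are not: the correct formula is $w^\v{}' = w_{M_Q}\, w'\, w_G$, and $w_{M_P} \neq w_{M_Q}$ unless $n=n'$. This matters for the bookkeeping you yourself flag as the main obstacle.

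Second, and more seriously, your ``direct computation'' conflates $\lambda^\v$ with $w_G \cdot \lambda$. These are different: $\lambda^\v = -w_G\lambda$ (linear action), while $w_G \cdot \lambda = w_G\lambda - 2\bfgreek{rho}_N$ (twisted action). So the step ``$\lambda^\v = w_G \cdot (w^{-1}\cdot(\mu+\mu'))$'' is false, and the chain that follows (leading to $w_{M_P} w_P \cdot (\mu+\mu')$) does not compute $w^\v{}' \cdot \lambda^\v$. The twisted action \emph{is} a group action, so composites like $w' \cdot (w^{-1}\cdot\nu) = w_P\cdot\nu$ are fine; the problem is purely in how you input $\lambda^\v$.

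The clean fix is the identity: for $u \in W^Q$ with $u\cdot\lambda = \nu$, one has
\[
u^\v \cdot \lambda^\v \;=\; -w_{M_Q}\nu \;-\; 2\bfgreek{rho}^{M_Q}
\;=\; \nu^{\v,M_Q} \;-\; n\,\bfgreek{delta}_{n'} \;+\; n'\,\bfgreek{delta}_n,
\]
where $\nu^{\v,M_Q}$ is the $M_Q$-contragredient and $2\bfgreek{rho}^{M_Q} = n\bfgreek{delta}_{n'} - n'\bfgreek{delta}_n$. Applying this with $u = w'$ and $\nu = (\mu'-n\bfgreek{delta}_{n'}) + (\mu+n'\bfgreek{delta}_n)$ from (1), the $M_Q$-contragredient gives $(\mu'^\v+n\bfgreek{delta}_{n'}) + (\mu^\v-n'\bfgreek{delta}_n)$, and the $-2\bfgreek{rho}^{M_Q}$ term cancels the remaining twists exactly, yielding $\mu'^\v + \mu^\v$. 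This is the ``cancellation'' you correctly anticipated, but its source is the $\bfgreek{rho}$-shift hidden in the twisted action, not a block exchange.
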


For a proof of the above proposition, the reader is referred to \cite[Sect.\,5.3.4]{harder-raghuram-book}. The appearance of various induced modules in boundary cohomology in bottom and top degrees are recorded in the following

\begin{prop}
\label{prop:ind-modules-top-bottom}
Let the notations be as above. 

\medskip
\begin{enumerate}

\item The module $\aInd_{P(\A_f)}^{G(\A_f)}(\sigma_f \times \sigma'_f)$ appears in $H^{\q_b}(\partial_P\SG, \tM_{\lambda, E}),$ \\ where 
$\q_b = b_N^F = b_n^F + b_{n'}^F + \frac12 \dim(U_P).$

\medskip

\item The module $\aInd_{Q(\A_f)}^{G(\A_f)}(\sigma'_f(n) \times \sigma_f(-n'))$ appears in $H^{\q_b}(\partial_Q\SG, \tM_{\lambda,E}).$ 
\end{enumerate}
\medskip
The contragredient of the algebraically-induced modules is 
$$
 \aInd_{P(\A_f)}^{G(\A_f)}(\sigma_f \times \sigma'_f)^\v \ = \ \aInd_{P(\A_f)}^{G(\A_f)}(\sigma_f^\v(n') \times \sigma'^\v_f(-n)).
$$ 
Furthermore, for the contragredients and cohomology in top-degree we have: 

\begin{enumerate}
\medskip

\item[(3)] $\aInd_{P(\A_f)}^{G(\A_f)}(\sigma_f^\v(n') \times \sigma'^\v_f(-n))$  appears in $H^{\q_t}(\partial_P\SG, \tM_{\lambda^\v, E}),$ \\ 
where $\q_t = t_N^F - 1 = t_n^F + t_{n'}^F + \frac12 \dim(U_P).$

\medskip

\item[(4)] $\aInd_{Q(\A_f)}^{G(\A_f)}(\sigma'^\v_f \times \sigma_f^\v )$  appears in $H^{\q_t}(\partial_Q\SG, \tM_{\lambda^\v, E}).$ 
\end{enumerate}
\end{prop}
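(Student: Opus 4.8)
\textbf{Proof proposal for Proposition \ref{prop:ind-modules-top-bottom}.}

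The plan is to combine three ingredients already assembled in the excerpt: the description of boundary cohomology as algebraically induced representations (Prop.\,\ref{prop:bdry-coh-2}), the explicit effect of the balanced Kostant representatives on the weight $\lambda$ (Prop.\,\ref{prop:w-wv-mu-muv}), and the numerological identities of Prop.\,\ref{prop-numerology}. I would treat (1), (2), (3), (4) uniformly; the parabolic $Q$ cases reduce to the $P$ cases after applying Lem.\,\ref{lem:kostant-P-Q}, and the top-degree cases reduce to the bottom-degree cases after applying Lem.\,\ref{lem:kostant-P-P} (which produces $w^\v$ and matches $t$-degrees against $b$-degrees via Prop.\,\ref{prop-numerology}\,(2) versus (1)).

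First I would establish (1). By Prop.\,\ref{prop:bdry-coh-2}, the $w$-summand of $H^q(\partial_P\SG,\tM_{\lambda,E})$ is $\aInd_{P(\A_f)}^{G(\A_f)}\!\left(H^{q-l(w)}(\uSMP, \widetilde{H^{l(w)}(\u_P,\M_{\lambda,E})(w)})\right)$, and since $w$ is balanced, $l(w) = \tfrac12\dim(U_P)$. By Kostant's theorem, $H^{l(w)}(\u_P,\M_{\lambda,E})(w) \simeq \M_{w\cdot\lambda,E}$, and by Prop.\,\ref{prop:w-wv-mu-muv}\,(1), $w\cdot\lambda = (\mu'-n\bdelta_{n'}) + (\mu + n'\bdelta_n)$ as a weight for $M_P = G_{n'}\times G_n$ (block order dictated by the Kostant combinatorics). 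Inflating strongly-inner cohomology classes of $\SMP$ up $\uSMP$ as indicated after Prop.\,\ref{prop:bdry-coh-2}, the relevant summand of $H^{\bullet}(\uSMP, \widetilde{\M_{w\cdot\lambda,E}})$ in its \emph{bottom} degree $b_n^F + b_{n'}^F$ contains $\sigma'_f(n)\times\sigma_f(-n')$ by the Tate-twist bookkeeping of Sect.\,\ref{sec:tate-twists} (cupping with the determinant classes $\bdelta_n, \bdelta_{n'}$ twists $\sigma_f, \sigma'_f$ by the displayed powers of $\|\cdot\|$); after re-inducing, the factors of $|\delta|$-type determinant shifts can be absorbed so that the module appearing is $\aInd_{P(\A_f)}^{G(\A_f)}(\sigma_f\times\sigma'_f)$ up to the harmless reordering of the two blocks (which is what distinguishes the $P$ and $Q$ statements). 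The total degree is then $q_b = l(w) + b_n^F + b_{n'}^F = \tfrac12\dim(U_P) + b_n^F + b_{n'}^F = b_N^F$ by Prop.\,\ref{prop-numerology}\,(1).

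For (2), apply Lem.\,\ref{lem:kostant-P-Q}: $w' = w_P w \in W^Q$ is balanced with $l(w') = \tfrac12\dim(U_P)$, and Prop.\,\ref{prop:w-wv-mu-muv}\,(1) already records $w'\cdot\lambda$; running the same argument over $\partial_Q\SG$ with Levi $G_{n'}\times G_n$ yields $\aInd_{Q(\A_f)}^{G(\A_f)}(\sigma'_f(n)\times\sigma_f(-n'))$ in degree $q_b$. For (3) and (4), replace $\lambda$ by $\lambda^\v$ and $w$ by $w^\v = w_{M_P} w w_G$ (Lem.\,\ref{lem:kostant-P-P}), use Prop.\,\ref{prop:w-wv-mu-muv}\,(2),(3) for the weights $w^\v\cdot\lambda^\v$ and $w^\v{}'\cdot\lambda^\v$, take the \emph{top} degree $t_n^F + t_{n'}^F$ of $H^\bullet(\uSMP,-)$ in place of the bottom, and compute $q_t = l(w^\v) + t_n^F + t_{n'}^F = \tfrac12\dim(U_P) + t_n^F + t_{n'}^F = t_N^F - 1$ via Prop.\,\ref{prop-numerology}\,(2); the contragredient identity $\aInd_{P}^{G}(\sigma_f\times\sigma'_f)^\v = \aInd_P^G(\sigma_f^\v(n')\times\sigma'^\v_f(-n))$ is a formal computation with $\aInd$ and the modular character of $P$ (duality of parabolic induction over $\A_f$), which I would state and verify directly. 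The main obstacle I anticipate is purely bookkeeping rather than conceptual: keeping the Tate twists, the block ordering forced by the Kostant representative, and the normalization of $\aInd$ versus $\Ind$ all consistent, so that the induced modules come out \emph{exactly} as $\aInd_{P}^{G}(\sigma_f\times\sigma'_f)$ and its relatives with the stated twists — this is precisely where one must follow \cite[Sect.\,5.3.4]{harder-raghuram-book} carefully.
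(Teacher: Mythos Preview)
Your overall strategy is exactly the paper's: combine Prop.\,\ref{prop:bdry-coh-2}, Kostant's theorem, Prop.\,\ref{prop:w-wv-mu-muv}, and the degree identities of Prop.\,\ref{prop-numerology}. However, you have mixed up which Kostant representative goes with which case, and this leads to a genuine error rather than just anticipated bookkeeping.

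For (1) you should use $w$ itself, and the relevant weight is simply $w\cdot\lambda = \mu + \mu'$ by the very definition $\lambda := w^{-1}\cdot(\mu+\mu')$; nothing from Prop.\,\ref{prop:w-wv-mu-muv} is needed here, and the Levi of $P$ is $G_n \times G_{n'}$ (not $G_{n'}\times G_n$). The module you obtain is then literally $\aInd_{P(\A_f)}^{G(\A_f)}(\sigma_f \times \sigma'_f)$, with no Tate twists to absorb. What you wrote, $(\mu'-n\bdelta_{n'}) + (\mu + n'\bdelta_n)$, is $w'\cdot\lambda$ (Prop.\,\ref{prop:w-wv-mu-muv}\,(1)) and belongs to case (2), where the Levi of $Q = P_{(n',n)}$ is $G_{n'}\times G_n$ and the twists are \emph{not} absorbable: they are exactly what produce the module $\aInd_{Q(\A_f)}^{G(\A_f)}(\sigma'_f(n)\times\sigma_f(-n'))$. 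Your claim that the $|\delta|$-type shifts ``can be absorbed so that the module appearing is $\aInd_P^G(\sigma_f\times\sigma'_f)$ up to harmless reordering of the two blocks'' is false --- reordering the blocks takes $P$ to $Q$, and the $(n,-n')$ twists are precisely the price of that passage (compare \eqref{eqn:a-ind-s=-N/2} at $s = \pm N/2$). Once you reassign (1) $\leftrightarrow w$, (2) $\leftrightarrow w'$, (3) $\leftrightarrow w^\v$, (4) $\leftrightarrow w^\v{}'$ using Prop.\,\ref{prop:w-wv-mu-muv}\,(1),(2),(3) respectively, the argument is correct and identical to the paper's.
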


\begin{proof}
For (1), use the summand in Prop.\,\ref{prop:bdry-coh-2} indexed by the balanced Kostant representative $w \in W^P$ provided by the combinatorial lemma. 
For (2), use $w' \in W^Q$ from Lem.\,\ref{lem:kostant-P-Q}, and then use (1) of Prop.\,\ref{prop:w-wv-mu-muv}. For (3), use $w^\v \in W^P$ from Lem.\,\ref{lem:kostant-P-P}, and then use (2) of Prop.\,\ref{prop:w-wv-mu-muv}. For (4) use $w^\v{}' \in W^Q$ from Lem.\,\ref{lem:kostant-P-Q} and \ref{lem:kostant-P-P}, and (3) of Prop.\,\ref{prop:w-wv-mu-muv}. The assertions of the cohomology degrees 
is clear from Prop.\,\ref{prop:j-lambda-range-degrees} and \ref{prop-numerology}. 
\end{proof}

\medskip
\subsubsection{\bf The Manin--Drinfeld principle}
\label{sec:manin-drinfeld-notations}

Continue with the notations $\mu \in X^+_{00}(T_n \times E)$, 
$\mu' \in X^+_{00}(T_{n'} \times E),$ $\sigma_f \in \Coh_{!!}(G_n, \mu)$, and $\sigma'_f \in \Coh_{!!}(G_{n'}, \mu').$ 
Assume that $\mu$ and $\mu'$ satisfy the conditions of the combinatorial lemma (Lem.\,\ref{lem:comb-lemma}), and let 
$\lambda = w^{-1}\cdot(\mu+\mu').$ 
Let $K_f$ be an open-compact subgroup of $G(\A_f)$ such that 
$\aInd_{P(\A_f)}^{G(\A_f)}(\sigma_f \times \sigma'_f)$ has nonzero $K_f$-fixed vectors; suppose $\sf{k}$ is the dimension of these $K_f$-fixed vectors. Let 
$$
I_b^\place(\sigma_f, \sigma'_f)_{P, w} \ := \ 
\aInd_{P(\A_f)}^{G(\A_f)} \left(H^{b_n^F+b_{n'}^F}(\SMP, \tM_{w \cdot \lambda})(\sigma_f \times \sigma'_f) \right)^{K_f}, 
$$
and, similarly, define
$$
I_b^\place(\sigma'_f(n) , \sigma_f(-n'))_{Q, w'} \ := \ 
\aInd_{Q(\A_f)}^{G(\A_f)} \left(H^{b_n^F+b_{n'}^F}(\SMQ, \tM_{w' \cdot \lambda})(\sigma'_f(n) \times \sigma_f(-n')) \right)^{K_f}.
$$
Now, go to `top-degree' for the contragredient modules and define:    
$$
I_t^\place(\sigma_f^\v(n') , \sigma'^\v_f(-n))_{P, w^\v} \ := \ 
\aInd_{P(\A_f)}^{G(\A_f)} \left(H^{t_n^F+t_{n'}^F}(\SMP, \tM_{w^\v \cdot \lambda})(\sigma_f^\v(n') \times \sigma'^\v_f(-n)) \right)^{K_f}, 
$$
and, similarly, define  
$$
I_t^\place(\sigma'^\v_f , \sigma_f^\v)_{Q, w^\v{}'} \ := \ 
\aInd_{Q(\A_f)}^{G(\A_f)} \left(H^{t_n^F+t_{n'}^F}(\SMQ, \tM_{w^\v{}' \cdot \lambda})(\sigma'^\v_f \times \sigma_f^\v ) \right)^{K_f}.
$$

\bigskip
\begin{thm}
\label{thm:manin-drinfeld}
Let the notations be as above. 
\begin{enumerate}
\item The sum 
$$
I_b^\place(\sigma_f, \sigma'_f)_{P, w} \ \oplus \ I_b^\place(\sigma'_f(n) , \sigma_f(-n'))_{Q, w'} 
$$
is a $2\sf{k}$-dimensional $E$-vector space that is isotypic in $H^{q_b}(\PBSC, \tM_{\lambda,E}).$ Note that if $Q = P$ then $w' \neq w.$ Furthermore, 
there is a $\HGS$-equivariant projection:
$$
\fR^b_{\sigma_f, \sigma'_f} \ : \ 
H^{q_b}(\PBSC, \tM_{\lambda,E}) \ \longrightarrow \ 
I_b^\place(\sigma_f, \sigma'_f)_{P, w} \oplus I_b^\place(\sigma'_f(n) , \sigma_f(-n'))_{Q, w'}.
$$

\medskip

\item 
Similarly, in `top-degree', the sum 
$$
I_t^\place(\sigma_f^\v(n') , \sigma'^\v_f(-n))_{P, w^\v} \ \oplus \ I_t^\place(\sigma'^\v_f , \sigma_f^\v)_{Q, w^\v{}'} 
$$
is a $2\sf{k}$-dimensional $E$-vector space that is isotypic in $H^{q_t}(\PBSC, \tM_{\lambda^\v,E}).$ Note that if $Q = P$ then $w^\v{}' \neq w^\v.$ Furthermore, 
there is a $\HGS$-equivariant projection:
$$
\fR^t_{\sigma_f, \sigma'_f} \ : \ H^{q_t}(\PBSC, \tM_{\lambda^\v, E}) \ \longrightarrow \ 
I_t^\place(\sigma_f^\v(n') , \sigma'^\v_f(-n))_{P, w^\v} \oplus I_t^\place(\sigma'^\v_f , \sigma_f^\v)_{Q, w^\v{}'}. 
$$
\end{enumerate}
\end{thm}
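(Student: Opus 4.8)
\textbf{Proof strategy for Theorem \ref{thm:manin-drinfeld}.} The plan is to deduce both assertions from the structure of boundary cohomology as described in Prop.\,\ref{prop:bdry-coh-2}, together with the combinatorial lemma (Lem.\,\ref{lem:comb-lemma}) and strong multiplicity one for the inner/cuspidal spectrum of $\GL_n$ and $\GL_{n'}$. The essential point is that the Hecke-module $\aInd_{P(\A_f)}^{G(\A_f)}(\sigma_f \times \sigma'_f)$ occurs in $H^{q_b}(\PBSC, \tM_{\lambda,E})$ with a \emph{constrained} multiplicity pattern: because $\sigma_f, \sigma'_f$ are strongly-inner, their Satake parameters away from $\place$ are pinned down, and the combinatorial lemma forces the only Kostant representatives $w \in W^P$ that can produce this particular system of Hecke eigenvalues in the relevant degree $q_b$ to be the balanced one $w$ (for the $P$-stratum) and $w' = w_P w$ (for the $Q$-stratum, via Lem.\,\ref{lem:kostant-P-Q}). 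First I would make precise the claim that, as a module for the central subalgebra $\HGS$, the $\sigma_f \times \sigma'_f$-isotypic part of $H^{q_b}(\partial\SGK, \tM_{\lambda,E})$ is exactly the sum appearing in the theorem; this requires running through the spectral sequence converging to $H^\bullet(\partial\SGK)$ from the strata cohomologies $H^\bullet(\partial_P\SGK)$ and checking that no differential can involve this isotypic piece for dimension/degree reasons — here one uses Prop.\,\ref{prop:j-lambda-range-degrees} and Prop.\,\ref{prop-numerology} to see that $q_b = b_N^F$ is the \emph{bottom} degree in which the induced module can appear, so the relevant $E_1$-terms are leading terms with no incoming differentials.

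The second step is to identify which parabolic strata contribute. For a proper parabolic $R$ of $G$, a Hecke module induced from cuspidal data on the Levi $M_R$ occurs in $H^\bullet(\partial_R\SGK)$ only if $M_R$ admits a cuspidal cohomology class whose associated representation, after the twisted Weyl action, matches $\sigma_f \times \sigma'_f$ up to twist. By strong multiplicity one for $\GL_n$ and $\GL_{n'}$ (cited already in Sect.\,\ref{sec:long-e-seq}), the only possibilities compatible with the prescribed Satake parameters are the maximal parabolics $P$ and its associate $Q$; and within those, Prop.\,\ref{prop:w-wv-mu-muv}(1) shows $w'\cdot\lambda = (\mu'-n\bdelta_{n'})+(\mu+n'\bdelta_n)$, so the $Q$-stratum contributes precisely $\aInd_{Q(\A_f)}^{G(\A_f)}(\sigma'_f(n) \times \sigma_f(-n'))$ in degree $q_b$ (Prop.\,\ref{prop:ind-modules-top-bottom}(2)). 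The case $Q=P$ (i.e.\ $n=n'$) is handled by observing the two Kostant representatives $w$ and $w' = w_P w$ are genuinely distinct there — if they coincided one would get $w_P = 1$, contradicting $w_P(\alpha_{P_0})<0$ — so the two summands remain linearly independent inside the $P$-stratum cohomology. Summing the two contributions and counting $K_f$-invariants gives the $2\sf{k}$-dimensional space, and the $\HGS$-equivariant projection $\fR^b_{\sigma_f,\sigma'_f}$ is then simply the spectral projection onto this isotypic block, which exists because the Hecke action on boundary cohomology is semisimple on the piece in question (the inducing data being tempered up to twist, hence unitary, hence the module is irreducible and the isotypic decomposition holds after enlarging $E$).

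The third step is assertion (2): this follows from (1) by duality. Applying the Tate-twist isomorphism and Poincar\'e duality on the Borel--Serre boundary, $H^{q_t}(\partial\SGK, \tM_{\lambda^\v,E})$ is dual to $H^{q_b}(\partial\SGK, \tM_{\lambda,E})$ up to the twist computed in Prop.\,\ref{prop-numerology}(2) (which gives $q_t = t_N^F - 1 = t_n^F + t_{n'}^F + \frac12\dim(U_P)$, matching $q_b$ under the duality shift). The contragredient of $\aInd_{P(\A_f)}^{G(\A_f)}(\sigma_f \times \sigma'_f)$ is computed in the statement of Prop.\,\ref{prop:ind-modules-top-bottom}, and Prop.\,\ref{prop:w-wv-mu-muv}(2),(3) identify the relevant balanced Kostant representatives as $w^\v$ (for $P$, via Lem.\,\ref{lem:kostant-P-P}) and $w^\v{}'$ (for $Q$, via the composite of Lem.\,\ref{lem:kostant-P-Q} and Lem.\,\ref{lem:kostant-P-P}). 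Transporting the projection $\fR^b$ through the duality pairing then produces $\fR^t_{\sigma_f,\sigma'_f}$ with the required target, and the $2\sf{k}$-dimensionality is preserved since duality is an isomorphism of $\HGS$-modules up to the character twist.

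\textbf{Main obstacle.} The step I expect to require the most care is the claim that the $\sigma_f \times \sigma'_f$-isotypic component survives to $E_\infty$ in the spectral sequence computing $H^\bullet(\partial\SGK)$ from the strata, i.e.\ that there are no nontrivial differentials into or out of this block — and, relatedly, that $P$ and $Q$ are genuinely the \emph{only} strata contributing this Hecke system in degree $q_b$. This is where the force of the combinatorial lemma is needed: one must rule out contributions from non-maximal parabolics and from other Kostant representatives by a combined degree-and-weight argument (using that $q_b$ is extremal, so Prop.\,\ref{prop:j-lambda-range-degrees} pins the cohomological degree of each factor, and the matching of infinitesimal characters forces the weight $w\cdot\lambda$ to be exactly $(\mu'-n\bdelta_{n'})+(\mu+n'\bdelta_n)$-type). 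The bookkeeping here is essentially the totally-imaginary analogue of \cite[Chap.\,5]{harder-raghuram-book}, but one must be attentive to the fact that $w$ being balanced (not merely of length $\tfrac12\dim U_P$) is what guarantees the degree in each archimedean factor is correct simultaneously at every place $v \in \place_\infty$.
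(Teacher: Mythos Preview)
Your overall strategy is essentially the paper's: boundary cohomology is built, up to semi-simplification, from parabolically induced modules via the spectral sequence over the strata $\partial_R\SG$, and one then isolates the $(\sigma_f \times \sigma'_f)$-isotype by appealing to strong multiplicity one. Two points deserve sharpening.

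First, the version of strong multiplicity one you need is \emph{Jacquet--Shalika for isobaric representations} \cite[Thm.\,4.4]{jacquet-shalika-II}, not merely the cuspidal case cited in Sect.\,\ref{sec:long-e-seq}. What has to be ruled out is that $\aInd_{P(\A_f)}^{G(\A_f)}(\sigma_f \times \sigma'_f)$ is $\HGS$-equivalent to some other induced module $\aInd_{R(\A_f)}^{G(\A_f)}(\tau_f)$ arising from any parabolic $R$ (maximal or not) and any Kostant representative; Jacquet--Shalika says two such isobaric sums coincide almost everywhere only if the cuspidal inducing data agree up to permutation, which forces $R \in \{P,Q\}$ and pins the representative to $w$ or $w'$. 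The combinatorial lemma is used upstream to \emph{produce} $w$, but it is not the engine that excludes the other strata---Jacquet--Shalika is.

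Second, your argument that the isotypic piece survives to $E_\infty$ ``for dimension/degree reasons'' (no incoming differentials because $q_b$ is extremal) is not quite the mechanism. The differentials in the spectral sequence are $\HGS$-equivariant; once Jacquet--Shalika guarantees the isotype occurs \emph{nowhere else} in the $E_1$-page, all differentials into and out of it vanish automatically, and the block splits off as a direct summand regardless of whether the full boundary cohomology is semisimple. This is also what furnishes the projection $\fR^b$: it is the $\HGS$-equivariant projector onto an isotypic summand that is already a direct summand, not something requiring unitarity or temperedness of the inducing data.

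Your treatment of (2) via Poincar\'e duality on $\PBSC$ is a legitimate alternative to the paper's parallel argument (which simply reruns the bottom-degree proof with $w^\v, w^\v{}'$ in place of $w, w'$ via Prop.\,\ref{prop:w-wv-mu-muv}(2),(3)); either route works, and the duality approach has the minor advantage of making the compatibility needed later in Sect.\,\ref{sec:rank-one-eis-coh} more visibly built in.
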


The above theorem is the exact analogue of \cite[Thm.\,5.12]{harder-raghuram-book}, and the proof is identical. To help the reader, the two key-ideas are adumbrated as follows: 

\medskip
\begin{itemize}
\item 
There is a spectral sequence--built from the cohomology of various boundary strata $\partial_R\SG$, as $R$ runs over $G(\Q)$-conjugacy classes of parabolic subgroups of $G$--that converges to the boundary cohomology $H^\bullet(\partial\SG, -)$. This spectral sequence was alluded to in Sect.\,\ref{sec:coh-of-bdry} and is discussed in greater detail in \cite[Sect.\,4.1]{harder-raghuram-book}. The basic idea here is that up to semi-simplification the cohomology of the boundary is built from parabolically induced representations. 

\medskip
\item Recall the strong multiplicity one theorem of Jacquet and Shalika for isobaric automorphic representations 
\cite[Thm.\,4.4]{jacquet-shalika-II}. The two induced modules in $I_b^\place(\sigma_f, \sigma'_f)_{P, w}$ and $I_b^\place(\sigma'_f(n) , \sigma_f(-n'))_{Q, w'}$ are themselves, of course, $\HGS$-equivalent, and more importantly, after applying Jacquet--Shalika, they are not almost-everywhere equivalent to any other induced module anywhere else in boundary cohomology; see \cite[Sect.\,5.3.3]{harder-raghuram-book} for more details. 
\end{itemize}

\medskip
\subsection{Eisenstein cohomology}
\label{sec:eis-coh-main-thm}

All the statements in \cite[Chap.\,6]{harder-raghuram-book} go through {\it mutatis mutandis} in the current situation. Therefore, the discussion below is very brief and just enough details are provided for this article to be reasonably self-contained, and to be able to state the main theorem on rank-one 
Eisenstein cohomology in Thm.\,\ref{thm:rank-one-eis} below.

\medskip
\subsubsection{\bf Poincar\'e duality and consequences} 
The Poincar\'e duality pairings on $\SGK$ and $\PBSC$ are compatible with the maps in the long exact sequence in Sect.\,\ref{sec:long-e-seq}: 
$$
\xymatrix{
H^\bullet(\SGK, \tM_{\lambda, E}) \ar[d]^{\r^*} & \times & H^{\d-\bullet}_c(\SGK, \tM_{\lambda^\v, E}) & 
\longrightarrow & E \\
H^\bullet(\PBSC, \tM_{\lambda, E}) & \times &  H^{\d-1-\bullet}(\PBSC, \tM_{\lambda^\v, E}) \ar[u]^{\fd^*} & \longrightarrow & E
}
$$
Here ${\rm dim}(\SGK)  = b_N^F + t_N^F =: \d$; and so ${\rm dim}(\PBSC) = \d -1 = q_b + q_t.$
A consequence of the above diagram is that Eisenstein cohomology, defined as
$$
H^q_{\rm Eis}(\PBSC, \tM_{\lambda, E}) \ := \ {\rm Image}
\left(H^q(\SGK, \tM_{\lambda, E}) \stackrel{\r^\bullet}{\longrightarrow} H^q(\PBSC, \tM_{\lambda, E}) \right), 
$$
is a maximal isotropic subspace of boundary cohomology, i.e., 
$$
H^q_{\Eis}(\PBSC, \tM_{\lambda, E}) \ = \ 
H^{\d-1-q}_{\Eis}(\PBSC, \tM_{\lambda^\v, E})^\perp. 
$$

\medskip
\subsubsection{\bf Main result on rank-one Eisenstein cohomology} 
\label{sec:rank-one-eis-coh}
With notations as in Sect.\,\ref{sec:manin-drinfeld-notations}, 
consider the following maps  
starting from global cohomology $H^{q_b}( \SGK,\tM_{\lambda,E})$ 
and ending with an isotypic component in boundary cohomology: 
\begin{equation*}
\xymatrix{
H^{q_b}( \SGK,\tM_{\lambda,E})  \ar[d]^{\r^*}  \\ 
H^{q_b}(\PBSC,\tM_{\lambda,E})  \ar[d]^{\fR_{\sigma_f, \sigma'_f}^b}  \\ 
I_b^\place(\sigma_f, \sigma'_f)_{P, w} \ \oplus \ I_b^\place(\sigma'_f(n) , \sigma_f(-n'))_{Q, w'}
}\end{equation*}
Recall, from Thm.\,\ref{thm:manin-drinfeld},  that  
$I^\place_b(\ts,\pts)_{P, w} \oplus  I^\place_b(\sigma_f'(n), \sigma_f(-n'))_{Q, w^\prime}$   
is a $E$-vector space of dimension $2{\mathsf  k}$.  In the self-associate case replace $Q$ by $P$.  
The proof of the main result on Eisenstein cohomology stated below also needs the analogue of the above maps for cohomology in degree $q_t$ for 
the coefficient system $\tM_{\lambda^\v,E}.$

\medskip
\begin{thm}
\label{thm:rank-one-eis}
For brevity, let 
$$
\fI^b(\sigma_f, \sigma_f') \ :=  \ 
\fR_{\sigma_f, \sigma'_f}^b(H^{q_b}_{\Eis}(\PBSC, \tM_{\lambda,E})), \quad 
\fI^t(\sigma_f, \sigma_f')^\v \  :=  \ 
\fR_{\sigma_f, \sigma'_f}^t(H^{q_t}_{\Eis}(\PBSC,\tM_{\lambda^\v,E})).
$$

\begin{enumerate}
\item In the non-self-associate cases ($n \neq n'$) we have: 

\smallskip

\begin{enumerate}
\item $\fI^b(\sigma_f, \sigma_f')$ is an $E$-subspace of dimension ${\mathsf  k}$. 

\smallskip

\item  $\fI^t(\sigma_f, \sigma_f')^\v$  is an $E$-subspace of dimension ${\mathsf  k}$. 

\end{enumerate}

\smallskip

\item In the self-associate case ($n=n'$) the same assertions hold by putting $Q = P.$ 
\end{enumerate}
\end{thm}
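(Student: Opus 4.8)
\textbf{Proof strategy for Theorem \ref{thm:rank-one-eis}.}
The plan is to deduce the two statements from Poincar\'e duality together with the Manin--Drinfeld projection of Theorem \ref{thm:manin-drinfeld}, exactly following the template of \cite[Chap.\,6]{harder-raghuram-book}. First I would record that, by the compatibility diagram between the Poincar\'e duality pairings on $\SGK$ and on $\PBSC$ and the long exact sequence of Sect.\,\ref{sec:long-e-seq}, the subspace $H^{q_b}_{\Eis}(\PBSC, \tM_{\lambda,E})$ is the exact annihilator of $H^{q_t}_{\Eis}(\PBSC, \tM_{\lambda^\v,E})$ under the pairing $H^{q_b}(\PBSC,\tM_{\lambda,E}) \times H^{q_t}(\PBSC,\tM_{\lambda^\v,E}) \to E$ (here $q_b + q_t = \d - 1$). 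The Hecke operators away from $\place$ act self-adjointly up to the main involution for this pairing, so the pairing restricts to a perfect pairing between the $\HGS$-isotypic blocks cut out by $(\sigma_f,\sigma'_f)$ on one side and by the contragredients on the other; i.e.\ the pairing restricts to a perfect pairing between $I_b^\place(\sigma_f,\sigma'_f)_{P,w} \oplus I_b^\place(\sigma'_f(n),\sigma_f(-n'))_{Q,w'}$ and $I_t^\place(\sigma_f^\v(n'),\sigma'^\v_f(-n))_{P,w^\v} \oplus I_t^\place(\sigma'^\v_f,\sigma_f^\v)_{Q,w^\v{}'}$. This uses that these are precisely the blocks of the indicated types in boundary cohomology, which is the content of Prop.\,\ref{prop:ind-modules-top-bottom} together with the strong multiplicity one input in Theorem \ref{thm:manin-drinfeld}.

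Next I would combine these two facts. Applying the projections $\fR^b_{\sigma_f,\sigma'_f}$ and $\fR^t_{\sigma_f,\sigma'_f}$ to $H^{q_b}_{\Eis}$ and $H^{q_t}_{\Eis}$ respectively, the isotropy-duality statement descends: $\fI^b(\sigma_f,\sigma'_f)$ is the annihilator of $\fI^t(\sigma_f,\sigma'_f)^\v$ inside the perfectly paired $2{\sf k}$-dimensional spaces above. Hence $\dim \fI^b(\sigma_f,\sigma'_f) + \dim \fI^t(\sigma_f,\sigma'_f)^\v = 2{\sf k}$. It remains to see that each of the two dimensions is at least ${\sf k}$, which by symmetry (running the same argument with $\lambda$ and $\lambda^\v$ interchanged, or $\sigma_f,\sigma'_f$ replaced by their contragredient Tate twists) forces each to equal ${\sf k}$. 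For the lower bound I would use that the restriction map $\r^\bullet$ is surjective onto boundary cohomology in the relevant degree modulo the image of $H^\bullet_c$; more precisely, the standard argument is that the summand of $H^{q_b}(\SGK,\tM_{\lambda,E})$ on which $\HGS$ acts through the isobaric datum attached to $(\sigma_f,\sigma'_f)$ is built, via Franke's filtration / the description of residual and Eisenstein spectrum, from the induced representation $\aInd_P^G(\sigma_f\times\sigma'_f)$ and its companion, contributing at least ${\sf k}$ dimensions after projection — equivalently, the Eisenstein series construction with the right choice of section produces classes whose restriction lands nontrivially in each of the ${\sf k}$ copies.

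For the self-associate case $n = n'$ I would observe that the only change is notational: $Q = P$, but $w' \neq w$ (resp.\ $w^\v{}' \neq w^\v$) by the last clause of Theorem \ref{thm:manin-drinfeld}, so $I_b^\place(\sigma_f,\sigma'_f)_{P,w}$ and $I_b^\place(\sigma'_f(n),\sigma_f(-n'))_{P,w'}$ are still two distinct ${\sf k}$-dimensional summands indexed by the two distinct balanced Kostant representatives, and the same counting argument applies verbatim. The main obstacle I expect is the lower bound $\dim \fI^b(\sigma_f,\sigma'_f) \geq {\sf k}$: the isotropy argument alone only gives the dimensions sum to $2{\sf k}$, and pinning down that the split is exactly ${\sf k}$--${\sf k}$ rather than, say, $0$--$2{\sf k}$ requires knowing that Eisenstein cohomology genuinely surjects onto ``a line in the two-dimensional plane'' in each isotypic block, i.e.\ the non-degeneracy of the relevant Eisenstein map. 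This is handled in \cite[Chap.\,6]{harder-raghuram-book} by an explicit analysis of the constant-term map and the holomorphy of the Eisenstein series at the relevant point; since the excerpt states all of \emph{loc.\ cit.}\ transfers \emph{mutatis mutandis}, I would cite that analysis rather than reproduce it, noting only that the totally imaginary case introduces no new phenomenon here beyond the bookkeeping of archimedean components already carried out in Sect.\,\ref{sec:arch-consideration}.
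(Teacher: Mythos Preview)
Your proposal is correct and follows essentially the same two-step strategy as the paper's own sketch: a lower bound $\dim \geq {\sf k}$ for each of $\fI^b$ and $\fI^t{}^\v$ coming from the transcendental Eisenstein-series construction (Langlands's constant term theorem, holomorphy at $s=-N/2$), combined with the Poincar\'e-duality isotropy statement to force $\dim \fI^b + \dim \fI^t{}^\v = 2{\sf k}$ on the isotypic block. You run the two steps in the opposite order from the paper, but that is cosmetic; you also correctly flag that the genuine content is the lower bound and that the paper defers this to \cite[Sect.\,6.2.2, 6.3.7]{harder-raghuram-book}.
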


It helps to have a mental picture of when ${\mathsf  k} = 1$, i.e., then 
$\fI^b(\sigma_f, \sigma_f')$ is a line in the ambient two-dimensional space
$I^\place_b(\ts,\pts)_{P, w} \oplus  I^\place_b(\sigma_f'(n), \sigma_f(-n'))_{Q, w^\prime}$; 
as will be seen later, the ``slope'' of this line contains arithmetic information about $L$-values.

\begin{proof}[A very brief sketch of proof]
The proof works exactly as explained in \cite[Sect.\,6.2.2]{harder-raghuram-book}, and involves two basic steps: 
\begin{enumerate}
\item[(i)] The first step is to show that both $\fI^b(\sigma_f, \sigma_f')$ and 
$\fI^t(\sigma_f, \sigma_f')^\v$ have dimension at least ${\mathsf  k}$; this is achieved by going to a transcendental level and appealing to Langlands's constant term theorem and producing enough cohomology classes in the image. The essential features are reviewed in Sect.\,\ref{sec:l-val-eis-coh} below, and for more details the reader is referred to \cite[Sect.\,6.3.7]{harder-raghuram-book}.  

\medskip
\item[(ii)] The second step, after invoking properties of the Poincar\'e duality pairing reviewed above, is to show that both 
$\fI^b(\sigma_f, \sigma_f')$ and $\fI^t(\sigma_f, \sigma_f')^\v$ have dimension exactly 
${\mathsf  k}.$ This step works exactly as in \cite[Sect.\,6.2.2.2]{harder-raghuram-book}. 
\end{enumerate}
\end{proof}

\medskip
\subsubsection{\bf $L$-values and rank-one Eisenstein cohomology} 
\label{sec:l-val-eis-coh}
The key ingredient in the main theorem on the rationality of these $L$-values 
is the role played by the $L$-values in the above result on rank-one Eisenstein cohomology. 

\medskip
\paragraph{\bf Induced representations}
Let $\sigma$ (resp., $\sigma'$) be a cuspidal automorphic representation of $G_n(\A)$ (resp., $G_{n'}(\A)$). The relation with the previous `arithmetic' notation is that given $\sigma_f \in \Coh_{!!}(G_n,\mu)$ and given $\iota : E \to \C$, think of 
${}^\iota\sigma_f$ to be the finite part of a cuspidal automorphic representation ${}^\iota\sigma$, etc. The $\iota$ is fixed and it is suppressed until otherwise mentioned. 
Consider the induced representation $I_P^G(s, \sigma \otimes \sigma')$ consisting of all smooth functions 
$f : G(\A) \to V_{ \sigma} \otimes V_{ \sigma'}$ such that 
\begin{equation}
\label{eqn:f-in-induced-repn}
f(mug) \ = \ 
\vert\delta_P\vert(m)^{\frac{1}{2}} \vert\delta_P\vert(m)^{\frac{s}{N}} \, ( \sigma \otimes \sigma')(m) \, f(g), 
\end{equation}
for all $m \in M_P(\A)$, $u \in U_P(\A)$, and $g \in G(\A)$; where $V_\sigma$ (resp., $V_{\sigma'}$) is the subspace inside the space of cusp forms on $G_n(\A)$ (resp., $G_{n'}(\A)$) realizing the representation $ \sigma$ (resp., 
$ \sigma'$). In other words,
$I_P^G(s, \sigma \otimes  \sigma') = 
{\mathrm{Ind}}_{P(\A)}^{G(\A)}((\sigma  \otimes |\ |^{\frac{n'}{N}s}) \otimes ( \sigma' \otimes |\ |^{\frac{-n}{N}s})),$
where ${\mathrm{Ind}}_P^G$ denotes the normalized parabolic induction. In terms of 
algebraic or un-normalized induction, we have
\begin{equation}
\label{eqn:a-ind-s=-N/2}
I_P^G(s, \sigma \otimes  \sigma') = {}^{\mathrm a}{\mathrm{Ind}}_{P(\A)}^{G(\A)}
(( \sigma \otimes |\ |^{\frac{n'}{N}s + \frac{n'}{2}}) \otimes 
( \sigma'  \otimes |\ |^{\frac{-n}{N}s - \frac{n}{2}})). 
\end{equation}
Specifically, note the {\it point of evaluation} $s_0 = -N/2$: 
$$
{}^{\mathrm a}{\mathrm{Ind}}_{P(\A)}^{G(\A)}
\left( \sigma \otimes \sigma' \right) \ = \ 
I_P^G(s, \sigma \otimes \sigma')|_{s = -N/2}, \ \ 
{}^{\mathrm a}{\mathrm{Ind}}_{Q(\A)}^{G(\A)}
\left( \sigma'(n) \otimes \sigma(-n') \right) \ = \ 
I_Q^G(s, \sigma' \otimes \sigma)|_{s = N/2}. 
$$
The finite parts of the induced representations appear in boundary cohomology. 

\medskip
\paragraph{\bf Standard intertwining operators}
There is an element $w_{P} \in W_G$, the Weyl group of $G$, which is uniquely determined by the property
$w_{P}(\bfpi_G - \{\bfgreek{alpha}_P\}) \subset \bfpi_G$ and $w_{P}(\bfgreek{alpha}_P) < 0.$ If we write 
$w_{P} = (w_{P_0}^\tau)_{\tau:F \to E}$, then for each $\tau$, as a permutation matrix in $\GL_N$, we have 
$w_{P_0}^\tau = \left[\begin{smallmatrix}& 1_n \\ 1_{n'} & \end{smallmatrix}\right].$
The parabolic subgroup $Q$, which is associate to $P$, corresponds to 
$w_{P}(\bfpi_G - \{\bfgreek{alpha}_P\}).$ Since 
$w_{P_0}^\tau\!{}^{-1} \, \mathrm{diag}(h,h') \, w_{P_0}^\tau = \mathrm{diag}(h',h)$ for all $ \mathrm{diag}(h,h') \in M_{P_0^\tau}$, we get  
$w_{P}(\sigma \otimes \sigma') =  \sigma' \otimes \sigma$ as a representation of $M_{Q}(\A)$. 
The global  standard intertwining operator:
$$
T_{\mathrm{st}}^{PQ}(s, \sigma \otimes \sigma') :  I_P^G(s, \sigma \otimes \sigma') \ \longrightarrow \  
I_Q^G(-s, \sigma' \otimes \sigma)
$$
is given by the integral
\begin{equation}
\label{eqn:global-in-op}
(T_{\mathrm{st}}^{PQ}(s, \sigma \otimes \sigma')f)(\ul g) = \int_{U_{Q}(\A)}f(w_{P_0}^{-1} \, \ul u \, \ul g)\, d\ul u. 
\end{equation}
See \ref{sec:choice-of-mesaure} for the choice of measure $d \ul u.$  
Abbreviate $T_{\mathrm{st}}^{PQ}(s, \sigma \otimes \sigma')$ as $T_{\mathrm{st}}(s, \sigma \otimes \sigma').$
The global standard intertwining operator factorizes as a product of local standard intertwining operators:
$T_{\mathrm{st}}(s, \sigma \otimes \sigma') = 
\otimes_v T_{\mathrm{st}}(s, \sigma_v \otimes \sigma'_v)$
where the local operator 
is given by a similar local integral. (At an archimedean place, the effect of this operator in relative Lie algebra cohomology has already been described in Sect.\,\ref{sec:T-st-infinity}.)

\medskip
\paragraph{\bf Eisenstein series}
\label{sec:eisenstein-series}

Let $f \in I_P^G(s, \sigma \times \sigma^\prime)$, for $\ul g \in G(\A)$ the value 
$f(\ul g)$ is a cusp form on $M_P(\A).$ By the defining equivariance property of $f$, the complex number 
$f(\ul g)(\ul m)$ determines and is determined by $f(\ul m \ul g)(\ul 1)$ for any $\ul m \in M_P(\A).$ 
Henceforth,  
$f \in I_P^G(s, \sigma \times \sigma')$ will be identified with the complex valued function $\ul g \mapsto f(\ul g)(\ul 1),$ i.e., one has embedded: 
$$
I_P^G(s, \sigma \times \sigma^\prime) \ \hookrightarrow \ 
\cC^\infty\left(U_P(\A)M_P(\Q)\backslash G(\A), \omega_\infty^{-1}\right) 
\ \subset \ \cC^\infty\left(P(\Q)\backslash G(\A), \omega_\infty^{-1}\right), 
$$
where $\omega^{-1}_\infty$ is a simplified notation for the central character of $\sigma \otimes \sigma'$ restricted to 
$S(\R)^\circ.$ If $\sigma_f \in \Coh(G_n, \mu)$, $\sigma_f' \in  \Coh(G_{n',} \mu')$ and $\iota : E \to \C$ then 
$\omega_\infty$ is the product of the central characters $\omega_{\M_{{}^\iota\! \mu}} \omega_{\M_{{}^\iota\! \mu'}}$ restricted to $S(\R)^\circ.$ 
Given $f \in I_P^G(s, \sigma \times \sigma')$, thought of as a function on $P(\Q)\backslash G(\A)$, define the corresponding Eisenstein series 
$\Eis_P(s, f) \in \cC^\infty\left(G(\Q)\backslash G(\A), \omega_\infty^{-1}\right)$ by the usual averaging over $P(\Q)\backslash G(\Q)$: 
\begin{equation}
\label{eqn:Eis-P}
\Eis_P(s, f)(\ul g) \ := \ \sum_{\gamma \in P(\Q)\backslash G(\Q)} f(\gamma \ul g), 
\end{equation}
which is convergent if $\Re(s) \gg 0$ and has meromorphic continuation to the entire complex plane. 
This provides an intertwining operator
$$
\Eis_P(s, \sigma\times \sigma^\prime) \ : \ I_P^G(s, \sigma \times \sigma^\prime)  \ \longrightarrow \ 
\cC^\infty\left(G(\Q)\backslash G(\A), \omega_\infty^{-1}\right); 
$$
denote $\Eis_P(s, \sigma\times \sigma^\prime)(f)$ simply as $\Eis_P(s, f).$ 
To construct a map in cohomology, one needs to evaluate at $s=-N/2,$ begging the question whether the
Eisenstein series is holomorphic at $s=-N/2.$ For this, it is well-known that one has to show that the constant term of the Eisenstein series is holomorphic at $s=-N/2.$

\medskip
\paragraph{\bf The constant term map}
\label{sec:constant-term-map} 
 
For the parabolic subgroup $Q$, the constant term map,  denoted
$\cF^Q : \cC^\infty(G(\Q)\backslash G(\A), \omega_\infty^{-1}) 
\to \cC^\infty(M_Q(\Q)U_Q(\A)\backslash G(\A), \omega_\infty^{-1}),$ is given by: 
\begin{equation}
\label{eqn:constant-term-map}
\cF^Q(\phi)(\ul g) \ = \ 
\int\limits_{U_Q(\Q)\backslash U_Q(\A)} \phi(\ul u \, \ul g)\, d\ul u.
\end{equation}

\medskip
\paragraph{\bf The choice of the global measure $d\ul u$}
\label{sec:choice-of-mesaure} 

In the integrals defining the intertwining operator \eqref{eqn:global-in-op} and the constant term map \eqref{eqn:constant-term-map} the 
choice of measure $d\ul u$ on $U_Q(\A)$ needs to be fixed, where $U_Q =  \Res_{F/\Q}(U_{Q_0})$ is the unipotent radical of the maximal parabolic subgroup $Q$; 
recall that $Q_0$ is the standard maximal parabolic subgroup of $\GL(N)$ corresponding to $N = n'+n$. 
To begin, take the global measure ${}^L\!d\ul u$ on $U_Q(\A) = U_{Q_0}(\A_F)$ 
as a product over the coordinates of $U_{Q_0}$ of the additive measure $d\ul x$ on $\A_F$, which in turn is a product $\prod_v dx_v$ of local additive measures $dx_v$ on $F_v^+$; for a finite place $v$ normalise $dx_v$ by ${\rm vol}(\cO_v) = 1$, where $\cO_v$ is the ring of integers of $F_v$, 
and for an archimedean $v$ take $dx_v$ as the Lebesgue measure on $\C.$ The notation ${}^L\!d\ul u$ is to suggest that this measure is well-suited 
for the purposes of the analytic theory of $L$-functions. 
For the constant term map \eqref{eqn:constant-term-map} to correspond to the restriction map in cohomology, the global measure should be normalised by asking 
${\rm vol}(U_{Q_0}(F)\backslash U_{Q_0}(\A_F)) = 1;$ see Borel \cite[Sect.\,6]{borel-intro}. Hence, consider the global measure on 
$U_Q(\A) = U_{Q_0}(\A_F)$:
$$
d\,\ul u \ := \ \frac{1}{{\rm vol}_{{}^L\!d\ul u}(U_{Q_0}(F)\backslash U_{Q_0}(\A_F))} \, {}^L\!d\ul u. 
$$
Of course, 
${\rm vol}_{{}^L\!d\ul u}(U_{Q_0}(F)\backslash U_{Q_0}(\A_F)) = 
{\rm vol}_{d\ul x}(F\backslash \A_F)^{\dim(U_{Q_0})} = {\rm vol}_{d\ul x}(F\backslash \A_F)^{nn'}.$
Recall some classical algebraic number theory  (see Tate \cite[Sect.\,4.1]{tate-thesis}) for the volume of $F\backslash \A_F.$ 
For the measure $d\ul x$ on $\A_F$, the volume of a fundamental domain for the action of $F$ on $\A_F$ is $|\delta_{F/\Q}|^{1/2}.$ 
If the set $\Hom(F,\C)$ of complex embeddings of $F$ is enumerated as, say, $\{\sigma_1,\dots, \sigma_{d_F} \}$, and 
suppose $\{\omega_1,\dots,\omega_{d_F}\}$ is a $\Q$-basis of $F$, then the absolute discriminant of $F$ is defined as 
$\delta_{F/\Q} =  \det[\sigma_i(\omega_j)]^2.$ The square root of the absolute value of the discriminant, $|\delta_{F/\Q}|^{1/2},$ 
as an element of $\R^\times/\Q^\times,$ is independent of the enumeration and the choice of basis. For the main theorem on $L$-values, the choice of measure can be changed by a nonzero rational number, which will still give the same rationality results and the reciprocity laws as in Thm.\,\ref{thm:main}. Define the global measure on
$U_Q(\A) = U_{Q_0}(\A_F)$ as:
\begin{equation}
\label{eqn:normalised-measure}
d\,\ul u \ := \ 
|\delta_{F/\Q}|^{- \tfrac{n n'}{2}} \, {}^L\!d\ul u. 
\end{equation}

\medskip
\paragraph{\bf Theorem of Langlands on the constant term of an Eisenstein series}
\label{sec:constant-term-eisenstein-series}

\medskip
\begin{thm}[Langlands]
\label{thm:langlands}
Let $f  \in I_P^G(s, \sigma \times \sigma').$ 
\begin{enumerate}
\item In the non-self-associate cases ($n \neq n'$), one has: \\
$\cF^Q \circ \Eis_P(s, f) \ = \ T_{\mathrm{st}}(s, \sigma \times \sigma')(f).$ 
    
\smallskip
\item In the self-associate cases ($n = n'$ and $P = Q$), one has: \\
$\cF^P \circ \Eis_P(s, f) \ = \ f + T_{\mathrm{st}}(s, \sigma \times \sigma')(f).$ 
\end{enumerate}
\end{thm}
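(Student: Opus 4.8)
The plan is to compute the constant term $\cF^Q\circ\Eis_P(s,f)$ directly from the definitions by unfolding the sum over $P(\Q)\backslash G(\Q)$ via the Bruhat decomposition relative to the pair $(P,Q)$, exactly as in Langlands' original treatment, which is reproduced for $\GL_N$ over a totally real field in \cite[Chap.\,6]{harder-raghuram-book}; since every step is insensitive to the arithmetic of $F$, the argument carries over verbatim. All manipulations are first carried out in the region $\Re(s)\gg 0$, where the Eisenstein series and all the integrals in sight converge absolutely, and the resulting identity is then extended by meromorphic continuation to every $s$ at which $\Eis_P(s,f)$ is holomorphic. Note that the measure $d\ul u$ is the same one (\ref{eqn:normalised-measure}) in the definition \eqref{eqn:constant-term-map} of $\cF^Q$ as in the definition \eqref{eqn:global-in-op} of $T_{\mathrm{st}}$, so the asserted identity is independent of the precise normalization of $d\ul u$.

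First I would substitute \eqref{eqn:Eis-P} into \eqref{eqn:constant-term-map} to get
$$
\cF^Q(\Eis_P(s,f))(\ul g) \ = \ \int_{U_Q(\Q)\backslash U_Q(\A)} \ \sum_{\gamma \in P(\Q)\backslash G(\Q)} f(\gamma\,\ul u\,\ul g)\, d\ul u,
$$
and decompose $P(\Q)\backslash G(\Q)$ according to the finitely many double cosets in $P(\Q)\backslash G(\Q)/Q(\Q)$. By the Bruhat decomposition for $G_0=\GL_N$ these are indexed by $W_{M_{P_0}}\backslash W_{G_0}/W_{M_{Q_0}}$; for each class pick the minimal-length representative $w$ and use $P(\Q)\backslash P(\Q)wQ(\Q)\cong (w^{-1}P(\Q)w\cap Q(\Q))\backslash Q(\Q)$ together with $Q=M_QU_Q$ to rewrite the corresponding inner sum. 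The decisive point is that a double coset represented by a $w$ which does \emph{not} conjugate $M_{P_0}$ onto $M_{Q_0}$ contributes zero: after folding, its integrand contains an integral of the cusp form realizing $\sigma\otimes\sigma'$ over a nontrivial unipotent subgroup, which vanishes by cuspidality. For $\GL_N$ with $P_0=P_{(n,n')}$ and $Q_0=P_{(n',n)}$ the Weyl elements with $wM_{P_0}w^{-1}=M_{Q_0}$ are exactly $w_{P_0}=\left[\begin{smallmatrix}&1_n\\ 1_{n'}&\end{smallmatrix}\right]$ when $n\neq n'$, and the two elements $\{1,\,w_{P_0}\}$ when $n=n'$ (whence $P=Q$). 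For $w=w_{P_0}$ the surviving term, after interchanging the sum with the integral and folding $\sum_{\gamma\in (w_{P_0}^{-1}P(\Q)w_{P_0}\cap Q(\Q))\backslash Q(\Q)}$ together with $\int_{U_Q(\Q)\backslash U_Q(\A)}$ into a single integral over $U_Q(\A)$, is precisely the integral \eqref{eqn:global-in-op} defining $T_{\mathrm{st}}(s,\sigma\times\sigma')(f)$, the identification of the target induced representation as $I_Q^G(-s,\sigma'\times\sigma)$ coming from $w_{P_0}^{-1}\,\mathrm{diag}(h,h')\,w_{P_0}=\mathrm{diag}(h',h)$. For $w=1$ in the self-associate case the surviving term is simply $f$, read off from the equivariance \eqref{eqn:f-in-induced-repn}. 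Adding the contributions yields (1) and (2) respectively.

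The main obstacle is bookkeeping rather than anything conceptually deep. One must (i) match the normalized-versus-unnormalized induction conventions and the modulus shifts $\vert\delta_P\vert^{1/2}$ so that the unfolded integral is \emph{literally} \eqref{eqn:global-in-op} and not a scalar multiple of it; (ii) justify the interchange of summation and integration and the passage from $\int_{U_Q(\Q)\backslash U_Q(\A)}\sum_\gamma$ to $\int_{U_Q(\A)}$, which uses absolute convergence for $\Re(s)\gg 0$ together with a standard rapid-decay estimate on cusp forms; and (iii) confirm, for maximal parabolics of $\GL_N$, that the unipotent subgroup appearing for each $w\neq w_{P_0}$ (and $w\neq 1$ when $n=n'$) is genuinely nontrivial, which is an elementary check on the block sizes $n,n'$. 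As none of these is affected by $F$ being totally imaginary, the proof is identical to that of the corresponding statement in \cite[Chap.\,6]{harder-raghuram-book}, to which we refer for the complete details.
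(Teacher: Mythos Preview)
The paper does not supply a proof of this statement; it is recorded as Langlands' theorem and simply attributed. Your sketch via the Bruhat decomposition and cuspidality is the standard argument and is correct, so there is nothing to compare against here.
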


\medskip
Suppose $f = \otimes_v f_v$ is a pure tensor in $I_P^G(s, \sigma \otimes \sigma')$, and 
for $v \notin \place$ suppose $f_v = f_v^0$ is the normalized spherical vector (normalized to take the value $1$ on the identity), 
and similarly, $\tilde f_v^0$ is such a vector in the $v$-th component of $I_Q^G(-s, \sigma' \otimes \sigma)$, then 
from \cite[Thm.\,6.3.1]{shahidi-book} we have the fundamental analytic identity
\begin{equation}
\label{eqn:basic-analytic-identity}
\cF^Q (\Eis_P(s, f)) \ = \ 
|\delta_{F/\Q}|^{- \tfrac{n n'}{2}} \,
\frac{L^\place(s, \sigma \times \sigma'^{\mathsf v})}{L^\place(s+1, \sigma \times \sigma'^{\mathsf v})} \, 
\otimes_{v \notin \place} \tilde f_v^0 \otimes_{v \in \place} 
T_{\mathrm{st}}(s, \sigma_v \otimes \sigma'_v)f_v.
\end{equation}
The proof of the main theorem on the arithmetic of special values of $L(s, \sigma \times \sigma'^{\mathsf v})$ comes from seeing the contribution of this identity in cohomology.

\medskip
\paragraph{\bf Holomorphy of the Eisenstein series at the point of evaluation}
\label{sec:holomorphy-eisenstein-series-at-point-N/2}

Given weights $\mu \in X^+_{00}(T_n \times E)$ and 
$\mu' \in X^+_{00}(T_{n'} \times E)$ and strongly-inner Hecke-summands 
$\sigma_f \in \Coh_{!!}(G_n, \mu)$ and $\sigma'_f \in \Coh_{!!}(G_{n'}, \mu'),$ recall then that ${}^\iota\sigma$ and ${}^\iota\sigma'$ are cuspidal automorphic representations of 
$G_n(\A) = \GL_n(\A_F)$ and $G_{n'}(\A) = \GL_{n'}(\A_F)$ for any $\iota : E \to \C$.  
The pair $(\mu, \mu')$ of weights 
is  said to be \textit{on the right of the unitary axis} if the abelian width is bounded above by the point of evaluation: 
$$
a(\mu,\mu') \leq -N/2,
$$ 
To explain the terminology, it is clear from the definition of the cuspidal parameters \eqref{eqn:cuspidal-parameters-alpha}, \eqref{eqn:cuspidal-parameters-beta}, 
and the archimedean representation \eqref{eqn:j-lambda-v}, 
that ${}^\iota\sigma = {}^\iota\sigma_u \otimes |\!| \ |\!|^{-\w/2}$ for a unitary cuspidal representation ${}^\iota\sigma_u;$ similarly, 
${}^\iota\sigma' = {}^\iota\sigma'_u \otimes |\!| \ |\!|^{-\w'/2}.$  Hence, we have 
$$
L(s, {}^\iota\sigma \times {}^\iota\sigma'^\v) \ = \ L(s - a(\mu, \mu'), {}^\iota\sigma_u \times {}^\iota\sigma_u'^\v). 
$$
Now, suppose $(\mu, \mu')$ is on the right of the unitary axis, then for the $L$-value in the denominator of the right hand side of \eqref{eqn:basic-analytic-identity} at the point of evaluation 
$s = - \tfrac{N}{2}$ we have: 
$$
L^\place(-  \tfrac{N}{2}+1, {}^\iota\sigma \times {}^\iota\sigma'^\v) \ = \ L^\place(1 -  \tfrac{N}{2} - a(\mu, \mu'), {}^\iota\sigma_u \times {}^\iota\sigma_u'^\v) \ \neq \ 0,
$$
since $1 -  \tfrac{N}{2} - a(\mu, \mu') \geq 1,$ by a nonvanishing result for Rankin--Selberg $L$-functions recalled in 
\cite[Thm.\,7.1, (4)]{harder-raghuram-book}.
Of course, the nonvanishing of this $L$-value, the holomorphy of the Eisenstein series $\Eis_P(s,f)$ or of its constant term are all intimately linked. We have the following well-known result (\cite{moeglin-waldspurger} and \cite[Chap.\,IV \S\,5]{harish-chandra}).

\medskip 
\begin{thm}
\label{thm:at-least-one-holomorphic}
Suppose we are given weights $\mu \in X^+_{00}(T_n \times E)$ and 
$\mu' \in X^+_{00}(T_{n'} \times E),$ 
strongly-inner Hecke-summands $\sigma_f \in \Coh_{!!}(G_n, \mu)$ and $\sigma'_f \in \Coh_{!!}(G_{n'}, \mu'),$ and 
an $\iota : E \to \C.$ Assume that $(\mu, \mu')$ is on the right 
of the unitary axis. Then $\Eis_P(s,f)$ is holomorphic at $s = -N/2$, unless we are in the exceptional case
   $n=n^\prime $ and ${}^\iota\sigma^\prime = {}^\iota\sigma\otimes | \ |^{-n-1}.$
 \end{thm}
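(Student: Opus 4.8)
The plan is to deduce the holomorphy of $\Eis_P(s,f)$ at $s=-N/2$ from Langlands's constant term formula together with the analytic properties of Rankin--Selberg $L$-functions. By the general theory of Eisenstein series attached to a \emph{maximal} parabolic subgroup (M\oe glin--Waldspurger \cite{moeglin-waldspurger}; Harish-Chandra \cite[Chap.\,IV\,\S\,5]{harish-chandra}), the meromorphic family $s \mapsto \Eis_P(s,f)$ is holomorphic at a point $s_0$ as soon as its constant terms along $P$ and along the associate parabolic $Q$ are holomorphic there; the constant terms along all other parabolic subgroups are built out of these and out of $f$ itself, which is an entire family in $I_P^G(s,\sigma\times\sigma')$. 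By Theorem \ref{thm:langlands}, in the non-self-associate case $\cF^Q\circ\Eis_P(s,f)=T_{\mathrm{st}}(s,\sigma\times\sigma')f$, and in the self-associate case $\cF^P\circ\Eis_P(s,f)=f+T_{\mathrm{st}}(s,\sigma\times\sigma')f$. Hence holomorphy of $\Eis_P(s,f)$ at $s=-N/2$ is equivalent to holomorphy of the standard intertwining operator $T_{\mathrm{st}}(s,\sigma\times\sigma')$ at $s=-N/2$.

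Next I would decompose $T_{\mathrm{st}}(s,\sigma\times\sigma')=\otimes_v T_{\mathrm{st}}(s,\sigma_v\times\sigma'_v)$ and pass to the Langlands--Shahidi normalisation. At each place one writes $T_{\mathrm{st}}(s,\sigma_v\times\sigma'_v)=r_v(s)\,\widetilde{N}_v(s)$, where $r_v(s)$ is the appropriate ratio of local $L$- and $\varepsilon$-factors and $\widetilde{N}_v(s)$ is the normalised operator; at the unramified places this is just the Gindikin--Karpelevich computation on the spherical vector, and at the archimedean places the operator was analysed in Sect.\,\ref{sec:T-st-infinity}. Because the $\sigma_v$ and $\sigma'_v$ are generic and essentially tempered, the operators $\widetilde{N}_v(s)$ are holomorphic for $\Re(s)$ in the range under consideration, so that, up to a nonzero scalar absorbing the $\varepsilon$-factors and the normalising constant of the measure, $T_{\mathrm{st}}(s,\sigma\times\sigma')$ equals a holomorphic operator times the ratio of \emph{completed} Rankin--Selberg $L$-functions $L(s,\sigma\times\sigma'^\v)/L(s+1,\sigma\times\sigma'^\v)$. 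Writing ${}^\iota\sigma={}^\iota\sigma_u\otimes|\!|\ |\!|^{-\w/2}$ and ${}^\iota\sigma'={}^\iota\sigma'_u\otimes|\!|\ |\!|^{-\w'/2}$ with ${}^\iota\sigma_u,{}^\iota\sigma'_u$ unitary cuspidal, and recalling $a(\mu,\mu')=\tfrac{\w-\w'}{2}$, this ratio becomes $L(s-a(\mu,\mu'),{}^\iota\sigma_u\times{}^\iota\sigma_u'^\v)/L(s+1-a(\mu,\mu'),{}^\iota\sigma_u\times{}^\iota\sigma_u'^\v)$.

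Finally I would invoke Jacquet--Shalika: the completed Rankin--Selberg $L$-function $L(s',{}^\iota\sigma_u\times{}^\iota\sigma_u'^\v)$ of two unitary cuspidal representations is entire unless ${}^\iota\sigma_u\cong{}^\iota\sigma'_u$, which forces $n=n'$, in which case its only poles are simple poles at $s'=0$ and $s'=1$. The denominator, evaluated at $s=-N/2$, is $L(1-\tfrac{N}{2}-a(\mu,\mu'),{}^\iota\sigma_u\times{}^\iota\sigma_u'^\v)$, which is nonzero because $1-\tfrac{N}{2}-a(\mu,\mu')\geq 1$ by the hypothesis that $(\mu,\mu')$ lies on the right of the unitary axis, and because Rankin--Selberg $L$-functions do not vanish on $\Re(s)\geq 1$ (\cite[Thm.\,7.1,\,(4)]{harder-raghuram-book}); thus no pole can come from the denominator. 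If ${}^\iota\sigma_u\not\cong{}^\iota\sigma'_u$ the numerator is entire and the proof is complete. If ${}^\iota\sigma_u\cong{}^\iota\sigma'_u$, a short bookkeeping of orders of vanishing shows that the numerator's potential pole at $s-a(\mu,\mu')=0$ is cancelled by the denominator's pole at $s+1-a(\mu,\mu')=1$, so that the only surviving pole of the ratio is the simple pole at $s=a(\mu,\mu')+1$; this meets $s=-N/2$ precisely when $a(\mu,\mu')=-\tfrac{N}{2}-1$, which by \eqref{eqn:abelian-width} (and $N=2n$ in the self-associate case) is exactly the condition ${}^\iota\sigma'={}^\iota\sigma\otimes|\ |^{-n-1}$. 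In every other case $\Eis_P(s,f)$ is holomorphic at $s=-N/2$. The one genuinely delicate input is the holomorphy of the normalised operators $\widetilde{N}_v(s)$ at the possibly reducible point $s=-N/2$; this rests on the Langlands--Shahidi theory for $\GL$, together with, at the archimedean places, the explicit description of Sect.\,\ref{sec:T-st-infinity}, in particular Prop.\,\ref{prop:irreducible-isomorphism-GLN}.
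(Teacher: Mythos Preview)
Your argument is correct, but it takes a different route from the paper. The paper does not analyse the constant term or the intertwining operator directly; it simply invokes two black boxes: Harish-Chandra's result that any pole of $\Eis_P(s,f)$ in the positive half-plane (relative to the unitary axis) is simple and its residue lies in the residual $L^2$-spectrum, and then the M\oe glin--Waldspurger classification of the residual spectrum of $\GL(N)$, which says that a maximal-parabolic induction of cuspidal data contributes to the residual spectrum only in the Speh situation $n=n'$, $\sigma'_u\cong\sigma_u$, at the unique point that translates back to ${}^\iota\sigma'={}^\iota\sigma\otimes|\ |^{-n-1}$. Your approach instead unpacks this via Langlands's constant term theorem and the Langlands--Shahidi normalisation, reducing everything to the ratio $L(s',{}^\iota\sigma_u\times{}^\iota\sigma_u'^\v)/L(s'+1,{}^\iota\sigma_u\times{}^\iota\sigma_u'^\v)$ and the Jacquet--Shalika pole classification. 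This is more explicit and in effect reproves, for the rank-one maximal-parabolic case, the relevant piece of M\oe glin--Waldspurger; the paper's route is shorter but hides the mechanism.

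One small caution on your last sentence: Prop.~\ref{prop:irreducible-isomorphism-GLN} is not the right reference for the archimedean input, since it carries the extra hypothesis that the archimedean $L$-factors are regular at $s=-N/2$ and $1-N/2$ (the combinatorial-lemma condition), which Thm.~\ref{thm:at-least-one-holomorphic} does \emph{not} assume. What you actually need is the general fact that for tempered inducing data on $\GL_n\times\GL_{n'}$ the normalised local intertwining operator is holomorphic (and nonzero) for $\Re(s')\geq 0$; since ``on the right of the unitary axis'' gives $s'=-N/2-a(\mu,\mu')\geq 0$, this suffices. That holomorphy is part of the Langlands--Shahidi package for $\GL$ (and at the archimedean places goes back to Harish-Chandra's theory of the $c$-function), and does not require the regularity assumption of Prop.~\ref{prop:irreducible-isomorphism-GLN}.
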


\medskip

The poles on the right of the unitary axis are simple and contribute to the residual spectrum
  \cite{harish-chandra} and then the assertion follows from \cite{ moeglin-waldspurger}. The exceptional case exactly corresponds to when the numerator of 
  of the right hand side of \eqref{eqn:basic-analytic-identity} at the point of evaluation is a pole at $1$, i.e., when 
  $L(-  \tfrac{N}{2} - a(\mu, \mu'), {}^\iota\sigma_u \times {}^\iota\sigma_u'^\v)$ is a pole at $1$; which is possible only when $n=n^\prime $ and ${}^\iota\sigma^\prime = {}^\iota\sigma\otimes |\!| \ |\!|^{-n-1}.$ 
To parse this further:  If ${}^\iota\sigma^\prime = {}^\iota\sigma \otimes |\!| \ |\!|^r$ for any integer $r$, then $\mu' = \mu-r$. Then the cuspidal parameters for $\mu'$ and $\mu$ are related thus: 
$\alpha'^v_i = \alpha^v_i+r, \ \beta'^v_i = \beta^v_i+r;$ hence the cuspidal width $\ell(\mu, \mu') = 0.$ For the main theorem on $L$-values, we will assume the conditions imposed by the combinatorial lemma, and in particular we will have $\ell(\mu,\mu') \geq 2$ to guarantee at least two critical values. Hence, the exceptional case will not be relevant to us.

\medskip
\subsection{The main theorem on $L$-values}
\label{sec:main-thm-L-values}

\subsubsection{\bf Statement of the main theorem}

\begin{thm}
\label{thm:main}
Let $n$ and $n'$ be two positive integers. 
Let $F$ be a totally imaginary field, and $E$ a finite Galois extension of $\Q$ that contains a copy of $F$. 
Consider strongly-pure weights $\mu \in X^+_{00}(T_n \times E)$ and 
$\mu' \in X^+_{00}(T_{n'} \times E).$ Let $\sigma_f \in \Coh_{!!}(G_n, \mu)$ and $\sigma'_f \in \Coh_{!!}(G_{n'}, \mu')$ be strongly-inner Hecke-summands, 
and assume that $E$ is large enough to contain all the Hecke-eigenvalues for $\sigma_f $ and $\sigma'_f.$
Let $\iota : E \to \C$ be an embedding. Recall then that ${}^\iota\sigma$ and ${}^\iota\sigma'$ are cuspidal automorphic representations of 
$G_n(\A) = \GL_n(\A_F)$ and $G_{n'}(\A) = \GL_{n'}(\A_F)$, respectively. Put $N = n+n'.$ 

\smallskip

Suppose that $m \in \tfrac{N}{2} + \Z$ is such that both $m$ and $1+m$ are critical for the Rankin--Selberg $L$-function $L(s, {}^\iota\sigma \times {}^\iota\sigma'^\v).$

\begin{enumerate}
 \medskip
\item If for some $\iota$, $L(1 + m, {}^\iota\sigma \times {}^\iota\sigma'^\v) = 0$, then $1+m -a(\mu,\mu') = \tfrac12$ and 
   $$
   L(1 + m, {}^\iota\sigma \times {}^\iota\sigma'^\v) \ = \ L(\tfrac12, {}^\iota\sigma_u \times {}^\iota\sigma'^\v_u) = 0
   $$ 
   is the central critical value. Furthermore,  $L(1+m, {}^\iota\sigma \times {}^\iota\sigma'^\v) = 0$ for every $\iota.$

   \medskip

   \item Assume $F$ is in the {\bf CM}-case. 
   Suppose that $L(1 +m, {}^\iota\sigma \times {}^\iota\sigma'^\v) \neq 0$, then we have 
   $$
  |\delta_{F/\Q}|^{- \tfrac{n n'}{2}} \,
   \frac{L(m, {}^\iota\sigma \times {}^\iota\sigma'^\v)}{L(1+m, {}^\iota\sigma \times {}^\iota\sigma'^\v)} \ \in \ \iota(E).
   $$
  Since  $L(m, {}^\iota\sigma \times {}^\iota\sigma'^\v) = L(-N/2, {}^\iota\sigma(N/2+m) \times {}^\iota\sigma'^\v)$, the pair 
  $(\mu-N/2-m, \mu')$ of weights satisfies Lem.\,\ref{lem:comb-lemma} giving a balanced Kostant representative $w \in W^P$. Let 
  $w' \in W^Q$ be determined by Lem.\,\ref{lem:kostant-P-Q}. 
   Furthermore, for any $\gamma \in \Gal(\bar\Q/\Q)$ we have 
   $$
   \gamma\left(
   |\delta_{F/\Q}|^{- \tfrac{n n'}{2}} \,
   \frac{L(m, {}^\iota\sigma \times {}^\iota\sigma'^\v)}{L(1+m, {}^\iota\sigma \times {}^\iota\sigma'^\v)}
   \right) \ = \ 
   \varepsilon_{\iota, w}(\gamma) \cdot \varepsilon_{\iota, w'}(\gamma) \cdot
   |\delta_{F/\Q}|^{- \tfrac{n n'}{2}} \,
   \frac{L(m, {}^{\gamma\circ\iota}\sigma \times {}^{\gamma \circ\iota} \sigma'^\v)}{L(1+m, {}^{\gamma \circ \iota}\sigma \times {}^{\gamma \circ\iota}\sigma'^\v)}. 
   $$

  \medskip
   \item Assume $F$ is in the {\bf TR}-case. Then $nn'$ is even. 
   Suppose that $L(1 +m, {}^\iota\sigma \times {}^\iota\sigma'^\v) \neq 0$, then we have 
   $$
   \frac{L(m, {}^\iota\sigma \times {}^\iota\sigma'^\v)}{L(1+m, {}^\iota\sigma \times {}^\iota\sigma'^\v)} \ \in \ \iota(E).
   $$
   Furthermore, for any $\gamma \in \Gal(\bar\Q/\Q)$ we have 
   $$
   \gamma\left(
    \frac{L(m, {}^\iota\sigma \times {}^\iota\sigma'^\v)}{L(1+m, {}^\iota\sigma \times {}^\iota\sigma'^\v)}
   \right) \ = \ 
     \frac{L(m, {}^{\gamma\circ\iota}\sigma \times {}^{\gamma \circ\iota} \sigma'^\v)}{L(1+m, {}^{\gamma \circ \iota}\sigma \times {}^{\gamma \circ\iota}\sigma'^\v)}. 
   $$
\end{enumerate}
\end{thm}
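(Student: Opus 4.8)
The plan is to assemble Thm.~\ref{thm:main} from the machinery already in place: the combinatorial lemma (Lem.~\ref{lem:comb-lemma}), the Manin--Drinfeld principle (Thm.~\ref{thm:manin-drinfeld}), the result on rank-one Eisenstein cohomology (Thm.~\ref{thm:rank-one-eis}), Langlands's constant-term theorem together with the analytic identity \eqref{eqn:basic-analytic-identity}, and the archimedean intertwining computation (Prop.~\ref{prop:basic-Tst-GLN-in-cohomology}); the overall architecture mirrors \cite[Chap.~6]{harder-raghuram-book} over a totally real field, the genuinely new ingredients being the measure-normalisation factor $|\delta_{F/\Q}|^{-nn'/2}$ of \eqref{eqn:normalised-measure} and the signatures of Def.~\ref{def:signature}. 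First I would reduce to the point of evaluation: by the Tate twists of Sect.~\ref{sec:tate-twists} and the identity $L(m, {}^\iota\sigma\times{}^\iota\sigma'^\v) = L(-N/2, {}^\iota\sigma(N/2+m)\times{}^\iota\sigma'^\v)$, it suffices to treat the ratio at $s=-N/2$ and $s=1-N/2$ for the pair of weights $(\mu-(N/2+m)\bfgreek{delta}_n,\mu')$, which I continue to denote $(\mu,\mu')$; and one reduces --- via the functional equation, or via the genericity built into the combinatorial lemma's reduction --- to the situation where $(\mu,\mu')$ lies on the right of the unitary axis, so that the Eisenstein series is holomorphic at $s=-N/2$ by Thm.~\ref{thm:at-least-one-holomorphic} (the exceptional case being excluded since $\ell(\mu,\mu')\geq 2$ under the hypothesis that $m$ and $m+1$ are both critical). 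The hypothesis that $-N/2$ and $1-N/2$ are critical is then exactly condition (1) of the combinatorial lemma, hence condition (3): there is a balanced $w\in W^P$ with $\lambda:=w^{-1}\cdot(\mu+\mu')$ dominant, and Lem.~\ref{lem:kostant-P-Q} produces $w'\in W^Q$.

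Next I would set up the cohomological line. In bottom degree $q_b=b_N^F$, form the composite $H^{q_b}(\SGK,\tM_{\lambda,E}) \xrightarrow{\r^\bullet} H^{q_b}(\PBSC,\tM_{\lambda,E}) \xrightarrow{\fR^b_{\sigma_f,\sigma'_f}} I_b^\place(\sigma_f,\sigma'_f)_{P,w}\oplus I_b^\place(\sigma'_f(n),\sigma_f(-n'))_{Q,w'}$ of the restriction to the boundary with the Manin--Drinfeld projection; by Thm.~\ref{thm:rank-one-eis} the image $\fI^b(\sigma_f,\sigma_f')$ of Eisenstein cohomology is an $E$-subspace of dimension $\mathsf{k}$ inside this $2\mathsf{k}$-dimensional $E$-space. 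Fixing once and for all $E$-rational generators of the two induced submodules --- built from the $E$-rational Hecke data of $\sigma_f,\sigma'_f$ and the Kostant harmonic representatives ${\sf h}(\lambda,w,\iota)$, ${\sf h}(\lambda^\v,w',\iota)$ of Sect.~\ref{sec:galois-infty-bdry} --- the subspace $\fI^b(\sigma_f,\sigma_f')$ acquires a well-defined ``slope'' in $E$. Passing to $\C$ via $\iota$, the class of the holomorphic Eisenstein series $\Eis_P(-N/2,f)$ restricts to a boundary class whose $P$- and $Q$-components are governed, via Langlands's constant-term theorem (Thm.~\ref{thm:langlands}), by $f$ and by $T_{\rm st}(-N/2,{}^\iota\sigma\times{}^\iota\sigma')f$; the analytic identity \eqref{eqn:basic-analytic-identity}, computed with the normalised measure \eqref{eqn:normalised-measure}, expresses the ratio of these components as $|\delta_{F/\Q}|^{-nn'/2}$ times $L^\place(-N/2,\cdot)/L^\place(1-N/2,\cdot)$ times the effect in relative Lie algebra cohomology of the archimedean intertwining operators, which by Prop.~\ref{prop:basic-Tst-GLN-in-cohomology} equals $L_\infty(-N/2,\cdot)/L_\infty(1-N/2,\cdot)$ up to $\Q^\times$. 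Thus the transcendental slope of the Eisenstein line equals, up to a nonzero rational, $|\delta_{F/\Q}|^{-nn'/2}\,L(-N/2,{}^\iota\sigma\times{}^\iota\sigma'^\v)/L(1-N/2,{}^\iota\sigma\times{}^\iota\sigma'^\v)$; comparing with the $E$-rational slope yields the rationality assertions in (ii) and (iii), where for (iii) one uses that $nn'$ is even in the {\bf TR}-case (Cor.~\ref{cor:F-tot-real-nn'-even}), so that $|\delta_{F/\Q}|^{-nn'/2}\in\Q^\times$ may be dropped. For (i): the non-vanishing results for Rankin--Selberg $L$-functions at critical points off the centre (\cite[Thm.~7.1]{harder-raghuram-book}) force $L(1+m,{}^\iota\sigma\times{}^\iota\sigma'^\v)$ to vanish only when $1+m-a(\mu,\mu')=\tfrac12$, and then $L(1+m,\cdot)=0$ is equivalent to $\fI^b(\sigma_f,\sigma_f')$ degenerating onto one of the two induced submodules --- an $E$-rational condition on the line, hence independent of $\iota$.

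Finally, the reciprocity law would be obtained by applying $\gamma\in\Gal(\bar\Q/\Q)$ to the equality ``$E$-rational slope $=$ transcendental slope'' and tracking each factor. The Galois element translates the fixed $E$-rational generators of the two induced submodules, so that by \eqref{eqn:galois-signature} the $P$-generator picks up $\varepsilon_{\iota,w}(\gamma)$ and the $Q$-generator picks up $\varepsilon_{\iota,w'}(\gamma)$; the archimedean constituents of ${}^{\gamma\circ\iota}\sigma$ and ${}^{\gamma\circ\iota}\sigma'$ are permutations of those of ${}^\iota\sigma$ and ${}^\iota\sigma'$, up to conjugation in the {\bf CM}-case (Sect.~\ref{sec:galois-arch-constituents}), so the completed $L$-values transform into those of ${}^{\gamma\circ\iota}\sigma\times{}^{\gamma\circ\iota}\sigma'^\v$; and $\gamma$ acts on $|\delta_{F/\Q}|^{-nn'/2}=\pm(\i^{d_F/2}\theta_F)^{-nn'}$ through a sign coming from the permutation it induces on $\Sigma_F$. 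Collecting these contributions one recovers the stated reciprocity with the factor $\varepsilon_{\iota,w}(\gamma)\,\varepsilon_{\iota,w'}(\gamma)$ in the {\bf CM}-case; when $F$ is itself a CM field, complex conjugation commutes with every element of $\Gal(\bar\Q/\Q)$ on the image of $F$ (Lem.~\ref{sec:CM-field-embeddings}), forcing the product of the two signatures to collapse to $1$, and in the {\bf TR}-case the same pairing phenomenon, combined with $w$ being base-change from the totally real $F_1$ (Rem.~\ref{rem:w-base-change}), makes each signature and the discriminant sign trivial. I expect the principal obstacle to be precisely this final sign bookkeeping: showing that the contributions of the Kostant wedge-products $e_{\Phi_{{}^\iota w}}^*$, of the discriminant period $\theta_F$ entering the measure normalisation, and of the permutation (and possible conjugation) of the archimedean constituents assemble into exactly $\varepsilon_{\iota,w}(\gamma)\,\varepsilon_{\iota,w'}(\gamma)$, and that this collapses as claimed; everything else amounts to careful but routine tracking along the same chain of maps as in \cite{harder-raghuram-book}.
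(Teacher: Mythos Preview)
Your proposal is correct and follows essentially the same architecture as the paper: reduce to $m=-N/2$ via Tate twists, flip to the right of the unitary axis via the functional equation, read the $E$-rational slope of the Eisenstein line $\fI^b(\sigma_f,\sigma'_f)$ through Thm.~\ref{thm:langlands} and Prop.~\ref{prop:basic-Tst-GLN-in-cohomology}, and obtain the reciprocity law from the structural identity \eqref{eqn:gal-equiv} with the two signatures arising via \eqref{eqn:galois-signature} from the coefficient systems $\M_{w\cdot\lambda}$ and $\M_{w'\cdot\lambda}$. One minor slip: the $Q$-side harmonic representative should be ${\sf h}(\lambda,w',\iota)$ rather than ${\sf h}(\lambda^\v,w',\iota)$ (both induced modules in bottom degree live in $H^{q_b}(\PBSC,\tM_{\lambda,E})$), and the collapse of $\varepsilon_{\iota,w}(\gamma)\cdot\varepsilon_{\iota,w'}(\gamma)$ when $F$ is itself CM is established separately in Sect.~\ref{sec:deligne} (Lem.~\ref{lem:equality-signatures}) and is not needed for the theorem as stated.
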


\medskip
As mentioned in the introduction, statement (1), is originally due to M\oe glin \cite[Sect.\,5]{moeglin}; our proof below is independent of \cite{moeglin}. More generally, 
the assertion on the vanishing of the central critical value is a special case of 
Deligne's conjecture \cite[Conj.\,2.7, (ii)]{deligne} based on a suggestion of Benedict Gross, that the order of vanishing at 
a critical point is independent of the embedding $\iota$.

\medskip
\subsubsection{\bf Proof of the main theorem}

\medskip
\paragraph{\bf Finiteness of the relevant $L$-values} 
The paragraph after Thm.\,\ref{thm:at-least-one-holomorphic} says that if $L(s, {}^\iota\sigma\otimes \times {}^\iota\sigma'^\v)$
has a pole at $m \in \tfrac{N}{2} + \Z$ then the cuspidal width $\ell(\mu,\mu') = 0$; a situation which is ruled out by the hypothesis
that requires two critical points. Hence all the $L$-values under consideration are finite; a fact that will be used without further comment.

\medskip
\paragraph{\bf It suffices to prove Thm.\,\ref{thm:main} for the point of evaluation $m = -N/2$} 
\label{para:comb-lemma-reduction}
If the theorem is known for the critical points $s = -\tfrac{N}{2}$ and $1-\tfrac{N}{2}$ and for all possible 
$\mu$, $\mu'$, $\sigma_f$, $\sigma'_f$, then one can deduce the theorem 
for any pair of successive critical points $m, \, m+1$ for a given $\sigma_f$ and $\sigma'_f$. 
This follows from using Tate-twists (Sect.\,\ref{sec:tate-twists}) and the combinatorial lemma. 
Take any integer $r$ and replace $\mu$ by $\mu-r \bfgreek{delta}_n$ and $\sigma_f$ by $\sigma_f(r).$ Note that 
${}^\iota(\sigma_f(r)) = {}^\iota\sigma_f \otimes |\!|\ |\!|^r.$  
The condition that 
$-\tfrac{N}{2}$ and $1-\tfrac{N}{2}$ are critical for 
$L(s, {}^\iota\sigma\otimes |\!|\ |\!|^r \times {}^\iota\sigma'^\v) = L(s +r, {}^\iota\sigma\otimes \times {}^\iota\sigma'^\v)$, after the combinatorial lemma, bounds the abelian width $a(\mu-r\bfgreek{delta}_n, \mu')$ by the cuspidal width $\ell(\mu-r\bfgreek{delta}_n, \mu')$ 
as in (2) of Lem.\,\ref{lem:comb-lemma}. 
Now, the crucial point is that, whereas for the abelian width one has $a(\mu-r\bfgreek{delta}_n, \mu') = a(\mu, \mu') - r$, but for the cuspidal width one has independence of $r$ in as much as 
$\ell(\mu-r\bfgreek{delta}_n, \mu') = \ell(\mu, \mu')$. 
This bounds the possible twisting integers $r$ above and below as: 
$$
- \frac{N}{2} + 1 - \frac{\ell(\mu,\mu')}{2} - a(\mu, \mu')
 \ \leq \ -r  \ \leq \  
- \frac{N}{2} -1 + \frac{\ell(\mu,\mu')}{2} - a(\mu, \mu').
$$
As $r$ ranges over this set, 
using the critical set described in Prop.\,\ref{prop-crit-mu-mu'}, one sees that 
$$
  \frac{L(- \tfrac{N}{2}, {}^\iota\sigma(r) \times {}^\iota\sigma'^\v)}{L(1- \tfrac{N}{2}, {}^\iota\sigma(r) \times {}^\iota\sigma'^\v)} \ = \ 
   \frac{L(r - \tfrac{N}{2}, {}^\iota\sigma \times {}^\iota\sigma'^\v)}{L(r+1- \tfrac{N}{2}, {}^\iota\sigma \times {}^\iota\sigma'^\v)} 
$$
runs over the set of all successive pairs of critical points; {\it no more and no less!} The number of possible $r$ is 
$\ell(\mu,\mu')-1$ which is exactly the number of pairs of successive critical points.

\medskip
\paragraph{\bf Being on the right versus on the left of the unitary axis} 
Suppose that $(\mu,\mu')$ is on the right of the unitary axis: $a(\mu,\mu') \leq -\tfrac{N}{2}.$ Then (1) is vacuously true since 
$L(1 - \tfrac{N}{2}, {}^\iota\sigma \times {}^\iota\sigma'^\v) = L(1 - \tfrac{N}{2} - a(\mu,\mu'), {}^\iota\sigma_u \times {}^\iota\sigma'^\v_u) \neq 0$ by a well-known nonvanishing result for Rankin--Selberg $L$-functions as already mentioned before. 
Next, recall that the Eisenstein series $\Eis_P(s,f)$ is holomorphic at $s = -\tfrac{N}{2}$, and at this point of evaluation (2) and (3) will be proved below. 
Granting this, suppose, on the other hand, that $(\mu, \mu')$ is on the left of the unitary axis, i.e., $a(\mu,\mu') > -\tfrac{N}{2}.$ Then, 
$a(\mu',\mu) < \tfrac{N}{2},$ i.e., $(\mu',\mu)$ is on the right of the unitary axis for the point of evaluation is $\tfrac{N}{2}$, so we get 
the holomorphy of $\Eis_Q(s,f)$ at $s = \tfrac{N}{2}$ and whence statement (2) for 
$L(s, {}^\iota\sigma' \times {}^\iota\sigma^\v)$ and for $s = \tfrac{N}{2}.$ And as above, (1) is in fact vacuously true, because 
$L(1+\tfrac{N}{2}, {}^\iota\sigma' \times {}^\iota\sigma^\v) = L(1+\tfrac{N}{2} - a(\mu',\mu), {}^\iota\sigma'_u \times {}^\iota\sigma^\v_u) \neq 0.$  Statement (2) for this situation says: 
$$
 |\delta_{F/\Q}|^{- \tfrac{n n'}{2}} \,  \frac{L(\tfrac{N}{2}, {}^\iota\sigma' \times {}^\iota\sigma^\v)}{L(1+\tfrac{N}{2}, {}^\iota\sigma' \times {}^\iota\sigma^\v)}
 \ \in \ \iota(E), 
$$
where the $L$-value in the denominator is not zero. Suppose the $L$-value in the numerator is $0$ (which can happen in the special case 
$\tfrac{N}{2} - a(\mu',\mu) = \tfrac12$) then the Galois equivariance in (2) implies that $L(\tfrac{N}{2}, {}^\iota\sigma' \times {}^\iota\sigma^\v) = 0$ for 
{\it every} $\iota.$
Applying the functional equation (\cite[(3), Thm.\,7.1]{harder-raghuram-book}) to the above ratio of $L$-values we get:
$$
|\delta_{F/\Q}|^{- \tfrac{n n'}{2}} \cdot
\frac{\varepsilon(\tfrac{N}{2}, {}^\iota\sigma' \times {}^\iota\sigma^\v)}
{\varepsilon(1+\tfrac{N}{2}, {}^\iota\sigma' \times {}^\iota\sigma^\v)} \cdot 
\frac{L(1- \tfrac{N}{2}, {}^\iota\sigma \times {}^\iota\sigma'^\v)}
{L(- \tfrac{N}{2}, {}^\iota\sigma \times {}^\iota\sigma'^\v)} 
\ \in \ \iota(E).
$$
For brevity, let $\tau := {}^\iota\sigma' \times {}^\iota\sigma^\v$. The global epsilon-factor is a product of local factors as  
$\varepsilon(s, \tau) = \prod_v \varepsilon(s, \tau_v, \psi_v)$, where we have fixed an additive character $\psi : F \backslash \A_F \to \C^\times.$ (See, for example, \cite[Sect.\,10.1]{shahidi-book}.) 
At a non-archimedean place $v$, the local factor has the form $\varepsilon(s, \tau_v, \psi_v) = 
W(\tau_v) q_v^{(1/2 - s)(c(\tau_v)+c(\psi_v))}$, where $W(\tau_v)$ is the local root number, $q_v$ the cardinality of the residue field of $F_v$, 
and $c(\tau_v)$ and $c(\psi_v)$ are integers defined to be the conductoral exponents of the respective data; it follows that  
$\varepsilon(N/2, \tau_v, \psi_v)/\varepsilon(1+N/2, \tau_v, \psi_v)$ is an integral power of $q_v$. 
At an archimedean place, it follows from 
\cite[(4.7)]{knapp} that the local factor is a constant, and hence the ratio is $1$. Whence,  
$\varepsilon(\tfrac{N}{2}, {}^\iota\sigma' \times {}^\iota\sigma^\v)/\varepsilon(1+\tfrac{N}{2}, {}^\iota\sigma' \times {}^\iota\sigma^\v)$
is a nonzero integer, 
from which it follows that
\begin{equation}
\label{eqn:reciprocal}
|\delta_{F/\Q}|^{- \tfrac{n n'}{2}} \,
\frac{L(1- \tfrac{N}{2}, {}^\iota\sigma \times {}^\iota\sigma'^\v)}
{L(- \tfrac{N}{2}, {}^\iota\sigma \times {}^\iota\sigma'^\v)} 
\ \in \ \iota(E).
\end{equation}
From the functional equation it is clear that
$
L(1- \tfrac{N}{2}, {}^\iota\sigma \times {}^\iota\sigma'^\v) = 0 \iff 
L(\tfrac{N}{2}, {}^\iota\sigma' \times {}^\iota\sigma^\v) = 0, 
$
proving (1) when $(\mu, \mu')$ is on the left of the unitary axis. 
If $L(1- \tfrac{N}{2}, {}^\iota\sigma \times {}^\iota\sigma'^\v) \neq 0$, then taking reciprocal of the ratio on the left hand side of 
\eqref{eqn:reciprocal}, (2) will follows when $(\mu, \mu')$ is on the left of the unitary axis. (See also \cite[Sect.\,7.3.2.5]{harder-raghuram-book} for
a slightly different way to argue this point if $(\mu, \mu')$  is on the left of the unitary axis.) The discussion is the same for (3).

\medskip
\paragraph{\bf Proof of the rationality result}
\label{para:conclusion-proof-rationality}
After the above reductions, it suffices to prove (2) and (3) of Thm.\,\ref{thm:main} when $(\mu,\mu')$ is on the right of the unitary axis and for $m = -N/2.$ 
This involves Eisenstein cohomology. Assume henceforth that $(\mu,\mu')$ is on the right of the unitary axis. For now $F$ is any totally imaginary field 
(either {\bf CM} or {\bf TR}). 
By Thm.\,\ref{thm:rank-one-eis}, 
the subspace  $\fI^b(\sigma_f, \sigma_f')$, which is the image of global cohomology in the $2{\mathsf k}$-dimensional $E$-vector space 
$I^\place_b(\sigma_f,\sigma_f')_{P,w} \oplus  I^\place_b(\sigma'_f(n),\sigma_f(-n'))_{Q,w'}$, 
is a ${\mathsf k}$-dimensional $E$-subspace; and furthermore, from the proof of that theorem, 
we get an intertwining operator $T_{\mathrm{Eis}}(\sigma, \sigma')$ defined over $E$  
such that in the non-self-associate case ($n\neq n'$) we have: 
$$
\fI^b(\sigma_f, \sigma_f') \ = \ 
\left\{ \left(\xi, \, T_{\mathrm{Eis}}(\sigma, \sigma')(\xi)\right) \ | \ 
\xi \in I^\place_b(\sigma_f,\sigma_f')_{P,w} \right\}, 
$$
and in the self-associate case we will have: 
$$
\fI^b(\sigma_f, \sigma_f') \ = \ 
\left\{ \left(\xi, \, \xi+ T_{\mathrm{Eis}}(\sigma, \sigma')(\xi)\right) \ | \ 
\xi \in I^\place_b(\sigma_f,\sigma_f')_{P,w} \right\}.
$$
The idea of the proof is to take $T_{\mathrm{Eis}}(\sigma, \sigma')$ to a transcendental level, use the constant term theorem which gives $L$-values, and then descend back to an arithmetic level, giving a rationality result for the said $L$-values. 
Take an $\iota: E \to \C$, and consider $T_{\mathrm{Eis}}(\sigma, \sigma') \otimes_{E, \iota} \C$. We have:
$$
\fI^b({}^\iota\!\sigma_f, {}^\iota\!\sigma_f') \ = \ 
\left\{ \left(\xi \, , \, T_{\mathrm{st}}(-\tfrac{N}{2}, {}^\iota\!\sigma \otimes {}^\iota\!\sigma')^\bullet\xi \right) \ : \ 
\xi \in I^\place_b({}^\iota\!\sigma_f, {}^\iota\!\sigma_f')_{P, {}^\iota\!w}
 \right\}
$$
in the non-self-associate case, and with analogous modification for the self-associate case. Here, 
$T_{\mathrm{st}}(-\tfrac{N}{2}, {}^\iota\!\sigma \otimes {}^\iota\!\sigma')^\bullet$ is the map induced by the standard intertwining operator in relative Lie algebra cohomology. For brevity, let $\ul{\sigma} = \sigma \times \sigma'$. 
The global operator $T_{\mathrm{st}}(-\tfrac{N}{2}, {}^\iota\!\ul{\sigma})^\bullet$ factors into local standard intertwining operators. 
The discussion in \cite[Sect.\,7.3.2.1]{harder-raghuram-book} involving rationality properties of local standard intertwining operators at 
finite places goes through verbatim in our situation, and we have for 
$T_{\mathrm{Eis}}(\sigma, \sigma') \otimes_{E, \iota} \C$ the following expression (which is the exact analogue of 
\cite[(7.38)]{harder-raghuram-book}): 
\begin{multline}
  \label{eqn:Tst-prod-1}
 |\delta_{F/\Q}|^{- \tfrac{n n'}{2}} \,
\frac{L (-\tfrac{N}{2}, {}^\iota\sigma_f \times {}^\iota\sigma_f'^{\mathsf v})}
{L (1-\tfrac{N}{2}, {}^\iota\sigma_f \times {}^\iota\sigma_f'^{\mathsf v})} \, 
\cdot\bigotimes_{v\in {\place}_\infty} 
T_{\mathrm{st}}(-\tfrac{N}{2}, {}^\iota\ul{\sigma}_v)^\bullet \otimes \\ 
\otimes \Bigl(\bigl(\bigotimes_{v \in \place_f}  
T^{\mathrm{ar}}_{\mathrm{norm}}( \ul{\sigma}_v)(1)
\otimes \bigotimes_{v \not\in \place}T^{\mathrm{ar}}_{\mathrm{loc}}( \ul{\sigma}_v)(1) 
\Bigr)\otimes_{E,\iota}\C. 
\end{multline}
The local operators $T^{\mathrm{ar}}_{\mathrm{norm}}( \ul{\sigma}_v)(1)$ and $T^{\mathrm{ar}}_{\mathrm{loc}}( \ul{\sigma}_v)(1)$ are exactly as in {\it loc.\,cit.}; the point of immediate interest for us being that they are defined over $E$. 
For the archimedean component $T_{\mathrm{st}}(-\tfrac{N}{2}, {}^\iota\ul{\sigma}_v)^\bullet$, 
we use Prop.\,\ref{prop:basic-Tst-GLN-in-cohomology}, to get for 
$T_{\mathrm{Eis}}(\sigma, \sigma') \otimes_{E, \iota} \C$ the following expression involving values of the completed $L$-function: 
\begin{equation}
  \label{eqn:Tst-prod-complete}
|\delta_{F/\Q}|^{- \tfrac{n n'}{2}} \,
\frac{L (-\tfrac{N}{2}, {}^\iota\sigma \times {}^\iota\sigma'^{\mathsf v})}
{L (1-\tfrac{N}{2}, {}^\iota\sigma \times {}^\iota\sigma'^{\mathsf v})} \, \cdot \, 
\Bigl(\bigl(\bigotimes_{v \in \place_f}  
T^{\mathrm{ar}}_{\mathrm{norm}}( \ul{\sigma}_v)(1)
\otimes \bigotimes_{v \not\in \place}T^{\mathrm{ar}}_{\mathrm{loc}}( \ul{\sigma}_v)(1) 
\Bigr)\otimes_{E,\iota}\C. 
\end{equation}
We conclude that the complex number 
$|\delta_{F/\Q}|^{- \tfrac{n n'}{2}} \, 
L (-\tfrac{N}{2}, {}^\iota\sigma \times {}^\iota\sigma'^{\mathsf v})/L (1-\tfrac{N}{2}, {}^\iota\sigma \times {}^\iota\sigma'^{\mathsf v})$ is in 
$\iota(E).$ When $F$ is in case-{\bf TR}, existence of a critical point implies $nn'$ is even (Cor.\,\ref{cor:F-tot-real-nn'-even}), which forces
$|\delta_{F/\Q}|^{- \tfrac{n n'}{2}} \in \Q^\times.$ 

\medskip

\medskip
\paragraph{\bf Proof of reciprocity}
\label{para:conclusion-proof-reciprocity}

For Galois equivariance, apply $\gamma \in \Gal(\bar{\Q}/\Q)$, 
to the objects and maps in the first paragraph of 
Sect.\,\ref{sec:rank-one-eis-coh}, while keeping in mind the behaviour of cohomology groups as Hecke modules 
under changing the base field $E$. Assume now that $F$ is a totally imaginary field in the {\bf CM}-case. The claim is 
that Galois-action and Eisenstein operator $T_{\mathrm{Eis}}(\sigma, \sigma')$ intertwine as: 
\begin{equation}
\label{eqn:gal-equiv}
(1 \otimes \gamma) \circ \left(T_{\mathrm{Eis}}(\sigma, \sigma') \otimes_{E,\iota} \bar{\Q} \right) \ = \ 
\varepsilon_{\iota, w}(\gamma) \cdot \varepsilon_{\iota, w'}(\gamma) \cdot  
T_{\mathrm{Eis}}(\sigma, \sigma') \otimes_{E, \gamma \circ \iota} \bar{\Q}. 
\end{equation}
From this claim and \eqref{eqn:Tst-prod-complete} the reciprocity law will follow. To prove this claim, take $n \neq n'$ (the reader can easily modify the argument for the self-associate case) and 
consider the following diagram 
$$
\xymatrix{
I^\place_b(\sigma_f,\sigma_f')_{P,w} \otimes_{E,\iota} \bar{\Q} 
\ar[rrrr]^{T_{\mathrm{Eis}}(\sigma, \sigma') \otimes_{E,\iota} 1_{\bar{\Q}}}
 \ar[d]^{1 \otimes \gamma} 
& & & & 
I^\place_b(\sigma'_f(n),\sigma_f(-n'))_{Q,w'} \ar[d]^{1 \otimes \gamma} \otimes_{E,\iota} \bar{\Q} \\
I^\place_b(\sigma_f,\sigma_f')_{P,w} \otimes_{E, \gamma \circ \iota} \bar{\Q} 
\ar[rrrr]^{T_{\mathrm{Eis}}(\sigma, \sigma') \otimes_{E, \gamma \circ \iota} 1_{\bar{\Q}}}
 & & & &
 I^\place_b(\sigma'_f(n),\sigma_f(-n'))_{Q,w'} \otimes_{E, \gamma \circ \iota} \bar{\Q}
}
$$
The left (resp., right) vertical arrow introduces the signature $\varepsilon_{\iota, w}(\gamma)$ (resp., $\varepsilon_{\iota, w'}(\gamma)$), and the diagram commutes up to the product 
of these two signatures. For the left vertical arrow, recall from \ref{sec:manin-drinfeld-notations} that 
the induced module $I^\place_b(\sigma_f,\sigma_f')_{P,w}$ appears in boundary cohomology as 
$\aInd_{P(\A_f)}^{G(\A_f)} \left(H^{b_n^F+b_{n'}^F}(\SMP, \tM_{w \cdot \lambda, E})(\sigma_f \times \sigma'_f) \right)^{K_f}.$ Hence, 
$I^\place_b(\sigma_f,\sigma_f')_{P,w} \otimes_{E,\iota} \bar{\Q}$ appears as a Hecke-summand in 
$\aInd_{P(\A_f)}^{G(\A_f)} \left(H^{b_n^F+b_{n'}^F}(\SMP, \tM_{{}^\iota\!w \cdot {}^\iota\! \lambda, \bar\Q})\right)$. 
Recall from \eqref{eqn:galois-signature} that $\gamma$ maps the the highest weight vector of 
the coefficient system $\M_{{}^\iota\!w \cdot {}^\iota\! \lambda, \bar\Q}$ to $\varepsilon_{\iota, w}(\gamma)$ times 
the highest weight vector of the coefficient system $\M_{{}^{\gamma\circ\iota}\!w \cdot {}^{\gamma \circ \iota}\! \lambda, \bar\Q}$ explaining the homothety by 
$\varepsilon_{\iota, w}(\gamma)$ in the left vertical arrow. Similarly, the induced module
$I^\place_b(\sigma'_f(n),\sigma_f(-n'))_{Q,w'} \otimes_{E,\iota} \bar{\Q} $ appears in  
$\aInd_{P(\A_f)}^{G(\A_f)} \left(H^{b_n^F+b_{n'}^F}(\SMP, \tM_{{}^\iota\!w' \cdot {}^\iota\! \lambda, \bar\Q})\right)$ for $w'$ related to $w$ by Lem.\,\ref{lem:kostant-P-Q}.  
Hence, the right vertical arrow, one gets a homothety by $\varepsilon_{\iota, w'}(\gamma)$. Hence the claim, and whence 
the reciprocity law. 

\medskip
If the totally imaginary field $F$ is in case {\bf CM} then the proof of (2) is complete. In case of {\bf TR}, the rationality and the 
Galois-equivariance, as will be now shown, will simplify to give the corresponding statements in (3). Assume now that $F$ is {\bf TR}, and that $F_1$ is the maximal totally real subfield of $F$. 
Existence of a critical point implies $nn'$ is even by Cor.\,\ref{cor:F-tot-real-nn'-even}, hence $|\delta_{F/\Q}|^{- \tfrac{n n'}{2}} \in \Q^\times$ and so we may absorb it into $\iota(E)$ and ignore it 
from the Galois-equivariance. The simplified Galois equivariance in (3) follows from the following 

\medskip
\begin{lemma}
\label{lem:trivial-signature-TR}
Suppose that $F$ is {\bf TR}, then 
$\varepsilon_{\iota, w}(\gamma)  = 1 = \varepsilon_{\iota, w'}(\gamma).$
\end{lemma}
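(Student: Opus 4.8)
The signature $\varepsilon_{\iota,w}(\gamma)$ is, by Definition \ref{def:signature}, the sign by which $\gamma$ rescales the wedge product $e^*_{\Phi_{{}^\iota\!w}}$, which in turn is the ordered wedge $e^*_{\Phi_{w^{\iota\circ\tau_1}}} \wedge \cdots \wedge e^*_{\Phi_{w^{\iota\circ\tau_d}}}$ over the embeddings $\tau : F \to E$. Applying $\gamma$ permutes the embeddings $\iota\circ\tau_j$ among themselves — this is just the action $\tau_j \mapsto \gamma\circ\iota\circ\tau_j$ on $\Hom(F,\C)$, a permutation of the index set $\{\tau_1,\dots,\tau_d\}$ — and within each block $e^*_{\Phi_{w^{\iota\circ\tau_j}}}$ it acts by $E$-linearity on the root vectors $e_\varphi$ (which are defined over $\Q$), so it fixes each individual block. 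Hence $\varepsilon_{\iota,w}(\gamma)$ is exactly the sign of the permutation that $\gamma$ induces on the ordered list of blocks, where the $j$-th block carries degree $q_j := l(w^{\iota\circ\tau_j}) = l(w^{\tau_j'})$ for the appropriate reindexing. The key point is that a permutation of blocks of sizes $q_1,\dots,q_d$ acting on a wedge of forms contributes the sign of the block-permutation raised appropriately, and this sign is trivial precisely when the relevant blocks have even degree, or more generally when the permutation only swaps blocks of equal degree in a controlled way.

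\textbf{Key steps.} First I would recall from Remark \ref{rem:w-base-change} that, under the hypotheses of the combinatorial lemma, $w$ is the base change of a Kostant representative from $F_1$: if $\tau|_{F_1} = \tau'|_{F_1}$ then $w^\tau = w^{\tau'}$. In the {\bf TR}-case $F_1$ is totally real, so the restriction map $\varrho : \Sigma_F \to \Sigma_{F_1}$ identifies $\Sigma_{F_1}$ with $\place_\infty(F_1)$, and for each $w \in \place_\infty(F_1)$ the fiber $\varrho^{-1}(\nu_w)$ consists of $k = 2k_1$ embeddings $\{\eta_{w,1},\bar\eta_{w,1},\dots,\eta_{w,k_1},\bar\eta_{w,k_1}\}$ (notation of \ref{sec:notation-TR}), all of which restrict to the \emph{same} $\nu_w$, hence all carry the \emph{same} block $e^*_{\Phi_{w^{\nu_w}}}$ of the same degree $q := l(w^{\nu_w})$. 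Second, the Galois action of $\gamma$ preserves fibers of $\varrho$, so the induced permutation of $\Sigma_F$ decomposes as: a permutation of $\place_\infty(F_1)$, together with, inside each fiber, a permutation of the $2k_1$ embeddings mapping to that fiber. Since within a fixed fiber all $2k_1$ blocks are identical forms of the same degree $q$, any permutation of them contributes $\mathrm{sgn}(\text{permutation})^q$ — but more carefully, permuting $2k_1$ copies of a fixed $q$-form $\omega$ in a wedge $\omega \wedge \cdots \wedge \omega$ simply gives $0$ unless $q=0$; since we need a nonzero class, the relevant combinatorics forces us to track the sign via the harmonic representative, and I would argue that the \emph{net} contribution of intra-fiber permutations is trivial. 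Third, for the inter-fiber permutation of $\place_\infty(F_1)$: moving a whole fiber (a block of $2k_1$ many $q$-forms, total degree $2k_1 q$) past another such fiber contributes $(-1)^{(2k_1 q)(2k_1 q)}$-type signs, which are all $+1$ because $2k_1 q$ is even. Putting these together gives $\varepsilon_{\iota,w}(\gamma) = 1$. The identical argument with $w'$ in place of $w$ (using Lemma \ref{lem:kostant-P-Q}, which also respects base change by its part (3) and Remark \ref{rem:w-base-change}) gives $\varepsilon_{\iota,w'}(\gamma) = 1$.

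\textbf{Main obstacle.} The delicate point is making precise the claim that the intra-fiber permutations contribute trivially. Naively, $e^*_{\Phi_{{}^\iota\!w}}$ is a wedge in which the same form $e^*_{\Phi_{w^{\nu_w}}}$ appears $2k_1$ times per fiber, but of course these live in \emph{distinct} tensor factors $\u_{P_0}^{\tau}$ of $\u_P \otimes \C = \bigoplus_\tau \u_{P_0}^\tau$, so they are not literally equal as elements of $\wedge^\bullet(\u_P^* \otimes \C)$ — the $\tau$-label distinguishes them. Thus I must carefully bookkeep: $\gamma$ sends the factor $\u_{P_0}^{\iota\circ\tau}$ to $\u_{P_0}^{\gamma\circ\iota\circ\tau}$ and, on the chosen $\Q$-rational root vectors $e_\varphi^\tau = e_\varphi \otimes 1$, acts by relabeling $\tau$, not by any nontrivial scalar. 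So the sign is genuinely just $\mathrm{sgn}$ of the permutation of the ordered set $\{(\varphi,\tau) : \tau \in \Hom(F,E),\ \varphi \in \Phi_{w^\tau}\}$ of basis covectors induced by $\tau \mapsto \gamma\circ\iota\circ\tau$ composed with whatever internal ordering \eqref{eqn:e-Phi} fixes. The obstacle is to check that this permutation, which respects the fibration $\Sigma_F \to \Sigma_{F_1}$ and within each fiber respects the common block structure, is always \emph{even} in the {\bf TR}-case — and the reason it is even is exactly that every fiber contributes a block of \emph{even} total length $2k_1 \cdot l(w^{\nu_w})$, whereas in the {\bf CM}-case fibers come in conjugate pairs of possibly odd individual lengths, so swapping $\nu_w \leftrightarrow \bar\nu_w$ can produce a genuine sign. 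I would organize the verification by first treating $F = F_1$ totally real trivially (then $k_1$ would have to be... no: $F$ is totally imaginary, so $k_1 \geq 1$ always), then the general {\bf TR}-case, reducing mod $2$ throughout.
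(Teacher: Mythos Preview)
Your approach is the same in spirit as the paper's --- group the factors of $e^*_{\Phi_{{}^\iota w}}$ into blocks over the fibers of $\varrho : \Sigma_F \to \Sigma_{F_1}$ and argue that only the permutation of these fiber-blocks contributes a sign, which is then trivially $+1$ because each block has even total degree. Your inter-fiber argument is correct and matches the paper exactly. But you miss one clean observation and leave a genuine gap precisely where you flag your ``main obstacle.''

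The missing observation: in the {\bf TR}-case, $F_1$ is totally real, so for any archimedean place $v$ of $F$ both $\eta_v$ and $\bar\eta_v$ restrict to the \emph{same} $\nu \in \Sigma_{F_1}$; since $w$ is base-change from $F_1$ (Rem.~\ref{rem:w-base-change}) this forces $w^{\eta_v}=w^{\bar\eta_v}$, and then the balanced condition $l(w^{\eta_v})+l(w^{\bar\eta_v})=nn'$ gives $l(w^\eta)=nn'/2$ for \emph{every} $\eta \in \Sigma_F$. So the length is not merely constant within a fiber (as you allow with your fiber-dependent $q$) but is the single number $nn'/2$ everywhere. This is the first thing the paper derives, and it is what makes the remainder of the argument short.

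On the intra-fiber step, your remark that $\omega\wedge\cdots\wedge\omega=0$ is a red herring: the $2k_1$ factors $e^*_{\Phi_{w^\eta}}$ for $\eta \in \varrho^{-1}(\nu)$ live in \emph{distinct} summands $(\u_{P_0}^\eta)^*$ of $\u_P^*\otimes\C$ and are certainly not equal as elements of the exterior algebra. What \emph{is} true --- and what the paper means by ``all the individual factors are identical'' --- is that they are all built from the \emph{same} ordered set of roots $\Phi_{w_\nu}$ (same Kostant element), so when one compares the fiber-block in $e^*_{\Phi_{{}^\iota w}}$ with the corresponding fiber-block in $e^*_{\Phi_{{}^{\gamma\circ\iota} w}}$, both are the wedge over the $2k_1$ summands, in the order inherited from the fixed ordering on $\Hom(F,E)$, of the same root-pattern; no intra-fiber reordering arises. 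Only the permutation $\pi_{F_1}(\gamma)$ of the $d_{F_1}$ fiber-blocks matters. Each fiber-block has degree $(nn'/2)\cdot 2k_1 = nn'k_1$, even since $nn'$ is even in the {\bf TR}-case (Cor.~\ref{cor:F-tot-real-nn'-even}), giving $\varepsilon_{\iota,w}(\gamma)=\varepsilon_{F_1}(\gamma)^{(nn'k_1)^2}=1$. The same goes for $w'$.
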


\medskip
\begin{proof}
Recall the notations from \ref{sec:notation-TR}: 
$\Sigma_{F_1} = \{\nu_1,\dots,\nu_{d_1}\}$, $\varrho : \Sigma_F \to \Sigma_{F_1}$, $k = 2k_1,$ and 
$\varrho^{-1}(\nu_j) = \{\eta_{j1}, \bar\eta_{j1},\dots, \eta_{jk_1} \bar\eta_{jk_1}\}.$ Since $\mu$ and $\mu'$ are strongly-pure weights that are the base-change from weight over 
$F_1$, the Kostant representative $w$ (and then so also $w'$) has the property that all the constituents $w^{\eta},$ as $\eta$ varies over $\varrho^{-1}(\nu_j),$ 
are copies of the same element of $\perm_N$--the Weyl group of $\GL_N$; in particular, since $w$ is balanced, $l(w^\eta) + l(w^{\bar\eta}) = nn'$ and $l(w^\eta) = l(w^{\bar\eta})$ since 
$\eta$ and $\bar\eta$ have the same restriction to $F_1$; hence $l(w^\eta) = nn'/2$. Recalling the notations from Sec.\,\ref{sec:galois-infty-bdry}, 
consider the wedge product
$$
e_{\Phi_{^\iota \! w, [\nu_j]}}^*
 \ := \ e_{\Phi_{^\iota \! w^{\eta_{j1}}}}^* \wedge e_{\Phi_{^\iota \! w^{\bar\eta_{j1}}}}^* \wedge  \cdots \wedge e_{\Phi_{^\iota \! w^{\eta_{jk_1}}}}^* \wedge e_{\Phi_{^\iota \! w^{\bar\eta_{jk_1}}}}^*. 
$$
All the individual factors such as $e_{\Phi_{^\iota \! w^{\eta_{ji}}}}^*$ or $e_{\Phi_{^\iota \! w^{\bar\eta_{ji}}}}^*$ are identical and have degree $nn'/2$. Hence the total degree of 
$e_{\Phi_{^\iota \! w, [\nu_j]}}^*$ is $nn'/2 \cdot k = nn'k_1.$ From \eqref{eqn:e-Phi} one gets: 
$$
e_{\Phi_{^\iota \! w}}^* \ = \ 
e_{\Phi_{^\iota \! w, [\nu_1]}}^* \wedge \cdots \wedge e_{\Phi_{^\iota \! w, [\nu_{d_1}]}}^*. 
$$
Denote the action of $\gamma$ on $\Sigma_{F_1},$ for the ordering fixed above, as $\pi_{F_1}(\gamma),$ and let $\varepsilon_{F_1}(\gamma)$ denote its signature. 
Then one has
$$
(1 \otimes \gamma) e_{\Phi_{^\iota \! w}}^* \ = \ 
\varepsilon_{F_1}(\gamma)^{(nn'k_1)^2} 
e_{\Phi_{^{\gamma \circ \iota} \! w}}^*; 
$$
from Def.\,\ref{def:signature}, one has $\varepsilon_{\iota, w}(\gamma) = \varepsilon_{F_1}(\gamma)^{(nn'k_1)^2} = 1$ since $nn'k_1 \equiv 0 \pmod{2}.$ 
Similarly, $\varepsilon_{\iota, w'}(\gamma) = 1.$
\end{proof}

\medskip
This concludes the proof of Thm.\,\ref{thm:main}. 

\medskip
\subsection{Compatibility with Deligne's Conjecture}
\label{sec:deligne}

\subsubsection{\bf Statement of Deligne's Conjecture}
In this subsection Deligne's celebrated conjecture on the special values of motivic $L$-functions is formulated for the ratios of successive 
successive critical $L$-values for Rankin--Selberg $L$-functions. 
The notations of \cite{deligne} will be freely used; a motive $M$ over $\Q$ with coefficients in a field $E$ will be thought in terms of its Betti, de Rham, and $\ell$-adic realizations. Attached to a critical motive $M$ are its periods $c^\pm(M) \in (E \otimes \C)^\times$ as in {\it loc.\,cit.,} that are well-defined in $(E \otimes \C)^\times/E^\times.$ We begin with a relation between the two periods over a totally imaginary base field $F$. Recall from the introduction that if $F$ is in the {\bf CM}-case, then $F_1$ is its maximal CM subfield which is totally imaginary quadratic 
over the totally real subfield $F_1$; suppose $F_1 = F_0(\sqrt{D})$ for a totally negative $D \in F_0$, then define 
$$
\Delta_{F_1} \ := \ \sqrt{N_{F_0/\Q}(D)}, \quad \Delta_F \ := \ \Delta_{F_1}^{[F:F_1]}.
$$ 
If $F$ is in the {\bf TR}-case, then $F_1 = F_0$ is the maximal totally real subfield, then define
$$
\Delta_{F_1} \ := \ 1, \quad \Delta_F \ := \ \Delta_{F_1}^{[F:F_1]} = 1. 
$$ 
A proof of the following proposition due to Deligne is given in \cite{deligne-raghuram}: 

\medskip
\begin{prop}
\label{prop:deligne}
Let $M_0$ be a pure motive of rank $n$ over a totally imaginary number field $F$ with coefficients in a number field $E$. Put $M = \Res_{F/\Q}(M_0),$ and suppose that $M$ has no middle Hodge type. Let $c^\pm(M)$ be the periods defined in \cite{deligne}. 
Then 
$$
\frac{c^+(M)}{c^-(M)} \ = \  (1 \otimes \Delta_F)^n, \quad \mbox{in $(E \otimes \C)^\times/ E^\times$.}
$$
\end{prop}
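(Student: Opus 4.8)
The plan is to reduce Proposition~\ref{prop:deligne} to a computation of Deligne periods purely in terms of Hodge data, exploiting that $M$ has no middle Hodge type. First I would recall Deligne's definitions: for $M = \Res_{F/\Q}(M_0)$ the Betti realization $M_B = \bigoplus_{\eta : F \to \C} M_{0,B,\eta}$ carries the action of $F_\infty := F \otimes \R$ and an involution $F_\infty$-linear $c$ coming from complex conjugation on coefficients twisted by the archimedean places; the de~Rham realization $M_{dR}$ is an $E \otimes F$-module. The periods $c^\pm(M) \in (E\otimes\C)^\times/E^\times$ are the determinants of the comparison isomorphism $M_B^\pm \otimes \C \iso (M_{dR}/F^0)\otimes\C$ (resp. with $F^\pm$ the appropriate pieces of the Hodge filtration). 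The key structural input is that since there is no middle Hodge type, $M_B^+$ and $M_B^-$ have the same rank, namely $\tfrac12 \dim M$, and the Hodge filtration splits the relevant spaces symmetrically, so $c^+(M)$ and $c^-(M)$ differ only by the effect of a single sign/quadratic-period factor at each archimedean place of $F$.

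The central step is to track, place by place, how the $F_\infty$-structure intervenes. For each $v \in \place_\infty$ with associated pair $\{\eta_v, \bar\eta_v\}$, the contribution to $c^+/c^-$ is governed by the behaviour of the Frobenius-at-infinity $F_\infty$ together with the de~Rham lattice over $F$; passing the basis of $M_{dR}$ as an $E\otimes F$-module to a basis over $E\otimes\C$ via the two embeddings $\eta_v, \bar\eta_v$ introduces exactly the discriminant-type factor $\theta_F$, and more precisely the quantity $\i^{d_F/2}\theta_F$ recorded in Sect.~1.1.1 (whose square is $\delta_{F/\Q}$ up to $\Q^\times$). I would then separate the {\bf CM}-case and the {\bf TR}-case. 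In the {\bf TR}-case the two embeddings over each archimedean place of $F$ restrict to the \emph{same} real embedding of $F_1 = F_0$, so the relevant quadratic character is trivial on the de~Rham side and $c^+(M)/c^-(M) = 1$ in $(E\otimes\C)^\times/E^\times$, matching $(1\otimes\Delta_F)^n = 1$. In the {\bf CM}-case, writing $F_1 = F_0(\sqrt D)$ with $D$ totally negative, the pair $\{\eta_v,\bar\eta_v\}$ is genuinely swapped by the nontrivial element of $\Gal(F_1/F_0)$, and the period ratio picks up the norm $N_{F_0/\Q}(D)$ raised to a power dictated by the rank $n$ and the degree $[F:F_1]$; carefully accounting for the multiplicity $[F:F_1]$ of each place of $F_1$ among the places of $F$, and for the rank-$n$ determinant, gives precisely $(1 \otimes \Delta_{F_1}^{[F:F_1]})^n = (1\otimes\Delta_F)^n$.

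Concretely, the steps in order: (i) unwind Deligne's period definitions for a restriction of scalars, reducing $c^\pm(\Res_{F/\Q}M_0)$ to an expression over the archimedean places of $F$; (ii) use the hypothesis ``no middle Hodge type'' to see that $c^+$ and $c^-$ are computed from the comparison isomorphism restricted to $F_\infty$-submodules of equal rank, differing only by the Frobenius-at-infinity action on the de~Rham side; (iii) compute that difference as a product of local quadratic factors, identifying the basis-change discriminant $\i^{d_F/2}\theta_F$; (iv) in the {\bf CM}-case identify the resulting quadratic factor with $\Delta_{F_1}$ via $F_1 = F_0(\sqrt D)$, and raise to the appropriate power $n \cdot [F:F_1]/([F:F_1]/2)$ after bookkeeping the fibers of $\Sigma_F \to \Sigma_{F_1}$; (v) in the {\bf TR}-case observe the factor is trivial. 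I expect the main obstacle to be step~(iii): pinning down the exact power of the discriminant/norm and the exact sign, since Deligne's periods are only well-defined modulo $E^\times$ and one must be scrupulous about which scalars can be absorbed into $E^\times$ versus which genuinely survive in $(E\otimes\C)^\times/E^\times$; getting the exponent to be exactly $n$ (rather than, say, $n$ times a place-count) requires carefully matching the $E\otimes F$-module structure of $M_{dR}$ against the $\bigoplus_\eta$ decomposition of $M_B$. Since the full argument is carried out in \cite{deligne-raghuram}, I would present the above as the strategy and defer the delicate period bookkeeping to that reference.
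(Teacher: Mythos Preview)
The paper does not actually prove this proposition: immediately before the statement it says ``A proof of the following proposition due to Deligne is given in \cite{deligne-raghuram},'' and no argument is supplied in the text. So there is no proof in the paper to compare your proposal against; the paper treats the result as a black box imported from the companion article with Deligne.

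Your sketch is a reasonable heuristic roadmap for how such a computation might go, and you are right to defer the actual bookkeeping to \cite{deligne-raghuram}. One caution: in your step (iii) you identify the local factor with $\i^{d_F/2}\theta_F$, but $\Delta_F$ is \emph{not} defined as (a power of) the discriminant---it is $N_{F_0/\Q}(D)^{[F:F_1]/2}$ in the {\bf CM}-case. The link between $|\delta_{F/\Q}|^{1/2}$ and $\i^{d_F/2}\Delta_F$ is the content of Lem.\,\ref{lem:compatibility_tot_imag}, which involves an extra factor $N_{F_1/\Q}(\delta_{F/F_1})^{1/2}$; so your place-by-place discriminant accounting would not directly produce $\Delta_F$ without a further argument separating these contributions. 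This is exactly the sort of delicate exponent/sign issue you flag as the main obstacle, and it is why the paper simply cites the external reference rather than attempting a self-contained proof.
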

Under the identification $E \otimes \C = \prod_{\iota : E \to \C} \C$, the element $1 \otimes \Delta_F$ is $\pm 1$ in each component of $(E \otimes \C)^\times/ E^\times$, since its square is trivial.  
Based on Prop.\,\ref{prop:deligne}, Deligne's conjecture \cite{deligne} for the ratios of successive critical values of the completed $L$-function of $M$ may be stated as:  

\medskip
\begin{con}[Deligne]
\label{con:deligne}
Let $M_0$ be a pure motive of rank $n$ over a totally imaginary $F$ with coefficients in $E$. Put $M = \Res_{F/\Q}(M_0),$ and suppose that 
$M$ has no middle Hodge type. For $\iota : E \to \C$ let $L(s, \iota, M)$ denote the completed $L$-function attached to $(M, \iota).$ Put 
$L(s,M) =  \{L(s, \iota, M)\}_{\iota : E \to \C}$ for the array of $L$-functions taking values in $E \otimes \C.$ Suppose $m$ and $m+1$ are critical 
integers for $L(s,M)$, and assuming that $L(m+1,M) \neq 0$, we have
$$
\frac{L(m,M)}{L(m+1,M)} \ = \  (1\otimes \i^{d_F/2} \Delta_F)^n, \quad \mbox{in $(E \otimes \C)/ E^\times$.}
$$
\end{con}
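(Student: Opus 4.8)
\medskip
The plan is to \emph{deduce} Conjecture~\ref{con:deligne} from Deligne's conjecture in its original form \cite[Conj.\,2.7]{deligne} together with Prop.\,\ref{prop:deligne}, thereby exhibiting it as a precise reformulation of Deligne's conjecture over a totally imaginary base field; granting the conjectural dictionary $\sigma_f\leftrightarrow M(\sigma_f)$ of \cite[Chap.\,7]{harder-raghuram-book} and taking $M_0=M(\sigma_f)\otimes M(\sigma'^\v_f)$, this then shows that the unconditional Thm.\,\ref{thm:main} confirms the prediction of Conjecture~\ref{con:deligne} for this class of motives. Throughout, equalities between elements of $E\otimes\C=\prod_{\iota : E \to \C}\C$ are read in $(E\otimes\C)^\times/E^\times$; write $L(s,M)$ for the completed $L$-function and $L(s,M)=L_\infty(s,M)\,L_{\mathrm{fin}}(s,M)$ for its factorization into the archimedean factor and the finite Euler product, the latter being the object to which Deligne's conjecture directly applies. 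First I would carry out the same preliminary reductions as in the proof of Thm.\,\ref{thm:main}: the functional equation of $L(s,M)$ (archimedean $\varepsilon$-factor constant, finite $\varepsilon$-factor a product of prime-powers, hence in $\iota(E)^\times$) lets one replace $m,m+1$ by any convenient pair of successive critical integers and, if needed, $M$ by its contragredient; and the case $L(m,M)L(m+1,M)=0$ reduces, by the part of Deligne's conjecture \cite[Conj.\,2.7, (ii)]{deligne} on orders of vanishing (due to B.~Gross), to the $\iota$-independence of the order of vanishing at a critical point, which for the Rankin--Selberg motives is Thm.\,\ref{thm:main}\,(1). So assume $L(m,M)L(m+1,M)\ne0$; then $M(m)$ and $M(m+1)$ are critical motives and Deligne's conjecture gives $L_{\mathrm{fin}}(m,M)=L_{\mathrm{fin}}(0,M(m))=c^+(M(m))$ and $L_{\mathrm{fin}}(m+1,M)=c^+(M(m+1))$, whence
$$
\frac{L(m,M)}{L(m+1,M)}\ =\ \frac{L_\infty(m,M)}{L_\infty(m+1,M)}\cdot\frac{c^+(M(m))}{c^+(M(m+1))}.
$$

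The crux is to evaluate the right-hand side. Because $F$ is totally imaginary, complex conjugation acts on the Betti realization $M_B=\bigoplus_{\eta : F \to \C}M_{0,B,\eta}$ by interchanging the summands indexed by $\eta$ and $\bar\eta=\c\circ\eta$, with no fixed index; hence $d^{+}(M)=d^{-}(M)=n\,{\sf r}=n d_F/2$ (where $d^{\pm}(M)=\dim_E M_B^{\pm}$), and the same holds for every Tate twist $M(j)$. Moreover Deligne's periods twist under $\Q(1)$ in the standard way (\cite[Sect.\,1--2]{deligne}): since complex conjugation acts by $-1$ on $\Q(1)_B$, the twist $M(m)\otimes\Q(1)=M(m+1)$ interchanges the two Betti eigenspaces and multiplies the period matrix by $2\pi\i$, so that $c^+(M(m+1))$ equals $(2\pi\i)^{d^{-}(M(m))}$ times $c^-(M(m))$, and likewise $c^+(M(m))$ equals a power of $2\pi\i$ times $c^{+}(M)$ or $c^{-}(M)$ according to the parity of $m$; meanwhile $L_\infty(m,M)$ and $L_\infty(m+1,M)$ are, at these two critical points, finite non-zero numbers lying in $\pi^{\bullet}\Q^\times$ with exponents determined by the Hodge numbers of $M$ (finiteness on both sides being exactly the ``no middle Hodge type'' hypothesis). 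Combining these, the powers of $2\pi$ from the periods and from the $\Gamma$-factors cancel, and the right-hand side collapses to $(1\otimes\i^{d_F/2})^{\pm n}\bigl(c^+(M)/c^-(M)\bigr)^{\pm1}$ with the two signs equal. As $(1\otimes\i^{d_F/2})^2\in E^\times$ and $\bigl(c^+(M)/c^-(M)\bigr)^2\in E^\times$ (the latter by the remark following Prop.\,\ref{prop:deligne}), the sign ambiguities are immaterial, and one obtains $L(m,M)/L(m+1,M)=(1\otimes\i^{d_F/2})^{n}\bigl(c^+(M)/c^-(M)\bigr)$. This archimedean--period bookkeeping is the precise counterpart, over a totally imaginary base, of the computations of \cite[Chap.\,7]{harder-raghuram-book} over a totally real field and, for $\GL_1$, of Harder \cite{harder-inventiones}.

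Finally, substituting $c^+(M)/c^-(M)=(1\otimes\Delta_F)^n$ from Prop.\,\ref{prop:deligne} gives
$$
\frac{L(m,M)}{L(m+1,M)}\ =\ \bigl(1\otimes\i^{d_F/2}\Delta_F\bigr)^{n}\qquad\text{in }(E\otimes\C)^\times/E^\times,
$$
which is Conjecture~\ref{con:deligne}. To conclude the compatibility with Thm.\,\ref{thm:main} (there the rank is $nn'$, so $n$ above becomes $nn'$), one checks that $\bigl(1\otimes\i^{d_F/2}\Delta_F\bigr)^{nn'}$ and $1\otimes|\delta_{F/\Q}|^{-nn'/2}$ have the same class in $(E\otimes\C)^\times/E^\times$; this follows from $\i^{d_F/2}\theta_F\in\R^\times$ with $\theta_F^2=\delta_{F/\Q}$ (Sect.\,\ref{sec:prelims}) and the definition of $\Delta_F$, and in the {\bf TR}-case both sides are trivial since the existence of a critical point forces $nn'$ even (Cor.\,\ref{cor:F-tot-real-nn'-even}). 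This last step is carried out in \ref{sec:thm-implies-conj}.

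\medskip
The main obstacle is the period--$\Gamma$-factor bookkeeping of the second paragraph: pinning down the exponent of $\i$ exactly and showing that the powers of $2\pi$ coming from the Deligne periods are precisely cancelled by those from the archimedean $\Gamma$-factors of the completed $L$-function. This requires a careful Hodge-theoretic accounting of the jumps of the de~Rham filtration of $M$ together with the explicit recipe for $L_\infty(s,M)$ in terms of the Hodge numbers, and it is here that the totally-imaginary and no-middle-type hypotheses are both used --- over a totally real $F$ the analogous power of $\i$ disappears, in accordance with Thm.\,\ref{thm:main}\,(3).
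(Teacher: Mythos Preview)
Your approach is essentially the same as the paper's: derive the reformulated conjecture from Deligne's original \cite[Conj.\,2.7--2.8, (5.1.8)]{deligne} by computing how the periods $c^\pm$ behave under Tate twist (using $d^+(M)=d^-(M)=nd_F/2$ since $F$ is totally imaginary), computing the ratio of archimedean $\Gamma$-factors, observing that the powers of $2\pi$ cancel leaving a power of $\i$, and then inserting Prop.\,\ref{prop:deligne}. The paper does exactly this in the paragraph following the conjecture, only more tersely---it directly cites Conj.\,2.8 and (5.1.8) of \cite{deligne} to write $L_f(m,M)/L_f(m+1,M)=(1\otimes(2\pi\i)^{-nd_F/2})\,c^\pm(M)/c^\mp(M)$, then notes $L_\infty(m,M)/L_\infty(m+1,M)=(1\otimes(2\pi)^{nd_F/2})$, and combines.

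One remark: your preliminary reductions (functional equation, reduction to a convenient pair of critical points, the vanishing case) are unnecessary here. Conjecture~\ref{con:deligne} is not a theorem to be proved but a reformulation of Deligne's conjecture, and the derivation from \cite{deligne} works uniformly for every pair $m,m+1$ of critical integers with $L(m+1,M)\neq 0$; no bootstrapping from a special pair is needed. Those reductions belong to the proof of Thm.\,\ref{thm:main}, not here. Likewise, your final paragraph on matching $(1\otimes\i^{d_F/2}\Delta_F)^{nn'}$ with $|\delta_{F/\Q}|^{-nn'/2}$ is the content of Sect.\,\ref{sec:thm-implies-conj} (Prop.\,\ref{prop:signatures-equal} and Lem.\,\ref{lem:compatibility_tot_imag}), as you correctly note, and is separate from the derivation of the conjecture itself.
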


A word of explanation is in order, since, in \cite{deligne}, Deligne formulated his conjecture for critical values of $L_f(s,M)$--the finite-part of the $L$-function attached to $M.$ 
From Conj.\,2.8 and (5.1.8) of \cite{deligne} for $M$ as above, one can deduce : 
$$
\frac{L_f(m, M)}{L_f(m+1, M)} \ = \ (1 \otimes (2 \bpi \i)^{-n \cdot d_F/2}) \, \frac{c^\pm(M)}{c^\mp(M)}, \quad \mbox{in $E \otimes \C$}.
$$
Knowing the $L$-factor at infinity one has: $L_\infty(m, M)/L_\infty(m+1, M) = 1 \otimes (2\bpi)^{d_F/2};$ hence for the completed $L$-function, one can deduce:
\begin{equation}
\label{eqn:deligne-con-ratios}
\frac{L(m, M)}{L(m+1, M)} \ = \ (1 \otimes \i^{n\cdot d_F/2}) \, \frac{c^\pm(M)}{c^\mp(M)}. 
\end{equation}
It is clear now that \eqref{eqn:deligne-con-ratios} and Prop.\,\ref{prop:deligne} give Conj.\,\ref{con:deligne}. 

\medskip
There is conjectural correspondence between $\sigma_f \in \Coh_{!!}(\Res_{F/\Q}(\GL_n/F), \mu/E)$ and a pure regular motive $M(\sigma_f)$ of rank $n$ over $F$ with coefficients
in $E$ (see \cite{clozel} or \cite[Chap.\,7]{harder-raghuram-book}). Given such a $\sigma_f$ and also $\sigma'_f \in \Coh_{!!}(\Res_{F/\Q}(\GL_{n'}/F), \mu'/E)$,  
Conj.\,\ref{con:deligne} applied to  $M = M(\sigma_f) \otimes M(\sigma_f'^\v)$ gives the following conjecture 
or the Rankin--Selberg $L$-functions $L(s, {}^\iota\sigma \times {}^\iota\sigma'^\v)$:

\begin{con}[Deligne's conjecture for Rankin--Selberg $L$-functions]
\label{con:deligne-rankin-selberg}
Let the notations and hypotheses be as in Thm.\,\ref{thm:main}. 
Then: 
$$
(\i^{d_F/2} \Delta_F)^{nn'} \, \frac{L(m, {}^\iota\sigma \times {}^\iota\sigma'^\v)}{L(m+1, {}^\iota\sigma \times {}^\iota\sigma'^\v)} \ \in \iota(E), 
$$
and, furthermore, for every $\gamma \in \Gal(\bar\Q/\Q)$ we have the reciprocity law: 
$$
\gamma \left(
(\i^{d_F/2} \Delta_F)^{nn'} \, \frac{L(m, {}^\iota\sigma \times {}^\iota\sigma'^\v)}{L(m+1, {}^\iota\sigma \times {}^\iota\sigma'^\v)} \right) \ = \ 
(\i^{d_F/2} \Delta_F)^{nn'} \, 
\frac{L(m, {}^{\gamma\circ\iota}\sigma \times {}^{\gamma\circ\iota}\sigma'^\v)}{L(m+1, {}^{\gamma\circ\iota}\sigma \times {}^{\gamma\circ\iota}\sigma'^\v)}.
$$
\end{con}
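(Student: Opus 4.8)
The statement to be proven is, in disguise, nothing more than Thm.\ref{thm:main}: the normalizing constant $(\i^{d_F/2}\Delta_F)^{nn'}$ — which is the shape predicted by Conj.\ref{con:deligne} together with the conjectural dictionary $\sigma_f\leftrightarrow M(\sigma_f)$ and Prop.\ref{prop:deligne} — is to be matched against the data $\big(|\delta_{F/\Q}|^{-nn'/2},\ \varepsilon_{\iota,w}(\gamma)\varepsilon_{\iota,w'}(\gamma)\big)$ furnished by Thm.\ref{thm:main}(ii)--(iii). After a harmless enlargement of $E$ — say so that $E$ contains $\i$ and $\Delta_{F_1}=\sqrt{N_{F_0/\Q}(D)}$; it already contains $\theta_F$, which lies in the Galois closure of $F$ — set
\[
\rho_F\ :=\ (\i^{d_F/2}\Delta_F)^{nn'}\cdot|\delta_{F/\Q}|^{nn'/2}\ \in\ \iota(E)^{\times},
\]
so that $(\i^{d_F/2}\Delta_F)^{nn'}=\rho_F\cdot|\delta_{F/\Q}|^{-nn'/2}$. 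Using $(\i^{d_F/2}\Delta_F)^2=(-1)^{d_F/2}\,\Delta_{F_1}^{2[F:F_1]}=(-1)^{d_F/2}N_{F_0/\Q}(D)^{[F:F_1]}\in\Q^{\times}$ in the {\bf CM}-case (and $=1$ in the {\bf TR}-case), together with $|\delta_{F/\Q}|\in\Q^{\times}$, one sees $\rho_F^2\in\Q^{\times}$; in particular $\rho_F\in\Q^{\times}$ whenever $nn'$ is even.

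For the \textbf{rationality assertion}: in the {\bf CM}-case, Thm.\ref{thm:main}(ii) gives $|\delta_{F/\Q}|^{-nn'/2}\,L(m,{}^\iota\sigma\times{}^\iota\sigma'^\v)/L(m+1,{}^\iota\sigma\times{}^\iota\sigma'^\v)\in\iota(E)$ when $L(m+1,{}^\iota\sigma\times{}^\iota\sigma'^\v)\neq0$, and multiplying through by $\rho_F\in\iota(E)^{\times}$ yields $(\i^{d_F/2}\Delta_F)^{nn'}\,L(m,{}^\iota\sigma\times{}^\iota\sigma'^\v)/L(m+1,{}^\iota\sigma\times{}^\iota\sigma'^\v)\in\iota(E)$; the vanishing case is Thm.\ref{thm:main}(i). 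In the {\bf TR}-case the hypothesis forces $nn'$ even by Cor.\ref{cor:F-tot-real-nn'-even}, so $(\i^{d_F/2}\Delta_F)^{nn'}=1$ and $|\delta_{F/\Q}|^{-nn'/2}\in\Q^{\times}$, and the assertion is literally Thm.\ref{thm:main}(iii).

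For the \textbf{reciprocity law}: writing $(\i^{d_F/2}\Delta_F)^{nn'}=\rho_F|\delta_{F/\Q}|^{-nn'/2}$ and applying $\gamma\in\Gal(\bar\Q/\Q)$ using Thm.\ref{thm:main}(ii), the identity to be proven collapses to
\[
\gamma(\rho_F)\ =\ \varepsilon_{\iota,w}(\gamma)\cdot\varepsilon_{\iota,w'}(\gamma)\cdot\rho_F,
\]
which in the {\bf TR}-case is $1=1$ by Lem.\ref{lem:trivial-signature-TR} together with $\rho_F\in\Q^{\times}$. Since $\rho_F^2\in\Q^{\times}$, the quantity $\gamma(\rho_F)/\rho_F=\pm1$ is the quadratic character of $\Q$ attached to the square class of $\rho_F^2$; when $nn'$ is even it is trivial, so the content is concentrated in the case $nn'$ odd, where necessarily $F$ is {\bf CM} (Cor.\ref{cor:F-tot-real-nn'-even}) and, by Prop.\ref{prop:strong-pure-weights-base-change}, the balanced $w$ (hence $w'$, via Lem.\ref{lem:kostant-P-Q}) is base-changed from $F_1$. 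One then computes $\varepsilon_{\iota,w}(\gamma)\varepsilon_{\iota,w'}(\gamma)$ along the lines of Lem.\ref{lem:trivial-signature-TR}: the wedge $e^*_{\Phi_{{}^\iota w}}$ of \eqref{eqn:e-Phi} breaks, over each archimedean place $w_j$ of $F_1$, into $[F:F_1]$ equal factors of degree $l(w^{\eta})$ for the embeddings $\eta$ over $\nu_j$ and $[F:F_1]$ equal factors of degree $l(w^{\bar\eta})=nn'-l(w^{\eta})$ over $\bar\nu_j$, while $\gamma$ either preserves or, via \eqref{eqn:galois-tot-imag-inner}, interchanges the pair $\{\nu_j,\bar\nu_j\}$; reordering $e^*_{\Phi_{{}^{\gamma\circ\iota}w}}$ back to $e^*_{\Phi_{{}^\iota w}}$ produces a sign built from the signature of the permutation $\gamma$ induces on $\Sigma_{F_1}$ and from the parities of the products $l(w^{\eta})\,l(w^{\bar\eta})$. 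The claim is that this sign equals $\gamma(\rho_F)/\rho_F$, whose $\theta_F$-part is precisely the signature of that permutation and whose $\Delta_F$-part is governed by the same data.

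The \textbf{main obstacle} is exactly this final sign comparison in the {\bf CM}-case: one must verify, place by place over $\place_\infty(F_1)$, that the block-reordering signature coming from $e^*_{\Phi_{{}^\iota w}}\otimes e^*_{\Phi_{{}^\iota w'}}$ equals the quadratic twist of $(\i^{d_F/2}\theta_F)^{nn'}$ times that of $\Delta_F^{\,nn'}$, the subtle point being that $l(w^{\eta})$ and $l(w^{\bar\eta})$ may be distinct — the phenomenon absent both in the {\bf TR}-case and when $F$ is itself a CM field (cf.\ \cite{raghuram-imrn}), which is why the signatures are trivial in those settings. The one remaining ingredient, relating $|\delta_{F/\Q}|$ to $N_{F_0/\Q}(D)^{[F:F_1]}$ up to norms and squares through the tower $\Q\subseteq F_0\subseteq F_1\subseteq F$, is a routine use of the multiplicativity of discriminants in towers, and it is this computation that dictates the shape of $\Delta_F$ fixed in \ref{sec:deligne}; a self-contained account of the whole argument is provided in \cite{deligne-raghuram}.
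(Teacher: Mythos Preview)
Your overall strategy is exactly the paper's: reduce Conj.\,\ref{con:deligne-rankin-selberg} to Thm.\,\ref{thm:main} via the identity
\[
\frac{\gamma\big((\i^{d_F/2}\Delta_F)^{nn'}\big)}{(\i^{d_F/2}\Delta_F)^{nn'}}
\ = \
\frac{\gamma\big(|\delta_{F/\Q}|^{nn'/2}\big)}{|\delta_{F/\Q}|^{nn'/2}}\cdot
\varepsilon_{\iota,w}(\gamma)\,\varepsilon_{\iota,w'}(\gamma),
\]
which is your $\gamma(\rho_F)=\varepsilon_{\iota,w}(\gamma)\varepsilon_{\iota,w'}(\gamma)\,\rho_F$ and the paper's Prop.\,\ref{prop:signatures-equal}. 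Your treatment of the {\bf TR}-case and of $nn'$ even is correct and matches the paper.

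The genuine gap is that you do not prove this identity in the remaining (and only nontrivial) case, $nn'$ odd with $F$ in the {\bf CM}-case: you sketch the block structure of $e^*_{\Phi_{{}^\iota w}}$ over $\place_\infty(F_1)$ and then defer to \cite{deligne-raghuram}. But the paper does \emph{not} defer this computation; it carries it out in full. The missing ingredients are two. First, the discriminant factorization (Lem.\,\ref{lem:compatibility_tot_imag}) giving $|\delta_{F/\Q}|^{1/2}=\i^{d_F/2}\Delta_F\cdot N_{F_1/\Q}(\delta_{F/F_1})^{1/2}$ in $\C^\times/\Q^\times$, which reduces the identity to
\[
\varepsilon_{\iota,w}(\gamma)\,\varepsilon_{\iota,w'}(\gamma)
\ = \
\frac{\gamma\big(N_{F_1/\Q}(\delta_{F/F_1})^{nn'/2}\big)}{N_{F_1/\Q}(\delta_{F/F_1})^{nn'/2}}.
\]
Second, and this is the crux you only gesture at: the paper factorizes the permutation $\pi_F(\gamma)$ of $\Sigma_F$ as $\pi_F'(\gamma)\circ\hat\pi_F(\gamma)$, where $\hat\pi_F(\gamma)$ is order-preserving on fibers of $\Sigma_F\to\Sigma_{F_1}$ and $\pi_F'(\gamma)$ induces the identity on $\Sigma_{F_1}$. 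It then shows (Sublemma~\ref{sublem-1}) that in the product $\varepsilon_{\iota,w}(\gamma)\varepsilon_{\iota,w'}(\gamma)$ the signs coming from the permutation of $\place_\infty(F_1)$ and from the interchanges $\nu_j\leftrightarrow\bar\nu_j$ cancel in pairs --- precisely because $l(w'^{\eta})=l(w^{\bar\eta})$ via Lem.\,\ref{lem:kostant-P-Q} --- leaving only $\varepsilon(\pi_F'(\gamma))^{nn'}$. Separately (Sublemma~\ref{sublem-2}), the block-determinant expression for $N_{F_1/\Q}(\delta_{F/F_1})^{1/2}$ shows that $\gamma$ acts on it exactly by $\varepsilon(\pi_F'(\gamma))$. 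Your sketch stops short of isolating $\pi_F'(\gamma)$ and of observing the cancellation between $w$ and $w'$; without that, the ``subtle point'' you flag (that $l(w^\eta)\neq l(w^{\bar\eta})$ in general) remains unresolved rather than being the mechanism that makes the proof work.
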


\medskip
\subsubsection{\bf Thm.\,\ref{thm:main} implies Conj.\,\ref{con:deligne-rankin-selberg}}
\label{sec:thm-implies-conj}
If $F$ is in the {\bf TR}-case, then existence of a critical point implies $nn'$ is even (Cor.\,\ref{cor:F-tot-real-nn'-even}), hence 
$(\i^{d_F/2} \Delta_F)^{nn'} = \i^{d_Fnn'/2} = \pm 1$ that may be absorbed into $\iota(E)$ and ignored from the Galois-equivarance, which is exactly 
the content of (3) of Thm.\,\ref{thm:main}. Assume henceforth that $F$ is in the {\bf CM}-case. The required 
compatibility follows from the equality of signatures in the following  

\begin{prop}
\label{prop:signatures-equal}
$$
\frac{ \gamma(|\delta_{F/\Q}|^{n n'/2})} {|\delta_{F/\Q}|^{n n'/2}} \cdot 
\varepsilon_{\iota, w}(\gamma) \cdot \varepsilon_{\iota, w'}(\gamma) \ = \ 
\frac{ \gamma((\i^{d_F/2} \Delta_F)^{nn'})}{(\i^{d_F/2} \Delta_F)^{nn'}} .
$$
\end{prop}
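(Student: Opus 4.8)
The plan is to reduce the claimed equality of signatures to a computation of how $\gamma$ acts on the discriminant term $|\delta_{F/\Q}|^{nn'/2}$ on the left, and on the motivic normalizing factor $(\i^{d_F/2}\Delta_F)^{nn'}$ on the right, and then to identify the product $\varepsilon_{\iota,w}(\gamma)\cdot\varepsilon_{\iota,w'}(\gamma)$ with precisely the ratio needed to make both sides agree. The key observation, which I would record first, is that since $F$ is in the \textbf{CM}-case, all three quantities appearing are square-roots of rational numbers: $|\delta_{F/\Q}|^{1/2}$ and $\i^{d_F/2}\theta_F$ are related by the identity in Sect.\,\ref{sec:prelims} ($\i^{d_F/2}\theta_F$ is real with absolute value $|\delta_{F/\Q}|^{1/2}$), and $\Delta_{F_1} = \sqrt{N_{F_0/\Q}(D)}$ where $F_1 = F_0(\sqrt D)$. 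So each of $\gamma(x)/x$ for these $x$ is a sign, and the whole identity lives in $\{\pm1\}$. I would set up a quadratic-character bookkeeping: attach to $\gamma$ the sign $\gamma(\sqrt{\delta_{F/\Q}})/\sqrt{\delta_{F/\Q}}$, the sign $\gamma(\i^{d_F/2}\theta_F)/(\i^{d_F/2}\theta_F)$, and the sign coming from $\Delta_{F_1}$, and note the first two differ by $\gamma(\i^{d_F/2})/\i^{d_F/2}$, which is controlled by $d_F/2 \bmod 2$.

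The heart of the matter is the identification of $\varepsilon_{\iota,w}(\gamma)\varepsilon_{\iota,w'}(\gamma)$. Here I would exploit the fact, from Rem.\,\ref{rem:w-base-change} and Prop.\,\ref{prop:strong-pure-weights-base-change}, that $w$ (hence $w'$ via Lem.\,\ref{lem:kostant-P-Q}) is a base-change Kostant representative from $F_1$: the constituents $w^\eta$ for $\eta$ in a fixed fiber of $\varrho:\Sigma_F\to\Sigma_{F_1}$ are all the same element of $\perm_N$. As in the proof of Lem.\,\ref{lem:trivial-signature-TR}, I would group the wedge factors $e_{\Phi_{{}^\iota w}}^*$ by fibers over $\Sigma_{F_1}$, so that $e_{\Phi_{{}^\iota w}}^* = \bigwedge_{j} e_{\Phi_{{}^\iota w,[\nu_j]}}^*$ with each fiber-block of degree $l(w^{\eta})\cdot k$ (now $k=[F:F_1]$, not necessarily with a factor of $2$ factored out as in the \textbf{TR}-case). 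The permutation $\gamma$ acts on $\Sigma_{F_1}$, possibly composing with complex conjugation within fibers, and rearranges these blocks; the resulting sign is a power of $\varepsilon_{F_1}(\gamma)$ — the signature of the induced permutation of $\place_\infty(F_1)$ — times signs coming from the within-fiber $\eta\leftrightarrow\bar\eta$ swaps governed by \eqref{eqn:galois-tot-imag-inner}. Crucially, since $w$ and $w'=w_Pw$ satisfy $l(w^\eta)+l(w'^\eta)=\dim(U_{P_0})=nn'$ for every $\eta$, the degrees of the two wedge-products $e_{\Phi_{{}^\iota w}}^*$ and $e_{\Phi_{{}^\iota w'}}^*$ are complementary in $\wedge^\bullet(\u_P^*)$, and I expect the block-permutation signs to multiply out to $\varepsilon_{F_1}(\gamma)^{nn'\cdot(\text{something})}$, with the within-fiber-conjugation contributions cancelling between $w$ and $w'$ for parity reasons.

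The main obstacle, as I see it, will be pinning down the exact parity of the exponent of $\varepsilon_{F_1}(\gamma)$ and matching it against the parity of the discriminant sign $\gamma(\sqrt{\delta_{F/\Q}})/\sqrt{\delta_{F/\Q}}$. For this I would use the classical fact that the sign $\gamma(\sqrt{\delta_{F/\Q}})/\sqrt{\delta_{F/\Q}}$ is exactly $\varepsilon_{\Sigma_F}(\gamma)$, the signature of the permutation $\gamma$ induces on $\Sigma_F = \Hom(F,\C)$ via $\iota$ (Galois acting on the roots of the defining polynomial), and similarly $\gamma(\Delta_{F_1})/\Delta_{F_1} = \varepsilon_{\Sigma_{F_1}}(\gamma)^{\text{?}}$ via $N_{F_0/\Q}(D)$. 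One then needs the compatibility: the permutation of $\Sigma_F$ factors through the permutation of $\Sigma_{F_1}$ up to the within-fiber conjugations, with each fiber of size $k$, so $\varepsilon_{\Sigma_F}(\gamma) = \varepsilon_{\Sigma_{F_1}}(\gamma)^{k}\cdot(\text{conjugation signs})$. Threading this through $\i^{d_F/2}\theta_F$ versus $\sqrt{\delta_{F/\Q}}$, and reconciling the extra $\i^{d_F/2}$ with $\Delta_F = \Delta_{F_1}^{[F:F_1]/2}$ (the $[F:F_1]/2$ versus $[F:F_1]$ discrepancy between $\Delta_F$ in the two formulations must be reconciled carefully — I would cross-check against the restated form in Sect.\,\ref{sec:thm-implies-conj} where the exponent is $[F:F_1]$), is the delicate combinatorial-numerical computation I expect to be tedious but, in the spirit of the paper's combinatorial lemma, ultimately mechanical. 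I would close by assembling: left side $= \varepsilon_{\Sigma_F}(\gamma)^{nn'}\cdot\varepsilon_{\iota,w}(\gamma)\varepsilon_{\iota,w'}(\gamma)$, right side $= \varepsilon_{\Sigma_F}(\gamma)^{nn'}\cdot(\gamma(\i^{d_F/2}\Delta_F)/\i^{d_F/2}\Delta_F)^{nn'}\cdot(\text{discriminant correction})$, and checking the leftover signs coincide using the base-change structure of $w$.
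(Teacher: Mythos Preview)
Your plan is broadly on the right track---grouping wedge factors by fibers of $\varrho:\Sigma_F\to\Sigma_{F_1}$, exploiting that $w$ and $w'$ are base-changed from $F_1$, and expecting complementary lengths $l(w^\eta)+l(w'^\eta)=nn'$ to produce cancellation---but it has a genuine gap in the signature analysis, and it is missing the organizing intermediate object that makes the paper's proof clean.

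The paper's route is to first prove (Lem.\,\ref{lem:compatibility_tot_imag}) the identity
\[
|\delta_{F/\Q}|^{1/2} \ = \ \i^{d_F/2}\,\Delta_F\,\bigl(N_{F_1/\Q}(\delta_{F/F_1})\bigr)^{1/2} \quad \text{in }\C^\times/\Q^\times,
\]
via transitivity of the discriminant for the tower $F/F_1/F_0/\Q$. This immediately reduces the proposition to
\[
\varepsilon_{\iota,w}(\gamma)\,\varepsilon_{\iota,w'}(\gamma) \ = \ \frac{\gamma\bigl(N_{F_1/\Q}(\delta_{F/F_1})^{nn'/2}\bigr)}{N_{F_1/\Q}(\delta_{F/F_1})^{nn'/2}},
\]
so all the business with $\i^{d_F/2}$ and $\Delta_F$ evaporates in one stroke. (Incidentally, your worry about a discrepancy in $\Delta_F$ is unfounded: $\Delta_{F_1}=\sqrt{N_{F_0/\Q}(D)}$ and $\Delta_F=\Delta_{F_1}^{[F:F_1]}=N_{F_0/\Q}(D)^{[F:F_1]/2}$ are consistent.)

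The real gap is in your fiber analysis. You account for two sources of sign: (a) the permutation $\gamma$ induces on $\place_\infty(F_1)$, rearranging the fiber-blocks; and (b) the ``within-fiber $\eta\leftrightarrow\bar\eta$ swaps''. But $\eta$ and $\bar\eta$ lie in \emph{different} fibers of $\Sigma_F\to\Sigma_{F_1}$ (over $\nu$ and $\bar\nu$ respectively), so (b) is really a swap of the two sub-blocks $e^*_{\Phi_{[\nu_j]}}$ and $e^*_{\Phi_{[\bar\nu_j]}}$ inside the $w_j$-block. You are missing a third source of sign: within each single fiber $\Sigma_F(\nu)$ of size $k=[F:F_1]$, the action of $\gamma$ can induce an \emph{arbitrary} permutation $\pi'_{\Sigma_F(\nu)}(\gamma)$, and this contributes $\varepsilon(\pi'_{\Sigma_F(\nu)}(\gamma))^{l_\nu}$ to $\varepsilon_{\iota,w}(\gamma)$ and $\varepsilon(\pi'_{\Sigma_F(\nu)}(\gamma))^{l_\nu^*}$ to $\varepsilon_{\iota,w'}(\gamma)$ (where $l_\nu+l_\nu^*=nn'$). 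Contributions (a) and (b) do cancel between $w$ and $w'$ exactly as you anticipated, but contribution (c) does \emph{not}: the product gives $\prod_\nu \varepsilon(\pi'_{\Sigma_F(\nu)}(\gamma))^{nn'}=\varepsilon(\pi'_F(\gamma))^{nn'}$, where $\pi'_F(\gamma)$ is the ``within-fiber'' part of the factorization $\pi_F(\gamma)=\pi'_F(\gamma)\circ\hat\pi_F(\gamma)$ (with $\hat\pi_F(\gamma)$ the order-preserving lift of $\pi_{F_1}(\gamma)$). The paper then identifies $\varepsilon(\pi'_F(\gamma))$ with the Galois action on $N_{F_1/\Q}(\delta_{F/F_1})^{1/2}$ by writing the latter as a block-diagonal determinant with one $k\times k$ block $[\rho_i^{\nu}(\omega_j)]$ per $\nu\in\Sigma_{F_1}$; under $\gamma$ the blocks permute (no sign change) and within each block the rows permute by $\pi'_{\Sigma_F(\nu)}(\gamma)$, giving exactly $\varepsilon(\pi'_F(\gamma))$. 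Without isolating $N_{F_1/\Q}(\delta_{F/F_1})$ and this third within-fiber contribution, your bookkeeping will not close.
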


The proof uses the following lemma. 

\medskip
\begin{lemma}
\label{lem:compatibility_tot_imag}
Let $F$ be a totally imaginary field in the {\bf CM}-case and suppose $F_1$ the maximal CM subfield of $F$. 
Then, as elements of $\C^\times/\Q^\times$, we have: 
$$
|\delta_{F/\Q}|^{1/2} \ = \ 
\i^{d_F/2} \cdot \Delta_F \cdot \left(N_{F_1/\Q}(\delta_{F/F_1})\right)^{1/2}.
$$
\end{lemma}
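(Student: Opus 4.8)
The plan is to compare the discriminant of $F$ over $\Q$ with the discriminant of $F$ over $F_1$ via the tower formula, and then to identify the discriminant of $F_1$ over $\Q$ with the stated quantity $\i^{d_{F_1}} \cdot \Delta_{F_1}^2$ (before raising to the power $[F:F_1]$). First I would recall the conductor–discriminant / tower formula for discriminants: for the tower $\Q \subset F_1 \subset F$ one has
$$
\delta_{F/\Q} \ = \ N_{F_1/\Q}(\delta_{F/F_1}) \cdot \delta_{F_1/\Q}^{[F:F_1]} \,,
$$
so that, taking absolute values and square roots, in $\C^\times/\Q^\times$,
$$
|\delta_{F/\Q}|^{1/2} \ = \ \left(N_{F_1/\Q}(\delta_{F/F_1})\right)^{1/2} \cdot |\delta_{F_1/\Q}|^{[F:F_1]/2} \,.
$$
Thus the lemma reduces to showing $|\delta_{F_1/\Q}|^{1/2} = \i^{d_{F_1}/2}\,\Delta_{F_1}$ in $\C^\times/\Q^\times$, after noting $d_F = [F:F_1]\, d_{F_1}$ so that $(\i^{d_{F_1}/2})^{[F:F_1]} = \i^{d_F/2}$, and $\Delta_F = \Delta_{F_1}^{[F:F_1]}$.

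Next I would treat the quadratic extension $F_1 = F_0(\sqrt{D})$ with $D \in F_0$ totally negative. Here the relative discriminant $\delta_{F_1/F_0}$ divides $4D$ and equals $4D$ up to a square in $F_0^\times$ (indeed $\delta_{F_1/F_0}$ and $D$ differ by the square of the different's contribution, which is a square in $F_0^\times$); hence by the tower formula applied to $\Q \subset F_0 \subset F_1$,
$$
\delta_{F_1/\Q} \ = \ N_{F_0/\Q}(\delta_{F_1/F_0}) \cdot \delta_{F_0/\Q}^{2} \ \equiv \ N_{F_0/\Q}(D) \pmod{(\Q^\times)^2}\,.
$$
Since $F_0$ is totally real of degree $d_{F_0} = d_{F_1}/2$ and $D$ is totally negative, $N_{F_0/\Q}(D)$ is a nonzero rational with sign $(-1)^{d_{F_0}} = (-1)^{d_{F_1}/2}$. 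Therefore, writing $N_{F_0/\Q}(D) = (-1)^{d_{F_1}/2}\,|N_{F_0/\Q}(D)|$, we get in $\C^\times/\Q^\times$:
$$
|\delta_{F_1/\Q}|^{1/2} \ = \ |N_{F_0/\Q}(D)|^{1/2} \ = \ \Delta_{F_1}\,,
$$
while $\delta_{F_1/\Q}^{1/2} = \i^{d_{F_1}/2}\,|N_{F_0/\Q}(D)|^{1/2} = \i^{d_{F_1}/2}\,\Delta_{F_1}$ when one chooses the square root consistently with the definition $\i = \sqrt{-1}$. Combining this with the tower formula above and raising to the appropriate power yields the claimed identity
$$
|\delta_{F/\Q}|^{1/2} \ = \ \i^{d_F/2} \cdot \Delta_F \cdot \left(N_{F_1/\Q}(\delta_{F/F_1})\right)^{1/2}
$$
in $\C^\times/\Q^\times$.

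The main obstacle I anticipate is bookkeeping the square roots modulo $\Q^\times$ correctly: the statement is an equality in $\C^\times/\Q^\times$, so one must be careful that replacing $\delta_{F_1/F_0}$ by $D$, or $\delta_{F/F_1}$-contributions by their norms, only changes things by rational squares, and that the factor $\i^{d_F/2}$ is precisely the sign-ambiguity coming from $F$ (equivalently $F_1$) being totally imaginary — this is exactly the normalization already recorded in the preliminaries where it is noted that $\i^{d_F/2}\theta_F$ is real with absolute value $|\delta_{F/\Q}|^{1/2}$. I would phrase the final comparison in terms of $\theta_F$ and $\theta_{F_1}$ to make the sign tracking transparent, rather than manipulating abstract square roots. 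Once this is done, Proposition~\ref{prop:signatures-equal} follows by applying $\gamma \in \Gal(\bar\Q/\Q)$ to both sides of the lemma and comparing with how $\gamma$ acts on the wedge products $e_{\Phi_{{}^\iota w}}^*$, i.e.\ on $\varepsilon_{\iota,w}(\gamma)$ and $\varepsilon_{\iota,w'}(\gamma)$, using that $\gamma$ fixes the rational quantity $N_{F_1/\Q}(\delta_{F/F_1})$ up to squares and noting $d_F = 2\sf{r}$.
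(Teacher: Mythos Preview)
Your strategy via the discriminant tower formula is correct and is essentially the paper's own approach (the paper routes through the towers $F/F_0/\Q$ and $F/F_1/F_0$, whereas you use $F/F_1/\Q$ and $F_1/F_0/\Q$; these are equivalent rearrangements). However, there is a genuine error in your sign bookkeeping, precisely at the step you yourself flagged as delicate.

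You write $|N_{F_0/\Q}(D)|^{1/2} = \Delta_{F_1}$, but this is false in general. By definition $\Delta_{F_1} = \sqrt{N_{F_0/\Q}(D)}$ is a square root of the \emph{signed} norm, not of its absolute value. Since $D$ is totally negative, $N_{F_0/\Q}(D)$ has sign $(-1)^{[F_0:\Q]} = (-1)^{d_{F_1}/2}$; when $d_{F_1}/2$ is odd (e.g.\ $F_1$ imaginary quadratic) $\Delta_{F_1}$ is purely imaginary and cannot equal the positive real number $|N_{F_0/\Q}(D)|^{1/2}$ modulo $\Q^\times$. The correct relation in $\C^\times/\Q^\times$ is
\[
|N_{F_0/\Q}(D)|^{1/2} \ = \ \i^{d_{F_1}/2}\,\Delta_{F_1},
\]
which is exactly the factor you need: with this fix your reduction $|\delta_{F_1/\Q}|^{1/2} = \i^{d_{F_1}/2}\Delta_{F_1}$ goes through, and raising to the $[F:F_1]$-th power gives the lemma. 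As written, your displayed line yields $|\delta_{F_1/\Q}|^{1/2} = \Delta_{F_1}$, contradicting the very reduction you stated two lines earlier; your ``while'' line compounds the same mistake on the other side.

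One further small omission: in passing from $|\delta_{F/\Q}|^{1/2}$ to $(N_{F_1/\Q}(\delta_{F/F_1}))^{1/2}\cdot|\delta_{F_1/\Q}|^{[F:F_1]/2}$ you silently drop an absolute value on the first factor. This is legitimate, but only because $N_{F_1/\Q}(\delta_{F/F_1})>0$ (as $F_1$ is CM, its complex embeddings come in conjugate pairs); you should say so, as the paper does.
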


\begin{proof}[Proof of Lem.\,\ref{lem:compatibility_tot_imag}]
Transitivity of discriminant for the tower of fields $F/F_0/\Q$ 
gives: 
$\delta_{F/\Q} = \delta_{F_0/\Q}^{[F:F_0]} \cdot N_{F_0/\Q}(\delta_{F/F_0}).$ Since the degree $[F:F_0] = 2[F:F_1]$ is even, one has  
$$
|\delta_{F/\Q}|^{1/2} \ = \ |N_{F_0/\Q}(\delta_{F/F_0})|^{1/2} \pmod{\Q^\times}.
$$
Next, one has 
$\delta_{F/F_0} = \delta_{F_1/F_0}^{[F:F_1]} \cdot N_{F_1/F_0}(\delta_{F/F_1}),$ by 
using transitivity of discriminant for the tower of fields $F/F_1/F_0$; using the $F_0$-basis $\{1, \sqrt{D}\}$ for 
$F_1$, one has $\delta_{F_1/F_0} = 4D;$ therefore, 
\begin{multline*}
N_{F_0/\Q}(\delta_{F/F_0}) \ = \ N_{F_0/\Q}(4D)^{[F:F_1]} \cdot N_{F_0/\Q}(N_{F_1/F_0}(\delta_{F/F_1})) \\ 
= \ N_{F_0/\Q}(D)^{[F:F_1]} \cdot N_{F_1/\Q}(\delta_{F/F_1}) \pmod{\Q^\times{}^2}.
\end{multline*}
Since $F_1/\Q$ is a CM-extension, $N_{F_1/\Q}(\delta_{F/F_1}) > 0$; hence 
$$
|\delta_{F/\Q}|^{1/2} \ = \  |N_{F_0/\Q}(\delta_{F/F_0})|^{1/2} \ = \ 
|N_{F_0/\Q}(D)|^{[F:F_1]/2} \cdot \left(N_{F_1/\Q}(\delta_{F/F_1})\right)^{1/2} \pmod{\Q^\times}. 
$$
Since $D \ll 0$ in $F_0$, we see that $(-1)^{[F_0:\Q]}N_{F_0/\Q}(D) > 0$. Hence, 
\begin{multline*}
|N_{F_0/\Q}(D)|^{[F:F_1]/2} \ = \ ((-1)^{[F_0:\Q]}N_{F_0/\Q}(D))^{[F:F_1]/2} \ = \ 
(\i^{[F_0:\Q]} \Delta_{F_1})^{[F:F_1]} \\ 
= \ \i^{[F_0:\Q][F:F_1]} \Delta_F \ = \ \i^{d_F/2} \Delta_F.
\end{multline*}
\end{proof}

After the above lemma, the proof of Prop.\,\ref{prop:signatures-equal} follows from the following 

\medskip
\begin{lemma}
\label{lem:equality-signatures}
$$
\varepsilon_{\iota, w}(\gamma) \cdot \varepsilon_{\iota, w'}(\gamma) \ = \  
\frac{\gamma \left(N_{F_1/\Q}(\delta_{F/F_1})^{nn'/2}\right)}{N_{F_1/\Q}(\delta_{F/F_1})^{nn'/2}}.
$$
\end{lemma}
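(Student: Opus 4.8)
The plan is to compute both sides of the asserted identity by tracking how $\gamma$ acts on the explicit wedge-products $e^*_{\Phi_{{}^\iota w}}$ from Section~\ref{sec:galois-infty-bdry}, and to match this against the Galois action on the square root $N_{F_1/\Q}(\delta_{F/F_1})^{1/2}$. The starting point is the same structural observation used in Lem.\,\ref{lem:trivial-signature-TR}: since $\mu$ and $\mu'$ are strongly-pure, they are base-change from $F_1$ (Prop.\,\ref{prop:strong-pure-weights-base-change}), and so by Rem.\,\ref{rem:w-base-change} the balanced Kostant representative $w = (w^\tau)_{\tau : F \to E}$ has the property that $w^\tau$ depends only on $\tau|_{F_1}$. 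Using the notation of \ref{sec:notation-CM}, the fibre $\varrho^{-1}(\nu_j) = \{\eta_{j1}, \dots, \eta_{jk}\}$ (and $\varrho^{-1}(\bar\nu_j) = \{\bar\eta_{j1}, \dots, \bar\eta_{jk}\}$) consists of $k = [F:F_1]$ embeddings, on all of which $w$ takes the same value; moreover $w$ balanced means $l(w^{\eta_{jl}}) + l(w^{\bar\eta_{jl}}) = nn'$ for each $j, l$. First I would group the factors of $e^*_{\Phi_{{}^\iota w}}$ according to the fibres of $\varrho$, writing
$$
e^*_{\Phi_{{}^\iota w}} \ = \ \bigwedge_{j=1}^{{\sf r}_1} e^*_{\Phi_{{}^\iota w, [\nu_j]}}, \qquad
e^*_{\Phi_{{}^\iota w, [\nu_j]}} \ := \ \bigwedge_{l=1}^{k} \left( e^*_{\Phi_{w^{\iota\circ\eta_{jl}}}} \wedge e^*_{\Phi_{w^{\iota\circ\bar\eta_{jl}}}} \right),
$$
so that the total degree of $e^*_{\Phi_{{}^\iota w, [\nu_j]}}$ is $k \cdot nn'$ (whereas in the {\bf TR}-case the two halves of each conjugate pair had equal degree $nn'/2$, which forced triviality; here they need not be equal).

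Next I would analyze the two possibilities for the action of $\gamma$ on a conjugate pair $\{\nu_j, \bar\nu_j\}$, exactly as in the two cases listed in the paragraph ``When $F$ is totally imaginary in the {\bf CM}-case'' of \ref{sec:galois-arch-constituents}: either $\gamma$ carries $\nu_j$ to some $\nu_{j'}$ (and $\bar\nu_j$ to $\bar\nu_{j'}$), or it swaps, carrying $\nu_j$ to $\bar\nu_{j'}$. In the non-swapping case the effect on $e^*_{\Phi_{{}^\iota w, [\nu_j]}}$ is just a relabelling of the $k$ conjugate pairs of embeddings within the fibre (a permutation of $k$ pairs of blocks, each block of degree $nn'$), contributing a sign that is a power of the signature of a permutation of $k$ objects raised to $(nn')^2$ — hence trivial when $nn'$ is even and otherwise governed by $\varepsilon_{F_1}$-type signatures. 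In the swapping case one additionally interchanges, within each of the $k$ pairs, a degree-$l(w^{\eta_{jl}})$ block with a degree-$l(w^{\bar\eta_{jl}})$ block; since $l(w^{\eta_{jl}}) + l(w^{\bar\eta_{jl}}) = nn'$, the sign of each such transposition of blocks is $(-1)^{l(w^{\eta_{jl}}) l(w^{\bar\eta_{jl}})}$, and the product over the $k$ pairs in the fibre gives $(-1)^{k \cdot l(w^{\eta}) l(w^{\bar\eta})}$. The key point is that after also passing from $w$ to the associate representative $w' = w_P w$ via Lem.\,\ref{lem:kostant-P-Q}, where $l(w'^\tau) = \dim(U_{P_0}) - l(w^\tau) = nn' - l(w^\tau)$, the product $\varepsilon_{\iota,w}(\gamma)\varepsilon_{\iota,w'}(\gamma)$ collects a sign $(-1)^{k[\,l(w^\eta)l(w^{\bar\eta}) + l(w'^\eta)l(w'^{\bar\eta})\,]}$ on each swapped fibre, and a congruence computation modulo $2$ shows $l(w^\eta)l(w^{\bar\eta}) + l(w'^\eta)l(w'^{\bar\eta}) \equiv nn' \pmod 2$, so the contribution per swapped fibre is $(-1)^{k \cdot nn'}$; in the {\bf CM}-case $k = [F:F_1]$ so this is $(-1)^{[F:F_1]nn'}$, and combining over all fibres swapped by $\gamma$ gives $\varepsilon_{\iota,w}(\gamma)\varepsilon_{\iota,w'}(\gamma) = (-1)^{[F:F_1]nn' \cdot (\#\text{swapped pairs})}$, which I would rewrite as $(-1)^{nn' \cdot e(\gamma)}$ with $e(\gamma) := [F:F_1]\cdot\#\{j : \gamma \text{ swaps } \{\nu_j,\bar\nu_j\}\}$.

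The other half of the argument is to identify $\gamma\bigl(N_{F_1/\Q}(\delta_{F/F_1})^{1/2}\bigr)/N_{F_1/\Q}(\delta_{F/F_1})^{1/2}$ as $(-1)^{e(\gamma)}$. For this I would use that $N_{F_1/\Q}(\delta_{F/F_1}) = \prod_{\nu \in \Sigma_{F_1}} \nu(\delta_{F/F_1})$ (choosing extensions of the $\nu$'s to $\bar\Q$), and that $\delta_{F/F_1}$, being the relative discriminant, is totally positive at the real places of $F_1$ but at the complex place $w_j$ it is a complex number whose two conjugate embeddings $\nu_j, \bar\nu_j$ give complex-conjugate values; a square root $N_{F_1/\Q}(\delta_{F/F_1})^{1/2}$ is then built from $\prod_j \nu_j(\delta_{F/F_1})$, up to a totally real factor and rational squares. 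Applying $\gamma$ and using $\gamma\circ\nu_j \in \{\nu_{j'}, \bar\nu_{j'}\}$, the sign picked up is exactly $(-1)$ to the number of indices where $\gamma$ swaps — and each such index contributes with multiplicity $[F:F_1]$ because $\delta_{F/F_1}$ lives over $F_1$ while the relevant square root is the $[F:F_1]/2$-power appearing in $\Delta_F$ versus the full norm; a careful bookkeeping of these exponents (parallel to the computation in the proof of Lem.\,\ref{lem:compatibility_tot_imag}) yields $(-1)^{e(\gamma) \cdot nn'}$ once raised to the power $nn'/2$ that appears in the statement. Matching the two computations completes the proof. \emph{The main obstacle} I anticipate is the bookkeeping in the swapping case: getting the exponents of $(-1)$ exactly right requires care in (i) the congruence $l(w^\eta)l(w^{\bar\eta}) + l(w'^\eta)l(w'^{\bar\eta}) \equiv nn' \pmod 2$, (ii) correctly counting how the degree-$nn'/2$-per-embedding heuristic of the {\bf TR}-case deforms when $l(w^\eta) \neq l(w^{\bar\eta})$, and (iii) reconciling the power $[F:F_1]/2$ hidden in $\Delta_F$ and in $\Delta_{F_1}$ with the full $N_{F_1/\Q}$ on the right-hand side — essentially re-deriving, sign by sign, the relation already packaged in Lem.\,\ref{lem:compatibility_tot_imag}. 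Everything else is a direct unwinding of Def.\,\ref{def:signature}, \eqref{eqn:e-Phi}, and Lemmas \ref{lem:kostant-P-Q}, \ref{lem:length-w-kappa}.
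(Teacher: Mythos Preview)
Your approach has a genuine gap: you have mis-identified the source of the sign. You organize $e^*_{\Phi_{{}^\iota w}}$ by archimedean places of $F$, i.e., by pairs $\{\eta_{jl},\bar\eta_{jl}\}$, and then argue that the signature comes from counting how many conjugate pairs $\{\nu_j,\bar\nu_j\}$ in $\Sigma_{F_1}$ are swapped by $\gamma$. But, as the paper already warns in \ref{sec:galois-arch-constituents}, the Galois action on $\Sigma_F$ does \emph{not} descend to an action on $\place_\infty(F)$ when $[F:F_1]>1$: one can have $\gamma\circ\eta_{11}=\eta_{j1}$ while $\gamma\circ\bar\eta_{11}\neq\bar\eta_{j1}$. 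Your degree-$nn'$ blocks are torn apart by $\gamma$, so your bookkeeping collapses. Concretely, your key congruence is false: with $l=l(w^\eta)$ and $l^*=l(w^{\bar\eta})=nn'-l$, one has $l(w'^\eta)=l^*$ and $l(w'^{\bar\eta})=l$, hence $l\,l^*+l(w'^\eta)l(w'^{\bar\eta})=2\,l\,l^*\equiv 0\pmod 2$, not $nn'$. So the ``swap'' contribution to $\varepsilon_{\iota,w}(\gamma)\varepsilon_{\iota,w'}(\gamma)$ is always trivial, and your final formula $(-1)^{nn'\cdot e(\gamma)}$ is wrong (for $F=F_1$ CM it would be nontrivial whenever $nn'$ is odd and $\gamma$ swaps some pair, contradicting the easy case where both sides are $1$).

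The paper instead groups by fibers of $\varrho:\Sigma_F\to\Sigma_{F_1}$ and factors $\pi_F(\gamma)=\pi_F'(\gamma)\circ\hat\pi_F(\gamma)$, where $\hat\pi_F(\gamma)$ is the order-preserving lift of $\pi_{F_1}(\gamma)$ and $\pi_F'(\gamma)$ permutes each fiber $\Sigma_F(\nu)$ internally. All contributions from $\pi_{1\infty}(\gamma)$ and from the swaps $\nu_j\leftrightarrow\bar\nu_j$ cancel in the product $\varepsilon_{\iota,w}(\gamma)\varepsilon_{\iota,w'}(\gamma)$ (exactly the cancellation you were trying to force, but for the wrong terms), leaving
\[
\varepsilon_{\iota,w}(\gamma)\,\varepsilon_{\iota,w'}(\gamma)\ =\ \prod_{\nu\in\Sigma_{F_1}}\varepsilon\bigl(\pi'_{\Sigma_F(\nu)}(\gamma)\bigr)^{\,l_\nu+l_\nu^*}\ =\ \varepsilon(\pi_F'(\gamma))^{nn'}.
\]
On the discriminant side, writing $N_{F_1/\Q}(\delta_{F/F_1})^{1/2}$ as the determinant of the block-diagonal matrix with blocks $[\rho_i^{\nu}(\omega_j)]$ indexed by $\nu\in\Sigma_{F_1}$, the action of $\gamma$ permutes the blocks (no sign change, since blocks are square) and permutes the rows \emph{within} each block according to $\pi'_{\Sigma_F(\nu)}(\gamma)$, giving exactly $\varepsilon(\pi_F'(\gamma))$. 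There is no swap-of-conjugate-pairs contribution here either; your proposed computation of the right-hand side via ``$\gamma\circ\nu_j\in\{\nu_{j'},\bar\nu_{j'}\}$'' misreads where $\delta_{F/F_1}$ lives. In short: the signature is governed by within-fiber permutations over $F_1$, not by conjugate-pair swaps; once you reorganize along those lines, the two sublemmas fall out directly.
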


\medskip
\begin{proof}
Suppose $F$ is a CM field, then $F = F_1$, hence the right hand side is $1$. We contend that in this case the left hand side is also $1$, or that 
$\varepsilon_{\iota, w}(\gamma) = \varepsilon_{\iota, w'}(\gamma).$ One may suppose that the ordering on 
$\Hom(F,E)$ is fixed such that conjugate embeddings are paired: 
$\Hom(F,E) \ = \ \{\tau_1, \bar\tau_1, \tau_2, \bar\tau_2, \dots, \tau_r, \bar\tau_r\}.$
After composing with $\iota : E \to \C$, one gets an enumeration: 
$\Hom(F,\C) \ = \ \{\eta_1, \bar\eta_1, \eta_2, \bar\eta_2, \dots, \eta_r, \bar\eta_r\}.$ For brevity, and hopefully, additional clarity, denote
$e_{\Phi_{w^{\eta, \bar\eta}}}^* := e_{\Phi_{w^{\eta}}}^* \wedge \ e_{\Phi_{w^{\bar\eta}}}^*$, and rewrite \eqref{eqn:e-Phi} as 
\begin{equation}
\label{eqn:e-Phi-rewrite}
e_{\Phi_{^\iota \! w}}^*
 \ := \ e_{\Phi_{w^{\eta_1, \bar\eta_1}}}^* \wedge \cdots \wedge e_{\Phi_{w^{\eta_r, \bar\eta_r}}}^* . 
\end{equation}
The action of $\gamma$ gives: 
\begin{equation}
\label{eqn:gamma-e-Phi-rewrite}
(1\otimes \gamma)(e_{\Phi_{^\iota \! w}}^*)
 \ := \ e_{\Phi_{w^{\gamma\circ\eta_1, \gamma\circ\bar\eta_1}}}^* \wedge \cdots \wedge e_{\Phi_{w^{\gamma\circ\eta_1, \gamma\circ\bar\eta_1}}}^* . 
\end{equation}
The right hand sides of \eqref{eqn:e-Phi-rewrite} and \eqref{eqn:gamma-e-Phi-rewrite} differ by the signature $\varepsilon_{\iota, w}(\gamma)$ that one seeks to identify. 
Each pair of conjugates embeddings $\{\eta_j, \bar\eta_j\}$ corresponds to a place $v_j$ of $F$; as before, $\eta_j$ is called the distinguished embedding--a base point in that pair 
of conjugate embeddings. The ordering on $\Hom(F,\C)$ fixes an ordering $\{v_1,\dots,v_r\}$ on $\place_\infty(F)$. 
Let $\pi_F(\gamma)$ denote the permutation of $\gamma$ on $\place_\infty(F)$, and $\varepsilon_F(\gamma)$ its signature. For each $1 \leq j \leq r$, let 
$l_j =  l(w^{\eta_j})$ and $l_j^* =  l(w^{\bar\eta_j})$; then $l_j + l_j^* = nn'$ since $w$ is balanced. The total degree of $e_{\Phi_{w^{\eta, \bar\eta}}}^*$ is $nn'$; 
interchanging two successive factors of \eqref{eqn:e-Phi-rewrite} introduces the signature $(-1)^{(nn')^2} = (-1)^{nn'}.$ 
Finally, let 
$J_\gamma := \{ j \, | \, \mbox{$\gamma \circ \eta_j$ is not distinguished.}\}.$ Then one has 
\begin{equation}
\label{eqn:varepsilon-w-gamma}
\varepsilon_{\iota, w}(\gamma) \ = \ \varepsilon_F(\gamma)^{nn'} \prod_{j \in J_\gamma} (-1)^{l_jl_j^*}; 
\end{equation}
since the term $\varepsilon_F(\gamma)^{nn'}$ arises by the permutation of the factors of \eqref{eqn:e-Phi-rewrite} to get the factors of \eqref{eqn:gamma-e-Phi-rewrite}; and then within 
each such factor 
indexed by $j \in J_\gamma$ the constituent factors in $e_{\Phi_{w^{\eta_j}}}^* \wedge \ e_{\Phi_{w^{\bar\eta_j}}}^*$ get interchanged. Similarly, 
\begin{equation}
\label{eqn:varepsilon-w'-gamma}
\varepsilon_{\iota, w'}(\gamma) \ = \ \varepsilon_F(\gamma)^{nn'} \prod_{j \in J_\gamma} (-1)^{l(w'^{\eta_j})l(w'^{\bar\eta_j})}.
\end{equation}
From Lem.\,\ref{lem:kostant-P-Q} it follows that $l(w'^{\eta_j}) = nn' - l(w^{\eta_j}) = l_j^*$ and $l(w'^{\bar\eta_j}) = nn' - l(w^{\bar\eta_j}) = l_j$; hence 
$(-1)^{l(w'^{\eta_j})l(w'^{\bar\eta_j})} = (-1)^{l_jl_j^*}$; whence, $\varepsilon_{\iota, w}(\gamma) = \varepsilon_{\iota, w'}(\gamma).$

\medskip
Now suppose $F$ is a totally imaginary field in the {\bf CM}-case and $F_1$ its maximal CM subfield. In preparation, 
fix orderings on $\Sigma_F$, $\Sigma_{F_1}$ and $\place_\infty(F_1)$ in a compatible way as follows: 
\begin{enumerate}
\item fix an ordering $\{w_1,\dots, w_{r_1}\}$ on $\place_\infty(F_1)$; 
\item then fix the ordering $\{\nu_1, \nu_2, \dots, \nu_{r_1}, \bar\nu_1,  \bar\nu_2, \dots,  \bar\nu_{r_1}\}$; where the pair of conjugate embeddings $\{\nu_j, \bar\nu_j\}$ map to $w_j$, and recall that we call $\nu_j$ as the distinguished embedding; 
\item finally, to fix an ordering on $\Sigma_F$, let $\Sigma_F(\nu)$ denote the fiber over $\nu \in \Sigma_{F_1}$ under the canonical restriction map $\Sigma_F \to \Sigma_{F_1}$; 
if $\nu < \nu'$ in $\Sigma_{F_1}$ then each element in $\Sigma_F(\nu)$ is less than every element of $\Sigma_F(\nu')$; and within each fiber $\Sigma_F(\nu)$ fix any ordering. 
\end{enumerate}
The Galois element $\gamma$ induces permutations on $\Sigma_F$, $\Sigma_{F_1}$ and $\place_\infty(F_1)$ giving the commutative diagram: 
$$
\xymatrix{
\Sigma_F \ar[rr]^{\pi_F(\gamma)} \ar[d] & & \Sigma_F \ar[d] \\ 
\Sigma_{F_1} \ar[rr]^{\pi_{F_1}(\gamma)} \ar[d] & & \Sigma_{F_1} \ar[d] \\
\place_\infty(F_1) \ar[rr]^{\pi_{1\infty}(\gamma)} & & \place_\infty(F_1) 
}
$$
Define $\hat{\pi}_{F}(\gamma)$ to be the permutation of $\Sigma_{F}$ that induces $\pi_{F_1}(\gamma)$ on $\Sigma_{F_1}$, and 
$$
\mbox{if \ $\pi_{F_1}(\gamma)(\eta) = \eta'$ \ then \ $\hat{\pi}_{F}(\gamma)$ is an order preserving bijection $\Sigma_F(\nu) \to \Sigma_F(\nu')$.}
$$
Define the permutation $\pi_F'(\gamma)$ of $\Sigma_{F}$ by 
\begin{equation}
\label{eqn:pi-F-gamma-prime}
 \pi_F(\gamma) \ = \ 
\pi_F'(\gamma) \circ  \hat{\pi}_{F}(\gamma). 
\end{equation}
Observe that $\pi_F'(\gamma)$ induces the identity permutation on $\Sigma_{F_1}$, and denote $\pi_{\Sigma_F(\nu)}'(\gamma)$
for the permutation that $\pi_F'(\gamma)$ induces on the fiber $\Sigma_F(\nu)$ above $\nu.$ Let $\varepsilon(*)$ denote the signature of a permutation $*$.
The proof of Lem.\,\ref{lem:equality-signatures} follows from the following two sub-lemmas.

\begin{sublemma}
\label{sublem-1}
$$ 
\varepsilon_{\iota, w}(\gamma) \cdot \varepsilon_{\iota, w'}(\gamma) \ = \ 
\varepsilon(\pi_F'(\gamma))^{nn'}.
$$
\end{sublemma}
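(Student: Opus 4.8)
The plan is to compute $\varepsilon_{\iota,w}(\gamma)$ and $\varepsilon_{\iota,w'}(\gamma)$ separately in terms of the permutation data on $\Sigma_F$ and then multiply, using that the ``intra-fiber'' contributions cancel while the ``inter-fiber'' contribution squares to $\varepsilon(\pi_F'(\gamma))^{nn'}$. Concretely, fix the compatible orderings on $\Sigma_F$, $\Sigma_{F_1}$, $\place_\infty(F_1)$ set up just before the sublemma, so that $\Sigma_F$ is grouped into the fibers $\Sigma_F(\nu_1), \bar\Sigma_F(\nu_1),\dots$ over $\Sigma_{F_1}$, and within each fiber $w$ is constant (Rem.\,\ref{rem:w-base-change}, since $\mu,\mu'$ are base-change from $F_1$). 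Then $e_{\Phi_{^\iota\!w}}^*$ factors as a wedge of per-fiber blocks $e_{\Phi_{^\iota\!w,[\nu]}}^*$ as in Lem.\,\ref{lem:trivial-signature-TR}, and the Galois action on it is governed by: (a) the permutation $\pi_{F_1}(\gamma)$ of the blocks, (b) within each block, a possible interchange of the $\eta$-half and $\bar\eta$-half coming from whether $\gamma\circ\eta_j$ is distinguished, and (c) within each fiber, the internal permutation $\pi'_{\Sigma_F(\nu)}(\gamma)$ that $\gamma$ performs on the embeddings lying over a fixed place of $F_1$.

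First I would record the degrees. For each $\nu = \nu_j$ and each $\eta \in \Sigma_F(\nu)$, balancedness gives $l(w^\eta) + l(w^{\bar\eta}) = nn'$; since $w$ is constant on $\Sigma_F(\nu)$ (and hence $l(w^\eta) =: l_j$ is the same for all $\eta$ over $\nu_j$, and likewise $l(w^{\bar\eta}) =: l_j^* = nn' - l_j$), each factor $e_{\Phi_{w^\eta}}^*$ has degree $l_j$ or $l_j^*$. In particular each per-fiber block $e_{\Phi_{^\iota\!w,[\nu_j]}}^*$ (which is the wedge over all $k = [F:F_1]$ embeddings lying over $\{\nu_j,\bar\nu_j\}$) has degree $\tfrac{k}{2}\cdot l_j + \tfrac{k}{2}\cdot l_j^* = \tfrac{k}{2} nn'$, and the total degree of $e_{\Phi_{^\iota\!w}}^*$ is $r_1 \cdot \tfrac{k}{2} nn' = \tfrac{d_F}{2} nn'$. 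Next I would break $(1\otimes\gamma)e_{\Phi_{^\iota\!w}}^*$ using the decomposition $\pi_F(\gamma) = \pi'_F(\gamma)\circ\hat\pi_F(\gamma)$ of \eqref{eqn:pi-F-gamma-prime}: the part $\hat\pi_F(\gamma)$ permutes the blocks according to $\pi_{F_1}(\gamma)$ and, block-internally, is order-preserving on fibers — so its contribution to the signature is $\varepsilon(\pi_{F_1}(\gamma))$ raised to (block-degree)$^2$ times a product over $j\in J_\gamma$ of $(-1)^{l_j l_j^*}$-type terms (the $\eta/\bar\eta$ swaps within a fiber, exactly as in the CM-field case \eqref{eqn:varepsilon-w-gamma}); while the part $\pi'_F(\gamma)$, being the identity on $\Sigma_{F_1}$, only reorders embeddings inside each fiber, contributing $\prod_j \varepsilon(\pi'_{\Sigma_F(\nu_j)}(\gamma))^{l_j^2}$-type signs. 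Since the $k$ factors within an $\eta$-half of a fiber all have the same degree $l_j$, permuting them by $\pi'_{\Sigma_F(\nu_j)}(\gamma)$ costs $\varepsilon(\pi'_{\Sigma_F(\nu_j)}(\gamma))^{l_j^2}$, and on the $\bar\eta$-half it costs $\varepsilon(\pi'_{\Sigma_F(\nu_j)}(\gamma))^{(l_j^*)^2}$ (here using that $\gamma$ commutes with complex conjugation on $F_1$ by Lem.\,\ref{sec:CM-field-embeddings}, so the same permutation appears on both halves).

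Then I would do the same computation for $w'$: by Lem.\,\ref{lem:kostant-P-Q}, $l(w'^\eta) = nn' - l(w^\eta)$, so over $\nu_j$ we have $l(w'^{\eta}) = l_j^*$ and $l(w'^{\bar\eta}) = l_j$. Multiplying the two signature expressions, the terms $(-1)^{l_j l_j^*}$ appear for both $w$ and $w'$ with the same exponent and hence cancel; the block-permutation factors $\varepsilon(\pi_{F_1}(\gamma))^{(\text{block-deg})^2}$ multiply to $\varepsilon(\pi_{F_1}(\gamma))^{2(\text{block-deg})^2}$, which is trivial; and the intra-fiber factors combine to $\prod_j \varepsilon(\pi'_{\Sigma_F(\nu_j)}(\gamma))^{l_j^2 + (l_j^*)^2 + (l_j^*)^2 + l_j^2} = \prod_j \varepsilon(\pi'_{\Sigma_F(\nu_j)}(\gamma))^{2(l_j^2 + (l_j^*)^2)}$ --- also trivial, EXCEPT I must be careful: the cross terms from reordering do not all cancel, and the surviving object should be exactly $\varepsilon(\pi'_F(\gamma))^{nn'}$, where $\varepsilon(\pi'_F(\gamma)) = \prod_j \varepsilon(\pi'_{\Sigma_F(\nu_j)}(\gamma))$. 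The cleanest bookkeeping is: $\pi'_F(\gamma)$ acts on the whole of $\Sigma_F$, and $e_{\Phi_{^\iota\!w}}^*$ is a wedge of $d_F$ factors of degrees summing to $\tfrac{d_F}{2}nn'$; reordering $d_F$ anticommuting-up-to-parity factors along a permutation $\pi$ costs $\prod_{i<i': \pi(i)>\pi(i')} (-1)^{\deg_i \deg_{i'}}$, and since within each fiber the degrees alternate in a controlled way and $\pi'_F(\gamma)$ is fiber-preserving, this product collapses to $\varepsilon(\pi'_F(\gamma))$ raised to a power that works out to $nn'$ after combining $w$ and $w'$. I expect the main obstacle to be precisely this last parity bookkeeping — keeping track of which cross-term signs survive after multiplying the $w$ and $w'$ contributions, and verifying that the $\hat\pi_F(\gamma)$-block-permutation and $\eta/\bar\eta$-swap contributions really do cancel in the product while the $\pi'_F(\gamma)$-part survives with exponent $nn'$; I would handle it by reducing, via a telescoping argument over adjacent transpositions (write $\pi'_F(\gamma)$ as a product of adjacent transpositions within fibers and track each), exactly as in the proof of Lem.\,\ref{lem:trivial-signature-TR} but now without the forced evenness of $nn'$.
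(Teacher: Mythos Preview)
Your overall decomposition into (a) block-permutation of the $\Sigma_{F_1}$-indexed blocks, (b) the $\nu_j/\bar\nu_j$ swap for $j\in J_\gamma$, and (c) the intra-fiber permutation $\pi'_{\Sigma_F(\nu)}(\gamma)$ is exactly the paper's approach, and your observation that passing from $w$ to $w'$ swaps $l_j \leftrightarrow l_j^*$ (via Lem.\,\ref{lem:kostant-P-Q}) is the correct cancellation mechanism for contributions (a) and (b). The gap is in (c), and you flagged it yourself without resolving it.

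The error is the claim that ``the same permutation appears on both halves'', i.e.\ that $\pi'_{\Sigma_F(\nu_j)}(\gamma)$ and $\pi'_{\Sigma_F(\bar\nu_j)}(\gamma)$ have equal signature. Lemma~\ref{sec:CM-field-embeddings} only gives $\gamma\circ\bar\nu = \overline{\gamma\circ\nu}$ for embeddings of the CM field $F_1$; it says nothing about how $\gamma$ permutes the embeddings of $F$ lying over $\nu_j$ versus those lying over $\bar\nu_j$, and the orderings on $\Sigma_F(\nu_j)$ and $\Sigma_F(\bar\nu_j)$ were fixed independently. With your identification the combined exponent is $2(l_j^2+(l_j^*)^2)$, forcing the product to be $1$ and contradicting the sublemma. (A minor related slip: in the {\bf CM}-case each fiber $\Sigma_F(\nu)$ has $k=[F:F_1]$ elements, so the block over $\{\nu_j,\bar\nu_j\}$ has degree $knn'$, not $\tfrac{k}{2}nn'$.)

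The fix is much simpler than a telescoping over transpositions. Do \emph{not} pair conjugate fibers: run the product over all $\nu\in\Sigma_{F_1}$ separately. For each such $\nu$ the $w$-contribution to (c) is $\varepsilon(\pi'_{\Sigma_F(\nu)}(\gamma))^{l_\nu}$ (using $l_\nu^2\equiv l_\nu\pmod 2$) and the $w'$-contribution is $\varepsilon(\pi'_{\Sigma_F(\nu)}(\gamma))^{nn'-l_\nu}$; their product is $\varepsilon(\pi'_{\Sigma_F(\nu)}(\gamma))^{nn'}$. Since (a) and (b) square away exactly as you said, one obtains
\[
\varepsilon_{\iota,w}(\gamma)\,\varepsilon_{\iota,w'}(\gamma)\;=\;\prod_{\nu\in\Sigma_{F_1}}\varepsilon(\pi'_{\Sigma_F(\nu)}(\gamma))^{nn'}\;=\;\varepsilon(\pi'_F(\gamma))^{nn'},
\]
with no cross-term bookkeeping needed.
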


\begin{sublemma}
\label{sublem-2}
$$ 
\frac{\gamma \left(N_{F_1/\Q}(\delta_{F/F_1})^{1/2}\right)}{N_{F_1/\Q}(\delta_{F/F_1})^{1/2}}
 \ = \ 
\varepsilon(\pi_F'(\gamma))
$$
\end{sublemma}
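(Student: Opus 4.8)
The plan is to compute the relative discriminant $\delta_{F/F_1}$ as the square of a determinant, to make the Galois action on its square root explicit, and to match the resulting sign with the signature of $\pi'_F(\gamma)$. First I would fix an $F_1$-basis $\{\omega_1,\dots,\omega_k\}$ of $F$, where $k = [F:F_1]$, so that, after the choices of \ref{sec:notation-CM}, $\delta_{F/F_1} = \det[\nu'(\omega_j)]^2$ as a number in $F_1$ (up to $F_1^{\times 2}$), where $\nu'$ runs over the $k$ embeddings of $F$ into $\bar\Q$ extending a fixed embedding of $F_1$. Taking the norm down to $\Q$ gives $N_{F_1/\Q}(\delta_{F/F_1}) = \theta^2$ where $\theta = \det[\eta(\omega_j)]_{\eta \in \Sigma_F}$ is the determinant of the $d_F \times d_F$ matrix indexed by all complex embeddings of $F$ (rows) against the basis $\{\omega_j\}$ suitably distributed over the fibers (columns); here one uses that the fibers $\Sigma_F(\nu)$ of $\Sigma_F \to \Sigma_{F_1}$ all have size $k$ and the compatible ordering fixed just before the sublemma. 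Thus $N_{F_1/\Q}(\delta_{F/F_1})^{1/2} = \theta$ modulo $\Q^\times$.

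Next I would compute $\gamma(\theta)/\theta$. The Galois element $\gamma$ permutes the rows of the matrix $[\eta(\omega_j)]$ according to $\pi_F(\gamma)$, so $\gamma(\theta) = \varepsilon(\pi_F(\gamma)) \cdot \theta$ as elements of $\bar\Q$; this is the standard "Galois acts on the discriminant by the sign of its permutation of the embeddings" computation (cf.\ the classical fact recalled in \ref{sec:choice-of-mesaure} and in \cite{weil}). Hence $\gamma(N_{F_1/\Q}(\delta_{F/F_1})^{1/2})/N_{F_1/\Q}(\delta_{F/F_1})^{1/2} = \varepsilon(\pi_F(\gamma))$ in $\C^\times/\Q^\times$, which being $\pm 1$ is the same in $\C^\times/\Q^\times$. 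It then remains to show $\varepsilon(\pi_F(\gamma)) = \varepsilon(\pi'_F(\gamma))$. By the factorization \eqref{eqn:pi-F-gamma-prime}, $\pi_F(\gamma) = \pi'_F(\gamma) \circ \hat\pi_F(\gamma)$, so $\varepsilon(\pi_F(\gamma)) = \varepsilon(\pi'_F(\gamma)) \cdot \varepsilon(\hat\pi_F(\gamma))$, and I would prove $\varepsilon(\hat\pi_F(\gamma)) = 1$: the permutation $\hat\pi_F(\gamma)$ is, by construction, a block permutation of the $r_1$ fibers $\Sigma_F(\nu_1),\dots,\Sigma_F(\nu_{r_1})$ (each a contiguous block of size $k$) induced by $\pi_{F_1}(\gamma)$, with each block carried to its image by an order-preserving bijection. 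Such a "block permutation with trivial internal shuffles" has sign $\varepsilon(\pi_{F_1}(\gamma))^{k^2}$, and since $k = [F:F_1]$ is even in the {\bf CM}-case as well as trivially when $F=F_1$ — wait, here $F$ is totally imaginary in the {\bf CM}-case and $k$ can be odd; but a block transposition of two blocks of size $k$ has sign $(-1)^{k^2} = (-1)^k$, so $\varepsilon(\hat\pi_F(\gamma)) = \varepsilon(\pi_{F_1}(\gamma))^{k}$. I would then observe that $\Sigma_{F_1}$ is itself paired into conjugates $\{\nu_j,\bar\nu_j\}$ and, because $F_1$ is a CM field, $\pi_{F_1}(\gamma)$ commutes with conjugation (Lem.\,\ref{sec:CM-field-embeddings}); writing $\pi_{F_1}(\gamma)$ in terms of the induced permutation $\pi_{1\infty}(\gamma)$ of $\place_\infty(F_1)$ and the independent "conjugation toggles", one gets that $\varepsilon(\pi_{F_1}(\gamma))$ equals $\varepsilon(\pi_{1\infty}(\gamma))^2 = 1$ times the sign of the toggles; I would need to track this carefully. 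In fact the cleanest route is to absorb it: since $\theta$ is only well-defined modulo $\Q^\times$ and the claim is only modulo $\Q^\times$, I should choose the basis $\{\omega_j\}$ to be compatible with the fiber decomposition so that $\hat\pi_F(\gamma)$ literally acts as a block permutation and its contribution is visibly $\pm 1$ depending only on $\varepsilon(\pi_{F_1}(\gamma))$, then compare with how $\theta$ already incorporates $\theta_{F_1}$.

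The main obstacle I anticipate is precisely this bookkeeping of signs in the factorization $\varepsilon(\pi_F(\gamma)) = \varepsilon(\pi'_F(\gamma))\varepsilon(\hat\pi_F(\gamma))$: one must show the "fiber-block" part $\hat\pi_F(\gamma)$ contributes trivially modulo $\Q^\times$ to $\gamma(\theta)/\theta$, equivalently that its sign is cancelled by the corresponding block structure already present in $N_{F_1/\Q}(\delta_{F/F_1})^{1/2}$ versus $|\delta_{F_1/\Q}|^{1/2}$. Concretely I would factor $\theta = \theta_{F_1}^{?} \cdot \det[\,\cdots]$ using the tower, but a subtler and safer approach is to directly expand the $d_F \times d_F$ determinant along the block structure: the determinant of a matrix with the fiber-block form is, up to sign $\varepsilon(\pi_{F_1}(\gamma))^{k}$ absorbed into $\bar\Q^\times$ — no, into the permutation sign — a product that is manifestly rational times a Galois-stable piece. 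The detail-heavy verification that the surviving sign is exactly $\varepsilon(\pi'_F(\gamma))$ and nothing else is where "der Teufel steckt im Detail," but it is entirely a determinant-expansion exercise with no conceptual content beyond the compatible orderings fixed in (1)--(3) before the sublemma.
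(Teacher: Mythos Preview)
Your setup has a genuine gap at the step ``$\gamma(\theta) = \varepsilon(\pi_F(\gamma))\,\theta$.'' The only way to realise $N_{F_1/\Q}(\delta_{F/F_1})^{1/2}$ as a determinant with your $F_1$-basis $\{\omega_1,\dots,\omega_k\}$ is as the determinant of the \emph{block-diagonal} matrix with one $k\times k$ block $[\rho^{\nu}_i(\omega_j)]$ for each $\nu\in\Sigma_{F_1}$ (this is exactly \eqref{eqn:norm-matrix} in the paper). But on that matrix, applying $\gamma$ entrywise is \emph{not} a row permutation of the full $d_F\times d_F$ matrix: the off-diagonal zeros stay zero, so the block-diagonal shape is preserved, whereas a row permutation by $\pi_F(\gamma)$ would shift the nonzero blocks off the diagonal. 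If instead you take a genuine $d_F\times d_F$ matrix $[\eta(\alpha_\ell)]$ with a $\Q$-basis $\{\alpha_\ell\}$ of $F$, then $\gamma$ \emph{does} act by row permutation, but now $\theta^2 = \delta_{F/\Q}$, not $N_{F_1/\Q}(\delta_{F/F_1})$, and you have reintroduced the factor $\i^{d_F/2}\Delta_F$ of Lem.\,\ref{lem:compatibility_tot_imag}.

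Your attempted rescue, showing $\varepsilon(\hat\pi_F(\gamma))=1$, also fails: as you compute, $\varepsilon(\hat\pi_F(\gamma)) = \varepsilon(\pi_{F_1}(\gamma))^k$, and this is not $1$ in general (take $F_1$ imaginary quadratic, $k=[F:F_1]$ odd, and $\gamma$ complex conjugation: then $\varepsilon(\pi_{F_1}(\gamma))=-1$). The paper sidesteps the whole issue by viewing $N_{F_1/\Q}(\delta_{F/F_1})^{1/2}$ directly as the \emph{product} $\prod_{\nu\in\Sigma_{F_1}}\det[\rho^\nu_i(\omega_j)]$. Then $\gamma$ permutes the factors in this product according to $\pi_{F_1}(\gamma)$, which contributes no sign since multiplication is commutative; the only signs come from the internal row permutations of each block, and those are by definition the $\pi'_{\Sigma_F(\nu)}(\gamma)$, whose product of signatures is $\varepsilon(\pi'_F(\gamma))$. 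This is the ``block-structure expansion'' you allude to at the end but do not carry out; once done, the argument is two lines and no residual sign needs to be cancelled.
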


\begin{proof}[Proof of Sublemma \ref{sublem-1}]
Define $J_\gamma = \{j \ : \ \mbox{$\pi_{F}(\gamma)(\eta_j)$ is not distinguished}\}$. Keeping in mind that strongly-pure weights such as $\mu$ and $\mu'$ are the base-change 
of (strongly-)pure weights from $F_1$, we deduce that the constituents $({}^\iota\!w^\eta)_{\eta : F \to \C}$ 
of the Kostant representative ${}^\iota\!w$ are such that if $\eta|_{F_1} = \eta'|_{F_1},$ then ${}^\iota\!w^\eta$ and ${}^\iota\!w^{\eta'}$ are the same element in $\perm_N$--the Weyl group of 
$\GL_N.$ For $1 \leq j \leq r_1$ denote 
$l_j = l({}^\iota\!w^{\eta_{ji}})$ and $l_j^* = l({}^\iota\!w^{\bar\eta_{ji}}).$
One has $l_j + l_j^* = nn'$ since $w$ is balanced. Also, denote $l_\nu = l({}^\iota\!w^{\eta})$ for any $\eta \in \Sigma_F(\nu)$. We claim that 
\begin{equation}
\label{eqn:e-w-claim}
\varepsilon_{\iota, w}(\gamma) \ = \ 
\varepsilon(\pi_{1\infty}(\gamma))^{(nn'k)^2} \cdot
\prod_{j \in J_\gamma} (-1)^{l_jl_j^*k^2} \cdot 
\prod_{\nu \in \Sigma_{F_1}} \varepsilon(\pi'_{\Sigma_F(\nu)}(\gamma))^{l_\nu^2}. 
\end{equation}

Recall that the signature $\varepsilon_{\iota, w}(\gamma)$ is determined by the action of $\gamma$ on the wedge-product in \eqref{eqn:e-Phi}: 
$e_{\Phi_{^\iota \! w}}^* = e_{\Phi_{w^{\eta_1}}}^* \wedge \cdots \wedge e_{\Phi_{w^{\eta_{d_F}}}}^*.$ The proof of \eqref{eqn:e-w-claim} boils down to 
becoming aware how the factors in this wedge-product are permuted, and what signature is introduced in un-permuting them. The following scheme 
depicts from bottom to top, the places of $F_1$, embeddings of $F_1$, embeddings of $F$, and the lengths of the Kostant representatives they parametrize:  
$$
\xymatrix{
l(w^{\eta_{j1}}) = l_j \ar@{-}[d] & \dots & l(w^{\eta_{jk}}) = l_j \ar@{-}[d]& & l(w^{\bar\eta_{j1}}) = l_j^* \ar@{-}[d]& \dots & l(w^{\bar\eta_{jk}}) = l_j^*\ar@{-}[d] \\
\eta_{j1}\ar@{-}[dr]  & \dots & \eta_{jk} \ar@{-}[dl]& & \bar\eta_{j1} \ar@{-}[dr]& \dots & \bar\eta_{jk} \ar@{-}[dl]\\
& \nu_j \ar@{-}[drr] & & & & \bar\nu_j \ar@{-}[dll] & \\
&&& w_j &&& 
}
$$
Group together the wedge-factors as follows: 
$$
e_{\Phi_{^\iota \! w}}^* \ = \ 
e_{\Phi_{[w_1]}}^* \wedge \cdots \wedge e_{\Phi_{[w_{r_1}]}}^*, 
$$
where, for each $1 \leq j \leq r_1$, 
$$
e_{\Phi_{[w_j]}}^* \ = \ e_{\Phi_{[\nu_j]}}^* \wedge e_{\Phi_{[\bar\nu_j]}}^*
$$
and for each $\nu_j \in \Sigma_{F_1}$, 
$$
e_{\Phi_{[\nu_j]}}^* = e_{\Phi_{w^{\eta_{j1}}}}^* \wedge \dots \wedge e_{\Phi_{w^{\eta_{jk}}}}^*, \quad 
e_{\Phi_{[\bar\nu_j]}}^* = e_{\Phi_{w^{\bar\eta_{j1}}}}^* \wedge \dots \wedge e_{\Phi_{w^{\bar\eta_{jk}}}}^*. 
$$
Recall that $e_{\Phi_{w^{\eta_{ji}}}}^*$ has degree $l_j$ and $e_{\Phi_{w^{\bar\eta_{ji}}}}^*$ has degree $l_j^*.$ Hence, 
$e_{\Phi_{[\nu_j]}}^*$ has degree $kl_j$, and $e_{\Phi_{[\bar\nu_j]}}^*$ has degree $kl_j^*.$ Therefore, $e_{\Phi_{[w_j]}}^*$ has 
degree $kl_j + kl_j^* = knn'$. Now, the permutation $\pi_{1\infty}(\gamma)$ on $S_\infty(F_1) = \{w_1,\dots,w_{r_1}\}$ can be 
undone by the signature $\varepsilon(\pi_{1\infty}(\gamma))^{(nn'k)^2}.$ Next, only for those $j \in J_\gamma$, 
the two factors in $e_{\Phi_{[\nu_j]}}^* \wedge e_{\Phi_{[\bar\nu_j]}}^*$ get interchanged, giving the signature $(-1)^{l_jl_j^*k^2}$. 
Finally, adjusting for the action of $\gamma$ on $\Sigma_{F_1}$, i.e., now working with $\pi'_F(\gamma)$, which only permutes internally 
within each fiber $\Sigma_F(\nu)$ over $\nu \in \Sigma_{F_1}$, one sees the signature $\varepsilon(\pi'_{\Sigma_F(\nu)}(\gamma))^{l_\nu^2}$ 
for each such $\nu.$ This proves the claim \eqref{eqn:e-w-claim}.

\medskip
For any integer $a$, since $a^2 \equiv a \pmod{2}$, \eqref{eqn:e-w-claim} simplifies to 
\begin{equation}
\label{eqn:e-w}
\varepsilon_{\iota, w}(\gamma) \ = \ 
\varepsilon(\pi_{1\infty}(\gamma))^{nn'k} \cdot
\prod_{j \in J_\gamma} (-1)^{l_jl_j^*k} \cdot 
\prod_{\nu \in \Sigma_{F_1}} \varepsilon(\pi'_{\Sigma_F(\nu)}(\gamma))^{l_\nu}.
\end{equation}
Similarly, using the relation of $w'$ with $w$, one has: 
\begin{equation}
\label{eqn:e-w'}
\varepsilon_{\iota, w'}(\gamma) \ = \ 
\varepsilon(\pi_{1\infty}(\gamma))^{nn'k} \cdot
\prod_{j \in J_\gamma} (-1)^{l_j^*l_jk^2} \cdot 
\prod_{\nu \in \Sigma_{F_1}} \varepsilon(\pi'_{\Sigma_F(\nu)}(\gamma))^{l_\nu^*}.
\end{equation}
Multiply \eqref{eqn:e-w} and \eqref{eqn:e-w'} to get: 
$$
\varepsilon_{\iota, w}(\gamma)  \cdot 
\varepsilon_{\iota, w'}(\gamma)
 \ = \ \prod_{\nu \in \Sigma_{F_1}} \varepsilon(\pi'_{\Sigma_F(\nu)}(\gamma))^{l_\nu + l_\nu^*} 
\ = \ \left(\prod_{\nu \in \Sigma_{F_1}} \varepsilon(\pi'_{\Sigma_F(\nu)}(\gamma))\right)^{nn'}  
\ = \ \varepsilon(\pi_{F}'(\gamma))^{nn'}.
$$
\end{proof}

\begin{proof}[Proof of Sublemma \ref{sublem-2}]
For $x \in F_1^\times$ one has $N_{F_1/\Q}(x) = \prod_{\nu \in \Sigma_{F_1}}\nu(x) > 0.$ Let $\{\rho_1,\dots,\rho_k\}$ denote the set of 
all embeddings of $F$ into $\bar{F_1}$ over $F_1$, for some algebraic closure $\bar{F_1}$ of $F_1$; 
let $\{\omega_1,\dots,\omega_k\}$ is an $F_1$-basis for $F$; then $\delta_{F/F_1} = \det[\rho_i(\omega_j)]^2.$ Hence, 
$$
N_{F_1/\Q}(\delta_{F/F_1}) \ = \ \prod_{\nu \in \Sigma_{F_1}} \nu(\det[\rho_i(\omega_j)]^2) \ = \ 
\prod_{\nu \in \Sigma_{F_1}} \det[\rho_i^\nu(\omega_j)]^2,
$$
where $\{\rho_1^\nu, \dots, \rho_k^\nu\}$ is the set of all embeddings of $F$ into $\C$ that restrict to $\nu : F_1 \to \C.$ (We may take $\rho_i^\nu$ to be $\tilde\nu \circ \rho_i$ for any 
extension $\tilde\nu : \bar{F_1} \to \C$ of $\nu.$ Whence,  
\begin{equation}
\label{eqn:norm-matrix}
N_{F_1/\Q}(\delta_{F/F_1})^{1/2} \ = \ 
\pm \det
\left[\begin{array}{cccc}
[\rho_i^{\nu_1}(\omega_j)] & & & \\
 & [\rho_i^{\nu_2}(\omega_j)] & & \\
 & & \ddots & \\
 & & & [\rho_i^{\nu_{d_1}}(\omega_j)]
\end{array}\right],
\end{equation}
where the appropriate sign $\pm$ is chosen to make the right hand side positive. Each block $[\rho_i^{\nu}(\omega_j)]$ is a $k \times k$-block. 
Apply $\gamma$ to \eqref{eqn:norm-matrix} and the change in the sign of the determinant on the right is 
the requisite sign $\gamma \left(N_{F_1/\Q}(\delta_{F/F_1})^{1/2}\right)/N_{F_1/\Q}(\delta_{F/F_1})^{1/2}.$ The blocks are permuted according to $\pi_{F_1}(\gamma)$ which 
does not change the sign. Hence, the signature is accounted for by assuming that the blocks remain where they are and looking at how each block's rows are permuted internally; 
in other words, keeping \eqref{eqn:pi-F-gamma-prime} in mind, the requisite signature is 
$$
\prod_{\nu \in \Sigma_{F_1}} \varepsilon(\pi'_{\Sigma_F(\nu)}(\gamma)) \ = \ 
\varepsilon(\pi_{F}'(\gamma)).
$$
\end{proof}

This concludes the proof of Lem.\,\ref{lem:equality-signatures}. 
\end{proof}

This concludes the proof of Prop.\,\ref{prop:signatures-equal}, proving compatbility of our main theorem with Deligne's conjecture.

\medskip
\subsection{An example}
\label{sec:example}

If we take $n = n' = 1$ then the main result and techniques are all due to Harder \cite{harder-inventiones}. However, the signature 
$\varepsilon_{\iota, w}(\gamma) \cdot \varepsilon_{\iota, w'}(\gamma),$ that can be nontrivial in general, is missing in \cite{harder-inventiones}. Furthermore, the subtle distinction between when $F$
is in the {\bf CM}-case or in the {\bf TR}-case is not seen \cite{harder-inventiones} and it becomes apparent only in the larger context of this article.  
This case $n = n' = 1$ is also extensively 
discussed in \cite{raghuram-hecke} wherein examples are constructed to show the non-triviality of these signatures. As an alternative, it is worth the effort to illustrate the content of the main theorem in the 
simplest non-trivial example: when $n = n' = 1$ and $F$ is an imaginary quadratic field, not so much by appealing to Harder \cite{harder-inventiones}, or this article, 
but rather via recourse to modular forms of CM type. Here, $\sigma$ and 
$\sigma'$ are both algebraic Hecke characters and main theorem concerns the ratios of successive critical values of the $L$-function attached to 
the algebraic Hecke character: $\chi = \sigma \sigma'^{-1}.$ After relabelling, take $\sigma = \chi$ an algebraic Hecke character, 
and for $\sigma'$ take the trivial character. This $\GL(1)$-example is instructive, and was helpful to the author to see some finer details. 

\medskip

For an imaginary quadratic field $F,$ let $\Hom(F,\C) = \{\eta, \bar\eta\}$; the choice of $\eta$ is not canonical; it induces an isomorphism 
$\eta : F_\infty \simeq \C.$ Let $\chi : F^\times \backslash \A_F^\times \to \C^\times$ be an algebraic Hecke character; this means that $\chi$ is a continuous homomorphism whose infinite component $\chi_\infty : F_\infty^\times  \to \C^\times$ is of the form $\chi_\infty(z) = z^p \bar{z}^q,$ for integers $p$ and $q$. Then 
$\chi \in \Coh(\GL_1/F, \mu)$ with $\mu = (\mu^\eta, \mu^{\bar\eta})$ and $\mu^\eta = -p$ and $\mu^{\bar\eta} = -q.$ The weight 
$\mu$ is strongly-pure with purity weight $\w = -p-q.$ One also has  
$$
\chi_\infty(z) = \left(\frac{z}{\bar{z}}\right)^{\ell/2} (z\bar{z})^{-\w/2}, \quad \ell = p-q \in \Z.
$$ 
As recalled in \eqref{eqn:abelian-local-l-factor}, the $\Gamma$-factors at infinity (up to nonzero constants and exponentials) on either side of the functional equation are: 
$$
L_\infty(s, \chi) \sim \Gamma(s - \tfrac{\w}{2} + \tfrac{|\ell|}{2}), \quad L_\infty(1-s, \chi^{-1}) \sim \Gamma(1-s + \tfrac{\w}{2} + \tfrac{|\ell|}{2}).
$$ 
Assume, without any loss of generality (if necessary, replacing $\chi$ by $\chi^{-1}$), that $\ell \geq 0,$ i.e., $p \geq q.$ Then 
$L_\infty(s, \chi) \sim \Gamma(s+p)$ and $L_\infty(1-s, \chi^{-1}) \sim \Gamma(1-s-q).$ 
The critical set for $L(s, \chi)$ is the set of $\ell$ consecutive integers: 
$\{1-p, \, 2-p, \dots, \, -q\}.$
The critical set is nonempty if $\ell \geq 1$, and we have $\ell$ many critical points and $\ell-1$ pairs of successive critical points. The cuspidal width $\ell(\mu, 0)$ between $\mu$ and the weight $\mu' = 0$ is 
$\ell(\mu, 0) = \ell.$ If we were to apply the main theorem to the pair $\chi$ and the trivial Hecke character (which is cohomological with respect to $\mu'=0$) then the combinatorial lemma imposes  the condition $\ell \geq 2,$ and Thm.\,\ref{thm:main} gives a rationality result for the ratios $L(m,\chi)/L(m+1,\chi)$ of all successive critical values. This theorem can also be seen independently by appealing to the rationality results of Shimura for $L$-functions of modular forms. 

\medskip

Take $\pi = \pi(\chi) = \AI_F^\Q(\chi)$ to be the automorphic induction of $\chi$ from $F$ to $\Q$. Then $\pi$ is a cuspidal automorphic representation of 
$\GL_2(\A_\Q).$ The representation $\pi_\infty$ at the infinite place is, by definition, $\AI_\C^\R(\chi_\infty),$ which in turn is defined by asking for its Langlands parameter to be the induced representation 
$\Ind_{W_\C}^{W_\R}(\chi_\infty) = \Ind_{\C^\times}^{W_\R}(z \mapsto \left(\frac{z}{\bar{z}}\right)^{\ell/2}) \otimes |\ |_\R^{-\w/2}.$ This is exactly the representation that has cohomology with respect to the irreducible representation of $\GL(2)$ with highest weight $\lambda = (p,q).$ By the standard dictionary between modular forms and automorphic representations (see, for example, 
Gelbart \cite{gelbart}) there is a primitive modular form $f_\chi$ of weight $k = p-q+1$ such that $\pi(\chi) = \pi(f_\chi) \otimes |\ |^{-\w/2}.$ One of the properties of this dictionary gives us the following equality of $L$-functions: 
$$
L(s, f_\chi) \ = \ L(s - \tfrac{(k-1)}{2}, \pi(f_\chi)) \ = \ L(s - \tfrac{(k-1)}{2} + \tfrac{\w}{2}, \pi(\chi)) \ = \ L(s-p, \chi).
$$
The critical set for $L(s, f_\chi)$ is the string of integers $\{1,2,\dots,k-1\}.$ A word about the normalizations of these $L$-functions: first of all $L(s, f_\chi)$ is 
the Hecke $L$-function of the modular form $f_\chi$ which has a functional equation with respect to $s \leftrightarrow k-s$. For a cuspidal automorphic representation $\pi$, as applied to $\pi(\chi)$ or to $\pi(f_\chi)$, the functional equation is with respect to $s \leftrightarrow 1-s$. The $L$-function $L(s,\chi)$ 
also has a functional equation with respect to $s \leftrightarrow 1-s$. Furthermore, for any Dirichlet character $\omega$, by which we mean a character 
$\omega : \Q^\times \backslash \A_\Q^\times \to \C^\times$ of finite-order, there is the equality: 
$$
L(s, f_\chi, \omega) \ = \ L(s-p, \chi \otimes \omega^F),
$$
where $\omega^F := \omega \circ N_{F/\Q}$ is the base-change of $\omega$ from $\Q$ to $F$. In particular, if $\omega = \omega_{F/\Q}$ the 
quadratic Dirichlet character of $\Q$ attached to $F$ by class field theory then: 
$$
L(s, f_\chi, \omega_{F/\Q}) \ = \ L(s, f_\chi),
$$
since the base-change of $\omega_{F/\Q}$ back to $F$ is the trivial character. This is also seen at the level of representations since 
$\pi(\chi) \simeq \pi(\chi) \otimes \omega_{F/\Q}.$

\medskip

From Shimura \cite{shimura-mathann} applied to $f_\chi$, there exists two periods $u^\pm(f_\chi) \in \C^\times$, such that for any critical integer 
$r \in \{1,\dots,k-1\}$, and any primitive Dirichlet character $\psi$, one has
$$
L_f(r, f_\chi, \psi) \ \approx \ (2\bfgreek{pi} \i)^r u^\pm(f_\chi) \g(\psi), 
$$
where 
$\g(\psi)$ is the Gau\ss~sum of $\psi$, and the choice of periods is dictated by the parities of $r$ and $\psi$ via: $\psi(-1) = \pm (-1)^r$; and 
$\approx$ is a simplified notation to mean that the ratio of the left hand side divided by everything on the right hand side is algebraic, and is $\Gal(\bar{\Q}/\Q)$-equivariant: 
$$
\gamma\left(\frac{L_f(r, f_\chi, \psi)}{(2\bfgreek{pi} \i)^r u^\pm(f_\chi) \g(\psi)} \right) \ = \ 
\frac{L_f(r, {}^\gamma f_\chi, {}^\gamma\psi)}{(2\bfgreek{pi} \i)^r u^\pm({}^\gamma f_\chi) \g({}^\gamma\psi)}, \quad 
\forall \gamma \in \Gal(\bar{\Q}/\Q). 
$$
The finite part of the $L$-function $L_f(r, f_\chi, \psi)$ 
is completed using the archimedean $\Gamma$-factor
$L_\infty(s, f_\chi, \psi) = 2(2\bfgreek{pi})^{-s}\Gamma(s)$. In terms of the completed $L$-function the above relation takes the form:
$$
L(r, f_\chi, \psi) \ \approx \ \i^r u^\pm(f_\chi) \g(\psi). 
$$
Take $r=1$ and use the above relation once for $\psi$ the trivial character and then for $\psi = \omega_{F/\Q}$ to deduce: 
$$
u^+(f_\chi) \ \approx \ u^-(f_\chi) \, \g(\omega_{F/\Q}).
$$
Next, apply Shimura's result to $L(s,f_\chi)$ for $s = r$ and $s = r+1$, where $r \in \{1,\dots,k-2\}$ (possible when $k \geq 3$, i.e., $\ell \geq 2$), and 
divide one by the other to deduce: 
$$
\frac{L(r,f_\chi)}{L(r+1,f_\chi)} \ \approx \ \i \, \g(\omega_{F/\Q}), 
$$
while using $\i^2 \in \Q^\times$ and $\g(\omega_{F/\Q})^2 \in \Q^\times.$ Since $L(s, f_\chi) = L(s-p, \chi),$ and putting $r-p = m,$ one gets for the 
ratio of two successive critical values of the completed $L$-function of $\chi$ the rationality result: 
$\i \, \g(\omega_{F/\Q} L(m, \chi)/L(m+1,\chi) \approx ) \in \bar\Q$ and furthermore, 
$$
\gamma\left( \i \, \g(\omega_{F/\Q}) \frac{L(m, \chi)}{L(m+1, \chi)}\right) \ = \ 
\i \, \g(\omega_{F/\Q}) \frac{L(m, {}^\gamma\chi)}{L(m+1, {}^\gamma\chi)}, 
\quad \forall \gamma \in \Gal(\bar{\Q}/\Q). 
$$
One has used that ${}^\gamma f_\chi = f_{{}^\gamma\chi}$ which follows from the definition of $f_\chi$ (see \cite[Sect.\,5]{shimura-1976}). 
To see that the above result is indeed an instance of Thm.\,\ref{thm:main}, one needs the basic fact about quadratic Gauss sums: 
$\i \, \g(\omega_{F/\Q}) = |\delta_{F/\Q}|^{1/2} \pmod{\Q^\times}$. It is shown in \cite{raghuram-imrn} that this example generalizes from $\GL(1)$ 
over an imaginary quadratic extension to $\GL(n)$ over a CM field.

\bigskip

\bigskip

\bigskip

 \end{document}